\documentclass[a4paper,11pt,twoside,reqno,onehalfspacing,final]{amsart}

\usepackage{tikz-cd}

\usepackage{fontspec}

\usepackage{mathtools}
\usepackage{xfrac}

\usepackage{unicode-math}

\let\llbracket\undefined
\let\rrbracket\undefined
\newcommand{\llbracket}{\symbol{"27E6}}
\newcommand{\rrbracket}{\symbol{"27E7}}
\newcommand{\lightning}{\symbol{"26A1}}

\usepackage{setspace}
\usepackage[skip=\smallskipamount]{parskip} 
\usepackage{geometry}
\usepackage{enumitem}
\setlist[enumerate,1]{label=\textbf{(\roman*)},ref=(\roman*)}

\usepackage{xcolor}
\usepackage[color]{showkeys}
\colorlet{refkey}{orange}
\colorlet{labelkey}{teal}
\usepackage[right,mathlines]{lineno}

\usepackage[backend=biber,bibencoding=utf8,style=numeric,
maxbibnames=99,minbibnames=99,
url=false,doi=true,eprint=true,isbn=false,
hyperref=auto,backref=false]{biblatex}
\usepackage[unicode=true,bookmarks=true,final=true,pdfstartview={FitH},
colorlinks=true,linktoc=page,linkcolor=red,citecolor=blue]{hyperref}

\reversemarginpar

\let\oldmarginpar\marginpar
\renewcommand\marginpar[1]{\-\oldmarginpar[\raggedleft\footnotesize #1]{\raggedright\footnotesize #1}}

\newtheorem{theorem}{Theorem}[section]
\newtheorem{lemma}[theorem]{Lemma}
\newtheorem{proposition}[theorem]{Proposition}
\newtheorem{corollary}[theorem]{Corollary}

\newtheorem{remark}[theorem]{Remark}
\newtheorem{example}[theorem]{Example}

\newtheoremstyle{intro}
  {12pt}
  {12pt}
  {\itshape}
  {}
  {\bfseries}
  {.}
  {.5em}
  {}

\theoremstyle{intro}
\newtheorem{introthm}{Theorem}

\usepackage{hsinchu}

\title[Duflo--Kontsevich isomorphisms for calculi of dg manifolds]{Duflo--Kontsevich-type isomorphisms\\ for Tamarkin--Tsygan calculi of dg manifolds}

\thanks{This research was partially supported by NSF grant DMS-2302447,
Simons Foundation grant MP-TSM-00002272,
and Taiwan's NSTC grants 112-2115-M-007-016-MY3 and 115-2628-M-007-007-MY4.
This work was also supported by the Research Institute for Mathematical Sciences (RIMS),
an International Joint Usage/Research Center located at Kyoto University.}

\author{Hsuan-Yi Liao}
\address{Department of Mathematics, National Tsing Hua University}
\email{hyliao@math.nthu.edu.tw}

\author{Mathieu Stiénon}
\address{Department of Mathematics, Pennsylvania State University}
\email{stienon@psu.edu}

\author{Ping Xu}
\address{Department of Mathematics, Pennsylvania State University}
\email{ping@math.psu.edu}

\begin{document}


\begin{abstract}
We establish a Duflo--Kontsevich-type theorem for differential graded (dg) manifolds. Specifically, we prove that the Hochschild--Kostant--Rosenberg maps twisted by the square root of the Todd class realize an isomorphism between the Tamarkin--Tsygan calculus and the Cartan calculus of the dg manifold. At the level of cohomology, this confirms the Kontsevich--Shoikhet conjecture formulated in \cite{arXiv:math/9812009}.
\end{abstract}

\maketitle

\tableofcontents


\section{Introduction}

The main goal of this paper is to establish Duflo--Kontsevich-type isomorphisms for Tamarkin--Tsygan calculi on dg manifolds.

Dg manifolds have recently become increasingly important. A dg manifold is defined as a $\ZZ$-graded manifold equipped with a homological vector field, i.e. a vector field $Q$ of degree $+1$ satisfying $\schouten{Q}{Q}=0$. Originally arising in physics as BRST operators for describing gauge symmetries, dg manifolds (also known as $Q$-manifolds) have since become prominent in mathematical physics, notably in the AKSZ formalism \cite{MR1432574,MR1230027}, and also appear naturally in geometry, Lie theory, and mathematical physics. Standard examples include $L_\infty$ algebras, foliations, complex manifolds, and derived intersections.

\paragraph{\sc $L_\infty$ algebras}
Given a finite-dimensional Lie algebra $\frakg$, we write $\frakg[1]$ to denote the dg manifold whose algebra of functions is $C^\infty(\frakg[1]):=\Lambda^\bullet\frakg^\vee$ and whose homological vector field is the Chevalley--Eilenberg differential $Q:=d_{\CE}$. This construction admits an “up to homotopy” version: given a $\ZZ$-graded finite-dimensional vector space $\frakg=\bigoplus_{i\in\ZZ}\frakg_i$, the graded manifold $\frakg[1]$ is a dg manifold, i.e. admits a homological vector field, if and only if the graded vector space $\frakg$ carries the structure of a curved $L_\infty$ algebra.

\paragraph{\sc Foliations and complex manifolds}
Given an integrable distribution $F\subseteq T_M^{\KK}:=T_M\otimes\KK$ ($\KK$ being either $\RR$ or $\CC$), we write $F[1]$ to denote the dg manifold whose algebra of functions is $C^\infty(F[1])=\Omega_F^\bullet:=\sections{\Lambda^\bullet F^\vee}$ and whose homological vector field is the leafwise de~Rham differential $Q:=d_{\DR}^F$. In particular, a complex manifold $X$ determines an integrable distribution $T^{0,1}_X\subset T_X^{\CC}$. Then $(T^{0,1}_X[1],\bar{\partial})$ is a dg manifold: the algebra of functions is $C^\infty(T^{0,1}_X[1])=\Omega^{0,\bullet}(X)$ and the homological vector field is the Dolbeault operator $\bar{\partial}$.

\paragraph{\sc Derived intersections}
Given a smooth section $s$ of a vector bundle $E\to M$, we write $E[-1]$ to denote the dg manifold whose algebra of functions is $C^\infty (E[-1])=\sections{\Lambda^{-\bullet}E\dual}$ and whose homological vector field is $Q=\iota_s$, the interior product with $s$. This dg manifold can be regarded as a smooth model for the (possibly singular) intersection of $s$ with the zero section of the vector bundle $E$, and is often called a “derived intersection” or a \emph{quasi-smooth derived manifold} \cite{MR4735657}.

The notion of dg manifolds provides a highly effective framework for studying the differential geometry of manifolds with singularities.
Dg manifolds of amplitude $[-n,-1]$ are equivalent to \emph{Lie $n$-algebroids} \cite{MR2768006}, which can be viewed as infinitesimal counterparts of higher groupoids.
Furthermore, dg manifolds of amplitude $[1,n]$ correspond to derived manifolds \cite{MR4735657,arXiv:2307.08179}.
Consequently, general dg manifolds with amplitude $[-m,n]$ are capable of encoding both stacky and derived singularities in differential geometry.
Notably, dg manifolds also serve as a powerful tool for analyzing singular foliations \cite{MR4164730}.

A calculus, as defined by Tamarkin and Tsygan \cite{MR1783778,MR2986860}, consists of a pair $(\aTheta^{\bullet},\Xi_\bullet)$ such that $\aTheta^{\bullet}$ is a Gerstenhaber algebra, $\Xi_\bullet$ is both an associative algebra module (with action $\iI$) over the algebra $\aTheta^{\bullet}$ and a Lie algebra module (with action $\iL$) over the Lie algebra $(\aTheta[1])^{\bullet}$, and $\Xi_\bullet$ is equipped with a degree $-1$ operator $d$ satisfying $d^2=0$ and identities analogous to those of the classical Cartan calculus. For a smooth manifold $M$, let $\aTheta^{\bullet} := \Tpoly{\bullet}(M) = \Gamma(\Lambda^\bullet T_M)$ be the space of \emph{polyvector fields}, and let $\Xi_\bullet := \Omega^{-\bullet}(M)$ be the space of \emph{differential forms} given the opposite grading. The pair $(\aTheta^{\bullet}, \Xi_\bullet)$ forms a calculus in the sense of Tamarkin--Tsygan, called the Cartan calculus of $M$ and denoted by $\calculus_C (M)$, where the operators $\iI_X$ and $\iL_X$ are the usual interior product and Lie derivative with respect to a polyvector field $X$, and $d$ is the de~Rham differential.

On the other hand, Tamarkin and Tsygan \cite{MR1039918,MR1783778,MR2986860,MR154906} observed that there exists an analogue of the classical Cartan calculus, called the Tamarkin--Tsygan calculus of $M$ and denoted by $\calculus_H (M)$, on the pair $(\Theta^\bullet,\Xi_\bullet)$ consisting of smooth Hochschild cohomology
$\Theta^\bullet=H^{\bullet}_{\smooth}(C^\infty(M),C^\infty(M))$
and smooth Hochschild homology
$\Xi_\bullet=H_{-\bullet}^{\smooth}(C^\infty(M),C^\infty(M))$.
Here, the smooth Hochschild cohomology is defined as the cohomology of the subcomplex $(\Dpoly{\bullet}(M),\hochschild)$ of the Hochschild cochain complex of $C^\infty(M)$ consisting of \emph{polydifferential operators}.
Meanwhile, the smooth Hochschild homology is defined as the homology of the chain complex of \emph{polyjets}, constructed analogously to the Hochschild chain complex but with the standard tensor product replaced by $\infty$-jets along the diagonal $\Cpoly{-\bullet}(M)= \cJ^\infty_\Delta(M^{\times \bullet})$.
The structural operations of this calculus include the cup product and the Gerstenhaber bracket \cite{MR161898}, the Connes--Rinehart operator $B$ \cite{MR1303779,MR154906}, and, for every Hochschild cochain $D$, the explicit contraction action $\iI_D$ and Lie derivative action $\iL_D$ on Hochschild chains (see~\cite{MR1039918,MR1783778,MR2986860,MR154906}).

At the level of cochains and chains, there exists a pair of natural quasi-isomorphisms $\hkr:\Tpoly{\bullet}(M)\to\Dpoly{\bullet}(M)$ and $\hhkr:\Cpoly{-\bullet}(M)\to\Omega^\bullet(M)$, called HKR maps, defined by
\begin{gather}
\textstyle\hkr(X_1\wedge\cdots\wedge X_n)
=\frac{1}{n!}\sum_{\sigma\in S_n}
\sgn(\sigma)X_{\sigma(1)}\otimes\cdots\otimes
X_{\sigma(n)}, \label{eq:HKR}\\
\textstyle\hhkr (f_0\otimes f_1\otimes\cdots\otimes f_n)
=f_0 df_1\cdots df_n. \label{eq:HKR1}
\end{gather}
It is a classical result \cite{MR1303779,MR142598} that the pair $(\hkr,\hhkr^{-1})$ defines an isomorphism of calculi from $\calculus_C (M)$ to $\calculus_H (M)$.

For a dg manifold $(\cM,Q)$, one defines the respective cohomological analogues
\begin{multline*} \HTT,\quad \HOM, \\ \HDD,\quad\text{and}\quad \HCC \end{multline*}
of
\[ \Tpoly{\bullet}(M),\quad \Omega^{-\bullet}(M),\quad H^{\bullet}_{\smooth}(C^\infty(M),C^\infty(M)),
\quad\text{and}\quad H_{-\bullet}^{\smooth}(C^\infty(M),C^\infty(M)) ,\]
by incorporating the internal differential $Q$.
This yields two calculi, $\calculus_C(\cM,Q)$ and $\calculus_H(\cM,Q)$.
In contrast to the classical case, the HKR maps fail to respect the algebraic structures
and must be corrected by the square root of the Todd class of $(\cM,Q)$.
Our main theorem establishes the resulting Duflo--Kontsevich-type isomorphism.

\begin{introthm}
\label{thm:main}
Let $(\cM, Q)$ be any dg manifold.
Then the pair of maps
\begin{equation}
\label{eq:Duroc}
\hkr\circ(\toddclassQ)^{\frac{1}{2}}: \HTT\xto{\cong} \HDD
\end{equation}
and
\begin{equation}
\label{eq:Odeon}
\big((\toddclassQ)^{\frac{1}{2}}\circ\hhkr\big)^{-1}:\HOM\xto{\cong} \HCC
\end{equation}
is an isomorphism of calculi from $\calculus_C(\cM,Q)$ to $\calculus_H(\cM,Q)$.
Here the square root of the Todd class of the dg manifold,
$(\toddclassQ)^{\frac{1}{2}}\in \cohomology{0}\big(\totApolyM{\bullet}, \cQ\big)$,
acts on $\HTT$ by contractions, and on $\HOM$ by wedge product.
\end{introthm}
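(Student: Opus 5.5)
The strategy is to split the statement into a \emph{cohomology-level} part and a \emph{chain-level} part. For the cochain side we rely on the Kontsevich--Shoikhet / Liao--Stiénon--Xu Kontsevich--Duflo theorem for dg manifolds: $\hkr\circ(\toddclassQ)^{\frac12}$ is an isomorphism of Gerstenhaber algebras $\HTT\to\HDD$. That result comes out of Kontsevich formality made global on $\cM$ through a Fedosov-type or Atiyah-class construction. For the chain side we use Shoikhet--Dolgushev formality of Hochschild chains, which supplies an $L_\infty$-module morphism. Within the paper's framework the order is as follows.

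\textbf{Step 1: a formal model.} Choose an affine connection $\nabla$ on $\cM$. Through the associated exponential map / PBW identification, identify $\Dpoly{\bullet}$ and $\Cpoly{-\bullet}$ of $(\cM,Q)$ with fiberwise polydifferential operators and polyjets on the formal neighbourhood of the zero section of $T_\cM$. Under this identification the differential becomes $\cQ+\hochschild$ (respectively $\cQ+b$), and $\cQ$ is perturbed by the Fedosov-type deformation of $Q$. The Atiyah cocycle $\At_Q^\nabla$ appears as the quadratic part of that perturbation.

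\textbf{Step 2: fiberwise formality.} Apply Kontsevich's $L_\infty$ quasi-isomorphism $\mathcal U$ and Shoikhet's $L_\infty$-module quasi-isomorphism $\mathcal S$ fiberwise. Twist both by the Maurer--Cartan element coming from the perturbed $Q$. Because the formality maps are $\mathfrak{gl}$-equivariant and their higher components vanish on linear vector fields, the twisted morphisms are still quasi-isomorphisms. In cohomology the twisted first Taylor components are $\hkr\circ(\toddclassQ)^{\frac12}$ and $(\toddclassQ)^{\frac12}\circ\hhkr$. Here the wheel contributions are computed exactly as in Kontsevich's Duflo argument: only wheels survive, and they produce $\det\big(\frac{\At}{1-e^{-\At}}\big)^{1/2}$ from the Bernoulli-number weights. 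On the chain side the same wheels enter, and the sign of the exponent is fixed by Shoikhet's computation.

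\textbf{Step 3: compatibility with the calculus operations.} The Gerstenhaber structure on cochains follows from Kontsevich's cup-product compatibility, i.e.\ the homotopy argument via the Kontsevich--Shoikhet $\infty$-calculus (the Dolgushev--Tamarkin--Tsygan formality of the pair). Compatibility with $\iI$ and $\iL$ follows because $\mathcal S$ is a morphism of $L_\infty$-modules, combined with the $A_\infty$/cap-product version of Calaque--Rossi / Dolgushev--Tamarkin--Tsygan. Compatibility with $d$ versus $B$ follows from the $S^1$-equivariance (cyclic formality, Willwacher) of the chain map, together with the fact that $\toddclassQ$ is $d$-closed.

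\textbf{Main obstacle.} The hardest part is Step 3 for $\iI$ and $B$. These need the full calculus formality, not just the $L_\infty$ data, and they need it in a form stable under twisting by a non-linear Maurer--Cartan element on a graded manifold. My first approach is to prove these compatibilities directly in cohomology. Working locally in formal coordinates, I would use that $\toddclassQ$ is a $\cQ$-cocycle of exponentials of traces of Atiyah powers. I would then check, with the explicit Duflo-style formula $\hkr(\iota_{\alpha}X)$ versus $\iI$, that the correction factors on the two sides cancel because one is the inverse of the other.
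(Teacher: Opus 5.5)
Your overall architecture (Fedosov model, leafwise Kontsevich and Shoikhet morphisms, twisting, cup/cap homotopies, Willwacher for $B$) matches the paper's. There are two genuine gaps, though.

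\textbf{Gap 1: Todd versus $\widehat{A}$.} Step~2 misidentifies what the twisted morphisms produce. Twisting Kontsevich's and Shoikhet's morphisms by a homological vector field gives $\hkr\circ\big(\Ahatcocycle{\std}{(\cV,Q)}\big)^{1/2}$ and $\big(\Ahatcocycle{\std}{(\cV,Q)}\big)^{1/2}\circ\hhkr$ (Propositions~\ref{prop:DK_vs} and~\ref{prop:DKS_vs}). The wheel series is an even function of the Atiyah cocycle: it is the square root of the $\widehat{A}$ cocycle, not of the Todd cocycle. Since $\toddcocycleQ=e^{\frac12\supertrace(\atiyahcocycleQ)}\cdot\Ahatcocycle{\nabla}{(\cM,Q)}$, the two square roots differ by $e^{\frac14\supertrace(\atiyahcocycleQ)}$. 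This class is nontrivial in general; for $(\frakg[1],d_{\CE})$ with $\frakg$ non-unimodular, $\supertrace(\atiyahcocycleQ)$ is, up to sign, the modular character. So your argument yields the $\widehat{A}$-version. To reach the stated Todd-version you must show that contraction by $e^{\alpha}$ on polyvector fields, paired with multiplication by $e^{-\alpha}$ on forms, is an automorphism of the Cartan calculus, where $\alpha$ is a scalar multiple of the supertrace of the Atiyah cocycle. This is not formal. It respects $\iI$ for every degree-$0$ one-form $\alpha$, but it respects the Schouten bracket, $\iL$ and $d$ only when $d\alpha=0$ and $\cQ\alpha=0$. The paper obtains closedness in the Fedosov model: for the canonical flat $\cF$-connection, $\supertrace\big(\atiyahcocycle{\can}{\cF_{\cQ}}\big)=d_{\cF}\,\divergence\big(\tauTpolyF(Q)+A^\nabla\big)$ (Lemma~\ref{lem:ScalarAtiyah-Div}), and it then applies Lemma~\ref{cherry}. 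To land on $\toddclassQ$ one also needs that $\tauTpolyQ(\toddcocycleQ)$ is the Todd cocycle of $\cF$ for the pushed-forward connection, which is cohomologous to the canonical one.

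\textbf{Gap 2: gluing in Step~3.} Step~3 leaves open exactly the point you flag as the main obstacle, and your fallback cannot close it. The homotopies $H^{\kontsevich}_\omega$ and $H^{\shoikhet}_\omega$, which make $(\kontsevich_\omega)_1$ compatible with $\cupproduct$ and $(\shoikhet_\omega)_0$ compatible with $\iI$, exist only for a Maurer--Cartan element $\omega$ on a chart. The global Fedosov element $\vfedosov$ is not Maurer--Cartan, and the local Maurer--Cartan elements $\localMC$ do not agree on overlaps. The missing idea has two parts. First, the $\localMC$ differ from $\vfedosov$, hence from one another, by \emph{linear} $\cF$-longitudinal vector fields (the Christoffel term). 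Second, both the twisted morphisms and the operators $H_\omega$ are unchanged when $\omega$ is shifted by a linear vector field; for the morphisms this is because higher Kontsevich and Shoikhet components vanish when one argument is linear. Hence everything glues to global homotopies, and passing to cohomology gives the cup- and cap-compatibilities. Willwacher's $B$--$d_{\DR}$ compatibility then passes through the gluing directly. Note that this linearity property is what gives gluing. Quasi-isomorphy comes instead from the explicit first component together with the HKR theorem for the Fedosov algebroid, not from equivariance as you suggest.

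\textbf{Why your fallback fails.} Checking $\iI$-compatibility by hand in cohomology, with $t$ and $t^{-1}$ cancelling, does not work, for two reasons.
\begin{enumerate}
\item $\hkr$ and $\hhkr$ are themselves not compatible with the products on the cohomology of a dg manifold.
\item Even inside the Cartan calculus, the identity $t^{-1}\,\iI_\gamma\omega=\iI_{\iI_t\gamma}(t^{-1}\omega)$ holds for $t=e^{\alpha}$ with $\alpha$ a one-form but fails in general. Take $t=e^{\beta}$ with $\beta$ quadratic, $\gamma=XY$ a bivector and $\omega=1$. The left side is $0$, while the right side is $\pm\,\iI_X\beta\cdot\iI_Y\beta\cdot e^{-\beta}$.
\end{enumerate}
The compatibility for $(\toddclassQ)^{1/2}$ is a homotopical consequence of the formality morphisms, not an identity one can verify directly.
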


The Todd class of a dg manifold provides a unifying framework for various seemingly unrelated objects including the Duflo element of a Lie algebra and the Todd class of a complex manifold.
Indeed, the Todd class of the dg manifold $(\frakg[1],d_{\CE})$ arising from a Lie algebra $\frakg$ coincides with the Duflo element of $\frakg$ \cite{MR4584414,MR3319134,MR4276044}, and the Todd class of the dg manifold $(T^{0,1}_X[1],\bar{\partial})$ arising from a Kähler manifold $X$ is isomorphic to the (usual) Todd class of $X$ \cite{MR3877426,MR4393962}.

As a stepping stone in the proof of Theorem~\ref{thm:main}, we establish a formality theorem for dg manifolds.

\begin{introthm}
\label{thm:formalitydg}
Let $(\cM,Q)$ be a dg manifold, and let $\nabla$ be a torsion-free affine connection on $\cM$.
\begin{enumerate}
\item \label{thm:formalitydg-K}
There exists an $L_\infty$ quasi-isomorphism of dglas
\begin{equation}\label{Nilfisk}
\dgkont : \big( \totTpolyM{\bullet}[1] \big)_Q \inftymorphism \big( \totDpolyM{\bullet} [1] \big)_Q
\end{equation}
whose first Taylor coefficient is given by
\[ \dgkont_1 = \hkr \circ (\toddcocycleQ)^{\frac{1}{2}} : \totTpolyM{\bullet}[1] \to \totDpolyM{\bullet}[1] .\]
\item \label{thm:formalitydg-S}
There exists a quasi-isomorphism of $L_\infty$ modules over the dgla $\big( \totTpolyM{\bullet}[1] \big)_Q$:
\[ \dgshoi : \dgkont^\ast\big( \totCpolyM{\bullet} \big)_Q \inftymorphism \big( \totApolyM{\bullet} \big)_Q \]
whose zeroth Taylor coefficient is given by
\[ \dgshoi_0 = (\toddcocycleQ)^{\frac{1}{2}} \circ \hhkr : \totCpolyM{\bullet} \to \totApolyM{\bullet} .\]
\end{enumerate}
Here, the element $(\toddcocycleQ)^{\frac{1}{2}} \in \totApolyM{0}$ acts on $\totTpolyM{\bullet}[1]$ by contraction and on $\totApolyM{\bullet}$ by multiplication.
We refer to Equations~\eqref{eq:Cluny} and~\eqref{eq:Mirabeau} for the precise dgla structures on the space polyvector fields and the space of polydifferential operators, to Equations~\eqref{eq:Vaneau} and~\eqref{eq:Michel-Ange} for the corresponding dgla module structures on the space of differential forms and the space of polyjets, and to Remark~\ref{rmk:LooModOnCpolyM} below for the structure of $L_\infty$ module (over the dgla of polyvector fields) on the space of polyjets.
\end{introthm}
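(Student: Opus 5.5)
The plan is to obtain both statements by transferring the classical Kontsevich and Shoikhet formality morphisms along a Fedosov-type resolution, and then twisting by the Maurer--Cartan element that encodes the homological vector field $Q$. The Todd correction appears as the effect of gauge-fixing the first Taylor coefficient after twisting.

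\textbf{Step 1: Fedosov resolution.} Use the torsion-free connection $\nabla$ to build a Fedosov-type resolution. Concretely, form the formal exponential map $\pbw^\nabla$ (Emmrich--Weinstein style) identifying $\cM$ with a formal neighbourhood of the diagonal, and obtain a flat connection $D = -\delta + \nabla + A^\nabla$ on the bundle of fibrewise formal objects
\[ \hat S(T^\vee_\cM)\otimes\Lambda T_\cM,\quad \hat S(T^\vee_\cM)\otimes\text{polydifferential},\quad \text{and so on}, \]
with values in $\Omega(\cM)$. The fibrewise Kontsevich morphism $\cU$ and Shoikhet morphism $\cS$ on $\RR^{m|n}$ are equivariant under affine vector fields. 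They are therefore compatible with $\delta$ and $\nabla$. By the standard Dolgushev argument, the nonlinear part $A^\nabla$ (vector-field valued) gets twisted in, using the property that $\cU_n(v,\dots)=0$ for linear vector fields $v$ when $n\ge 2$ (and the analogous property of $\cS$). The fibrewise formality then globalizes to $L_\infty$ quasi-isomorphisms between the $D$-flat sections. These are identified with $\totTpolyM{\bullet}[1]$ and $\totDpolyM{\bullet}[1]$, respectively with polyjets and forms, via contracting homotopies. This works for graded manifolds since only a degree-counting version of Kontsevich's graphs is needed.

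\textbf{Step 2: incorporating $Q$.} The vector field $Q$ is a Maurer--Cartan element of $\totTpolyM{\bullet}[1]$, as is its image $\hkr(Q)$ in $\totDpolyM{\bullet}[1]$. Twisting an $L_\infty$ morphism $\Phi$ by $Q$ gives $\Phi^Q$, with $(\Phi^Q)_1(x)=\sum_n \frac1{n!}\Phi_{n+1}(Q^{n},x)$, and the analogous formula for the module morphism. Since $Q$ is not linear in the Fedosov fibre coordinates, these sums do not truncate. A twisted morphism is again a quasi-isomorphism by a filtration (spectral sequence) argument on the weight filtration. This gives $\dgkont$ and $\dgshoi$ as $L_\infty$ quasi-isomorphisms between the $Q$-twisted dglas and modules.

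\textbf{Step 3: identifying the first Taylor coefficients.} This is the main obstacle. One must show that $(\cU^Q)_1$ equals $\hkr\circ(\toddcocycleQ)^{1/2}$, possibly after an $L_\infty$ gauge equivalence, and that $(\cS^Q)_0$ equals $(\toddcocycleQ)^{1/2}\circ\hhkr$. The twisted coefficient $(\cU^Q)_1$ is a sum over graphs whose vertices are decorated by the Fedosov lift of $Q$. Only wheel-type graphs survive, namely the wheels attached to the point $x$. These reproduce $\operatorname{str}$ of powers of the Atiyah cocycle $\At^\nabla_{(\cM,Q)}$ (the component of $\nabla Q$ of degree $\geq 2$ in the fibre). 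The wheel weights are those of Kontsevich's Duflo computation, namely the coefficients of $\frac12\log\frac{x}{1-e^{-x}}$ up to odd terms, which are removable by coboundaries. So $(\cU^Q)_1=\hkr\circ\exp\bigl(\sum_k c_{2k}\operatorname{str}(\At^{2k})\bigr)$, which is $\hkr\circ(\toddcocycleQ)^{1/2}$ modulo the odd-wheel terms. Any remaining discrepancy is a cocycle in $\totApolyM{0}$, and I would remove it by composing with the automorphism given by contraction with an exact exponential. The Shoikhet side uses the same wheel computation, following Shoikhet's and Calaque--Rossi's analysis, and gives multiplication by the same half-Todd factor. I expect the delicate points to be the bookkeeping of signs and degrees in the wheel weights, and showing that the odd wheels are genuinely $\cQ$-exact in the dg setting.
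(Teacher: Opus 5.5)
Your Steps~1--2 essentially reproduce the paper's construction. Twisting commutes with composition, so twisting your globalized morphism by $Q$ amounts to twisting the leafwise Kontsevich and Shoikhet morphisms by $\tauTpolyF(Q)-\delta+A^\nabla$, which is how the paper defines $\tkontsevich$ and $\tshoikhet$. The paper does not twist the transfer morphisms; it uses contractions that already contain $Q$, so the quasi-inverse has first coefficient exactly $\sigmaDpolyQ$. In your order you still owe a convergence argument and the first coefficient of the twisted transfer morphism.

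The genuine gap is in Step~3, in passing from the graph computation to the \emph{exact} coefficient $\hkr\circ(\toddcocycleQ)^{1/2}$. The wheels produce $\widehat{A}^{1/2}$ (Propositions~\ref{prop:DK_vs} and~\ref{prop:DKS_vs}), whereas $(\toddcocycleQ)^{1/2}=e^{\frac14\supertrace(\atiyahcocycleQ)}\,(\Ahatcocycle{\nabla}{(\cM,Q)})^{1/2}$. Odd wheels with at least three spokes simply have zero weight. The only odd discrepancy is this linear factor, and it is \emph{not} removable by coboundaries. For $(\frakg[1],d_{\CE})$ the class of $\supertrace(\atiyahcocycleQ)$ is, up to sign, the modular character $\mathrm{tr}\circ\mathrm{ad}\in(\frakg^\vee)^{\frakg}$, a nonzero degree-$0$ class whenever $\frakg$ is not unimodular. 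Nor can a general cocycle in $\totApolyM{0}$ be absorbed by composing with a contraction. Contraction with $e^{\alpha}$ is a dgla automorphism only if $\alpha$ is a $1$-form closed for both $\iL_Q$ and the de~Rham differential (Lemma~\ref{cherry}). For an arbitrary torsion-free $\nabla$ on $\cM$, $\supertrace(\atiyahcocycleQ)$ is not $d$-closed: its differential is, up to sign, $\iL_Q$ of the curvature of the induced Berezinian connection, which is nonzero e.g.\ for suitable $\nabla$ on $T[1]\RR^2$. The paper therefore makes this correction on the Fedosov side, for the flat canonical $\cF$-connection. There $\supertrace(\atiyahcocycle{\can}{\cF_{\cQ}})=d_{\cF}\,\divergence(\tauTpolyF(Q)+A^\nabla)$ (Lemma~\ref{lem:ScalarAtiyah-Div}), and one uses the calculus automorphism $(e^{\alpha^\sharp},e^{-\alpha^\flat})$; $e^{\alpha^\flat}$ also supplies the strict module morphism needed in part~(ii).

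Second, the graph computation on the Fedosov side gives the Atiyah cocycle of $\cF_{\cQ}$ for the \emph{canonical} connection, not $\tauTpolyQ(\atiyahcocycleQ)=\atiyahcocycle{\breve\nabla}{\cF_{\cQ}}$ (Lemma~\ref{maple}). The two are only cohomologous, so after transferring back your first coefficient is homotopic, but not equal, to $\hkr\circ(\toddcocycleQ)^{1/2}$ for the given $\nabla$. Your phrase ``possibly after an $L_\infty$ gauge equivalence'' is where the work lies. One must lift the cochain map $(\todd^{\can}_{\cF_Q})^{-1/2}\circ(\todd^{\breve\nabla}_{\cF_Q})^{1/2}$, which is homotopic to the identity, to an $L_\infty$ automorphism $\Xi^{\breve\nabla}\sim\id$ with that first coefficient (Lemma~\ref{lem:LiftedLooMor-htp}). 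Only then does the first coefficient become $\tauDpolyQ\circ\hkr\circ(\toddcocycleQ)^{1/2}$, and $\sigmaDpolyQ\circ\tauDpolyQ=\id$ concludes.

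Every such modification of $\dgkont$ also changes the source $\dgkont^\ast\big(\totCpolyM{\bullet}\big)_Q$ of $\dgshoi$, and your plan does not address this. The paper needs two further ingredients. One is a module morphism over $\Xi^{\breve\nabla}$ with zeroth coefficient $(\todd^{\breve\nabla}_{\cF_Q})^{1/2}\circ(\todd^{\can}_{\cF_Q})^{-1/2}$ (Lemma~\ref{lem:Pyramides}). The other is an isomorphism between the modules pulled back along the homotopic morphisms $\Tau_D\circ\dgkont$ and $\ukontsevich\circ\Xi^{\breve\nabla}\circ\Tau_T$ (Lemma~\ref{lem:PullbackViaHtpMor}).

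Finally, ``only wheels survive'' is much harder for nonlinear $Q$ than in Kontsevich's Duflo computation, since vertices can receive several edges. The paper does not redo it. It imports the result for trivialized graded manifolds and passes to the leafwise setting via the block-triangular Atiyah matrix on $\cW$ (Lemma~\ref{lem:ToddFedosov}).
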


\begin{remark}\label{rmk:LooModOnCpolyM}
The space of polyjets $\totCpolyM{\bullet}$ is naturally an $L_\infty$ module over the dgla of polydifferential
operators via the action \eqref{eq:Michel-Ange}.
In Theorem~\ref{thm:formalitydg}~\ref{thm:formalitydg-S}, the symbol $\dgkont^\ast\big( \totCpolyM{\bullet} \big)_Q$ denotes the pullback $L_\infty$ module structure on $\totCpolyM{\bullet}$
induced by the $L_\infty$ morphism \eqref{Nilfisk}.
Explicitly, the sequence of multibrackets $(\lambda_n)_{n=0}^\infty$ defining
the $L_\infty$ module structure on $\dgkont^\ast\big( \totCpolyM{\bullet} \big)_Q$ is given by
\begin{equation}
\lambda_n(\gamma_1, \cdots, \gamma_n; \zeta) =
\begin{cases}
(\hochschildb + \iL_Q)\zeta, & \text{if } n = 0, \\[1ex]
\iL_{\dgkont_n(\gamma_1, \cdots, \gamma_n)}\zeta, & \text{if } n \ge 1,
\end{cases}
\end{equation}
for all $\gamma_1, \cdots, \gamma_n \in \totTpolyM{\bullet}[1]$ and $\zeta \in \totCpolyM{\bullet}$.
See Appendix~\ref{sec:PullBack} for the general construction of pullback $L_\infty$ modules.
\end{remark}

Indeed, our approach to Theorem~\ref{thm:main} relies fundamentally on the formality theorems of Kontsevich \cite{MR2062626} and Shoikhet--Tsygan \cite{MR1729368,MR2004726} together with the formal geometry of dg manifolds developed in~\cite{paper-1A,paper-1B} via Fedosov dg Lie algebroids.

The isomorphism \eqref{eq:Duroc} at the level of Gerstenhaber algebras was known as the Kontsevich--Shoikhet conjecture \cite{arXiv:math/9812009} and its proof was announced in a Comptes Rendus note \cite{MR3754617}; see also \cite{MR4276044}.
As special cases, the isomorphism \eqref{eq:Duroc} at the level of associative algebras recovers two fundamental results:
(1) the Duflo--Kontsevich theorem for Lie algebras \cite{arXiv:math/9812009,MR4584414} in the case of $(\frakg [1],d_{\CE})$, and (2) the Kontsevich theorem for complex manifolds \cite{MR4504932} in the case of $(T^{0,1}_X[1],\bar{\partial})$. These two theorems are thus unified within a single conceptual framework.

Note that a formality theorem for $\ZZ$-graded manifolds was obtained by Cattaneo--Felder \cite{MR2304327},
who applied it to the quantization of coisotropic submanifolds of Poisson manifolds.
We expect that Theorem~\ref{thm:formalitydg} can be used to study deformation quantization of ($0$-shifted) derived Poisson manifolds a.k.a. $P_\infty$-manifolds \cite{MR2180451,MR2304327,MR4091493}.

\subsection*{Notations and conventions}

Graded means $\ZZ$-graded, and dg means differential graded in the present paper.

Given a $\ZZ$-bigraded $R$-module $C = \bigoplus_{p,q \in \ZZ} C^{p,q}$, we set
\[ \tot_\oplus^n(C) := \bigoplus_{\substack{p+q=n,\ p,q \in \ZZ}} C^{p,q}
\qquad \text{and} \qquad
\tot_\Pi^n(C) := \prod_{\substack{p+q=n,\ p,q \in \ZZ}} C^{p,q} ,\]
which are called the \emph{direct-sum total space} and the \emph{direct-product total space}, respectively.
The notations $\tot_\oplus$ and $\tot_\Pi$ are inspired by the two types of total complexes of a double complex.
Explicitly, in this paper, a double complex $(C,d_1,d_2)$ is a $\ZZ$-bigraded $R$-module $C = C^{\bullet,\bullet}$ together with operators $d_1: C^{p,q} \to C^{p+1,q}$ and $d_2:C^{p,q} \to C^{p,q+1}$ such that both of the operators square to zero and they anti-commute.
Given a double complex $(C,d_1,d_2)$, one has two types of total complexes: the direct-sum total complex $(\tot_\oplus^\bullet(C), d_1+d_2)$ and the direct-product total complex $(\tot_\Pi^\bullet(C), d_1+d_2)$.

Let $V$ be a graded $R$-module and $i,j \in \ZZ$.
Let $V[i]$ be the graded $R$-module whose homogeneous components are $(V[i])^j = V^{i+j}$.
We denote by $\nshift: V \to V[1]$ the degree-shifting map of degree $-1$, and denote by $\pshift: V \to V[-1]$ the degree-shifting map of degree $+1$.
Thus, we have $V[1] = \nshift V$ and $V[-1] = \pshift V$.

\section{Preliminary}

Dg Lie algebroids provide a framework for conceptually unifying the construction of calculi across diverse settings. In particular, associated with any dg Lie algebroid $(\cL \to \cM, \cQ)$, there exists a dual pair of algebraic structures: the Cartan calculus $\calculus_C(\cL,\cQ)$ and the Tamarkin--Tsygan calculus $\calculus_H(\cL,\cQ)$.
Here, $\calculus_C(\cL,\cQ)$ denotes the calculus structure on the pair of spaces comprising $\cL$-polyvector fields and $\cL$-differential forms: $\big( \cohomology{}\big(\totTpolyL{\bullet},\cQ\big), \, \cohomology{}\big(\totApolyL{\bullet},\cQ\big) \big)$. Concurrently, $\calculus_H(\cL,\cQ)$ denotes the calculus structure on the pair of spaces comprising $\cL$-polydifferential operators and $\cL$-polyjets: $\big( \cohomology{}\big( \totDpolyL{\bullet}, \hochschild+\dQ\big), \, \cohomology{}\big(\totCpolyL{\bullet}, \hochschildb+\dQ \big) \big)$. For a comprehensive treatment, we refer the reader to~\cite{paper-1B}. Two special classes of dg Lie algebroids bear particular significance within the scope of this paper: the tangent dg Lie algebroid of a dg manifold and the Fedosov dg Lie algebroid of a dg manifold. We review their explicit constructions in further detail below.

\subsection{Calculi of dg manifolds}\label{corniche}

This section is devoted to introducing the main objects of study in the present paper: the Cartan and Tamarkin--Tsygan calculi on dg manifolds.

Let $(\cM,Q)$ be a dg manifold.
Then $T_{\cM}\to\cM$ is naturally a dg Lie algebroid, where the homological vector field $\tilde{Q}$ on $T_{\cM}$ is the complete lift of $Q$.
It corresponds to the dg module map $\liederivative{Q}:\Gamma(T_{\cM})\to\Gamma(T_{\cM})$.
Applying the general constructions \cite{paper-1B}, we will be able to obtain the calculi $\calculus_C(\cM,\cQ)$ and $\calculus_H(\cM,\cQ)$.

In what follows, we briefly summarize various spaces involved and their algebraic structures.
The main goal of the paper is to explore the relationship between these algebraic structures.

\subsubsection{Cartan calculus of dg manifolds}

We begin with the Cartan calculus.

Let $(\cM,Q)$ be a dg manifold over $\KK$.
For all $p\geq 0$, let $\Tpolym{p}$ denote the space $\sections{S^p(T_{\cM}[-1])}$, which is isomorphic to $\sections{\Lambda^{p}T_{\cM}}[-p]$, of $p$-vector fields on $\cM$.
We use the symbol $\TpolyM{p}{n}$ to denote the degree~$n$ subspace of $\Tpolym{p}$.
Let $\totTpolyM{n}$ be the direct-sum total space
$\totTpolyM{n} = \bigoplus_{p = 0}^\infty \TpolyM{p}{n}$.
The homological vector field $Q$ on $\cM$ induces a degree~$+1$ differential
$\tQ: \TpolyM{p}{n} \to \TpolyM{p}{n+1}$,
namely the Lie derivative $\tQ=\iL_{Q}=\schouten{Q}{\argument}$ w.r.t. the homological vector field $Q$.
Thus we obtain a dg Gerstenhaber algebra $\big(\totTpolyM{\bullet}, \tQ ,\schouten{\argument}{\argument},\wedge\big)$, where $\schouten{\argument}{\argument}$ and $\wedge$ denote the Schouten bracket and the wedge product on polyvector fields of $\cM$.
Its cohomology $\cohomology{}\big(\totTpolyM{\bullet},\tQ\big)$ is a Gerstenhaber algebra.
In particular, the triple
\begin{equation}\label{eq:Cluny}
\big(\totTpolyM{\bullet}[1]\big)_Q :=
\big(\totTpolyM{\bullet}[1],\schouten{Q}{\argument},\schouten{\argument}{\argument}\big) .
\end{equation}
is a dgla.

Similarly, let $\Apolym{-p} = \sections{S^{p}(T^\vee_{\cM} [1])}$ be the space of differential $p$-forms on $\cM$ with negative degree shifting, which is $\Omega^p(\cM)[p]$.
We denote the degree $n$ subspace of $\Apolym{-p}$ by $\ApolyM{-p}{n}$. By $\totApolyM{n}$, we denote the direct-product total space $ \totApolyM{n} = \prod_{p = 0}^\infty \ApolyM{-p}{n}.$
The homological vector field $Q$ induces a degree $+1$ differential
$\tQ = \iL_Q: \ApolyM{-p}{n} \to \ApolyM{-p}{n+1}$ via the Lie derivative.
By $\big(\totApolyM{\bullet}\big)_Q$, we denote the pair
\begin{equation}\label{eq:Vaneau}
\big(\totApolyM{\bullet}\big)_Q := \big(\totApolyM{\bullet}, \iL_Q \big)
.\end{equation}

By $d_{\dR}$, we denote the de~Rham differential
\begin{equation}\label{DR0}
d_{\dR} : \Apolym{-p} \to \Apolym{-p-1}.
\end{equation}
For any $X \in \Tpolym{d}$, denote by
\begin{equation}
\label{eq:IX0}
\iI_X : \Apolym{-p} \to \Apolym{-p+d}
\end{equation}
the natural contraction operator, and by
\begin{equation}
\label{eq:LX10}
\iL_{X} : \Apolym{-p} \to \Apolym{-p+d-1}
\end{equation}
the Lie derivative defined by the Cartan formula
\begin{equation}
\label{eq:LX20}
\iL_{X} = (-1)^{|X|-1} \commutator{d_{\dR}}{\iI_X}.
\end{equation}

\begin{proposition}[{\cite[]{paper-1B}}]
Let $(\cM, Q)$ be a dg manifold. Then
\begin{itemize}
\item The triple $\big(\totTpolyM{\bullet}[1]\big)_Q$ is a dgla.
\item The pair $\big(\totApolyM{\bullet}\big)_Q$ is a dgla module over
$\big(\totTpolyM{\bullet}[1]\big)_Q$ for the representation \eqref{eq:LX10}.
\item
The pair $(\aTheta^{\bullet}, \Xi_\bullet)$, where $\aTheta^{\bullet}:=\HTT$ and $\Xi_\bullet=\HOM$, admit a calculus structure, whose operators $d$, $\iI_X$, and $\iL_X$ are respectively the de~Rham differential \eqref{DR0}, the interior product \eqref{eq:IX0} and the Lie derivative \eqref{eq:LX10} w.r.t. the polyvector field $X$.
\end{itemize}
\end{proposition}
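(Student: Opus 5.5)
This proposition is a direct specialization of the general theory of calculi attached to dg Lie algebroids in \cite{paper-1B}. The plan is to apply that theory to the tangent dg Lie algebroid $(T_{\cM},\tilde Q)$, whose differential on sections is $\liederivative{Q}$. The general construction attaches to a dg Lie algebroid $(\cL,\cQ)$ two things: a dg Gerstenhaber algebra $\totTpolyL{\bullet}$ with differential induced by $\cQ$, and a dg module $\totApolyL{\bullet}$ on which contraction, Lie derivative and de~Rham differential satisfy the Cartan identities up to the differential. Specializing $\cL=T_{\cM}$, I will identify $\totTpolyL{\bullet}$ with $\totTpolyM{\bullet}$ and $\totApolyL{\bullet}$ with $\totApolyM{\bullet}$. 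The induced differentials are the Schouten and Lie derivatives $\schouten{Q}{\argument}$ and $\iL_Q$. This identification holds because the complete lift $\tilde Q$ acts on $T_{\cM}$-tensors by the Lie derivative along $Q$.

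For the first bullet, I argue as follows. The Schouten bracket makes $\totTpolyM{\bullet}[1]$ a graded Lie algebra, by the usual graded-manifold argument extended degreewise in $p$. The identity $\schouten{Q}{Q}=0$, together with the graded Jacobi identity, gives $(\schouten{Q}{\argument})^2=\tfrac12\schouten{\schouten{Q}{Q}}{\argument}=0$. Jacobi also shows that $\schouten{Q}{\argument}$ is a derivation of the bracket. Since $Q$ is a vector field, the differential preserves the polyvector degree $p$, so it is compatible with the direct-sum totalization.

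For the second bullet, I use the definition $\iL_X=(-1)^{|X|-1}\commutator{d_{\dR}}{\iI_X}$. I then check the standard identities $\commutator{\iL_X}{\iL_Y}=\iL_{\schouten{X}{Y}}$ and $\commutator{\iI_X}{\iL_Y}=\iI_{\schouten{X}{Y}}$ (with the paper's signs). These follow from the graded analogue of the classical derivation-based proof: both sides are derivations of the appropriate degree, so it suffices to verify them on functions and exact one-forms, which are local generators. Taking $Y=Q$ shows that $\iL_Q$ intertwines the action with the differential, which gives the dgla-module statement. The direct-product totalization on forms causes no trouble, because each operator shifts the form degree by a bounded amount.

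For the third bullet, the remaining calculus axioms are $d^2=0$, $\commutator{d}{\iL_X}=0$, the module property $\iI_{X\wedge Y}=\iI_X\iI_Y$, and the Cartan homotopy formula. All of them hold at the cochain level. The differentials $\iL_Q$ commute with $d_{\dR}$, and $\iL_Q$ is a derivation for $\wedge$ and for contraction. Hence every operation descends to $\HTT$ and $\HOM$, and the identities hold on cohomology.

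I expect the main obstacle to be sign bookkeeping under the shifted gradings $\Tpolym{p}=\sections{S^p(T_{\cM}[-1])}$ and $\Apolym{-p}$, especially in $\commutator{\iL_X}{\iI_Y}$. I would handle this by importing the sign conventions verbatim from \cite{paper-1B} rather than rederiving them.
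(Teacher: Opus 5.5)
Your overall route is exactly what the paper does: you obtain the proposition by specializing the dg Lie algebroid calculus of \cite{paper-1B} to the tangent dg Lie algebroid $(T_{\cM},\tilde Q)$, whose induced differentials are $\schouten{Q}{\argument}$ and $\iL_Q$. The paper gives no argument beyond that specialization.

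The problem is the self-contained verification you sketch for the second bullet. You assert that both sides of $\commutator{\iL_X}{\iL_Y}=\iL_{\schouten{X}{Y}}$ and $\commutator{\iI_X}{\iL_Y}=\iI_{\schouten{X}{Y}}$ are derivations, so that checking them on functions and exact one-forms suffices. This holds only when $X$ and $Y$ are vector fields. For $X\in\Tpolym{p}$ with $p\geq 2$, neither $\iI_X$ nor $\iL_X=(-1)^{|X|-1}\commutator{d_{\dR}}{\iI_X}$ is a derivation of $\totApolyM{\bullet}$. For instance, contraction with $X_1\wedge X_2$ is, up to sign, $\iI_{X_1}\circ\iI_{X_2}$, which is a second-order operator. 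Concretely, on $\RR^2$ contraction with $\frac{\partial}{\partial x_1}\wedge\frac{\partial}{\partial x_2}$ kills every function and every one-form, but not $dx_1\wedge dx_2$. So operators of this kind are not determined by their values on local generators. The module structure is over all of $\totTpolyM{\bullet}[1]$, so you need the identities in every polyvector degree, and your argument does not supply them.

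This is repairable by standard means. Prove the identities for functions and vector fields by your derivation argument, then induct on polyvector degree using:
\begin{itemize}
\item $\iI_{X\wedge Y}=\pm\iI_X\circ\iI_Y$ and the graded commutativity of contractions;
\item the consequence $\iL_{X\wedge Y}=\pm\iL_X\circ\iI_Y\pm\iI_X\circ\iL_Y$ of the Cartan formula;
\item the Leibniz rules of the Schouten bracket in each slot.
\end{itemize}
This establishes $\commutator{\iL_X}{\iI_Y}=\iI_{\schouten{X}{Y}}$ in general. The Lie-module identity then follows formally. Expand $\commutator{\iL_X}{\iL_Y}=\pm\commutator{\iL_X}{\commutator{d_{\dR}}{\iI_Y}}$ by the graded Jacobi identity for commutators, and use $\commutator{\iL_X}{d_{\dR}}=0$, which holds because $d_{\dR}^2=0$. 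Alternatively, drop the direct verification and rely on the cited dg Lie algebroid result, as the paper does.
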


This calculus is called the Cartan calculus of the dg manifold $(\cM,Q)$ and is denoted by $\calculus_C(\cM,Q)$.

\subsubsection{Tamarkin--Tsygan calculus of dg manifolds}

Next we briefly recall the Tamarkin--Tsygan calculus of a dg manifold.

Let $\pshift\DD(\cM):=\DD(\cM)[-1]$ be the suspended space of linear differential operators on $\cM$, which is naturally an $\cR$-module.
The space $\sD^p_{\poly}(\cM)$ of $p$-differential operators on $\cM$ is defined as
$\sD^p_{\poly}(\cM)=\big(\pshift\DD(\cM)\big)^{\otimes_{\cR}^p}$
for $p\geq 1$ and $\sD^0_{\poly}(\cM)=\cR$.
By $\DpolyM{p}{n}$, we denote the space of $p$-differential operators on $\cM$ of total degree~$n$.
Let $\totDpolyM{n}$ be the direct-sum total space
$\totDpolyM{n}= \bigoplus_{p = 0}^\infty \DpolyM{p}{n}.$
As in the classical case \cite{MR161898}, endowing the space $\totDpolyM{\bullet}$ of
polydifferential operators with the Gerstenhaber
bracket $\gerstenhaber{\argument}{\argument}$ and the Hochschild
differential $\hochschild:=\gerstenhaber{m}{\argument}:\DpolyM{p}{n} \to\DpolyM{p+1}{n}$
makes the triple
\begin{equation}\label{eq:Mirabeau}
\big(\totDpolyM{\bullet}[1]\big)_Q
:= \big(\totDpolyM{\bullet}[1],\hochschild+\dQ,
\gerstenhaber{\argument}{\argument}\big) .
\end{equation}
into a dgla.
Here, $\dQ = \gerstenhaber{Q}{\argument} : \totDpolyM{\bullet} \to \totDpolyM{\bullet+1}$, and $m = -\pshift 1 \otimes_{\cR} \pshift 1 \in \DpolyM{2}{0}$ is the shifted multiplication, defined explicitly by
\begin{equation}\label{eq:ShiftedMultiplication}
m(\nshift f, \nshift g) = (-1)^\degree{f} fg \qquad \forall f, g \in \cR.
\end{equation}
The map $\nshift : \cR \to \cR[1]$ denotes the degree-shifting map of degree $-1$, and $\Dpolym{2} = \pshift \DD(\cM) \otimes_{\cR} \pshift \DD(\cM)$ is identified with a subspace of $\Hom(\cR[1] \otimes_\KK \cR[1], \cR)$.
See, for example, \cite[Appendix~B]{MR4584414}.

The tensor product of left $\cR$-modules determines a cup product
\[ \cupproduct:\DpolyM{p}{n} \times\DpolyM{p'}{n'}\to\DpolyM{p+p'}{n+n'} .\]

It is standard that the Gerstenhaber bracket $\gerstenhaber{\argument}{\argument}$
and the cup product $\cupproduct$ descend to the Hochschild cohomology: $\HDD$.
When endowed with the cup product and the Gerstenhaber bracket, $\HDD$ becomes a Gerstenhaber algebra.
See~\cite[Appendix]{MR2304327}, \cite[Section~2.2]{MR2986860}.

Denote by $\JJ(\cM):=\Hom_{\cR}(\DD(\cM),\cR)$ the space of $\infty$-jets of $\cM$,
which is naturally an $\cR$-module.
For every $p\geq 1$, let $\Cpolym{-p}:= \big(\nshift \JJ(\cM) \big)^{\cotimes_{\cR} \, p}= \big(\JJ(\cM) [1] \big)^{\cotimes_{\cR} \, p}$, called the space of \emph{$p$-polyjets} of $\cM$.
For $p=0$, let $\Cpolym{0}:=\cR$.
For every $p\geq 0$, $\Cpolym{-p}$ can be identified with $\cJ^\infty_\Delta \big( \cM^{\times (p+1)} \big)[p]$, the space of $\infty$-jets along the diagonal $\Delta$ of the product $\cM\times\cdots\times\cM$ of $(p+1)$-copies of the graded manifold $\cM$.
The isomorphism is induced by the map
\begin{equation}\label{eq:Phi}
\Phi: \big(C^\infty(\cM) \otimes_\KK (C^\infty(\cM))^{\otimes_\KK p}\big)[p] \to \Cpolym{-p}
\end{equation}
defined by
\begin{equation}\label{eq:Phi1}
\duality{\Phi(\nshift^p(a_0\otimes a_1\otimes\cdots\otimes a_p))}{\pshift u_1\otimes\cdots\otimes \pshift u_p}
= \pm\, a_0 \cdot u_1(a_1) \cdots u_p(a_p)
,\end{equation}
for $a_0, \cdots, a_p \in C^\infty(\cM)$, and $u_0, \cdots, u_p\in \DD(\cM)$.
See~\cite{paper-zero} for more details.
By $\CpolyM{-p}{n}$, we denote the subspace of $\Cpolym{-p}$ consisting of elements of total degree~$n$.
By $\totCpolyM{n}$, we denote the direct-product total space
$\totCpolyM{n}= \prod_{p = 0}^\infty \CpolyM{-p}{n}$.

Let $a_0, \cdots, a_p \in C^\infty(\cM)$ and let $\iD \in \Dpolym{d}$. We define the linear operators $B : \CpolyM{-p}{n} \to \CpolyM{-p-1}{n-1}$, $\iI_\iD : \Cpolym{-p} \to \Cpolym{-p+d},$ and $\iL_\iD : \Cpolym{-p} \to \Cpolym{-p+d-1}$ as the unique operators determined via the isomorphism \eqref{eq:Phi} by the following explicit formulas:
\begin{multline}\label{eq:Connes1}
B(a_0 \otimes a_1 \otimes \cdots \otimes a_p)
= \sum_{i=0}^p \pm\, \big(1 \otimes a_i \otimes a_{i+1} \otimes \cdots \otimes a_p \otimes a_0 \otimes \cdots \otimes a_{i-1} \\
\qquad \pm\, a_i \otimes 1 \otimes a_{i+1} \otimes \cdots \otimes a_p \otimes a_0 \otimes \cdots \otimes a_{i-1}\big)
,\end{multline}
\begin{equation}\label{eq:iI}
\iI_\iD(a_0 \otimes a_1 \otimes \cdots \otimes a_p)
= (-1)^{|\iD||a_0|} a_0 \iD(a_1, \cdots, a_d) \otimes a_{d+1} \otimes \cdots \otimes a_p
,\end{equation}
\begin{multline}\label{eq:iL}
\iL_\iD(a_0 \otimes \cdots \otimes a_p)
= \sum_{k=0}^{p-d} (-1)^{(|\iD|-1)(\eta_k - 1)} a_0 \otimes \cdots \otimes \iD(a_{k+1}, \cdots, a_{k+d}) \otimes \cdots \otimes a_p \\
+ \sum_{k=p+1-d}^{p} (-1)^{\eta_p (\eta_p - \eta_k)} \iD(a_{k+1}, \cdots, a_p, a_0, \cdots, a_{k+d-p-1}) \otimes \cdots \otimes a_k
.\end{multline}
The second sum in Equation~\eqref{eq:iL} is taken over all cyclic permutations where $a_0$ appears as an argument of $\iD$, and we adopt the grading shorthand notation $\eta_k = |a_0| + \cdots + |a_k| - k$. For a comprehensive discussion of these operations, we refer the reader to \cite{MR1261901,paper-zero}.

The Hochschild boundary differential is defined by
\begin{equation}\label{eq:hochschildb}
\hochschildb = \iL_m :\CpolyM{-p}{n}\to\CpolyM{-p+1}{n+1},
\end{equation}
where $m$ is the shifted multiplication defined in Equation~\eqref{eq:ShiftedMultiplication}.

The homological vector field $Q$ induces a differential $\ccQ$ by Lie derivative
$\ccQ = \liederivative{Q}:\CpolyM{-p}{n}\to\CpolyM{-p}{n+1}$.
By $\big(\totCpolyM{\bullet}\big)_Q$, we denote the pair
\begin{equation}\label{eq:Michel-Ange}
\big(\totCpolyM{\bullet}\big)_Q := \big(\totCpolyM{\bullet}, \hochschildb+\dQ\big)
.\end{equation}

We have the following

\begin{proposition}[{\cite{paper-1B}}]
Let $(\cM,Q)$ be a dg manifold. Then
\begin{itemize}
\item The triple $\big(\totDpolyM{\bullet}[1]\big)_Q$ is a dgla.
\item The pair $\big(\totCpolyM{\bullet}\big)_Q$ is a dgla module over
$\big(\totDpolyM{\bullet}[1]\big)_Q$ for the representation \eqref{eq:iL}.
\item The pair \[ \Theta^{\bullet}=\HDD \quad\text{and}\quad \Xi_\bullet=\HCC \] admits a calculus structure, whose differential $d$ is the Connes--Rinehart operator $B$ defined by Equation~\eqref{eq:Connes1} and whose action operators $\iI_D$ and $\iL_D$ are defined explicitly as in Equations~\eqref{eq:iI} and~\eqref{eq:iL}.
\end{itemize}
\end{proposition}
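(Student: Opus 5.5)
The statement just above is the proposition attributed to \cite{paper-1B}: the triple $\big(\totDpolyM{\bullet}[1]\big)_Q$ is a dgla, $\big(\totCpolyM{\bullet}\big)_Q$ is a dgla module over it via \eqref{eq:iL}, and the cohomology pair carries a calculus structure. My plan is to deduce all three items from the corresponding structures for the \emph{graded} manifold $\cM$, forgetting $Q$, and then to add $Q$ as a Maurer--Cartan type perturbation.

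\textbf{Step 1 (no $Q$).} Forgetting $Q$, $\totDpolyM{\bullet}[1]$ with the Gerstenhaber bracket is a graded Lie algebra. Here I would use the standard Gerstenhaber construction, with Koszul signs for the graded commutative algebra $\cR$, applied inside $\Hom(\cR[1]^{\otimes p},\cR)$ as in \cite[Appendix~B]{MR4584414}. The shifted multiplication $m$ satisfies $\gerstenhaber{m}{m}=0$ by associativity, so $\hochschild=\gerstenhaber{m}{\argument}$ is a differential. The operators $\iL_\iD$ on polyjets, transported through $\Phi$ in \eqref{eq:Phi}, satisfy $\iL_{\gerstenhaber{\iD}{\iE}}=\commutator{\iL_\iD}{\iL_\iE}$. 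This is the classical identity of \cite{MR1261901} for Hochschild chains, and it extends to jets because every operator involved is polydifferential, hence continuous for the adic topology along the diagonal. The same continuity argument extends the identities of the Tamarkin--Tsygan calculus, among them $\commutator{B}{\iI_\iD}=\pm\iL_\iD$ up to the homotopy formula, the Rinehart homotopy formula $\commutator{\iL_\iD}{\iI_\iE}-\iI_{\gerstenhaber{\iD}{\iE}}=\pm\commutator{\hochschildb}{\cdot}+\dots$, and $\iI_{\iD\cupproduct\iE}=\iI_\iD\iI_\iE$.

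\textbf{Step 2 (adding $Q$).} Regard $Q$ as a degree $1$ element of $\Dpoly{1}(\cM)$, so a degree $0$ element of $\totDpolyM{\bullet}[1]$. Since $Q$ is a derivation, $\gerstenhaber{m}{Q}=0$; since $\schouten{Q}{Q}=0$, also $\gerstenhaber{Q}{Q}=0$. Hence $m+Q$ is a Maurer--Cartan element, and twisting by it gives the dgla \eqref{eq:Mirabeau} with differential $\hochschild+\dQ$. On the module side, $\iL_{m+Q}=\hochschildb+\iL_Q$ squares to zero and is compatible with the module structure, because $\iL$ is a Lie morphism. This proves the first two items.

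\textbf{Step 3 (the calculus, main obstacle).} The cup product, Gerstenhaber bracket, $\iI$, $\iL$ and $B$ must commute appropriately with the twisted differentials, and every calculus identity must hold up to homotopy. The point needing care is that $B$ commutes with $\iL_Q$ and anticommutes with $\hochschildb$, and that Rinehart's homotopy formula holds for the total differential $\hochschildb+\iL_Q$. I would handle this by replacing $m$ with $m+Q$ throughout the classical homotopy formulas, which are universal in the Maurer--Cartan element. A derivation $Q$ contributes only to terms of arity one, and the Cartan-type formula $\commutator{B}{\iI_Q}=\iL_Q$ covers these. Finally, convergence in the direct-product total space is automatic, since each operator changes the weight $p$ by a bounded amount.
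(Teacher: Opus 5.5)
The paper does not prove this proposition; it imports it from \cite{paper-1B}. As Section~\ref{corniche} indicates, there it is the special case $\cL=T_{\cM}$, with dg structure $\iL_Q$ on $\Gamma(T_{\cM})$, of the Tamarkin--Tsygan calculus built for an arbitrary dg Lie algebroid $(\cL,\cQ)$ from $\cL$-polydifferential operators and $\cL$-polyjets, with the dg structure acting as a derivation of all operations. Your route is different and more economical, and it is sound for the first two items. For the tangent algebroid the homological vector field is itself an element $\pshift Q$ of $\Dpoly{1}(\cM)$, so the dg structure is an inner twist by the Maurer--Cartan element $m+\pshift Q$; in effect you compute the Hochschild calculus of the dg algebra $(\cR,Q)$. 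Items one and two then follow from $\gerstenhaber{m}{Q}=0$, $\gerstenhaber{Q}{Q}=0$ and $\iL_{\gerstenhaber{D}{E}}=\commutator{\iL_D}{\iL_E}$. The price is generality. The trick does not apply verbatim to the Fedosov algebroid $\cF_{\cQ}\to\cN_{\cQ}$ used later in the paper: there $\tauQ$ contains $d^{\nabla}$ and is not $\cF$-longitudinal, so it is not an element of $\totDpolyF{\bullet}$ and there is no inner Maurer--Cartan element to twist by. That case needs the algebroid framework, where the differential acts as an outer derivation.

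Two things need fixing. First, your degree count is off by one. $\pshift Q$, like $m$, has degree $2$ in $\totDpolyM{\bullet}$, hence degree $1$, not $0$, in $\totDpolyM{\bullet}[1]$; a degree-$0$ element could not be Maurer--Cartan.

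Second, and more substantively, your justification of the third item is not the right one. The identity $\commutator{B}{\iI_Q}=\iL_Q$ fails on the chain level: already on $a_0\otimes a_1$ the two sides differ by $\pm\hochschildb(1\otimes a_0\otimes Q(a_1))$, so it holds only modulo Rinehart's homotopy. In any case it is not what the $Q$-terms require. What you need is that $\iL_Q$ and $\dQ$ act as derivations of every chain-level operation \emph{and} every homotopy in the classical formulas:
\begin{itemize}
\item $\commutator{\iL_Q}{\iI_D}=\iI_{\dQ D}$ and $\commutator{\iL_Q}{B}=0$;
\item $\commutator{\iL_Q}{S_D}=S_{\dQ D}$ for the Rinehart--Getzler homotopy $S_D$;
\item the analogous identity for the homotopy realizing $\commutator{\iL_D}{\iI_E}\simeq\iI_{\gerstenhaber{D}{E}}$;
\item $\dQ$ is a derivation of $\cupproduct$ and of the pre-Lie/brace operations entering Gerstenhaber's homotopies.
\end{itemize}
These hold because $Q$ is a derivation of $\cR$ with $Q(1)=0$, and all these operators are assembled from the product, insertions of $1$ and cyclic permutations. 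Equivalently, they are the $Q$-linear parts of the $A_\infty$ versions of the formulas for $\mu=m+Q$, in which the arity-one component drops out of $\cupproduct$ and $\iI$. Only with this input does ``replace $m$ by $m+Q$'' upgrade each classical identity to the total differentials $\hochschildb+\iL_Q$ and $\hochschild+\dQ$. With it, your argument goes through.
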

This calculus is called the Tamarkin--Tsygan calculus of the dg manifold $(\cM,Q)$ and is denoted by $\calculus_H(\cM,Q)$.

\subsection{Calculi of Fedosov dg Lie algebroid}
\label{sec:FedosovMfd}

\subsubsection{Fedosov dg Lie algebroid}

Let $\cM$ be a graded manifold. The \emph{Fedosov manifold} associated with $\cM$ is the graded manifold $\cN = T_{\cM}[1] \oplus \Tformal\cM$ supported on $M$, whose structure sheaf $\fA_{\cN}$ is given by the sections of the graded vector bundle $\hat{S}(T_{\cM}[1] \oplus T_{\cM})\dual \to \cM$. Here, $\Tformal\cM$ denotes the formal neighborhood of the zero section in the tangent bundle $T_{\cM}$, whose structure sheaf (supported on $M$) is given by the sections of $\hat{S}T_{\cM}\dual \to \cM$, while $T_{\cM}[1]$ denotes the shifted tangent bundle, whose structure sheaf (supported on $M$) is induced by the sections of $\hat{S}(T_{\cM}[1])\dual \to \cM$.

Heuristically, we often use the notation $\Omega\big(\cM, \hat{S}(T^\vee_{\cM})\big)$ to denote the algebra of global functions $C^\infty(\cN)$.

Choose a local chart $(x_1,\cdots, x_{\mr})$ on $\cM$, where $(x_1, \cdots, x_m)$ are smooth coordinates on an open subset $U$ in the support $M$, and $(x_{m+1}\cdots, x_{\mr})$ are virtual homogeneous coordinate functions on $\cM$.
It induces local fiberwise linear functions $(\xi_1,\cdots, \xi_{\mr})$ on $T_{\cM}[1]$ and $(y_1,\cdots, y_{\mr})$ on $\Tformal\cM$, respectively.
The local functions
\begin{equation}\label{eq:InducedCoordFedosovMfd}
(x_1,\cdots, x_{\mr}, \xi_1,\cdots, \xi_{\mr}, y_1, \cdots, y_{\mr})
\end{equation}
form a local chat on $\cN = T_{\cM}[1]\oplus \Tformal\cM$
with degrees $|\xi_k|=|x_k|+1$ and $|y_k|=|x_k|$, respectively.
A function on $\cN$ locally can be identified with an element in
\[ C^\infty(U)\llbracket x_{m+1},\cdots,x_{\mr},\xi_1,\cdots,\xi_{\mr},
y_1,\cdots,y_{\mr}\rrbracket .\]

Let $\nabla$ be a torsion-free affine connection on $\cM$. Denote by $\tauTpolyF: \XX(\cM)\to \sections{\cN;\cF}$ the injection in~\cite{paper-1B}.
It is known that $(\cN,\fedosov+\tauTpolyF(Q))$ is a dg manifold \cite{paper-1B},
where
\begin{equation}\label{Fhvf}
\fedosov: \ \Omega^\bullet(\cM,\SM)\to\Omega^{\bullet+1}(\cM,\SM),
\quad \quad \fedosov=-\delta+d^{\nabla}+A^\nabla.
\end{equation}
Here
\begin{equation}\label{Cotonou}
\delta:\OO^p\big(\cM,S^q(T^\vee_{\cM})\big)
\to\OO^{p+1}\big(\cM,S^{q-1}(T^\vee_{\cM})\big), \quad \quad
\delta=\sum_{k=1}^{\mr} \xi_k\otimes\frac{\partial}{\partial y_k},
\end{equation}
$d^{\nabla}$ denotes the induced covariant derivative corresponding
to the affine connection $\nabla$, i.e.
\begin{equation}\label{eq:LocCovDer}
d^\nabla = \sum_{k=1}^{\mr} \xi_k
\Bigg( \frac{\partial}{\partial x_k}\otimes 1
- \sum_{l=1}^{\mr}\sum_{j=1}^{\mr} (-1)^{|y_l|+ |y_l||y_j|} \Gamma_{k,l}^j
\otimes y_l\frac{\partial}{\partial y_j}\Bigg),
\end{equation}
where $\Gamma_{i,j}^k$ are the Christoffel symbols: $\nabla_{\frac{\partial}{\partial x_i}}\frac{\partial}{\partial x_j } = \sum_{k=1}^{\mr} \Gamma_{i,j}^k \frac{\partial}{\partial x_k}$; and \begin{equation}\label{Conakry}
A^\nabla=\sum_{k=1}^{\mr} \xi_k\Bigg(\sum_{\substack{L\in\ZZ_{\geq 0}^{\mr} \\ \abs{L}\geq 2}}
\sum_{j=1}^{\mr} A^k_{L,j} \otimes y^L\frac{\partial}{\partial y_j}\Bigg)
,\end{equation}
with $ A^k_{L,j}\in C^\infty (\cM)$, is an element of degree~$+1$ of
$\OO^1\big(\cM,\hat{S}^{\geq 2}(T_{\cM}^\vee)\otimes T_{\cM}\big)$
uniquely determined by an iteration formula
--- regarded as an operator acting on the algebra $\OO\big(\cM,\hat{S}(T_{\cM}^\vee)\big)$ by derivation.
By $\cNQ$, we denote the dg manifold $(\cN,\fedosov+\tauTpolyF(Q))$.

Consider the Lie subalgebroid $\cF\to\cN$ of $T_{\cN}\to\cN$ whose sections are the vector fields
$\mathcal{Y}$ on $\cN$ of type
\[ \mathcal{Y}=\sum_{k=1}^{m+r}\sum_{J\in\ZZ_{\geq 0}^{m+r}} \nu_{k,J} \cdot y^J \frac{\partial}{\partial y_k}
\qquad\text{with}\ \nu_{k,J}\in\OO(\cM) .\]
As a vector bundle, $\cF\to\cN$ is the pullback of the vector bundle $T_{\cM}\to\cM$
through the surjective submersion $\cN\onto\cM$;
the pullback of a (local) section $\sum_k f_k(x) \frac{\partial}{\partial x_k}$ of $T_{\cM}\to\cM$
is the (local) section $\sum_k f_k(x) \frac{\partial}{\partial y_k}$ of $\cF\to\cN$.
It is a graded vector bundle whose total space $\cF$ is a graded manifold with support $M$.
We have the canonical identification
\begin{equation}\label{eq:Fedosov}
\sections{\cF} \cong C^\infty(\cN)\otimes_{C^\infty(\cM)}\sections{T_{\cM}}
\cong \OO\big(\cM,\hat{S}(T_{\cM}^\vee)\otimes T_{\cM}\big) = \prod_{k=0}^\infty \OO\big(\cM,S^{ k}(T^\vee_{\cM})\otimes T_{\cM}\big)
.\end{equation}
It is straightforward that $\Gamma(\cF)$ is stable under $\schouten{\fedosov+\tauTpolyF(Q)}{\argument}$.
In other words, $\cF$ is a dg foliation of the dg manifold $\cNQ:=(\cN,\fedosov+\tauTpolyF(Q))$.
Therefore $\cF\to\cN$ is a dg Lie subalgebroid of the tangent dg Lie algebroid $T_{\cNQ}\to\cNQ$ of the Fedosov dg manifold $(\cN,\fedosov +\tauTpolyF(Q))$.
This dg Lie subalgebroid is called a \emph{Fedosov dg Lie algebroid}, and is denoted by $\cFQ\to\cNQ$.
Adapting somewhat the construction of~\cite[Section~2]{paper-1B} to the dg Lie algebroid $\cFQ\to\cNQ$,
we obtain two calculi $\calculus_C(\cF,\fedosov+\tauTpolyF(Q))$
and $\calculus_H(\cF,\fedosov+\tauTpolyF(Q))$, which we recall below.

\subsubsection{\texorpdfstring{Cartan calculus $\calculus_C\big(\cF,\fedosov+\tauTpolyF(Q)\big)$}{Tamarkin-Tsygan calculus}}
\label{sec:CalCFedosov}

Let $\cFQ \to \cNQ$ be a Fedosov dg Lie algebroid of a dg manifold $(\cM,Q)$. Recall that
\begin{align}
\Tpolyf{p} &= \GG\big(S^p (\cF[-1])\big) = \OO\big(\cM,\hat{S}(T^\vee_{\cM}) \otimes S^p (T_{\cM}[-1])\big), \label{eq:TpolypF} \\
\Apolyf{-p} &= \GG\big(S (\cF\dual[1])\big) = \OO\big(\cM,\hat{S}(T^\vee_{\cM})\otimes S^p (T_{\cM}\dual[1])\big). \label{eq:ApolypF}
\end{align}
Consider the following total spaces:
\begin{align*}
\totTpolyF{n} &= \bigoplus_{\substack{p,q,r\in\ZZ,\ p\geq 0,\ r\geq 0 \\ p+q+r=n }} \prescript{\cF}{}{\sT}^{r,q,p}, \\
\totcApolyF{n} &= \bigoplus_{r\in\ZZ, \, r\geq 0} \prod_{\substack{p,q\in\ZZ, \, p\geq 0 \\ -p+q=n-r }} \prescript{\cF}{}{\sA}^{r,q,-p}, \\
\totbApolyF{n} &= \prod_{p \in \ZZ,\ p \geq 0} \bigg(\bigoplus_{\substack{q,r\in\ZZ, \, r\geq 0 \\ q+r=n+p}} \prescript{\cF}{}{\sA}^{r,q,-p}\bigg),
\end{align*}
where the components are given by
\begin{align}
\prescript{\cF}{}{\sT}^{r,q,p} &= \Big(\OO^r\big(\cM,\hat{S}(T^\vee_{\cM})\otimes S^p( T_{\cM}[-1])\big) \Big)^{p+q+r} , \label{eq:TpolyF-component}
\\
\prescript{\cF}{}{\sA}^{r,q,-p} &= \Big(\OO^r\big(\cM,\hat{S}(T^\vee_{\cM}) \otimes S^p( T_{\cM}\dual [1])\big) \big)\Big)^{-p+q+r} . \label{eq:ApolyF-component}
\end{align}
These total spaces are equipped with the operations $\wedge$, $\schouten{\argument}{\argument}$, $\iI$, $\iL$, and $d$. Moreover, the pair of canonical maps
\[ \totTpolyF{\bullet} \xto{\id} \totTpolyF{\bullet} \quad \text{and} \quad \totcApolyF{\bullet} \into \totbApolyF{\bullet} \]
are compatible with $\wedge$, $\schouten{\argument}{\argument}$, $\iI$, $\iL$, and $d$; see~\cite{paper-1B}.

Let $\tauQ:=\tauTpolyF(Q)+\fedosov$. We denote by $\big(\totTpolyF{\bullet}[1]\big)_\tauQ$ the triple
\[ \big(\totTpolyF{\bullet}[1]\big)_\tauQ := \big(\totTpolyF{\bullet}[1], \schouten{\tauQ}{\argument}, \schouten{\argument}{\argument}\big), \]
by $\big(\totcApolyF{\bullet}\big)_\tauQ$ the pair
\[ \big(\totcApolyF{\bullet}\big)_\tauQ := \big(\totcApolyF{\bullet}, \iL_\tauQ \big), \]
and by $\big(\totbApolyF{\bullet}\big)_\tauQ$ the pair
\[ \big(\totbApolyF{\bullet}\big)_\tauQ := \big(\totbApolyF{\bullet}, \iL_\tauQ \big). \]

\begin{proposition}[{\cite{paper-1B}}]\label{pro:Genoa}
Let $(\cM,Q)$ be a dg manifold, and let $\cFQ \to \cNQ$ be its Fedosov dg Lie algebroid associated with a torsion-free affine connection $\nabla$. Then:
\begin{itemize}
\item The triple $\big(\totTpolyF{\bullet}[1]\big)_\tauQ$ is a dgla.
\item The Lie derivative turns both $\big(\totcApolyF{\bullet}\big)_\tauQ$
and $\big(\totbApolyF{\bullet}\big)_\tauQ$
into dgla modules over $\big(\totTpolyF{\bullet}[1]\big)_\tauQ$.
\item The canonical inclusion $\big(\totcApolyF{\bullet}\big)_\tauQ \into \big(\totbApolyF{\bullet}\big)_\tauQ$
is a quasi-isomorphism of dgla modules.
\item The pair $\big( \cohomology{}\big(\totTpolyF{\bullet},\schouten{\tauQ}{\argument}\big),\, \cohomology{}\big(\totcApolyF{\bullet},\cL_{\tauQ}\big) \big)$ carries a calculus structure.
\end{itemize}
\end{proposition}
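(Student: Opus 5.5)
Proposition~\ref{pro:Genoa} is quoted from \cite{paper-1B}. My plan is to derive it by specializing the general machinery of calculi for dg Lie algebroids to $\cFQ\to\cNQ$, and then to check the features specific to the completed, multi-graded total spaces by hand.

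\textbf{Step 1: the dgla.} The Schouten bracket on $\Tpolyf{\bullet}$ is the leafwise Schouten bracket of the Lie algebroid $\cF\to\cN$. The algebroid $\cF$ is involutive, being a foliation by the fibres of $\cN\to T_{\cM}[1]$, so graded skew-symmetry and the Jacobi identity hold pointwise. Since $\Gamma(\cF)$ is stable under $\schouten{\tauQ}{\argument}$, the operator $\schouten{\tauQ}{\argument}$ is a degree $+1$ derivation of the bracket. It squares to zero because $\schouten{\tauQ}{\tauQ}=0$, which holds since $(\cN,\tauQ)$ is a dg manifold. One must also confirm that $\schouten{\tauQ}{\argument}$ preserves the direct-sum total space $\totTpolyF{\bullet}$. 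This follows because $\tauQ$ raises the form degree $r$ by at most one, changes the polyvector degree $p$ by zero, and changes the internal degree $q$ in a controlled way.

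\textbf{Step 2: the modules.} The operators $\iI_X$, $d$ (the leafwise de~Rham differential along $\cF$) and $\iL_X=(-1)^{|X|-1}\commutator{d}{\iI_X}$ satisfy the standard Cartan identities. In particular,
\[
\commutator{\iL_X}{\iL_Y}=\iL_{\schouten{X}{Y}}
\qquad\text{and}\qquad
\commutator{\iL_X}{\iI_Y}=\iI_{\schouten{X}{Y}}.
\]
These are purely algebraic consequences of the Lie algebroid structure, so they can be quoted from the general theory. Taking $X=\tauQ$ gives $\iL_\tauQ^2=0$ and shows that $\iL_\tauQ$ intertwines the actions. The remaining check is that these operators preserve the two completions $\totcApolyF{\bullet}$ and $\totbApolyF{\bullet}$. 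Each operator has bounded shifts in $(r,q,p)$ and involves no infinite sums in the $r$-direction, which has only finitely many nonzero terms because $r\le\dim$ of $T_{\cM}[1]$ in the bosonic part; so the operators respect the prescribed sum/product patterns.

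\textbf{Step 3: the quasi-isomorphism.} This is the main obstacle. I would filter both complexes by the form degree $r$, which is bounded. This is legitimate because $\tauQ=-\delta+\cdots$, and every component of $\iL_\tauQ$ raises $r$ by $0$ or $1$. The degree-$0$ part comes from $\tauTpolyF(Q)$ together with the internal parts. On each associated graded piece in fixed $r$, the two total spaces coincide except for sum-versus-product in the variable $p$ at fixed total degree. For fixed $r$ and $n$, this amounts to the equality of $\bigoplus_p$ and $\prod_p$ of the same terms, which holds once one checks that only finitely many $p$ contribute to each homogeneous piece, or that the inclusion induces an isomorphism on cohomology by a second filtration in $p$. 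Because the $r$-filtration is finite, a standard spectral sequence comparison then upgrades the graded quasi-isomorphism to a quasi-isomorphism of the inclusion. The inclusion is clearly compatible with all operations, hence a map of dgla modules. I expect the delicate point to be the convergence of the $p$-filtration, for which I would use the completeness of $\prod_p$ together with the Mittag-Leffler property.

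\textbf{Step 4: the calculus structure.} The cochain-level Cartan identities from Step~2 hold on $\totTpolyF{\bullet}$ and $\totcApolyF{\bullet}$, and $\tauQ$ acts compatibly with them. Hence $\wedge$, $\schouten{\argument}{\argument}$, $\iI$, $\iL$ and $d$ all descend to cohomology. The $Q$-twisted identities $\commutator{\iL_\tauQ}{d}=0$ and $\commutator{\iL_\tauQ}{\iI_X}=\iI_{\schouten{\tauQ}{X}}$ ensure this descent, and together they yield the asserted Tamarkin--Tsygan calculus.
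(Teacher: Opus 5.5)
\paragraph{Steps 1, 2 and 4.} These are essentially routine. In Step~2, bounded shifts in $(r,q,p)$ are all you need to preserve both completions. The identities involving $\iL_{\tauQ}$ should be derived from $\cF_Q\to\cN_Q$ being a dg Lie subalgebroid of $T_{\cN_Q}$, not from Cartan identities for sections of $\Lambda\cF$, because $\tauQ$ is not tangent to $\cF$.

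\paragraph{The gap in Step 3.} Step~3 rests on the claim, also made in Step~2, that the form degree $r$ is bounded. It is not. If a coordinate $x_k$ has odd degree, then $\xi_k$ has even degree $\degree{x_k}+1$, all its powers survive, and $\Omega^r(\cM)\neq 0$ for every $r$. This happens in all the motivating examples ($\frakg[1]$, $T^{0,1}_X[1]$, $E[-1]$). It is exactly why the paper distinguishes $\totcApolyF{\bullet}=\bigoplus_r\prod_p$ from $\totbApolyF{\bullet}=\prod_p\bigoplus_r$, and why the square root of the Todd cocycle lies in the second space but not the first. If $r$ were bounded, the two spaces would coincide and the third bullet would be trivial.

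You have also misplaced where the two spaces differ. At fixed $r$ and total degree $n$, both equal $\prod_p\prescript{\cF}{}{\sA}^{r,n-r+p,-p}$. So the graded pieces of your $r$-filtration are literally identical, and everything hinges on convergence. Convergence fails, because neither space is complete for the decreasing $r$-filtration.

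The $p$-direction is harmless: $\iL_{\tauQ}$ preserves $p$, so $\totbApolyF{\bullet}$ is a product over $p$ of subcomplexes. Mittag-Leffler therefore does not touch the issue.

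In fact, no argument using only ``the differential raises $r$ by $0$ or $1$'' can work. Let $C_p$ have basis $a_0,\dots,a_p$ in degree $0$ and $b_1,\dots,b_p$ in degree $1$, with $a_i,b_i$ in form degree $i$ and $d a_i=b_i+b_{i+1}$ (where $b_0=b_{p+1}=0$). Then $H^0(C_p)$ is spanned by $\sum_i(-1)^i a_i$, whose components reach $r=p$. Consequently the inclusion of uniformly $r$-bounded sequences into $\prod_p C_p$ induces $\bigoplus_p\KK\to\prod_p\KK$ on $H^0$, which is not surjective.

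\paragraph{The missing input.} What you need is geometric: positive form degrees are acyclic for $\delta$ (the formal Poincar\'e lemma, via the Koszul homotopy). Perturbing this gives the Fedosov contraction of Theorem~\ref{thm:mainT}~\ref{thm:mainT-TA} onto $\big(\totApolyM{\bullet},\iL_Q\big)$. Its injection $\tauTpolyQ$ lands in form degree $r=0$, and its homotopy $h$ lowers $r$. Such a contraction is therefore defined on both $\totcApolyF{\bullet}$ and $\totbApolyF{\bullet}$, compatibly with the inclusion; this is diagram~\eqref{eq:diag-bApoly}. Since both maps $\tauTpolyQ$ are quasi-isomorphisms, the inclusion is one by two-out-of-three.

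Concretely, a cocycle $z$ of $\totbApolyF{\bullet}$ satisfies $z-\tauTpolyQ\sigmaTpolyQ z=\iL_{\tauQ}(hz)$. Here $\tauTpolyQ\sigmaTpolyQ z$ has $r=0$, hence lies in $\totcApolyF{\bullet}$, which gives surjectivity on cohomology. The same identity, together with $h$ not raising $r$, gives injectivity.
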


This calculus structure is called the Cartan calculus of the Fedosov dg Lie algebroid $\cFQ \to \cNQ$ and is denoted by $\calculus_C(\cF,\tauQ)$ or $\calculus_C(\cF, \fedosov + \tauTpolyF(Q))$.

\subsubsection{\texorpdfstring{Tamarkin--Tsygan calculus $\calculus_H(\cF,\fedosov+\tauTpolyF(Q))$}{Tamarkin--Tsygan calculus}}
\label{sec:CalHFedosov}

Now we turn our discussion to the space of polydifferential operators and the space of polyjets of the Fedosov dg Lie algebroid $\cFQ\to\cNQ$.

Since the Lie algebroid $\cF \to \cN$ is a foliation on $\cN$, its universal enveloping algebra $\enveloping{\cF}$ can be thought of as the algebra of leafwise differential operators on $\cN$. Hence, $\enveloping{\cF}$ can be identified in a natural way with the $C^\infty(\cN)$-module $\OO\big(\cM, \hat{S}(T^\vee_{\cM}) \otimes S(T_{\cM})\big)$. Dually, the space $\jet{\cF}$ of $\cF$-jets can be identified with the $C^\infty(\cN)$-module $\OO\big(\cM, \hat{S}(T^\vee_{\cM}) \otimes \hat{S}(T_{\cM}^\vee)\big)$. We denote
\begin{align*}
\Dpolyf{p} &=\big(\pshift\,\enveloping{\cF}\big)^{\otimes p}
\cong \OO\big(\cM, \hat{S}(T^\vee_{\cM}) \otimes (S(T_{\cM})[-1])^{\otimes p}\big), \\
\Cpolyf{-p} &= \big(\nshift\jet{\cF}\big)^{\cotimes p}
\cong \OO\big(\cM, \hat{S}(T^\vee_{\cM}) \otimes (\hat{S}(T_{\cM}^\vee)[1])^{\cotimes p}\big).
\end{align*}
Consider the total spaces
\begin{align*}
\totDpolyF{n} =\bigoplus_{\substack{p,q,r\in\ZZ \\ p+q+r=n \\
p\geq 0,\ r\geq 0}} \prescript{\cF}{}{\sD}^{r,q,p}
\qquad\text{and}\qquad
\totcCpolyF{n} =\bigoplus_{r\in\ZZ, \, r\geq 0} \prod_{\substack{p,q\in\ZZ, \, p\geq 0 \\ -p+q=n-r }} \prescript{\cF}{}{\sC}^{r,q,-p},
\end{align*}
where
\begin{align}
\prescript{\cF}{}{\sD}^{r,q,p} & =
\Big(\OO^r\big(\cM,\hat{S}(T^\vee_{\cM})
\otimes ((ST_{\cM})[-1])^{\otimes p}\big) \Big)^{p+q+r} , \label{eq:DpolyF-component} \\
\prescript{\cF}{}{\sC}^{r,q,-p} & =
\Big(\OO^r\big(\cM,\hat{S}(T^\vee_{\cM})
\otimes ((\hat{S}T_{\cM}\dual)[1])^{\cotimes p}\big)\Big)^{-p+q+r}. \label{eq:CpolyF-component}
\end{align}
These spaces are equipped with the operations
$\cupproduct$, $\gerstenhaber{\argument}{\argument}$, $\iI$, $\iL$ and $B$. See~\cite{paper-1B}.

Denote, by $\big(\totDpolyF{\bullet}[1]\big)_\tauQ$,
the triple
\[ \big(\totDpolyF{\bullet}[1]\big)_\tauQ :=
\big(\totDpolyF{\bullet}[1],\gerstenhaber{\tauQ}{\argument},\gerstenhaber{\argument}{\argument}\big) .\]
and by $\big(\totcCpolyF{\bullet}\big)_\tauQ$ the pair
\[ \big(\totcCpolyF{\bullet}\big)_\tauQ :=
\big(\totCpolyF{\bullet}, \iL_\tauQ \big) .\]

\begin{proposition}[{\cite{paper-1B}}]\label{pro:Luxembourg}
Let $(\cM, Q)$ be a dg manifold, and let $\cFQ \to \cNQ$ be its Fedosov dg Lie algebroid associated with a torsion-free affine connection $\nabla$. Then:
\begin{itemize}
\item The triple $\big(\totDpolyF{\bullet}[1]\big)_\tauQ$ is a dgla.
\item The Lie action \eqref{eq:iL} turns $\big(\totcCpolyF{\bullet}\big)_\tauQ$ into a dgla module over $\big(\totDpolyF{\bullet}[1]\big)_\tauQ$.
\item The pair $\big( \cohomology{}\big(\totDpolyF{\bullet}, \gerstenhaber{\tauQ}{\argument}\big),\, \cohomology{}\big(\totcCpolyF{\bullet}, \iL_{\tauQ}\big) \big)$ carries a calculus structure.
\end{itemize}
\end{proposition}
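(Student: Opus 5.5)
The proof will follow the standard construction of the Tamarkin--Tsygan calculus for a Lie algebroid, applied to the foliation $\cF\to\cN$. The internal differential $\tauQ$ will then be incorporated as a derivation of all the structures. Throughout, I read $\gerstenhaber{\tauQ}{\argument}$ (respectively $\iL_\tauQ$) as the total differential: the Hochschild part $\hochschild=\gerstenhaber{m}{\argument}$ (respectively $\hochschildb=\iL_m$) together with the derivation induced by the vector field $\tauQ$, exactly as in \eqref{eq:Mirabeau} and \eqref{eq:Michel-Ange}.

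\textbf{Step 1: the structures without $\tauQ$.} Since $\cF$ is an involutive subbundle of $T_{\cN}$, the algebra $\enveloping{\cF}$ is the algebra of leafwise differential operators. It is a cocommutative Hopf algebroid over $C^\infty(\cN)$, and $\jet{\cF}$ is its $C^\infty(\cN)$-dual. From these data:
\begin{itemize}
\item the cup product, the Gerstenhaber bracket and the shifted multiplication $m$ on $\Dpolyf{\bullet}$ are defined by the usual formulas;
\item the operators $B$, $\iI_D$ and $\iL_D$ on $\Cpolyf{\bullet}$ are defined by transporting \eqref{eq:Connes1}--\eqref{eq:iL} through the dual of the analogue of $\Phi$ in \eqref{eq:Phi}.
\end{itemize}
The purely algebraic identities then hold exactly as in the classical case, with the same proofs via Gerstenhaber's pre-Lie composition and Getzler's brace and cyclic operations. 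These identities are: the graded Jacobi identity, $\gerstenhaber{m}{m}=0$, the fact that $\iL$ is a Lie action, and the cochain-level calculus identities up to the standard explicit homotopies. Only the Hopf algebroid axioms and $\cR$-linearity enter.

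\textbf{Step 2: incorporating $\tauQ$.} The Fedosov vector field $\tauQ=\fedosov+\tauTpolyF(Q)$ is a homological vector field on $\cN$ that preserves $\Gamma(\cF)$, since $\cF$ is a dg foliation. Hence $\schouten{\tauQ}{\argument}$ extends to a derivation of $\enveloping{\cF}$ that is compatible with the multiplication, the comultiplication, the unit and the anchor. Dually, it gives a derivation of $\jet{\cF}$. Extending as derivations yields degree $+1$ operators on $\Dpolyf{\bullet}$ and $\Cpolyf{\bullet}$. These operators satisfy three properties:
\begin{itemize}
\item they are derivations of the cup product and the Gerstenhaber bracket, and commute with $\iI$, $\iL$ and $B$ in the graded sense;
\item they square to zero, because $\schouten{\tauQ}{\tauQ}=0$;
\item they annihilate $m$, since a vector field is a derivation of the product.
\end{itemize}
Therefore the total operator $\hochschild+\gerstenhaber{\tauQ}{\argument}$ squares to zero and is a derivation of the bracket, which gives the first bullet. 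Compatibility of the action with these derivations gives the second bullet.

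\textbf{Step 3: completions.} Because of the completed symmetric algebras $\hat S(T_\cM^\vee)$ and the mixed $\bigoplus/\prod$ total spaces $\totDpolyF{\bullet}$ and $\totcCpolyF{\bullet}$, one must check that each operation is well defined on infinite products and sends the stated total space to itself. I expect this to be the main technical obstacle.

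The plan is to note that every operator in the formulas involved, except $\delta$, preserves or raises the $\hat S(T_\cM^\vee)$-filtration degree, while $\delta$ lowers it by one but raises the form degree $r$. In addition, all operators change the weight $p$ by a bounded amount. Then in each fixed total degree and fixed form degree $r$, only finitely many components contribute to any given component of the output. This is precisely why $r$ carries the direct sum and $p$ carries the product in $\totcCpolyF{\bullet}$.

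\textbf{Step 4: calculus on cohomology.} The calculus identities that hold only up to homotopy at the cochain level include, for instance,
\[
\commutator{\iL_D}{\iI_E}=\iI_{\gerstenhaber{D}{E}},
\]
along with the homotopy Cartan formula $\commutator{B}{\iI_D}=\iL_D$ and graded commutativity of the cup product. The homotopies realizing them are built from braces and cyclic insertions. Since the $\tauQ$-derivation commutes with all these natural operations by Step 2, the homotopies remain homotopies for the total differential. Hence all identities descend to cohomology, giving the third bullet.
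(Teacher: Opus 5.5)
This paper gives no proof of the proposition; it quotes it from \cite{paper-1B}. There, the calculi of a dg Lie algebroid are built from $\enveloping{\cF}$ and $\jet{\cF}$, with the dg structure acting as a derivation compatible with every operation, and the construction is then adapted to the mixed total spaces of the Fedosov algebroid. Your Steps 1, 2 and 4 follow that route. You treat $\schouten{\tauQ}{\argument}$ correctly as the derivation induced by a vector field that preserves $\Gamma(\cF)$ without being $\cF$-longitudinal (because of the $\xi_k\partial_{x_k}$ part of $d^\nabla$). You also read the differentials as including $\hochschild$ and $\hochschildb$, which is how the paper uses them elsewhere.

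The one step where you give a concrete argument, Step 3, rests on a false claim. It is not true that every operator other than $\delta$ preserves or raises the $\hat S(T^\vee_\cM)$-degree. Three counterexamples:
\begin{enumerate}
\item The Gerstenhaber bracket composes the $\partial_y$'s of one cochain with the power-series coefficients of the other, which lowers the $y$-degree.
\item The operators $\iI_D$ and $\iL_D$ apply $\cF$-differential operators to jets, which also lowers the $y$-degree.
\item The $y$-independent part of $\tauTpolyF(Q)=\sum_{j,L}g_{L,j}y^L\partial_{y_j}$ lowers the $y$-degree by one without changing $r$.
\end{enumerate}
So the filtration argument, as stated, does not go through.

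It is also unnecessary. The $y$-adic completion lives inside each component \eqref{eq:DpolyF-component}, \eqref{eq:CpolyF-component}. On a single component, every operation lowers the $y$-degree by at most a bounded amount, because $\cF$-differential operators are polynomial in $\partial_y$. Hence each operation is $y$-adically continuous and well defined there.

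Separately, every operation is homogeneous, with bounded shifts, for the trigrading $(r,q,p)$:
\begin{enumerate}
\item $\delta$, $d^\nabla$ and $A^\nabla$ raise $r$ by one;
\item $\tauTpolyF(Q)$ raises $q$ by one;
\item $\hochschild$ on $\Dpolyf{p}$ and $B$ on $\Cpolyf{-p}$ raise $p$ by one, while $\hochschildb$ lowers it by one;
\item cup products, brackets, $\iI_D$ and $\iL_D$ shift by the finitely many tridegrees of $D$.
\end{enumerate}
Consequently, each component of the output in $\bigoplus_r\prod_p$ receives contributions from only finitely many input components. This holds for the structure maps and for your Step 4 homotopies alike. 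The same bookkeeping shows that well-definedness does not single out $\bigoplus_r$ versus $\prod_p$, so your remark that this is \emph{why} the total spaces are mixed is not right. With Step 3 replaced by this argument, your outline is a sound sketch along the same lines as the cited source.
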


This calculus structure is called the Tamarkin--Tsygan calculus of the Fedosov dg Lie algebroid $\cFQ\to\cNQ$ and is denoted by $\calculus_H(\cF,\tauQ)$ or $\calculus_H(\cF,\fedosov+\tauTpolyF(Q))$.

\subsubsection{Fedosov contractions on a dg manifold}

Heuristically, the Fedosov dg Lie algebroid $\cF_Q \to \cN_Q$ can be viewed as being homotopy equivalent to the tangent dg Lie algebroid $T_{\cM} \to \cM$ of the dg manifold $(\cM, Q)$. Consequently, their corresponding calculi are expected to be isomorphic.

In~\cite{paper-1B}, we established this isomorphism as our main result, showing that $\calculus_C(\cF,\fedosov+\tauTpolyF(Q))$ and $\calculus_H(\cF,\fedosov+\tauTpolyF(Q))$ are indeed isomorphic to the calculi $\calculus_C (\cM, Q)$ and $\calculus_H(\cM, Q)$, respectively, by constructing two pairs of contractions.

\begin{theorem}[{\cite{paper-1B}}]\label{thm:mainT}
Given a dg manifold $(\cM,Q)$ and a torsion-free affine connection $\nabla$ on $\cM$, let $\cF \to \cN$ be the Fedosov dg Lie algebroid corresponding to the $\ZZ$-graded manifold $\cM$ and let $\fedosov$ be the associated Fedosov homological vector field on $\cN$.
\begin{enumerate}
\item \label{thm:mainT-TA}
There exists a pair of contractions
\begin{equation}\label{eq:Contraction-Tpoly}
\begin{tikzcd}[cramped]
\Big(\totTpolyM{\bullet}, \schouten{Q}{\argument}\Big) \arrow[r, "\tauTpolyQ", shift left] & \Big(\totTpolyF{\bullet}, \schouten{\fedosov+\tauTpolyF(Q)}{\argument} \Big) \arrow[l, "\sigmaTpolyQ", shift left] \arrow[loop, "\hTpolyQ", out=5, in=-5, looseness=3]
\end{tikzcd}
\end{equation}
and
\begin{equation}\label{eq:Contraction-Apoly}
\begin{tikzcd}[cramped]
\Big(\totApolyM{\bullet},\iL_{Q}\Big) \arrow[r, "\tauTpolyQ", shift left] & \Big(\totbApolyF{\bullet},\iL_{\fedosov+\tauTpolyF(Q)}\Big) \arrow[l, "\sigmaTpolyQ", shift left] \arrow[loop, "\hTpolyQ", out=5, in=-5, looseness=3]
\end{tikzcd}
\end{equation}
such that the pair of injections $\tauTpolyQ$ preserves the operations $\wedge$, $\schouten{\argument}{\argument}$, $\iI$, $\iL$, and $d$. Furthermore, this statement still holds if $\totbApolyF{\bullet}$ is replaced by the subspace $\totcApolyF{\bullet}$.

\item \label{thm:mainT-AA}
The diagram
\begin{equation}\label{eq:diag-bApoly}
\begin{tikzcd}[row sep=small]
& \big(\totTpolyF{\bullet},\totbApolyF{\bullet}\big) \\
\big(\totTpolyM{\bullet},\totApolyM{\bullet} \big)
\ar[ru, "{( \id , \tauTpolyQ )}"] \ar[rd, "{( \id , \tauTpolyQ )}"'] & \\
& \big(\totTpolyF{\bullet},\totcApolyF{\bullet}\big) \ar[uu,hook]
\end{tikzcd}
\end{equation}
commutes as a diagram of dg calculi.

\item \label{thm:mainT-DC}
There exists a pair of contractions
\begin{equation}\label{eq:Contraction-Dpoly}
\begin{tikzcd}[cramped]
\Big( \totDpolyM{\bullet} , \gerstenhaber{Q}{\argument}+\hochschild \Big) \arrow[r, "\tauDpolyQ", shift left] & \Big( \totDpolyF{\bullet} , \gerstenhaber{\fedosov+\tauTpolyF(Q)}{\argument}+\hochschild \Big) \arrow[l, "\sigmaDpolyQ", shift left] \arrow[loop, "\hDpolyQ", out=5, in=-5, looseness=3]
\end{tikzcd}
\end{equation}
and
\begin{equation}\label{eq:Contraction-Cpoly}
\begin{tikzcd}[cramped]
\Big(\totCpolyM{\bullet},\iL_{Q}+\hochschildb \Big) \arrow[r, "\tauDpolyQ", shift left] & \Big(\totcCpolyF{\bullet}, \iL_{\fedosov+\tauTpolyF(Q)}+\hochschildb \Big) \arrow[l, "\sigmaTpolyQ", shift left] \arrow[loop, "\hTpolyQ", out=5, in=-5, looseness=3]
\end{tikzcd}
\end{equation}
such that the pair of injections $\tauDpolyQ$ preserves the operations $\cupproduct$, $\gerstenhaber{\argument}{\argument}$, $\iI$, $\iL$, and $B$.
\end{enumerate}
\end{theorem}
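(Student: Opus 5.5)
The plan is to obtain all four contractions from the homological perturbation lemma. In each case I start from an ``unperturbed'' Koszul-type contraction for the fiberwise de~Rham operator $\delta$ of \eqref{Cotonou}, and then treat the remaining pieces of $\fedosov+\tauTpolyF(Q)$ as a small perturbation.

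\textbf{Step 1 (Koszul contraction).} On $\OO\big(\cM,\hat{S}(T^\vee_{\cM})\otimes E\big)$, where $E$ is any of the coefficient bundles
\[ S^p(T_{\cM}[-1]),\qquad S^p(T^\vee_{\cM}[1]),\qquad (S(T_{\cM})[-1])^{\otimes p},\qquad (\hat S(T^\vee_{\cM})[1])^{\cotimes p}, \]
the operator $-\delta$ is a differential. Let $\sigma$ be the projection onto the part with $r=0$ and $y$-degree $0$. The homotopy $h$ is defined on $\OO^{r}(\cM,S^{q}(T^\vee_{\cM})\otimes E)$ by $h=\frac{1}{r+q}\sum_k y_k\,\iota_{\partial/\partial\xi_k}$ for $r+q>0$, and $h=0$ otherwise. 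The graded signs for the virtual coordinates must be handled here, but this is a standard local computation. Together these give a contraction of $\big(\sections{E},0\big)$ into $\big(\OO(\cM,\hat S(T^\vee_{\cM})\otimes E),-\delta\big)$, which is moreover compatible with $\hochschild$ and $\hochschildb$: those differentials act only on the $E$-factor, commute with $\delta$, and are preserved by $\sigma$ and $h$.

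\textbf{Step 2 (perturbation).} The perturbation is $\theta:=d^\nabla+A^\nabla+\schouten{\tauTpolyF(Q)}{\argument}$, or its analogues on $\Dpoly{}$, $\Apoly{}$ and $\Cpoly{}$. To apply the perturbation lemma I filter by the total degree $r+q$ (form degree plus polynomial $y$-degree). The homotopy $h$ preserves this degree, and $(\theta h)^n$ raises it for $n$ large, except for the part of $\tauTpolyF(Q)$ of $y$-degree $0$, which is exactly $\tQ$ on the bottom. Since the spaces are complete with respect to this filtration, the series $\sum(h\theta)^n$ converges. The choice of totalizations ($\oplus$ in the polyvector direction, $\prod$ in the form/jet direction, and the two versions $\totcApolyF{\bullet}\subset\totbApolyF{\bullet}$) is exactly what makes these sums well defined. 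This convergence and totalization bookkeeping is the step I expect to be the main obstacle. It is especially delicate for $\Apoly{}$ and $\Cpoly{}$, because $\tauTpolyF(Q)$ can have components of arbitrary degree, and the perturbed maps must land in $\totcApolyF{\bullet}$ and not merely in $\totbApolyF{\bullet}$. I would first prove the statement for $\totbApolyF{\bullet}$, then check directly that the perturbed injection has finite form degree. That check gives the final clause of \ref{thm:mainT-TA} and the commutativity of diagram \eqref{eq:diag-bApoly} in \ref{thm:mainT-AA}, since both routes use the same $\tauTpolyQ$.

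\textbf{Step 3 (multiplicativity of the injections).} The perturbed injection is $\tau=\sum(h\theta)^n\iota$. I would identify it with the ``flat lift'' map: $\tau(X)$ is the unique element of form degree $0$ whose $y$-degree-$0$ part is $X$ and which satisfies $\fedosov\tau(X)=0$. The flat lifts form the kernel of a derivation (a Lie derivative with respect to $\fedosov$), hence they are closed under $\wedge$, $\schouten{\argument}{\argument}$, $\iI$, $\iL$ and $d$; the same holds for $\cupproduct$, $\gerstenhaber{\argument}{\argument}$ and $B$ on the $\Dpoly{}$/$\Cpoly{}$ side, using the PBW identification of $\enveloping{\cF}$ with fiberwise $S(T_{\cM})$ determined by $\nabla$. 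By uniqueness, $\tau$ therefore preserves all these operations. For instance $\tau(X)\wedge\tau(Y)$ is flat with bottom term $X\wedge Y$, so it equals $\tau(X\wedge Y)$.

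\textbf{Step 4 (compatibility with $Q$).} Because $\tau$ is a bracket morphism and $\fedosov\circ\tau=0$, we get
\[ \schouten{\fedosov+\tauTpolyF(Q)}{\tau(X)}=\tau\big(\schouten{Q}{X}\big). \]
Hence the injection is unchanged by the $Q$-part of the perturbation, so $\tauTpolyQ=\tauTpolyF$ and $\tauDpolyQ$ is the flat-lift map; only $\sigma$ and $h$ receive $Q$-corrections. This yields \eqref{eq:Contraction-Tpoly}, \eqref{eq:Contraction-Apoly}, \eqref{eq:Contraction-Dpoly} and \eqref{eq:Contraction-Cpoly}, with the injections preserving the operations as claimed.
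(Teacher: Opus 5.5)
The paper does not prove this theorem; it imports it from \cite{paper-1B}. What it later uses of that construction ($\iL_{\fedosov}\circ\tauTpolyQ=0$, $\tauTpolyQ$ used interchangeably with $\tauTpolyF$, compatibility with tensor products and pairings) fits your architecture: a Koszul contraction for $\delta$, homological perturbation, flat lifts, and an injection unaffected by $Q$. However, two steps do not go through as you wrote them.

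\textbf{Convergence in Step~2.} Your justification does not hold up. The operator $\schouten{\tauTpolyF(Q)}{\argument}$ does not raise $r+q$: the $y$-linear part of $\tauTpolyF(Q)$ preserves it, and the $y$-constant part lowers it. Setting the constant part aside as ``the bottom differential'' does not remove it from $\theta$. In addition, when $\cM$ has odd coordinates, $\Omega(\cM)$ has unbounded form degree. So $\totTpolyF{\bullet}$, $\totDpolyF{\bullet}$ and $\totcCpolyF{\bullet}$, which are direct sums in $r$, are not complete for a filtration that only controls $r+q$.

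The fact that actually makes $\sum(\theta h)^n$ converge is one you do not state. $\tauTpolyF(Q)$ has form degree $0$, so $\schouten{\tauTpolyF(Q)}{\argument}h$ strictly lowers $r$ and does not lower $y$-degree. Meanwhile $(d^\nabla+A^\nabla)h$ preserves $r$ and raises $y$-degree by at least one. Every element of each totalization has $r\le r_0$ for some $r_0$; for $\totbApolyF{\bullet}$ this holds componentwise in $p$, which all the operators preserve. A word of length $n$ therefore contains at most $r_0$ $Q$-letters, and the series converges $y$-adically without $r$ ever increasing.

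A cleaner route is to perturb in two stages. First perturb by $d^\nabla+A^\nabla$ alone, where your filtration argument is valid. Then perturb by $\varrho=\schouten{\tauTpolyF(Q)}{\argument}$: here $\varrho h'$ strictly lowers $r$, and $h'\varrho\tau=h'\tau\schouten{Q}{\argument}=0$ shows at once that the injection is unchanged. Your worry about landing in $\totcApolyF{\bullet}$ is moot, since $\tau$ lands in form degree $0$ and the perturbed projection and homotopy never raise $r$.

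\textbf{Multiplicativity in Step~3.} On the polydifferential side, the bottom of your Koszul contraction is $\Gamma\big((S(T_{\cM})[-1])^{\otimes p}\big)$, not $\Dpoly{p}(\cM)$. The identification of $\enveloping{\cF}$ with $\OO(\cM,\hat{S}(T^\vee_{\cM})\otimes S(T_{\cM}))$ is canonical (constant-coefficient operators along the linear fibres) and involves no $\nabla$.

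Uniqueness via ``same bottom term'' does give compatibility with $\wedge$, $\iI$ and $\cupproduct$, because $\sigma$ is multiplicative for those. It does not give compatibility with composition in $\enveloping{\cF}$, and hence not with $\gerstenhaber{\argument}{\argument}$. Composition differentiates the $y$-dependent coefficients: for example, $\sigma(\tau X\circ\tau X)$ picks up $\nabla_XX$ from the linear part $y_l\nabla_{\partial_l}X$ of $\tau X$.

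The missing idea is to identify $\cD(\cM)$ with the bottom through flat functions. A flat operator $P$ of form degree $0$ sends flat functions to flat functions and is determined by that action, since $\sigma(P\,\tau f)$ pairs $\sigma(P)$ with the $y$-Taylor coefficients of $\tau f$, and these exhaust the jets of $f$. One then defines $\tauDpolyQ(u)$ as the unique flat $P$ with $P(\tau f)=\tau(uf)$. The map $u\mapsto\sigma(P)$ is a PBW-type isomorphism $\cD(\cM)\cong\Gamma(S(T_{\cM}))$, and this is where $\nabla$ enters. Compatibility with composition and coproduct follows, hence with $\hochschild$, $\cupproduct$ and $\gerstenhaber{\argument}{\argument}$, and dually with $\iI$, $\iL$, $B$ and $\hochschildb$ on polyjets. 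Without this step you have neither the bracket statement nor the identification of the transferred bottom differential with $\gerstenhaber{Q}{\argument}+\hochschild$ on $\Dpoly{}(\cM)$.

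A milder version of the same caveat applies to $\schouten{\argument}{\argument}$ and $d$ on the polyvector/form side. There $\sigma\schouten{\tau X}{\tau Y}=\nabla_XY-(-1)^{|X||Y|}\nabla_YX$, so the bottom-term identity you need is exactly torsion-freeness of $\nabla$, which your sketch never invokes. Either verify it directly or use the same action-on-flat-functions argument.
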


\subsection{Characteristic classes of a dg Lie algebroid}\label{sec:Atiyah}

Let $\cL\to\cM$ be a dg Lie algebroid with anchor $\rho:\cL\to T_{\cM}$.
An $\cL$-connection on $\cL$ is a degree~$0$ bilinear map
$\nabla:\sections{\cL}\times\sections{\cL}\to\sections{\cL}$
satisfying the pair of relations
\begin{gather*}
\nabla_{fX} Y = f\nabla_X Y , \qquad \qquad \nabla_X (fY) = \rho_X (f) Y + (-1)^{\degree{X}\degree{f}} f \nabla_X Y
,\end{gather*}
for all homogeneous elements $f\in C^\infty(\cM)$, $X\in\sections{\cL}$, and $Y\in\sections{\cL}$.
Note that such connections always exist since the standard partition of unity argument holds in the context of graded manifolds.

Consider the bundle map $\atiyahcocycle{\nabla}{\cL}:\cL\otimes\cL\to\cL$ of degree~$+1$ defined by
\[ \atiyahcocycle{\nabla}{\cL} (X,Y) = \cQ(\nabla_X Y)
-\nabla_{\cQ(X)}Y -(-1)^{\degree{X}}\nabla_X\big(\cQ(Y)\big),
\quad \forall X\in\sections{\cL}, Y\in\sections{\cL} .\]
The bundle map $\atiyahcocycle{\nabla}{\cL}$ can be regarded as a section of degree $+1$
of $\cL^\vee\otimes\End\cL$, and hence as a $1$-cochain in the cochain complex
$\big(\sections{\cL^\vee\otimes\End\cL}^\bullet,\mathcal{Q}\big)$.
It is simple to check that the $\atiyahcocycle{\nabla}{\cL}$ is indeed a 1-cocycle:
$\mathcal{Q}(\atiyahcocycle{\nabla}{\cL})=0$,
and its cohomology class is independent of the choice of the connection $\nabla$ \cite{MR3319134}.
The cohomology class $\atiyahclass{\cL}:=[\atiyahcocycle{\nabla}{\cL}]$
in $\cohomology{1}\big(\Gamma(\cL^\vee\otimes\End\cL)^\bullet,\cQ\big)$ is
the obstruction class to the existence of an $\cL$-connection on $\cL$ compatible with the dg structure,
called the \emph{Atiyah class} of the dg Lie algebroid $\cL\to\cM$ \cite{MR3319134}.

By the natural identification $\big(\sections{\cL^\vee \otimes \End\cL}\big)^1 = \big(\sections{(\cL^\vee[1]) \otimes \End\cL}\big)^0$, the Atiyah cocycle $\atiyahcocycle{\nabla}{\cL}$ can be regarded as a degree zero section of $(\cL^\vee[1]) \otimes \End\cL$.
The \emph{Todd cocycle} and \emph{$\widehat{A}$ cocycle} are the elements $\toddcocycle{\nabla}{\cL}$ and $\Ahatcocycle{\nabla}{\cL}$ in $\totApolyL{0} = \prod_{p=0}^\infty \big(\sections{S^p(\cL^\vee[1])} \big)^0$ defined by
\[ \toddcocycle{\nabla}{\cL} = \Ber\left(\frac{\atiyahcocycle{\nabla}{\cL}}{1-e^{-\atiyahcocycle{\nabla}{\cL}}}\right) \qquad \text{and} \qquad \Ahatcocycle{\nabla}{\cL} = \Ber\left(\frac{\atiyahcocycle{\nabla}{\cL}}{e^{\half\atiyahcocycle{\nabla}{\cL}} - e^{-\half\atiyahcocycle{\nabla}{\cL}}}\right) ,\]
where $\Ber$ denotes the Berezinian \cite{MR914369,MR2275685}.
Both $\toddcocycle{\nabla}{\cL}$ and $\Ahatcocycle{\nabla}{\cL}$ are cocycles, satisfying $\cQ(\toddcocycle{\nabla}{\cL}) = 0 = \cQ(\Ahatcocycle{\nabla}{\cL})$.

The cohomology classes $\toddclass{\cL}$ and $\Ahatclass{\cL}$ in $\cohomology{0}\big( \totApolyL{\bullet}, \cQ\big)$ of the cocycles $\toddcocycle{\nabla}{\cL}$ and $\Ahatcocycle{\nabla}{\cL}$ are independent of the choice of the connection $\nabla$ and are, respectively, called the \emph{Todd class} and \emph{$\widehat{A}$ class} of the dg Lie algebroid $\cL$. Hence, the Todd class and the $\widehat{A}$ class of the dg Lie algebroid $\cL$ are, respectively, the elements
\begin{equation}
\toddclass{\cL} = \Ber\left(\frac{\atiyahclass{\cL}}{1-e^{-\atiyahclass{\cL}}}\right) \qquad \text{and} \qquad \Ahatclass{\cL} = \Ber\left(\frac{\atiyahclass{\cL}}{e^{\half\atiyahclass{\cL}} - e^{-\half\atiyahclass{\cL}}}\right)
\end{equation}
in $\cohomology{0}\big( \totApolyL{\bullet}, \cQ\big)$.

Both $\toddclass{\cL}$ and $\Ahatclass{\cL}$ can be expressed in terms of the scalar Atiyah classes
\[ c_p = \frac{1}{p!} \left(\frac{i}{2\pi}\right)^p \supertrace\big(\atiyahclass{\cL}^p\big) \quad \in \cohomology{0}\big(\Apolyl{-p}, \cQ\big), \quad \forall p \geq 0 .\]
Here $\supertrace : \End(\cL) \to C^\infty(\cM)$ denotes the supertrace. Note that $\supertrace(\atiyahclass{\cL}^p) \in \cohomology{0}\big(\Apolyl{-p}, \cQ\big)$ since $(\atiyahcocycle{\nabla}{\cL})^p \in \big(\sections{S^p(\cL^\vee[1])^{\otimes p} \otimes \End(\cL)}\big)^0$. For details, see~\cite{MR3319134}.

Now consider a finite-dimensional dg manifold $(\cM,Q)$. The tangent bundle $T_{\cM}\to\cM$ is naturally a dg Lie algebroid. By definition, the \emph{Atiyah cocycle} $\atiyahcocycleQ$ (associated with an affine connection $\nabla$) and the \emph{Atiyah class} $\atiyahclassQ$ of the dg manifold $(\cM,Q)$ are, respectively, the Atiyah cocycle (associated with $\nabla$) and the Atiyah class of the dg Lie algebroid $T_{\cM}$.

The \emph{Todd cocycle} and the \emph{$\widehat{A}$ cocycle} of the dg manifold $(\cM,Q)$ associated with an affine connection $\nabla$ are, respectively, the elements
\[ \toddcocycleQ = \Ber\left(\frac{\atiyahcocycleQ}{1-e^{-\atiyahcocycleQ}}\right)
\qquad\text{and}\qquad
\Ahatcocycle{\nabla}{(\cM,Q)} = \Ber\left(\frac{\atiyahcocycleQ}{e^{\half\atiyahcocycleQ}
-e^{-\half\atiyahcocycleQ}}\right) \]
in $\totApolyM{0}
=\prod_{p = 0}^\infty \ApolyM{-p}{0} = \prod_{p = 0}^\infty \big( \sections{S^p(T_{\cM}\dual[1])} \big)^0$.
Their respective cohomology classes in
$\cohomology{0}\big(\totApolyM{\bullet},\iL_Q\big)$
are independent of the choice of the connection $\nabla$
and will be referred to as the \emph{Todd class} and the \emph{$\widehat{A}$ class}
of the dg manifold $(\cM,Q)$, and denoted by $\toddclassQ$ and $\Ahatclass{(\cM,Q)}$, respectively.

\begin{example}[{\cite[Example~3.4]{MR3319134}}]
Let $(x_1, \cdots, x_m; x_{m+1}, \cdots, x_{m+n})$ be coordinate functions on $\RR^{m|n} = \RR^m \times \RR^n[-1]$, and let $Q = \sum_k Q_k \frac{\partial}{\partial x_k}$ be a homological vector field on $\RR^{m|n}$. The Atiyah $1$-cocycle of the dg manifold $(\RR^{m|n}, Q)$ associated with the trivial connection $\nabla$ (which is characterized by the property $\nabla_{\frac{\partial}{\partial x_i}}\frac{\partial}{\partial x_j} = 0$) is then
\begin{equation}\label{velociraptor}
\atiyahcocycle{\nabla}{(\RR^{m|n},Q)} \left(\frac{\partial}{\partial x_i}, \frac{\partial}{\partial x_j}\right) = (-1)^{|x_i|+|x_j|} \sum_k \frac{\partial^2 Q_k}{\partial x_i \partial x_j} \frac{\partial}{\partial x_k}.
\end{equation}
The Atiyah $1$-cocycle $\atiyahcocycle{\nabla}{(\RR^{m|n},Q)}$ captures the second-order derivatives of the components of the homological vector field $Q$.
\end{example}

\subsection{Hochschild--Kostant--Rosenberg theorems}

In this section, we establish the Hochschild--Kos\-tant--Rosenberg theorems for dg manifolds and Fedosov dg Lie algebroids. These results play a fundamental role in this paper.

\subsubsection{Hochschild--Kostant--Rosenberg theorem for dg manifolds}

Let $\cM$ be a graded manifold. We first recall that the \emph{(cohomological) Hochschild--Kostant--Rosenberg map}
\[ \hkr: \totTpolyM{\bullet} \to \totDpolyM{\bullet} \]
is defined by the formula
\begin{equation}\label{eq:hkr-coh}
\hkr(\pshift X_1 \odot \cdots \odot \pshift X_k) = \frac{1}{k!} \sum_{\sigma \in S_k} \pm \, \pshift X_{\sigma(1)} \otimes \cdots \otimes \pshift X_{\sigma(k)},
\end{equation}
where $S_k$ is the symmetric group, the sign $\pm$ is determined by the Koszul sign rule, and $X_1, \cdots, X_k \in \sections{T_{\cM}}$ are regarded as first-order differential operators on the right-hand side.

The \emph{(homological) Hochschild--Kostant--Rosenberg map}
\[ \hhkr: \totCpolyM{\bullet} \to \totApolyM{\bullet} \]
is determined by the map \eqref{eq:Phi1} and the following formula:
\[ \hhkr\big(\Phi(\nshift(a_0\otimes a_1\otimes\cdots\otimes a_p))\big) = a_0 da_1 \cdots da_p \in\Apolym{-p} ,\]
where $a_0, \cdots, a_p \in C^\infty(\cM)$ and $d = d_{\dR}$ is the de~Rham differential. Equivalently, if $\xi_1, \cdots, \xi_p \in \jm = \Hom_{\cR}(\cU(\cM), \cR)$ with $\cR = C^\infty(\cM)$, then
\begin{equation}\label{eq:hkr-homology}
\hhkr(\nshift\xi_1 \otimes \cdots \otimes \nshift\xi_p) = \nshift \bar\xi_1 \odot \cdots \odot \nshift \bar\xi_p \in \Apolym{-p}.
\end{equation}
Here $\bar\xi_i \in \Hom_{\cR}(\XX(\cM), \cR) = \sections{T_{\cM}\dual}$ is defined by $\pair{\bar\xi_i}{X} = \pair{\xi_i}{X}$ for all $X \in \XX(\cM)$.

The following Hochschild--Kostant--Rosenberg theorem for dg manifolds follows from the corresponding theorem for graded manifolds (see \cite{arXiv:2410.15903,MR2112623,MR2202177} and \cite[Lemma~A.2]{MR2304327}) together with spectral sequence arguments.

\begin{proposition}\label{pro:hkrfordg}
Let $(\cM, Q)$ be a finite-dimensional dg manifold. The maps \eqref{eq:hkr-coh} and~\eqref{eq:hkr-homology} induce isomorphisms of vector spaces
\begin{equation}\label{eq:HKR-Dpoly}
\hkr : \HTT \xto{\cong} \HDD
\end{equation}
and
\begin{equation}\label{eq:HKR-Cpoly}
\hhkr : \HCC \xto{\cong} \HOM
\end{equation}
on the cohomology level.
\end{proposition}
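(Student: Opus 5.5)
The plan is a filtration and spectral sequence argument, reducing to the HKR theorem for graded manifolds (no differential $Q$). I will treat the two isomorphisms separately but in parallel.

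\emph{Cohomological map.} First I check that $\hkr$ is a cochain map
\[ \big(\totTpolyM{\bullet},\schouten{Q}{\argument}\big)\to\big(\totDpolyM{\bullet},\hochschild+\dQ\big). \]
This holds because $\hkr$ lands in Hochschild cocycles, so $\hochschild\circ\hkr=0$. It also holds because $\hkr$ intertwines the Lie derivative along the degree~$+1$ vector field $Q$: the operator $\gerstenhaber{Q}{\argument}$ acts on a tensor product of first-order operators as a derivation, and it sends vector fields to vector fields. Next I filter both sides by the polyvector/polydifferential degree $p$, via the decreasing filtration $F^k=\bigoplus_{p\geq k}$. The differential $\schouten{Q}{\argument}$ preserves $p$. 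The differential $\hochschild+\dQ$ splits into $\hochschild$, which raises $p$ by one, and $\dQ$, which preserves $p$.

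This suggests a different grading, where $\hochschild$ is the $d_0$-part. I would regrade by internal degree $q=n-p$. Then $\dQ$ raises $q$ by one and $\hochschild$ preserves $q$, so filtering by $q$ makes the associated graded differential equal to $\hochschild$ alone. On the $E_1$ page, the graded HKR theorem (cited before Proposition~\ref{pro:hkrfordg}) identifies the cohomology of $(\Dpolym{p},\hochschild)$ with $\Tpolym{p}$ via $\hkr$. So $\hkr$ induces an isomorphism on $E_1$ pages, and hence on later pages.

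\emph{Homological map.} The argument for $\hhkr$ is the same, now using $\hochschildb$, the graded homological HKR theorem, and the fact that $\hhkr\circ\hochschildb=0$ and $\hhkr\circ\iL_Q=\iL_Q\circ\hhkr$. The latter follows since $\iL_Q$ commutes with $d_{\dR}$ and $\hhkr(a_0\otimes\cdots\otimes a_p)=a_0da_1\cdots da_p$.

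\emph{Main obstacle: convergence.} Because $\cM$ is finite dimensional, for fixed total degree $n$ and fixed $p$ the internal degree $q$ is unbounded in general: $\cM$ may carry coordinates of both positive and negative degrees. So the filtration is neither bounded nor obviously complete. I would handle this by a different filtration for each complex, chosen so that it is complete and exhaustive.

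For $\totDpolyM{\bullet}$ and $\totTpolyM{\bullet}$, which are direct sums over $p$, I filter by $p$ (increasing filtration $\bigoplus_{p\leq k}$). This filtration is exhaustive. However, $\hochschild$ increases $p$, so instead I argue directly on each $p$-truncation, using a contracting homotopy. The graded HKR theorem provides a homotopy $h$ with $\hochschild h+h\hochschild=\id-\hkr\circ\pi$, which is $C^\infty(\cM)$-linear. The map $\dQ$ is a perturbation. By the homological perturbation lemma, the deformed data give a contraction as long as $\dQ h$ is locally nilpotent. I expect this to hold because $h$ lowers $p$, so $(\dQ h)^{k}$ vanishes on $F_{\leq k}$ in the direct-sum setting.

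For $\totCpolyM{\bullet}$, a direct product over $p$, the filtration by $p\geq k$ is complete, and the same perturbation argument converges $p$-adically.

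The perturbed inclusion differs from $\hkr$ only by terms that go to zero, or we can verify that $\hkr$ and $\hhkr$ are themselves chain maps. Since $\hkr$ (respectively $\hhkr$) is a chain map that agrees with the perturbed quasi-isomorphism up to homotopy, it is itself a quasi-isomorphism. Checking the nilpotency and completeness conditions carefully is the step I expect to require the most care.
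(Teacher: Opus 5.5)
Your first sketch is exactly the paper's proof, and it is already complete. Filter by the internal degree $q$, so that the associated graded differential is $\hochschild$ (resp.\ $\hochschildb$), then compare $E_1$ pages row by row using the graded HKR theorem.

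The convergence obstruction you then raise does not exist. Your sentence that ``for fixed total degree $n$ and fixed $p$ the internal degree $q$ is unbounded'' is off, since $q=n-p$. What is unbounded is the range of $p$ in a fixed total degree.

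\begin{itemize}
\item On $\tot_\oplus$ of $\sD^{0,\bullet,\bullet}$ one has $q=n-p\le n$ in total degree $n$. So the decreasing $q$-filtration vanishes in filtration index $n+1$. It is exhaustive because every element has only finitely many components.
\item On $\tot_\Pi$ of $\sC^{0,\bullet,\bullet}$ one has $q=n+p\ge n$. So the filtration is everything in index $\le n$, and it is complete because of the direct product.
\end{itemize}

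These are exactly the hypotheses of the classical convergence and Eilenberg--Moore comparison theorems. This is what the paper means by ``right/left half-plane double complexes''. Concretely, the mapping cone has $\hochschild$-exact rows. A cocycle in $\tot_\oplus$ is killed by a staircase that descends from its top $p$-component and terminates because $p\ge 0$. In the $\tot_\Pi$ case the staircase ascends in $p$ indefinitely, and the product assembles the infinitely many corrections.

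Your perturbation-lemma alternative is also valid, and it has the merit of producing explicit contraction data. The finiteness facts you identify are the right ones, and they are the same ones that make the spectral sequences converge:
\begin{itemize}
\item $h$ lowers $p$, so $\dQ h$ is locally nilpotent on $\tot_\oplus$;
\item $h$ raises $p$, so $\sum_k(\iL_Q h)^k$ converges on $\tot_\Pi$.
\end{itemize}

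However, your closing step is not an argument as written: neither ``differs from $\hkr$ only by terms that go to zero'' nor ``agrees with the perturbed quasi-isomorphism up to homotopy'' is justified. Without side conditions, the transferred differential on polyvector fields picks up extra terms such as $\pi\dQ h\dQ\hkr$. The perturbed map then does not even have the same source complex as $\hkr$.

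The fix is to choose the HKR contraction bidegreewise over $\KK$ with $h\circ\hkr=0$ (resp.\ $\hhkr\circ h=0$). This needs only linear algebra; no $C^\infty(\cM)$-linearity is needed, and the cited graded HKR theorem does not supply it. Then:
\begin{itemize}
\item Since $\dQ\circ\hkr=\hkr\circ\schouten{Q}{\argument}$, every correction term $h(\dQ h)^k\dQ\hkr$ and $\pi(\dQ h)^{k+1}\hkr$ vanishes. The perturbed inclusion is exactly $\hkr$ and the transferred differential is exactly $\schouten{Q}{\argument}$.
\item Dually, $\hhkr\circ\iL_Q=\iL_Q\circ\hhkr$ together with $\hhkr\circ h=0$ makes the perturbed projection exactly $\hhkr$, with transferred differential $\iL_Q$.
\end{itemize}

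In summary, the spectral-sequence (staircase) route is shorter and is what the paper uses. The perturbation route buys explicit contractions, at the cost of imposing these side conditions.
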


\begin{proof}
It is straightforward to show that, on the cochain level, the maps \eqref{eq:hkr-coh} and~\eqref{eq:hkr-homology}
arise from the pair of morphisms of double complexes
\[ \hkr : \big( \sT^{0,\bullet,\bullet}, 0, \cQ \big) \to \big( \sD^{0,\bullet,\bullet}, \hochschild, \cQ\big)
\qquad\text{and}\qquad
\hhkr : \big( \sC^{0,\bullet,\bullet}, \hochschildb, \cQ \big) \to \big( \sA^{0,\bullet,\bullet}, 0, \cQ\big), \]
where
\begin{align*}
\sT^{0,q,p} &= \big(\GG\big(S^p( T_{\cM}[-1])\big)\big)^{p+q},
& \sA^{0,q,-p} &= \big(\GG\big(S^p( T_{\cM}\dual [1])\big)\big)^{-p+q}, \\
\sD^{0,q,p} &= \Big(\big(\pshift\cD(\cM)\big)^{\otimes p}\Big)^{p+q},
& \sC^{0,q,-p} &= \Big(\big(\nshift\jm\big)^{\hat\otimes p}\Big)^{-p+q}.
\end{align*}
According to the Hochschild--Kostant--Rosenberg theorems
\cite{arXiv:2410.15903,MR2112623,MR2202177,MR2304327}, the maps
\[ \hkr : \big( \sT^{0,\bullet,q}, 0 \big) \to \big( \sD^{0,\bullet,q}, \hochschild \big)
\qquad\text{and}\qquad
\hhkr : \big( \sC^{0,\bullet,q}, \hochschildb \big) \to \big( \sA^{0,\bullet,q}, 0 \big) \]
are quasi-isomorphisms for each $q \in \ZZ$. Since $\sT^{0,\bullet,\bullet}$ and $\sD^{0,\bullet,\bullet}$ are right-half-plane double complexes, standard spectral sequence arguments (e.g., \cite[Section~5.6]{MR1269324}) imply that \eqref{eq:HKR-Dpoly} is an isomorphism. As $\sA^{0,\bullet,\bullet}$ and $\sC^{0,\bullet,\bullet}$ are left-half-plane double complexes, \eqref{eq:HKR-Cpoly} is likewise an isomorphism.
\end{proof}

\begin{remark}
The cohomologies in \eqref{eq:HKR-Dpoly} are understood as direct sum total cohomologies. If one uses direct product total cohomologies instead, the map \eqref{eq:HKR-Dpoly} is not necessarily an isomorphism; a counterexample can be found in~\cite{MR2202177}.
\end{remark}

\subsubsection{Hochschild--Kostant--Rosenberg theorem for Fedosov dg Lie algebroids}

Let $\cFQ \to \cNQ$ be a Fedosov dg Lie algebroid associated with a dg manifold $(\cM, Q)$. The Hochschild--Kostant--Rosenberg maps for Fedosov dg Lie algebroids are defined by formulas analogous to \eqref{eq:hkr-coh} and \eqref{eq:hkr-homology}:
\begin{align*}
\hkr(\pshift X_1 \odot \cdots \odot \pshift X_k) &= \frac{1}{k!} \sum_{\sigma \in S_k} \pm \, \pshift X_{\sigma(1)} \otimes \cdots \otimes \pshift X_{\sigma(k)} \in \Dpolyf{k}, \\
\hhkr(\nshift\xi_1 \otimes \cdots \otimes \nshift\xi_p) &= \nshift \bar\xi_1 \odot \cdots \odot \nshift \bar\xi_p \in \Apolyf{-p},
\end{align*}
where $X_1, \cdots, X_k \in \sections{\cF} \subset \enveloping{\cF}$ and $\xi_1, \cdots, \xi_p \in \jet{\cF}$.

\begin{lemma}\label{lem:Tau-HKR}
The diagrams
\begin{equation}\label{eq:lem:Tau-HKR-TD}
\begin{tikzcd}
\totTpolyF{\bullet} \ar[r,"\hkr"] & \totDpolyF{\bullet} \\
\totTpolyM{\bullet} \ar[r,"\hkr"] \ar[u,"\tauTpolyQ"] & \totDpolyM{\bullet} \ar[u,"\tauDpolyQ"']
\end{tikzcd}
\end{equation}
and
\begin{equation}\label{eq:lem:Tau-HKR-CA}
\begin{tikzcd}
\totcCpolyF{\bullet} \ar[r,"\hhkr"] & \totcApolyF{\bullet} \\
\totCpolyM{\bullet} \ar[r,"\hhkr"] \ar[u,"\tauDpolyQ"] & \totApolyM{\bullet} \ar[u,"\tauTpolyQ"']
\end{tikzcd}
\end{equation}
commute.
\end{lemma}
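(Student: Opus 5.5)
The plan is to reduce both squares to checks on generators, using the structural properties of the injections $\tauTpolyQ$ and $\tauDpolyQ$ from Theorem~\ref{thm:mainT}. The key observation is that $\tauTpolyQ$ and $\tauDpolyQ$ are induced by a single underlying construction (from \cite{paper-1B}). On vector fields this is the map $\tauTpolyF:\XX(\cM)\to\sections{\cF}$. On differential operators it is the corresponding map $\cD(\cM)\to\enveloping{\cF}$, which restricts to $\tauTpolyF$ on first-order operators and is the algebra morphism extending it; it is obtained by Fedosov-type iteration, i.e. the PBW map via $\nabla$ composed with the Fedosov flat section. The dual map on jets $\JJ(\cM)\to\jet{\cF}$ and the map on forms $\Omega(\cM)\to\Omega_\cF$ are the transposes of the previous two, extended multiplicatively.

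\textbf{Cohomological square \eqref{eq:lem:Tau-HKR-TD}.} Since $\tauTpolyQ$ preserves $\wedge$ and $\tauDpolyQ$ preserves $\cupproduct$, and both are $C^\infty(\cM)$-linear through $C^\infty(\cM)\to C^\infty(\cN)$, I will check the square on $\pshift X_1\odot\cdots\odot\pshift X_k$. The left-then-top path gives $\hkr(\pshift\tauTpolyF X_1\odot\cdots\odot\pshift\tauTpolyF X_k)$, the symmetrized tensor of the $\tauTpolyF X_i$. The bottom-then-right path gives the symmetrized tensor of the $\tauDpolyQ(X_i)$, because $\tauDpolyQ$ acts factorwise on $\big(\pshift\cD(\cM)\big)^{\otimes p}$. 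So everything reduces to the identity $\tauDpolyQ(X)=\tauTpolyF(X)\in\sections{\cF}\subset\enveloping{\cF}$ for a vector field $X$. This holds because the construction sends first-order operators to first-order operators: both are defined via the same Fedosov-flat lift of $X$. If the paper's $\tauDpolyQ$ is defined by a homological perturbation formula involving $\hTpolyQ$, I would instead argue by the uniqueness of flat lifts with prescribed $y$-degree-zero part in $\ker\sigma$. That is the step I expect to need the most care.

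\textbf{Homological square \eqref{eq:lem:Tau-HKR-CA}.} Here both maps are morphisms of the relevant module structures. The map $\hhkr$ on a jet $\xi$ simply restricts $\xi$ to vector fields, giving $\bar\xi$. So for a single jet, the square says
\[
\overline{\tauDpolyQ(\xi)}=\tauTpolyQ(\bar\xi).
\]
Pairing both sides with $\tauTpolyF(X)$, and using that $\tauTpolyF(X)$ spans $\sections{\cF}$ over $C^\infty(\cN)$ after Fedosov completion, this follows by duality from the first square's key identity $\tauDpolyQ(X)=\tauTpolyF(X)$. The duality holds because the jet and form maps are transposes of the operator and vector field maps. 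Multiplicativity (the maps respect $\cotimes$ on polyjets and $\odot$ on forms, and $\hhkr$ is defined factorwise) extends this from a single jet to $\nshift\xi_1\otimes\cdots\otimes\nshift\xi_p$.

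Since all maps are continuous for the adic topologies and the direct-product completions, the identities on generators extend to the completed total spaces. The main obstacle is the identification of $\tauDpolyQ$ on first-order operators with $\tauTpolyF$, together with the matching Koszul signs in the shifts $\pshift,\nshift$.
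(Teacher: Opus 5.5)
Your proposal is correct and follows essentially the same route as the paper. For the first square, the paper uses that $\tauDpolyQ$ preserves tensor products and acts on each factor $\pshift X_i$ as $\tauTpolyF$. For the second square, it uses pairing preservation together with the fact that $\tauTpolyF(\XX(\cM))$ spans $\sections{\cN;\cF}$ over $C^\infty(\cN)$ to get $\tauTpolyQ(\bar\xi)=\overline{\tauDpolyQ(\xi)}$, and then multiplicativity.

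The only difference is that you isolate the identity $\tauDpolyQ(X)=\tauTpolyF(X)$ on vector fields as the key step. The paper uses it implicitly, writing the same symbol $\tauTpolyQ$ for both maps.
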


\begin{proof}
Let $X_1, \cdots, X_k \in \XX(\cM)$.
Since the map $\tauTpolyQ$ preserves tensor products \cite{paper-1B}, we have
\begin{align*}
\tauTpolyQ (\hkr(\pshift X_1 \odot \cdots \odot \pshift X_k))
&= \frac{1}{k!} \sum_{\sigma \in S_k} \pm \tauTpolyQ(\pshift X_{\sigma(1)} \otimes \cdots \otimes \pshift X_{\sigma(k)}) \\
&= \frac{1}{k!} \sum_{\sigma \in S_k} \pm \tauTpolyQ(\pshift X_{\sigma(1)} ) \otimes \cdots \otimes \tauTpolyQ(\pshift X_{\sigma(k)}) \\
&= \hkr (\tauTpolyQ(\pshift X_1 \odot \cdots \odot \pshift X_k)),
\end{align*}
which verifies the commutativity of the first diagram.

For the second diagram, let $\xi_1, \cdots, \xi_p \in \jm = \Hom_{\cR}(\cU(\cM),\cR)$, where $\cR = C^\infty(\cM)$.
Since the map $\tauTpolyQ$ preserves the pairing $\pair{\argument}{\argument}$ \cite{paper-1B}, we have
\begin{equation}\label{eq:lem:Tau-HKR-proof-pairing}
\pair{\tauTpolyQ(\bar\xi_i)}{\tauTpolyQ(X)} = \tauTpolyQ(\pair{\bar \xi_i}{X}) = \tauTpolyQ(\pair{ \xi_i}{X}) = \pair{\tauTpolyQ(\xi_i)}{\tauTpolyQ(X)}
\end{equation}
for any $X \in \XX(\cM)$. Recall from~\cite{paper-1B} that $\Gamma(\cN;\cF)$ is the $C^\infty(\cN)$-span of $\tauTpolyF(\XX(\cM))$. By Equation~\eqref{eq:lem:Tau-HKR-proof-pairing}, it follows that $\tauTpolyQ(\bar\xi_i) = \overline{\tauTpolyQ(\xi_i)}$. Therefore,
\begin{align*}
\tauTpolyQ(\hhkr(\nshift\xi_1 \otimes \cdots \otimes \nshift\xi_p))
&= \tauTpolyQ(\nshift \bar\xi_1 \odot \cdots \odot \nshift \bar\xi_p) = \nshift \tauTpolyQ(\bar\xi_1) \odot \cdots \odot \nshift \tauTpolyQ(\bar\xi_p) \\
& = \nshift \overline{\tauTpolyQ(\xi_1)} \odot \cdots \odot \nshift \overline{\tauTpolyQ(\xi_p)}
= \hhkr(\nshift\tauTpolyQ(\xi_1) \otimes \cdots \otimes \nshift\tauTpolyQ(\xi_p)) \\
& = \hhkr(\tauTpolyQ(\nshift\xi_1 \otimes \cdots \otimes \nshift\xi_p)),
\end{align*}
which completes the proof.
\end{proof}

As a consequence, we obtain the following Hochschild--Kostant--Rosenberg theorems for Fedosov dg Lie algebroids:

\begin{corollary}\label{cor:HKR-Fedosov}
The maps
\[ \hkr: \cohomology{}\big(\totTpolyF{\bullet},\schouten{\tauTpolyF(Q)+\fedosov}{\argument} \big) \xto{\cong} \cohomology{}\big(\totDpolyF{\bullet},\gerstenhaber{\tauTpolyF(Q)+\fedosov}{\argument} + \hochschild\big) \]
and
\[ \hhkr:\cohomology{}\big(\totcCpolyF{\bullet},\iL_{\tauTpolyF(Q)+\fedosov} + \hochschildb\big) \xto{\cong} \cohomology{}\big(\totcApolyF{\bullet},\iL_{\tauTpolyF(Q)+\fedosov} \big) \]
are isomorphisms of vector spaces.
\end{corollary}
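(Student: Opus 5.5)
The plan is to deduce the corollary directly from Lemma~\ref{lem:Tau-HKR}, together with Proposition~\ref{pro:hkrfordg} and the contractions of Theorem~\ref{thm:mainT}, using the two-out-of-three property for isomorphisms in cohomology.

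\textbf{Cohomological HKR map.} Consider the commutative square \eqref{eq:lem:Tau-HKR-TD}. First I would check that every map in it is a cochain map for the indicated differentials. The two vertical maps $\tauTpolyQ$ and $\tauDpolyQ$ are the injections of the contractions \eqref{eq:Contraction-Tpoly} and \eqref{eq:Contraction-Dpoly}. They are therefore quasi-isomorphisms. The bottom map $\hkr$ is a chain map with respect to $\schouten{Q}{\argument}$ and $\gerstenhaber{Q}{\argument}+\hochschild$, and it induces an isomorphism in cohomology by Proposition~\ref{pro:hkrfordg}.

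For the top map $\hkr$, the identity $\hochschild\circ\hkr=0$ holds for the Fedosov algebroid exactly as in the classical case. The harder point is compatibility with $\tauQ=\fedosov+\tauTpolyF(Q)$, namely $\hkr\circ\schouten{\tauQ}{\argument}=\gerstenhaber{\tauQ}{\argument}\circ\hkr$. This is true because $\tauQ$ is a vector field on $\cN$ preserving $\cF$. Bracketing with it acts as a derivation on each tensor factor of $\Gamma(\cF)^{\otimes k}$ and on $\wedge$-products, and $\hkr$ is just antisymmetrization. So $\hkr$ intertwines the Lie derivatives along $\tauQ$. One has to take care that $\gerstenhaber{\tauQ}{\argument}$ on $\totDpolyF{\bullet}$ is indeed the induced Lie derivative on tensor products of $\enveloping{\cF}$; this follows from its definition as a dg Lie algebroid differential in~\cite{paper-1B}.

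Given this, passing to cohomology in \eqref{eq:lem:Tau-HKR-TD} gives $\hkr_{\cF}\circ\tauTpolyQ=\tauDpolyQ\circ\hkr_{\cM}$. Three of these maps are isomorphisms, so the top $\hkr$ is an isomorphism as well.

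\textbf{Homological HKR map.} I would argue the same way with \eqref{eq:lem:Tau-HKR-CA}. The left vertical map $\tauDpolyQ$ is the injection of the contraction \eqref{eq:Contraction-Cpoly}, which already lands in $\totcCpolyF{\bullet}$. The right vertical map $\tauTpolyQ$ is the injection of \eqref{eq:Contraction-Apoly}, in the version where $\totbApolyF{\bullet}$ is replaced by $\totcApolyF{\bullet}$; Theorem~\ref{thm:mainT}\ref{thm:mainT-TA} explicitly allows this. Both are quasi-isomorphisms. The bottom map $\hhkr$ is a quasi-isomorphism by Proposition~\ref{pro:hkrfordg}.

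For the top map, $\hhkr\circ\hochschildb=0$ holds because $\hhkr$ sends the products $a_ia_{i+1}$ appearing in $\hochschildb$ to terms killed by the Leibniz rule of $d$. Compatibility with $\iL_{\tauQ}$ again holds because $\tauQ$ acts by derivations, compatibly with the map $\xi\mapsto\bar\xi$. The two-out-of-three argument then gives the second isomorphism.

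The main obstacle I expect is exactly this verification that the top HKR maps are cochain maps for the $\tauQ$-twisted differentials. The subtle part is the completed tensor products in $\totcCpolyF{\bullet}$ and the mixed direct sum/product totalization. If the chain map property turns out to need more than the derivation argument, I would fall back on a local coordinate check using the explicit formulas \eqref{Fhvf}–\eqref{Conakry}.
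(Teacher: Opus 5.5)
Your proposal is correct and follows the paper's proof: it uses the commutative squares of Lemma~\ref{lem:Tau-HKR}, whose vertical maps are quasi-isomorphisms by Theorem~\ref{thm:mainT} and whose bottom maps are quasi-isomorphisms by Proposition~\ref{pro:hkrfordg}, so the top HKR maps must be quasi-isomorphisms too. You also check that the top maps are cochain maps for the $\tauQ$-twisted differentials, via $\hochschild\circ\hkr=0$, $\hhkr\circ\hochschildb=0$, and the derivation property of the action of $\tauQ$; the paper leaves this implicit, and your argument for it is sound.
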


\begin{proof}
Consider the commutative diagrams \eqref{eq:lem:Tau-HKR-TD} and~\eqref{eq:lem:Tau-HKR-CA} of Lemma~\ref{lem:Tau-HKR}.
According to Theorem~\ref{thm:mainT}, the vertical maps are quasi-isomorphisms.
According to Proposition~\ref{pro:hkrfordg}, the lower horizontal maps are also quasi-isomorphisms.
Therefore, the upper horizontal maps are quasi-isomorphisms as well.
\end{proof}

\begin{remark}
It follows from~\eqref{eq:diag-bApoly} and~\eqref{eq:lem:Tau-HKR-CA} that the diagram of cochain complexes
\[ \begin{tikzcd}
\totcCpolyF{\bullet} \ar[r,"\hhkr"] & \totcApolyF{\bullet} \ar[r, hook] & \totbApolyF{\bullet} \\
\totCpolyM{\bullet} \ar[r,"\hhkr"] \ar[u,"\tauDpolyQ"]
& \totApolyM{\bullet} \ar[u,"\tauTpolyQ"] \ar[r, "\id"] & \totApolyM{\bullet} \ar[u, "\tauTpolyQ"]
\end{tikzcd} \]
commutes.
According to Theorem~\ref{thm:mainT}, its three vertical maps are quasi-isomorphisms.
According to Proposition~\ref{pro:hkrfordg}, the lower horizontal map is also a quasi-isomorphism.
Therefore, the upper horizontal map
$\hhkr: \big(\totcCpolyF{\bullet}\big)_\tauQ \to \big(\totbApolyF{\bullet}\big)_\tauQ$
is a quasi-isomorphism as well.
\end{remark}

\section{Formality theorems for Fedosov dg Lie algebroids}

Given a finite-dimensional dg manifold $(\cM,Q)$,
let $\cF_{\cQ}\to\cN_{\cQ}$ be the Fedosov dg Lie algebroid corresponding to a torsion-free affine connection $\nabla$ on $\cM$.
The symbol $\cQ$ denotes the induced homological vector field
\begin{equation}\label{eq:cQ-Fedosov}
\tauQ:=\tauTpolyF(Q)+\fedosov
\end{equation}
on the Fedosov manifold $\cN$.
According to~\eqref{eq:Fedosov}, we have $\Gamma(\cN;\cF) = C^\infty(\cN)\otimes_{\cR}\Gamma(T_{\cM}) = \Omega(\cM,\SM\otimes T_{\cM}) $.
The canonical inclusion $\etendu{\tau}:\Gamma(T_{\cM})\to\Gamma(\cN;\cF)$ determines a ``canonical'' $\cF$-connection $\nabla^{\can}$ on $\cF$ through the relation
\[ \nabla^{\can}_{\etendu{\tau}(a)} \etendu{\tau}(b) = 0, \quad \forall a, b \in \sections{T_{\cM}} .\]

The corresponding Atiyah, Todd, and \Aroof\ cocycles are respectively denoted $\atiyahcocycle{\can}{\cF_{\cQ}}$,
$\toddcocycle{\can}{\cF_{\cQ}}$, and $\Ahatcocycle{\can}{\cF_{\cQ}}$.

In this section, we aim to prove the following three propositions:

\begin{proposition}[Kontsevich-type formality theorem for Fedosov dg Lie algebroids]
\label{thm:KontsevichFormalityFedosov}
There exists an $L_\infty$ quasi-isomorphism
\[ \ukontsevich : \big( \totTpolyF{\bullet}[1] \big)_{\tauQ}
\inftymorphism \big( \totDpolyF{\bullet} [1] \big)_{\tauQ} \]
from the dgla \[ \big(\totTpolyF{\bullet}[1]\big)_{\tauQ} = \Big(\totTpolyF{\bullet}[1],\
\schouten{\tauTpolyF(Q)+\fedosov}{\argument},\ \schouten{\argument}{\argument}\Big) \] to the dgla
\[ \big(\totDpolyF{\bullet}[1]\big)_{\tauQ} =
\Big(\totDpolyF{\bullet}[1],\
\gerstenhaber{\tauDpolyF(Q)+\fedosov+m_2}{\argument},\ \gerstenhaber{\argument}{\argument}\Big) ,\]
whose first Taylor coefficient is the composition
\[ \ukontsevich_1=\hkr\circ(\todd^{\can}_{\cF_{\tauQ}})^{\frac{1}{2}}:
\totTpolyF{\bullet}[1]\to\totDpolyF{\bullet}[1] ,\]
of the action (by contraction) of $(\todd^{\can}_{\cF_{\tauQ}})^{\frac{1}{2}} \in \totbApolyF{0}$ on $\totTpolyM{\bullet}[1]$ with the cohomological HKR map.
\end{proposition}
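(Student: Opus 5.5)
Since $\cF\to\cN$ is the pullback of $T_{\cM}$ with leafwise ``flat'' coordinates $y_k$, I will obtain $\ukontsevich$ by applying Kontsevich's universal formality fiberwise: on each formal fiber $\Tformal\cM$ use Kontsevich's $L_\infty$ quasi-isomorphism $\mathcal{U}^{\mathrm{K}}:T_{\poly}(\widehat{\RR}^{\mr}_{y})[1]\inftymorphism D_{\poly}(\widehat{\RR}^{\mr}_{y})[1]$, extended $\OO(\cM)$-linearly to $\Omega(\cM,\hat S(T^\vee_{\cM})\otimes\cdots)$. Because the Kontsevich weights are $GL$-invariant (equivariant under linear coordinate changes), and the transition functions of $\cF$ in the $y$-coordinates are linear, this fiberwise morphism is globally well defined; the graded version (coordinates of arbitrary degree) is Cattaneo--Felder's. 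This gives an $L_\infty$ quasi-isomorphism $\mathcal{U}$ between the dglas with zero differential on polyvectors and Hochschild differential $\gerstenhaber{m_2}{\argument}$ on polydifferential operators, whose first component is $\hkr$.

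Next I twist by the homological vector field. The element $\tauQ=\tauTpolyF(Q)+\fedosov$ is a Maurer--Cartan element of degree $1$ in the Lie algebra of vector fields on $\cN$, which acts on both sides; however only its $\cF$-part is fiberwise, so I decompose $\tauQ=d^{\nabla}$-type horizontal part plus a fiberwise part $\mathcal{Y}\in\Omega^{\bullet}(\cM,\hat S T^\vee_{\cM}\otimes T_{\cM})$ (coming from $-\delta$, $A^\nabla$, the Christoffel term, and $\tauTpolyF(Q)$). The horizontal part $\xi_k\partial_{x_k}$ commutes with $\mathcal U$ since $\mathcal U$ is $\OO(\cM)$-linear and built from constant-coefficient weights. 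For the fiberwise vector field $\mathcal Y$, use Kontsevich's properties: $\mathcal U_n(\mathcal Y,\gamma_2,\dots)$ vanishes for $n\geq 2$ when $\mathcal Y$ is linear in $y$, and in general $\mathcal U$ intertwines the vector-field action up to the standard correction: $\mathcal U_1(\mathcal Y)=\mathcal Y$ and the twisted morphism $\mathcal U^{\tauQ}_n(\gamma_1,\dots,\gamma_n)=\sum_k\frac1{k!}\mathcal U_{n+k}(\tauQ,\dots,\tauQ,\gamma_1,\dots,\gamma_n)$ is an $L_\infty$ morphism from the $\tauQ$-twisted polyvector dgla to the $\mathcal U(\tauQ)$-twisted polydifferential dgla. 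I then check $\sum_k\frac1{k!}\mathcal U_k(\tauQ,\dots,\tauQ)=\tauQ$ (so the target twist is exactly $\gerstenhaber{\tauDpolyF(Q)+\fedosov+m_2}{\argument}$): the terms with $k\geq2$ involve graphs with only vector-field vertices and no ground points beyond one, which vanish by Kontsevich's vanishing lemmas (wheels aside, see below). Convergence of the infinite sums is guaranteed by the filtration by form degree $r$ and polynomial $y$-degree, since the components of $\tauQ$ either raise $r$ or (for $\delta$) are controlled by the Fedosov filtration.

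The key computation, and the main obstacle, is the first Taylor coefficient: $\ukontsevich_1(\gamma)=\sum_k\frac1{k!}\mathcal U_{k+1}(\tauQ,\dots,\tauQ,\gamma)$. The surviving graphs are the ``wheels'': $k$ copies of the fiberwise vector field forming a cycle with the polyvector $\gamma$ attached, which is Shoikhet's/Kontsevich's computation for the Duflo case. I will show that the wheel of length $k$ contributes the contraction of $\gamma$ by $\supertrace((\mathcal{Y}'')^k)$ times the universal weight, where $\mathcal Y''$ is the fiberwise second $y$-derivative, and that $\mathcal{Y}''$ evaluated at $y=0$ is precisely the Atiyah cocycle $\atiyahcocycle{\can}{\cF_{\tauQ}}$ of the canonical connection (as in the Example computing Atiyah cocycles via second derivatives). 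Using Kontsevich's wheel weights (only even wheels survive, giving $\log$ of $\widehat{A}$ together with the odd part matching $\frac12 c_1$), the sum exponentiates to contraction by $\Ber\big(\tfrac{\atiyahcocycle{\can}{\cF}}{1-e^{-\atiyahcocycle{\can}{\cF}}}\big)^{1/2}=(\todd^{\can}_{\cF_{\tauQ}})^{1/2}$ followed by $\hkr$; the delicate points are the supertrace signs in the graded setting and verifying that higher $y$-jets of $\mathcal Y$ do not contribute to the wheels (they would need extra incoming edges, killed by degree counting). Finally, $\ukontsevich$ is a quasi-isomorphism because $\ukontsevich_1$ is $\hkr$ composed with an invertible contraction (the Todd cocycle starts with $1$), and $\hkr$ is a quasi-isomorphism by Corollary~\ref{cor:HKR-Fedosov}, applied via a filtration argument on the first-order part.
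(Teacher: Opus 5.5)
\textbf{Main gap: the wheel sum gives the $\widehat{A}$ cocycle, not the Todd cocycle.} Your overall architecture matches the paper's: a leafwise Kontsevich morphism, globalized by $GL$-equivariance, then twisted, with the quasi-isomorphism property coming from $\hkr$. The decisive step, however, fails. Twisting the leafwise Kontsevich morphism by the leafwise part of $\tauQ$ gives exactly $\tkontsevich_1=\hkr\circ(\Ahatcocycle{\can}{\cF_{\cQ}})^{\frac{1}{2}}$.

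The reason is that Kontsevich weights of odd wheels vanish. A one-vertex wheel would need an edge from a vertex to itself, which is not an admissible graph. So no term involving $\supertrace(\atiyahcocycle{\can}{\cF_{\cQ}})$ can ever appear, and your sentence ``only even wheels survive \dots\ together with the odd part matching $\frac12 c_1$'' contradicts itself.

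Now $\toddcocycle{\can}{\cF_{\cQ}}=e^{\frac12\supertrace(\atiyahcocycle{\can}{\cF_{\cQ}})}\,\Ahatcocycle{\can}{\cF_{\cQ}}$. This supertrace is nonzero in general, already for $\frakg[1]$ with $\frakg$ non-unimodular, and $\hkr$ is injective. So the twisted morphism does not have the asserted first coefficient.

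The missing idea is a correction by a dgla automorphism.
\begin{enumerate}
\item Take $\alpha=\frac14\supertrace(\atiyahcocycle{\can}{\cF_{\cQ}})$, so that contraction by $e^{\alpha}$ turns $(\Ahatcocycle{\can}{\cF_{\cQ}})^{\frac12}$ into $(\toddcocycle{\can}{\cF_{\cQ}})^{\frac12}$. The coefficient must be $\frac14$, not the $\frac12$ written in the paper, because a square root is taken.
\item Show $\alpha$ is closed both for $\cQ$ and for the leafwise differential $d_{\cF}$. The latter holds because $\supertrace(\atiyahcocycle{\can}{\cF_{\cQ}})=d_{\cF}\big(\divergence(\tauTpolyF(Q)+A^\nabla)\big)$. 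This divergence is globally defined because the $y$-Jacobians of induced chart changes depend only on $x$.
\item Then contraction $\alpha^\sharp$ is a derivation of the wedge product and of the Schouten bracket, which is exactly where $d_{\cF}\alpha=0$ is needed. It also commutes with $\schouten{\tauQ}{\argument}$. Hence $e^{\alpha^\sharp}$ is a dgla automorphism, and $\ukontsevich=\tkontsevich\circ e^{\alpha^\sharp}$ is the desired morphism.
\end{enumerate}
This is what the paper does in Lemmas~\ref{lem:ScalarAtiyah-Div} and~\ref{cherry} and Corollary~\ref{cor:AroofToTodd}.

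A smaller correction: the Atiyah cocycle of $\nabla^{\can}$ is the full leafwise Hessian $\partial^2\tauQ(y_j)/\partial y_i\partial y_k$, viewed as a function on $\cN$. It is not its value at $y=0$.

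\textbf{Second gap: globalization of the twist.} $\tauQ$ is not $\cF$-longitudinal, so it cannot be inserted into the leafwise morphism. Your splitting off of $\xi_k\partial_{x_k}$, with the Christoffel term moved into $\mathcal Y$, depends on the chart, so $\mathcal Y$ is only a local Maurer--Cartan element. You never explain why the chartwise twisted morphisms agree on overlaps.

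The paper twists globally by $\vfedosov=\tauTpolyF(Q)-\delta+A^\nabla$, which is $\cF$-longitudinal but not Maurer--Cartan. It then uses that $\vfedosov|_U-\localMC$, and likewise $\localMC_U-\localMC_{U'}$, is linear in $y$. By the vanishing of $\kontsevich_n(X,\gamma_2,\dots,\gamma_n)$ for linear $X$ and $n\geq 2$, the global twist agrees on each chart with the honest twist by $\localMC$. You quote this property, but it is needed precisely for this purpose.

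Your convergence argument also misses $\tauTpolyF(Q)$, which neither raises form degree nor is $\delta$. Finiteness follows instead from arity counting: each inserted vector field lowers the arity of the output operator by one.

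Finally, you propose to redo the wheel analysis leafwise with $\Omega(\cV)$-coefficients. The paper instead compares with Kontsevich's morphism on the full chart $\cW$ twisted by $\tauQ$, where the answer $\hkr\circ\widehat{A}^{\frac12}$ is already known. It then uses the block-triangular form of the Atiyah matrix of $(\cW,\tauQ)$ to see that $\cI^\top$ carries that $\widehat{A}$ cocycle to $\Ahatcocycle{\can}{\cF_{\cQ}}$. Your route is viable, but only once its conclusion is corrected to $\widehat{A}$ and the correction above is added.
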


The space of differential forms is an $L_\infty$ module over the dgla of polyvector fields
$\big(\totTpolyF{\bullet}[1]\big)_{\tauQ}$, which we denote by the symbol $\big(\totbApolyF{\bullet}\big)_{\tauQ}$.
The sequences of multibrackets $(\lambda^{A,\tauQ}_n)_{n=0}^\infty$
defining the $L_\infty$ module structure on differential forms is given by
\begin{equation}\label{eq:TwistedApolyFedosov}
\begin{cases}
\lambda^{A,\tauQ}_0(\alpha) = \iL_{\tauTpolyF(Q)+\fedosov}\alpha, & \text{if $n=0$} , \\
\lambda^{A,\tauQ}_1( \gamma_1;\alpha) = \iL_{\gamma_1}\alpha, & \text{if $n=1$} , \\
\lambda^{A,\tauQ}_n(\gamma_1,\cdots,\gamma_n;\alpha) = 0, & \text{if $n\geq 2$} ,
\end{cases}
\end{equation}
for all $\gamma_1,\cdots,\gamma_n\in\totTpolyF{\bullet}[1]$ and $\alpha\in\totbApolyF{\bullet}$.

The space of polyjets admits a structure of $L_\infty$ module over the dgla
$\big(\totTpolyF{\bullet}[1]\big)_{\tauQ}$ of polyvector fields
induced by the pullback process via the $L_\infty$ quasi-isomorphism $\ukontsevich$,
which we denote by the symbol $\ukontsevich^\ast \big( \totcCpolyF{\bullet} \big)_{\tauQ}$. (See Appendix~\ref{sec:PullBack} for the general construction of pullback $L_\infty$ modules.)
The sequence of multibrackets $(\breve\lambda^{C,\tauQ}_n)_{n=0}^\infty$ defining the $L_\infty$ module structures on polyjets is given by
\begin{equation}\label{eq:ToddTwistedCpolyFedosov}
\begin{cases}
\breve\lambda^{C,\tauQ}_0(\zeta)=\iL_{\tauTpolyF(Q)+\fedosov+m_2}\zeta, & \text{if $n=0$} , \\
\breve\lambda^{C,\tauQ}_n(\gamma_1,\cdots,\gamma_n;\zeta)
=\iL_{\ukontsevich_n(\gamma_1,\cdots,\gamma_n)}\zeta, & \text{if $n\geq 1$} .
\end{cases}
\end{equation}
for all $\gamma_1,\cdots,\gamma_n\in\totTpolyF{\bullet}[1]$ and $\zeta\in\totcCpolyF{\bullet}$.

\begin{proposition}[Tsygan-type formality theorem for Fedosov dg Lie algebroids]
\label{thm:TsyganFormalityFedosov}
There exists a quasi-isomorphism of $L_\infty$ modules
\[ \ushoikhet : \ukontsevich^\ast \big( \totcCpolyF{\bullet} \big)_{\tauQ}
\inftymorphism \big( \totbApolyF{\bullet} \big)_{\tauQ} \]
over the dgla $\big(\totTpolyF{\bullet}[1]\big)_{\tauQ}$, whose zeroth Taylor coefficient is the composition
\[ \ushoikhet_0 =(\todd^{\can}_{\cF_{\tauQ}})^{\frac{1}{2}} \circ \hhkr:
\totcCpolyF{\bullet}\to\totbApolyF{\bullet} \]
of the homological HKR map with the action (by multiplication)
of $(\todd^{\can}_{\cF_{\tauQ}})^{\frac{1}{2}}\in\totbApolyF{0}$ on $\totbApolyF{\bullet}$.
\end{proposition}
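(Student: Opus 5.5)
The plan is to run the Fedosov/globalization argument fiberwise, modelled on the proof of Proposition~\ref{thm:KontsevichFormalityFedosov} that precedes this statement. The Fedosov algebroid $\cF\to\cN$ is a foliation whose leafwise geometry is the formal neighbourhood $\Tformal\cM$. So $\totTpolyF{\bullet}$, $\totDpolyF{\bullet}$, $\totcCpolyF{\bullet}$, $\totbApolyF{\bullet}$ are $\Omega(\cM,-)$-valued versions of the polyvector fields, polydifferential operators, polyjets and forms on a formal graded vector space $\hat V$, with fiber $V=T_{\cM}|_x$.

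First I would apply Shoikhet's formality theorem for chains, in its $\ZZ$-graded form (Cattaneo--Felder, Shoikhet--Tsygan), fiberwise. Its Taylor coefficients $\mathcal S_n$ are given by universal configuration-space integrals. They are $GL(V)$-equivariant, and each is built from polynomial-coefficient operations. Hence they extend $\Omega(\cM,\hat S(T^\vee_{\cM}))$-linearly to maps
\[
\mathcal S_n:\big(\totTpolyF{\bullet}[1]\big)^{\otimes n}\otimes \totcCpolyF{\bullet}\to\totbApolyF{\bullet}.
\]
These satisfy the $L_\infty$-module morphism equations relative to the fiberwise Kontsevich morphism $\mathcal U$, for the leafwise differentials $\hochschildb$ and $0$.

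Next I would twist by the Maurer--Cartan element. Write $\tauQ=d^{\nabla}+(\tauTpolyF(Q)-\delta+A^\nabla)$. Here $d^\nabla$ is the part that is not $\cF$-vertical. The vertical part $\gamma:=\tauTpolyF(Q)-\delta+A^\nabla$ is a leafwise vector field with form coefficients, hence an element of $\totTpolyF{1}[1]$ in degree $0$. The relation $\schouten{\tauQ}{\tauQ}=0$ says exactly that $\gamma$ is Maurer--Cartan for the connection-twisted dgla. In the proof of Proposition~\ref{thm:KontsevichFormalityFedosov} one twists $\mathcal U$ by $\gamma$ to get $\ukontsevich$. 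In the same way I set
\[
\ushoikhet_n(\gamma_1,\dots,\gamma_n;\zeta)=\sum_{k\ge0}\frac1{k!}\mathcal S_{n+k}(\gamma^{k},\gamma_1,\dots,\gamma_n;\zeta).
\]
Compatibility with $d^\nabla$ follows from equivariance under linear vector fields: the Christoffel terms act through $\mathfrak{gl}(V)$, and the Shoikhet coefficients are $\mathfrak{gl}$-equivariant with vanishing "linear" corrections. Convergence holds because $\gamma$ raises form degree or polynomial filtration. The only exception is $\tauTpolyF(Q)$ and $\delta$, which are at most linear in $y$ in their nonfiltered parts. For those, Shoikhet's vanishing lemmas for linear vector fields reduce $\mathcal S_{k+1}(\gamma_{\mathrm{lin}},\dots)$ to trace/divergence terms. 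The twisted maps then form an $L_\infty$-module morphism from $\ukontsevich^\ast(\totcCpolyF{\bullet})_\tauQ$ to $(\totbApolyF{\bullet})_\tauQ$, by the standard twisting lemma for $L_\infty$ modules (Appendix~\ref{sec:PullBack}).

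The main obstacle is identifying $\ushoikhet_0=\sum_k\frac1{k!}\mathcal S_k(\gamma^k;-)$. This sum is the HKR map composed with multiplication by a series in the fiberwise quadratic parts $\partial^2\gamma$. Those quadratic parts are precisely the Atiyah cocycle $\atiyahcocycle{\can}{\cF_\tauQ}$, as in \eqref{velociraptor} for the canonical flat connection. Wheel diagrams contribute $\supertrace$ of powers of the Atiyah cocycle. I would use the known computation, which parallels the one yielding $\ukontsevich_1=\hkr\circ(\todd^{\can})^{1/2}$ in Proposition~\ref{thm:KontsevichFormalityFedosov}. In Shoikhet's chain formality with the standard propagator, the wheel weights give the same $\frac12\log\frac{x}{1-e^{-x}}$ generating function (modulo odd terms that are exact or vanish), so the multiplier is $\Ber$ of that, namely $(\todd^{\can}_{\cF_\tauQ})^{1/2}$. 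If a discrepancy by a coboundary appears, I would absorb it by composing with a gauge ($\exp$ of an $\iI$-type contraction) as an $L_\infty$-module automorphism.

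Finally, the quasi-isomorphism property follows from $\ushoikhet_0$ being one. Multiplication by the invertible degree-$0$ cocycle $(\todd^{\can})^{1/2}$ is an isomorphism of complexes. The map $\hhkr:(\totcCpolyF{\bullet})_\tauQ\to(\totbApolyF{\bullet})_\tauQ$ is a quasi-isomorphism by the remark following Corollary~\ref{cor:HKR-Fedosov}.
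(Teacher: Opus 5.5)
You have the first half of the paper's argument right. The paper also twists the leafwise Shoikhet morphism $\fshoikhet$ by the $\cF$-longitudinal field $\vfedosov=\tauTpolyF(Q)-\delta+A^\nabla$ (your $\gamma$). It disposes of the curvature of $d^\nabla$ the same way you do, because linear vector fields drop out. The paper does this chart by chart: there $\localMC$ is an honest Maurer--Cartan element for $d_{\DR}$ and differs from $\vfedosov$ by the linear Christoffel term.

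\textbf{The gap is in what the twist produces.} It produces $\tkontsevich$ and $\tshoikhet$ with
$\tkontsevich_1=\hkr\circ(\Ahatcocycle{\can}{\cF_{\cQ}})^{1/2}$ and $\tshoikhet_0=(\Ahatcocycle{\can}{\cF_{\cQ}})^{1/2}\circ\hhkr$. This is Proposition~\ref{prop:Florence}, obtained from Propositions~\ref{prop:DK_vs} and~\ref{prop:DKS_vs} on $\cW=\cN|_U$ together with Lemma~\ref{lem:ToddFedosov}. Behind those propositions is the fact that odd wheels have weight zero. So the generating function is $\frac{1}{2}\log\frac{x/2}{\sinh(x/2)}$, not
\[
\frac{1}{2}\log\frac{x}{1-e^{-x}}=\frac{x}{4}+\frac{1}{2}\log\frac{x/2}{\sinh(x/2)}.
\]
The ``odd terms'' you wave away are exactly this linear term: $(\todd^{\can}_{\cF_{\tauQ}})^{1/2}=e^{\frac{1}{4}\supertrace(\atiyahcocycle{\can}{\cF_{\cQ}})}(\Ahatcocycle{\can}{\cF_{\cQ}})^{1/2}$.

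This factor neither vanishes nor comes from a coboundary. The class of $\supertrace(\atiyahcocycle{\can}{\cF_{\cQ}})$ corresponds to the first scalar Atiyah class of $(\cM,Q)$. For $\frakg[1]$ with $\frakg$ non-unimodular, that class is, up to a constant, the modular character $x\mapsto\mathrm{tr}(\mathrm{ad}_x)$. It cannot be exact, since $\frakg[1]$ has no $1$-forms of degree $-1$. Hence multiplication by the factor is not homotopic to the identity, and your fallback of absorbing a coboundary by a gauge cannot turn $\widehat{A}^{1/2}$ into the Todd square root. For the same reason your base morphism is wrong: twisting $\mathcal{U}$ by $\gamma$ gives $\tkontsevich$, not $\ukontsevich$. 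So your module morphism lives over $\tkontsevich^\ast(\totcCpolyF{\bullet})_{\tauQ}$, not over $\ukontsevich^\ast(\totcCpolyF{\bullet})_{\tauQ}$.

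\textbf{The missing idea is a genuine, non-homotopic change of both morphisms.} By Lemma~\ref{lem:ScalarAtiyah-Div},
\[
\supertrace(\atiyahcocycle{\can}{\cF_{\cQ}})=d_{\cF}\big(\divergence(\tauTpolyF(Q)+A^\nabla)\big).
\]
Let $\alpha$ be the multiple of the scalar Atiyah cocycle with $e^{\alpha}(\Ahatcocycle{\can}{\cF_{\cQ}})^{1/2}=(\todd^{\can}_{\cF_{\tauQ}})^{1/2}$. Then $\alpha$ is a degree-$0$ leafwise $1$-form that is $d_{\cF}$-closed and $\iL_{\tauQ}$-closed, though not $\iL_{\tauQ}$-exact. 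Lemma~\ref{cherry} then makes $(e^{\alpha^\sharp},e^{-\alpha^\flat})$ an automorphism of the dg Cartan calculus. Concretely, contraction with such a closed $1$-form is a derivation of the bracket, and $e^{\alpha^\flat}\circ\iL_{e^{\alpha^\sharp}\gamma}=\iL_{\gamma}\circ e^{\alpha^\flat}$.

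One then sets $\ukontsevich:=\tkontsevich\circ e^{\alpha^\sharp}$ and $\ushoikhet:=e^{\alpha^\flat}\circ\big((e^{\alpha^\sharp})^\ast\tshoikhet\big)$. This is an $L_\infty$ module morphism $\ukontsevich^\ast(\totcCpolyF{\bullet})_{\tauQ}\inftyto(\totbApolyF{\bullet})_{\tauQ}$ with
\[
\ushoikhet_0=e^{\alpha}(\Ahatcocycle{\can}{\cF_{\cQ}})^{1/2}\circ\hhkr=(\todd^{\can}_{\cF_{\tauQ}})^{1/2}\circ\hhkr .
\]
Your closing quasi-isomorphism argument then goes through unchanged.

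Two smaller corrections. Convergence of the twisted sums comes from Theorem~\ref{ShoikhetFormality}~\ref{soja}, together with $\totbApolyF{\bullet}$ being a product over leafwise form degree; your stated reason is wrong because $\tauTpolyF(Q)$ is not linear in $y$. And by Theorem~\ref{ShoikhetFormality}~\ref{palmatum}, inserting a linear vector field gives zero, not trace terms.
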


The use of $\totbApolyF{\bullet}$ instead of $\totcApolyF{\bullet}$ is necessary here because the square root $(\todd^{\can}_{\cF_{\tauQ}})^{\frac{1}{2}}$ of the Todd cocycle resides in $\totbApolyF{0} \setminus \totcApolyF{0}$.

Passing to cohomology, the square root
$(\toddclass{\cF_{\tauQ}})^{\frac{1}{2}} \in\cohomology{0}\big(\totbApolyF{\bullet},\iL_{\tauQ}\big)$ of the Todd class of the Fedosov dg Lie algebroid $\cF_{\cQ}\to\cN_{\cQ}$ acts on
$\cohomology{}\big(\totTpolyF{\bullet},\schouten{\tauQ}{\argument}\big)$ by contraction
and on $\cohomology{}\big(\totbApolyF{\bullet},\iL_{\tauQ}\big)$ by multiplication.

\begin{proposition}[Duflo--Kontsevich-type theorem for Fedosov dg Lie algebroids]\label{thm:DK_Fedosov}
The pair of maps
\begin{multline}\label{eq:uKont1_coh}
\hkr\circ(\toddclass{\cF_{\cQ}})^{\frac{1}{2}}:
\cohomology{}\big(\totTpolyF{\bullet},\schouten{\tauTpolyF(Q)+\fedosov}{\argument}\big)
\\
\xto{\cong} \cohomology{}\big(\totDpolyF{\bullet},\gerstenhaber{\tauTpolyF(Q)+\fedosov+m_2}{\argument}\big)
\end{multline}
and
\begin{multline} \label{eq:uShoi0_coh}
((\toddclass{\cF_{\cQ}})^{\frac{1}{2}} \circ
\hhkr)^{-1}:
\cohomology{}\big(\totbApolyF{\bullet},\iL_{\tauTpolyF(Q)+\fedosov} \big)
\xto{\cong} \cohomology{}\big(\totcCpolyF{\bullet},\iL_{\tauTpolyF(Q)+\fedosov+m_2}\big)
\end{multline}
is an isomorphism of calculi from $\calculus_C(\cF,\tauQ)$ to $\calculus_H(\cF,\tauQ)$.
\end{proposition}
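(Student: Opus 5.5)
The plan is to derive Proposition~\ref{thm:DK_Fedosov} from Propositions~\ref{thm:KontsevichFormalityFedosov} and~\ref{thm:TsyganFormalityFedosov}, following the classical argument that a formality morphism together with a compatible module morphism induces an isomorphism of calculi on cohomology. The first step is the Gerstenhaber part. Since $\ukontsevich$ is an $L_\infty$ quasi-isomorphism of dglas, its first Taylor coefficient $\ukontsevich_1=\hkr\circ(\todd^{\can}_{\cF_{\tauQ}})^{\frac12}$ is a quasi-isomorphism. It therefore induces an isomorphism of graded Lie algebras on cohomology: the obstruction to $\ukontsevich_1$ preserving brackets is $\ukontsevich_2$, which is a homotopy. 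This gives \eqref{eq:uKont1_coh} and its compatibility with $\schouten{\argument}{\argument}$ and $\gerstenhaber{\argument}{\argument}$. Bijectivity also follows independently from Corollary~\ref{cor:HKR-Fedosov}, because contraction with the invertible cocycle $(\todd^{\can})^{\frac12}$ is invertible.

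The second step is the module part. The quasi-isomorphism $\ushoikhet$ of $L_\infty$ modules has zeroth coefficient $\ushoikhet_0=(\todd^{\can})^{\frac12}\circ\hhkr$. The $n=1$ component of the $L_\infty$-module morphism equations states that
\[
\ushoikhet_0\circ\iL_{\ukontsevich_1(\gamma)}-\iL_\gamma\circ\ushoikhet_0
\]
is $\tauQ$-exact whenever $\gamma$ is closed, with homotopy $\ushoikhet_1(\gamma;\argument)$. Consequently, on cohomology the inverse of \eqref{eq:uShoi0_coh} intertwines $\iL$. The quasi-isomorphism claim, together with the Remark following Corollary~\ref{cor:HKR-Fedosov}, shows that \eqref{eq:uShoi0_coh} is bijective.

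The third step, which I expect to be the main obstacle, is compatibility with the associative structures: the cup product $\cupproduct$ versus $\wedge$, the contraction $\iI$, and $B$ versus $d$. The $L_\infty$ data do not see these operations directly. The approach would be to reduce to the fiberwise situation. The Fedosov algebroid is locally the formal affine space $\Tformal\cM$, with $\cF$ spanned by $\partial/\partial y_k$ and coefficients in forms on $\cM$. Kontsevich's and Shoikhet--Tsygan's morphisms on $\RR^{\mr}_{\mathrm{formal}}$ are then known to induce an isomorphism of calculi. This is Kontsevich's cup-product theorem and its extension by Calaque--Rossi, Dolgushev--Tamarkin--Tsygan and related work; in particular it holds after twisting by a Maurer--Cartan element, which in our setting is $\tauQ$ and which has been made $\mathrm{GL}$-equivariant as in Dolgushev's globalization.

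Concretely, I would twist the flat-space formality morphisms by the Maurer--Cartan element $\tauQ$, viewed as a Fedosov-type element with $\tauTpolyF(Q)$ included. The twisted $\ukontsevich_1$ then equals $\hkr$ composed with contraction by $\exp\big(\sum_{n}\frac{1}{n!}\mathcal U_{n+1}(\tauQ,\dots,\tauQ,\argument)\big)$. Its wheel contributions are computed from the Atiyah cocycle $\atiyahcocycle{\can}{\cF_{\cQ}}$, which gives the $(\todd^{\can})^{\frac12}$ factor in the form of the $\widehat A$ genus up to the odd Chern terms, which vanish in cohomology. The twisting procedure preserves the property ``induces a morphism of calculi on cohomology'', because the homotopies realizing cup-product compatibility (Kontsevich's eye-diagram argument) and $B$-compatibility are natural and equivariant, and hence survive twisting.

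Finally, $(\toddclass{\cF_\cQ})^{\frac12}$ commutes appropriately with the contraction and multiplication actions. Multiplication by the closed even form commutes with $\iI$, and the correction to $d$ is governed by $d\,\todd$, which is exact on cohomology. Using this, one checks that $\hkr$ and $\hhkr^{-1}$ transport $\iI$ and $B$ correctly on cohomology, which completes the identification of calculi.
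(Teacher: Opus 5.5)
Your Steps 1--2 (bracket and Lie action $\iL$) are fine. Your plan for the associative part is the same as the paper's proof of Proposition~\ref{prop:CompatibleWithCal_Fedosov}: flat-space cup and cap homotopies, twisted and glued, plus Willwacher's theorem for $B$.

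The gap is in how you pass from what that machinery gives to the maps in the statement. Twisting gives $\tkontsevich_1=\hkr\circ(\Ahatcocycle{\can}{\cF_{\cQ}})^{\frac{1}{2}}$ and $\tshoikhet_0=(\Ahatcocycle{\can}{\cF_{\cQ}})^{\frac{1}{2}}\circ\hhkr$ (Propositions~\ref{prop:DK_vs}, \ref{prop:DKS_vs}, \ref{prop:Florence}). So the cup-, cap- and $B$-compatibilities you import hold for the $\widehat{A}$-pair, not the Todd pair.

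You bridge the two by saying the odd Chern terms vanish in cohomology, and that is false. Since $\toddcocycle{\can}{\cF_{\cQ}}=e^{\frac{1}{2}\supertrace(\atiyahcocycle{\can}{\cF_{\cQ}})}\cdot\Ahatcocycle{\can}{\cF_{\cQ}}$, the discrepancy is the first scalar Atiyah class. For $(\frakg[1],d_{\CE})$ this is, up to sign, the modular character $x\mapsto\mathrm{tr}(\mathrm{ad}_x)$. It is a nonzero degree-$0$ class whenever $\frakg$ is not unimodular, since there are no coboundaries in that degree. Via $\tauTpolyQ$ and Lemma~\ref{maple} it stays nonzero on the Fedosov side.

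Your final paragraph does not repair this, for three reasons.
\begin{enumerate}
\item Multiplication by a form does not commute with $\iI_\gamma$. For a $1$-form one has $\iI_\gamma\circ\alpha^\flat-\alpha^\flat\circ\iI_\gamma=\iI_{\alpha^\sharp(\gamma)}$, and for a general even form such as $(\Ahatcocycle{\can}{\cF_{\cQ}})^{\frac12}$ the pointwise intertwining of (contraction, inverse multiplication) with $\iI$ fails.
\item The plain HKR map is not multiplicative on this cohomology; that failure is exactly what the square root corrects. So the correction cannot be checked factor by factor.
\item ``$d\,\todd$ is exact on cohomology'' is not the condition that makes multiplication commute with $d$.
\end{enumerate}

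The missing ingredient is Lemma~\ref{lem:ScalarAtiyah-Div} together with Lemma~\ref{cherry}. One has $\supertrace(\atiyahcocycle{\can}{\cF_{\cQ}})=d_{\cF}\big(\divergence(\tauTpolyF(Q)+A^\nabla)\big)$. Hence the degree-$0$ $\cF$-longitudinal $1$-form $\alpha$, a suitable multiple of $\supertrace(\atiyahcocycle{\can}{\cF_{\cQ}})$, is both $\iL_{\tauQ}$-closed and $d_{\cF}$-closed. Because $e^{\alpha}$ is group-like and $\alpha$ is closed, the pair $(e^{\alpha^\sharp},e^{-\alpha^\flat})$ is an automorphism of the whole Cartan dg calculus (Corollary~\ref{cor:AroofToTodd}). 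Composing it with the $\widehat{A}$-isomorphism of Proposition~\ref{prop:CompatibleWithCal_Fedosov} gives the Todd statement, and nothing needs to vanish in cohomology.

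Separately, your gluing step should be made precise. $\tauQ$ is not a Maurer--Cartan element of the fiberwise dgla: it contains the non-longitudinal part $\sum_k\xi_k\frac{\partial}{\partial x_k}$, which globally is $d^\nabla$. One twists locally by $\localMC=\tauQ-\sum_k\xi_k\frac{\partial}{\partial x_k}$. The local homotopies $H^{\tkontsevich}_{\localMC}$ and $H^{\tshoikhet}_{\localMC}$ glue because $\localMC_U-\localMC_{U'}$ is a linear vector field, and the homotopy operators are unchanged under linear shifts of the Maurer--Cartan element (last clauses of Theorems~\ref{prop:HmtOpKont} and~\ref{prop:HmtOpShoi}). $\mathrm{GL}$-equivariance alone does not give this.
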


\subsection{Formality morphisms for Fedosov dg Lie algebroids}
\label{sec:ConstructionFormalityFedosov}

\subsubsection{Construction of the formality morphisms}

Following Dolgushev \cite{MR2102846,MR2199629},
we construct formality morphisms for Fedosov dg Lie algebroids in two successive steps.

The first step consists in applying the quasi-isomorphisms of Kontsevich \eqref{eq:KontsevichFormality}
and Shoikhet \eqref{eq:ShoikhetFormality} $\cF$-leafwise on $\cN$.
Each leaf of this foliation is essentially diffeomorphic to a fixed $\ZZ$-graded vector space:
the standard fiber of the vector bundle $T_{\cM}\to\cM$.
This yields a morphism of $L_\infty$ algebras
\begin{equation}\label{Masaki}
\fkontsevich: \big(\totTpolyF{\bullet}[1],0,
\schouten{\argument}{\argument}\big) \inftyto \big(\totDpolyF{\bullet}[1],\hochschild,
\gerstenhaber{\argument}{\argument}\big)
\end{equation}
and a morphism
\begin{equation}\label{Kenji}
\fshoikhet: (\fkontsevich)^\ast \big(\totcCpolyF{\bullet}\big) \inftyto \totbApolyF{\bullet}
\end{equation}
of $L_\infty$ modules over the dgla
$\big(\totTpolyF{\bullet}[1],0,\schouten{\argument}{\argument}\big)$.

More explicitly, we choose a local coordinate system $(x_1, \cdots, x_{\mr})$ on $\cM$, which induces a local coordinate system $(x_1, \cdots, x_{\mr}, \xi_1, \cdots, \xi_{\mr}, y_1, \cdots, y_{\mr})$ on $\cN$ (see~\eqref{eq:InducedCoordFedosovMfd}). Given $\gamma_1, \cdots, \gamma_n \in \totTpolyF{\bullet}[1]$, we can express each element in a chosen local coordinate system as
\[ \gamma_i = \nshift \Big( \sum_{J, K, L} f_{i, J, K}^L \, \xi^J y^K (\pshift \partial_y)^{\odot L} \Big) ,\]
where $J, K, L$ are multi-indices, $f_{i, J, K}^L$ are local functions of $x_1, \cdots, x_{\mr}$.
Here we write $\xi^J = \xi_1^{j_1} \cdots \xi_{\mr}^{j_{\mr}}$ for $J = (j_1, \cdots, j_{\mr})$, $y^K = y_1^{k_1} \cdots y_{\mr}^{k_{\mr}}$ for $K = (k_1, \cdots, k_{\mr})$, and
\[ (\pshift \partial_y)^{\odot L} = (\pshift \partial_{y_1})^{\odot l_1} \odot \cdots \odot (\pshift \partial_{y_{\mr}})^{\odot l_{\mr}} \]
for $L = (l_1, \cdots, l_{\mr})$.
In this type of local coordinate system, we have, for $n \geq 1$,
\begin{equation}\label{Kyoji}
\fkontsevich_n( \gamma_1, \cdots, \gamma_n) = \sum_{i, J_i, K_i, L_i} \pm \Big(\prod_i f_{i, J_i, K_i}^{L_i} \, \xi^{J_i}\Big) \cdot \kontsevich_n\big(\nshift(y^{K_1} (\pshift \partial_y)^{\odot L_1}), \cdots, \nshift( y^{K_n} (\pshift \partial_y)^{\odot L_n})\big).
\end{equation}
Similarly, an element $\zeta \in \totcCpolyF{\bullet}$ is locally of the form
\[ \zeta = \sum_{P, Q_1, \cdots, Q_s} g_{P, Q_1, \cdots, Q_s} \, \xi^P \nshift(y^{Q_1})
\otimes \cdots \otimes \nshift(y^{Q_s}) ,\]
where $P, Q_1, \cdots, Q_s$ are multi-indices, and $g_{P, Q_1, \cdots, Q_s}$ are local functions of $x_1, \cdots, x_{\mr}$. Then for $n \geq 0$,
\begin{multline}\label{Kaoru}
\fshoikhet_n(\gamma_1, \cdots, \gamma_n; \zeta) = \sum_{\substack{i, J_i, K_i, L_i \\ s, P, Q_1, \cdots, Q_s}} \pm \Big(\prod_i f_{i, J_i, K_i}^{L_i} \, \xi^{J_i}\Big) \cdot g_{P, Q_1, \cdots, Q_s} \, \xi^P \\
\cdot \shoikhet_n\big(\nshift(y^{K_1} (\pshift \partial_y)^{\odot L_1}), \cdots, \nshift(y^{K_n} (\pshift \partial_y)^{\odot L_n}); \nshift(y^{Q_1}) \otimes \cdots \otimes \nshift(y^{Q_s})\big).
\end{multline}
Here the signs $\pm$ are determined by the Koszul sign rule.

The second step of the construction consists in twisting the morphims \eqref{Kyoji} and~\eqref{Kaoru}.

Consider the vector field of degree $1$
\begin{equation}\label{eq:globalOmega}
\vfedosov = \tauTpolyF(Q) + \fedosov - d^\nabla =\tauTpolyF(Q) -\delta+A^\nabla
,\end{equation}
where $\tauTpolyF:\XX(\cM)\to\Gamma(\cF)$ is the injection appearing in the contraction
\eqref{eq:Contraction-Tpoly}.
Note that the difference between the vector fields $\fedosov$ and $d^\nabla$ is tangent to $\cF$. Therefore, $\vfedosov \in \big(\Tpolyf{1}[1]\big)^1$ is $\cF$-longitudinal.

We introduce two sequences $(\tkontsevich_n)_{n\in\NN}$ and $(\tshoikhet_n)_{n\in\NO}$ of maps
\begin{small}
\begin{gather*}
\tkontsevich_n: \overbrace{\totTpolyF{\bullet}[1] \times \cdots \times \totTpolyF{\bullet}[1]}^{\text{$n$ factors}} \to \totDpolyF{\bullet}, \\
\tshoikhet_n: \overbrace{\totTpolyF{\bullet}[1] \times \cdots \times \totTpolyF{\bullet}[1]}^{\text{$n$ factors}} \times \totcCpolyF{\bullet} \to \totbApolyF{\bullet},
\end{gather*}
\end{small}
defined as follows:
\begin{gather}
\tkontsevich_n(\gamma_1,\cdots,\gamma_n) = \sum_{k=0}^\infty \dfrac{1}{k!} \fkontsevich_{n+k}(\vfedosov,\cdots, \vfedosov,\gamma_1,\cdots,\gamma_n), \label{eq:TwistedVerKont} \\
\tshoikhet_n(\gamma_1,\cdots,\gamma_n;\zeta) = \sum_{k=0}^\infty \dfrac{1}{k!} \fshoikhet_{n+k}(\vfedosov,\cdots, \vfedosov,\gamma_1,\cdots,\gamma_n;\zeta), \label{eq:TwistedVerShoi}
\end{gather}
for $\gamma_1,\cdots,\gamma_n\in\totTpolyF{\bullet}[1]$ and $\zeta\in\totcCpolyF{\bullet}$.

\begin{lemma}
The maps $\tkontsevich_n$ and $\tshoikhet_n$ are well-defined.
\end{lemma}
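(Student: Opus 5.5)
The plan is to show that each infinite sum in \eqref{eq:TwistedVerKont} and \eqref{eq:TwistedVerShoi} converges, in the sense that in every fixed multi-degree only finitely many terms contribute, or else the sum converges in the adic topology of $\hat{S}(T^\vee_{\cM})$.

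The key input is the structure of $\vfedosov=\tauTpolyF(Q)-\delta+A^\nabla$. Each summand is $\cF$-longitudinal and carries a definite bidegree.
\begin{itemize}
\item The term $-\delta=-\sum_k\xi_k\partial_{y_k}$ has form degree $1$ and polynomial degree $0$ in $y$.
\item The term $A^\nabla$ has form degree $1$ and $y$-degree $\geq 2$.
\item The term $\tauTpolyF(Q)$ has form degree $0$ and is a formal series in $y$ whose components are determined by $Q$ and $\nabla$.
\end{itemize}
Since $\fkontsevich_{n+k}$ and $\fshoikhet_{n+k}$ are $C^\infty(\cM)\llbracket\xi\rrbracket$-multilinear (applied fiberwise), with coefficients multiplied by \eqref{Kyoji} and \eqref{Kaoru}, we may split $\vfedosov=\vfedosov'+\vfedosov''$ with $\vfedosov'=-\delta+A^\nabla$ and $\vfedosov''=\tauTpolyF(Q)$. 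We then expand the $k$-fold insertions multinomially.

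First I will bound the number of insertions of $\vfedosov'$. Each such insertion raises the form degree $r$ by $1$. Because $\Omega^r(\cM,\cdot)$ with $r$ larger than the (graded) rank allows only finitely many odd $\xi$'s, the contribution vanishes above a fixed number in the purely even directions. For even $\xi$'s (coming from odd coordinates), the form degree is not bounded. Here I use instead that the target total spaces $\totDpolyF{n}$ and $\totbApolyF{n}$ are organised so that each fixed $r$-component must be finite. For $\totDpolyF{\bullet}$ the direct sum over $r$ requires that only finitely many $r$ occur. This should follow because $\kontsevich_m$ is nonzero only when the degree-counting $\sum(\text{arity of }\gamma_i)-$(output arity)$=2m-2$ holds. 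Each $\vfedosov$ is a vector field, i.e. of arity $1$, so $k$ insertions of vector fields contribute $k$ to the input arity. The output arity is then $\sum_i p_i+k-2(n+k)+2$, which is negative for large $k$. Hence only finitely many $k$ give nonzero terms, for given $\gamma_i$ of fixed arities $p_i$.

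This arity count is the crux, and it disposes of the Kontsevich sum outright, including the $\vfedosov''$ insertions. For $\shoikhet_m$ the analogous count is that the output form degree equals the input polyjet length minus $\sum_i(p_i-1)$ minus $k\cdot 0$. Since $\vfedosov$ has arity $1$, it contributes $p-1=0$, so the Shoikhet count does not bound $k$. In that case I will instead use the weight in $y$.

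Shoikhet's morphism on the formal space is a sum over graphs in which each vector-field vertex contributes one derivative-edge, and the total number of edges is fixed by form-type counting. An insertion of $-\delta$ (constant coefficient in $y$) must be hit by no edge to survive, which forces its term to vanish unless $k$ is small. The real point is that insertions of a constant-coefficient vector field into the $m\geq 2$ components vanish, since the corresponding graph has a vertex with no incoming edges and these weights vanish, as in Kontsevich's lemma. Linear-in-$y$ pieces give a bounded contribution. Every remaining piece, namely $A^\nabla$ and the higher $y$-degree parts of $\tauTpolyF(Q)$, raises the $y$-filtration degree of the output by at least $1$ per insertion after derivatives are accounted for. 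The sum therefore converges $y$-adically in $\hat{S}(T^\vee_{\cM})$, which is complete.

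The main obstacle is to make this last filtration estimate precise, controlling the linear-in-$y$ part of $\tauTpolyF(Q)$, whose insertions do not raise the filtration. I would handle it by noting that linear vector fields act through Shoikhet's morphism only via $\shoikhet_1$, which is a Lie derivative. Consequently the sums of those insertions, interleaved with other insertions, reduce to finitely many nonvanishing graph types. Combining this with the product structure of $\totbApolyF{\bullet}$ over $p$, each component of fixed $(r,q,p)$ receives finitely many terms modulo arbitrarily high $y$-degree. This gives well-definedness.
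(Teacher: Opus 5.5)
Your argument for $\tkontsevich_n$ is correct and is the paper's: with $k$ copies of $\vfedosov$ the output arity is $\sum_i p_i-2n-k+2$, which is negative for $k$ large. The Shoikhet half, however, rests on a wrong degree count. A graph contributing to $\shoikhet_N$ on a chain of length $m$ has $2N+m$ edges. The output form degree is the number of edges leaving the central vertex $0$, i.e.\ $m+2N-\sum(\text{arities of the $N$ polyvector inputs})$. So each input of arity $a$ shifts the form degree by $2-a$, not $1-a$, and every inserted vector field raises the output form degree by exactly one; this is property~\ref{soja} of Theorem~\ref{ShoikhetFormality}. Take homogeneous $\gamma_i$ of arity $p_i$ and $\zeta$ of length $\tilde p$. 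Then the $k$-th term $\fshoikhet_{n+k}(\vfedosov,\dots,\vfedosov,\gamma_1,\dots,\gamma_n;\zeta)$ is a form of degree $\tilde p+2n+k-\sum_ip_i$. Its total degree is independent of $k$, and its $r$-index takes only the $k+1$ values $r_0,\dots,r_0+k$, where $r_0$ is the sum of the input $r$-degrees. Distinct $k$ therefore land in distinct factors of $\totbApolyF{\bullet}=\prod_p\bigoplus_{q,r}(\cdots)$, and the sum is well defined with no convergence argument at all. This is exactly why the target is $\totbApolyF{\bullet}$ rather than $\totcApolyF{\bullet}$, and it is the idea your proposal is missing.

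The $y$-adic argument you substitute does not work. Each edge applies one $\partial_y$, and each new internal vertex brings two more edges. So inserting a vector field whose coefficients have $y$-degree $d$ changes the $y$-degree of the output by $d-2$. In particular, the quadratic parts of $A^\nabla$ and of $\tauTpolyF(Q)$ do not raise the $y$-filtration at all. These are the insertions responsible for the $\widehat{A}$-cocycle factor in $\tshoikhet_0$ (Proposition~\ref{prop:Florence}). That factor is a genuinely infinite series, in which infinitely many $k$ contribute in the same $y$-degree and are separated only by form degree. Also, linear vector fields do not act via $\shoikhet_1$ as a Lie derivative: by property~\ref{palmatum}, $\shoikhet_N(X,\dots;\alpha)=0$ for every linear $X$ and every $N\geq 1$, so those insertions simply vanish.
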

\begin{proof}
Let $\prescript{\cF}{}{\sT}^{r,q,p}$, $\prescript{\cF}{}{\sA}^{r,q,-p}$, $\prescript{\cF}{}{\sD}^{r,q,p}$, and $\prescript{\cF}{}{\sC}^{r,q,-p}$ be the component spaces defined in Equations~\eqref{eq:TpolyF-component}, \eqref{eq:ApolyF-component}, \eqref{eq:DpolyF-component}, and \eqref{eq:CpolyF-component}, respectively. Without loss of generality, we may assume that $\zeta \in \prescript{\cF}{}{\sC}^{\tilde{r},\tilde{q},-\tilde{p}}$ and $\gamma_i \in \prescript{\cF}{}{\sT}^{r_i,q_i,p_i}[1]$ for each $i \in \{1, \cdots, n\}$. Note that $\vfedosov \in (\prescript{\cF}{}{\sT}^{1,0,1} \oplus \prescript{\cF}{}{\sT}^{0,1,1})[1]$.

Examining the definition of the Kontsevich morphism closely, we note that
\[ \fkontsevich_{n+k}(\underset{\text{$k$ copies}}{\underbrace{\vfedosov,\cdots,\vfedosov}},\gamma_1,\cdots,\gamma_n)
\in \big(\Dpolyf{p_1+\cdots+p_n-2n-k+2}[1]\big)^{d_1+\cdots+d_n-2n+1} ,\]
where $d_i = p_i + q_i + r_i$ for each $i \in \{1,\cdots, n\}$.
This degree counting shows that, as $k$ increases,
$\fkontsevich_{n+k}(\vfedosov,\cdots,\vfedosov,\gamma_1,\cdots,\gamma_n)$
eventually vanishes for $k$ sufficiently large.
Consequently, since only finitely many terms of the sum are nonzero, the sum $\sum_{k=0}^\infty\fkontsevich_{n+k}(\vfedosov,\cdots,\vfedosov,\gamma_1,\cdots,\gamma_n)$
belongs to $\totDpolyF{\bullet}[1]$.
The map $\tkontsevich_n$ is therefore well-defined.

Inspecting the definition of Shoikhet's formality morphism carefully, we note that
\[ \fshoikhet_{n+k}(\vfedosov,\cdots,\vfedosov,\gamma_1,\cdots,\gamma_n;\zeta) \in \bigoplus_{j=0}^k
\prescript{\cF}{}{\sA}^{(\check\mathbf{r} +j),(\check\mathbf{q}+k-j),(\check\mathbf{p}-2n-k)} ,\]
where $\check\mathbf{p} = p_1+ \cdots + p_n - \tilde{p}$;
$\check\mathbf{q}= q_1 + \cdots + q_n + \tilde{q}$;
and $\check\mathbf{r}=r_1 + \cdots + r_n + \tilde{r}$.
Thus,
\[ \tshoikhet_n(\gamma_1,\cdots,\gamma_n;\zeta) \in \prod_{k=0}^\infty \bigg(\bigoplus_{j=0}^k
\prescript{\cF}{}{\sA}^{(\check\mathbf{r}+j),(\check\mathbf{q}+k-j),(\check\mathbf{p}-2n-k)}\bigg)
\subset \totbApolyF{\check\mathbf{p} + \check\mathbf{q} + \check\mathbf{r}-2n} ,\]
and the proof is complete.
\end{proof}

\begin{proposition}\label{prop:FormalityMor-Fedosov}
\begin{enumerate}
\item
The maps $(\tkontsevich_n)_{n\in\NN}$ defined by Equation~\eqref{eq:TwistedVerKont}
are the Taylor coefficients of an $L_\infty$ morphism of dglas
\begin{equation}\label{eq:tkonsevich}
\tkontsevich: \big(\totTpolyF{\bullet}[1]\big)_{\tauQ} \inftyto \big(\totDpolyF{\bullet}[1]\big)_{\tauQ} .
\end{equation}
\item
The maps $(\tshoikhet_n)_{n\in\NO}$ defined by Equation~\eqref{eq:TwistedVerShoi}
are the Taylor coefficients of a morphism
\begin{equation}\label{eq:tshoikhet}
\tshoikhet: \tkontsevich^\ast\big(\totcCpolyF{\bullet}\big)_{\tauQ} \inftyto \big(\totbApolyF{\bullet}\big)_{\tauQ} ,
\end{equation}
of $L_\infty$ modules over the dgla $\big(\totTpolyF{\bullet}\big)_{\tauQ}$.
The sequences of multibrackets $(\lambda^{A,\tauQ}_n)_{n=0}^\infty$
and $(\lambda^{C,\tauQ}_n)_{n=0}^\infty$
defining the $L_\infty$ module structures on differential forms and polyjets
are given by Equation~\eqref{eq:TwistedApolyFedosov} and
\begin{equation}\label{eq:TwistedCpolyFedosov}
\begin{cases}
\lambda^{C,\tauQ}_0(\zeta)=\iL_{\tauTpolyF(Q)+\fedosov+m_2}\zeta, & \text{if $n=0$} , \\
\lambda^{C,\tauQ}_n(\gamma_1,\cdots,\gamma_n;\zeta)
=\iL_{\tkontsevich_n(\gamma_1,\cdots,\gamma_n)}\zeta, & \text{if $n\geq 1$} .
\end{cases}
\end{equation}
for all $\gamma_1,\cdots,\gamma_n\in\totTpolyF{\bullet}[1]$ and $\zeta\in\totcCpolyF{\bullet}$.
\end{enumerate}
\end{proposition}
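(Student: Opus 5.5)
Both parts follow from Dolgushev's twisting procedure, applied to the leafwise morphisms $\fkontsevich$ and $\fshoikhet$ of \eqref{Masaki}--\eqref{Kenji} with Maurer--Cartan element $\vfedosov$.

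\textbf{Step 1: $\vfedosov$ is Maurer--Cartan.} We check that $\vfedosov$ of \eqref{eq:globalOmega} is a Maurer--Cartan element in the dgla $\big(\totTpolyF{\bullet}[1],d^\nabla,\schouten{\argument}{\argument}\big)$, i.e.
\[ d^\nabla \vfedosov+\tfrac12\schouten{\vfedosov}{\vfedosov}=0 . \]
Since $\tauQ=d^\nabla+\vfedosov$ is homological, the identity $\schouten{\tauQ}{\tauQ}=0$ expands to
\[ \schouten{d^\nabla}{d^\nabla}+2\,\schouten{d^\nabla}{\vfedosov}+\schouten{\vfedosov}{\vfedosov}=0 . \]
The torsion-free connection $\nabla$ may have curvature, so $\schouten{d^\nabla}{d^\nabla}$ need not vanish. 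It is, however, an $\cF$-longitudinal term, namely the curvature acting fibrewise as $\xi\xi\, y\,\partial_y$.

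It is cleaner to use the Fedosov convention: view $d^\nabla$ as the ``de Rham part'' and absorb the curvature into $\vfedosov$. The operators $d^\nabla$, $\schouten{\argument}{\argument}$ and $\fkontsevich,\fshoikhet$ behave as follows.
\begin{itemize}
\item[(a)] $\fkontsevich,\fshoikhet$ are built $\cF$-leafwise from the Kontsevich and Shoikhet morphisms, which are $GL$-equivariant. Hence they intertwine $d^\nabla$ up to terms $\fkontsevich_{n+1}(R^\nabla,\ldots)$, where $R^\nabla$ is the curvature viewed as a linear vector field.
\item[(b)] The Kontsevich/Shoikhet properties for linear vector fields give $\kontsevich_{n+1}(\text{linear},\ldots)=0$ for $n\geq 1$, and $\kontsevich_1(\text{linear})=\hkr(\text{linear})$. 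The analogue holds for $\shoikhet$.
\end{itemize}
Together, (a) and (b) let us rewrite $d^\nabla$ as $\schouten{d^\nabla}{\argument}$ on $\totTpolyF{\bullet}$ and as $\gerstenhaber{d^\nabla}{\argument}$ on $\totDpolyF{\bullet}$. The curvature term then cancels against $\schouten{d^\nabla}{d^\nabla}$ on both sides. So we obtain an $L_\infty$ morphism between the dglas $(\totTpolyF{\bullet}[1],\schouten{d^\nabla}{\argument})$ and $(\totDpolyF{\bullet}[1],\hochschild+\gerstenhaber{d^\nabla}{\argument})$ whose curvatures match, and $\vfedosov$ satisfies the appropriate Maurer--Cartan equation.

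\textbf{Step 2: twist the Kontsevich morphism.} The general twisting lemma says that if $F$ is an $L_\infty$ morphism and $\omega$ is a Maurer--Cartan element, then
\[ F^\omega_n=\sum_k \tfrac1{k!}F_{n+k}(\omega^{k},\ldots) \]
is an $L_\infty$ morphism from the $\omega$-twisted source to the $F_*(\omega)$-twisted target, where
\[ F_*(\omega)=\sum_k\tfrac1{k!}F_k(\omega^{k}) . \]
Well-definedness of these sums is the preceding lemma. Twisting the source gives the differential $\schouten{\tauQ}{\argument}$. For part (i) it remains to identify
\[ \sum_k \tfrac1{k!}\fkontsevich_k(\vfedosov,\ldots,\vfedosov)=\tauQ-d^\nabla \]
as polydifferential operators, so that the twisted target differential is $\gerstenhaber{\tauQ+m_2}{\argument}$. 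We use the structure of $\vfedosov=\tauTpolyF(Q)-\delta+A^\nabla$. Its pieces $-\delta$ and $A^\nabla$ are vector fields in $y$ with $\xi$-form coefficients, and $\tauTpolyF(Q)$ is likewise a (nonlinear in $y$) vector field.
\begin{itemize}
\item The term $k=1$ gives $\hkr(\vfedosov)=\vfedosov$.
\item The terms $k\geq2$ are Kontsevich graphs all of whose vertices are vector fields, and these vanish. This is Kontsevich's vanishing for vector-field inputs, which kills graphs with all vertices of valence one outgoing edge; they contain wheels that vanish by Shoikhet's lemma.
\end{itemize}

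\textbf{Step 3: twist the Shoikhet morphism.} The module version of the twisting lemma gives (ii): the twisted source module structure is exactly $\lambda^{C,\tauQ}$ via $\iL_{\tkontsevich_n}$, and the target becomes $\iL_{\tauQ}$. Here too the terms $\shoikhet_{k}(\vfedosov^{k};\argument)$ with $k\geq 2$ must vanish. This uses the Shoikhet vanishing lemma for vector-field inputs, up to contributions that sum to zero.

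\textbf{Main obstacle.} The hard part is the curvature and vector-field vanishing bookkeeping in Steps 1--2. We must confirm that $\fkontsevich_k(\vfedosov^{k})$ with $k\ge2$ vanishes, or at least equals an expression involving curvature that cancels the $\schouten{d^\nabla}{d^\nabla}$ term. The tool is the $GL$-equivariance and the linear-vector-field property of the Kontsevich morphism, following Dolgushev's Lemmas on the Fedosov resolution. These are local, leafwise statements, so we would check them in the coordinates \eqref{eq:InducedCoordFedosovMfd} using \eqref{Kyoji} and \eqref{Kaoru}.
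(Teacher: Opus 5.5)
There is a genuine gap at the foundation of your argument, in Step~1.

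When $\nabla$ is curved, $\schouten{d^\nabla}{\argument}$ does not square to zero on $\totTpolyF{\bullet}[1]$. So $\big(\totTpolyF{\bullet}[1],\schouten{d^\nabla}{\argument},\schouten{\argument}{\argument}\big)$ is not a dgla. Expanding $\schouten{\tauQ}{\tauQ}=0$ with $\tauQ=d^\nabla+\vfedosov$ gives $R^\nabla+d^\nabla\vfedosov+\tfrac12\schouten{\vfedosov}{\vfedosov}=0$, where $R^\nabla=\tfrac12\schouten{d^\nabla}{d^\nabla}\neq 0$ in general. This is not the flat Maurer--Cartan equation you display, and it is why the paper stresses that $\vfedosov$ is \emph{not} a Maurer--Cartan element.

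You cannot ``absorb the curvature into $\vfedosov$'': $\vfedosov$ is fixed by \eqref{eq:globalOmega}, and the maps $\tkontsevich_n$, $\tshoikhet_n$ are defined with it. Items (a)--(b) do not repair this. Item (a) is misstated: $\fkontsevich$ intertwines $\schouten{d^\nabla}{\argument}$ exactly, because locally $d^\nabla$ is $d_{\DR}$ plus a linear Christoffel vector field. For the curvature to ``cancel'', you would have to treat $\fkontsevich$ as a morphism of \emph{curved} $L_\infty$ algebras with curvature $R^\nabla$ on both sides. That curvature is preserved because $R^\nabla$ is linear, so $\fkontsevich_1(R^\nabla)=R^\nabla$ and $\fkontsevich_{n+1}(R^\nabla,\dots)=0$. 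You would then need a twisting lemma for curved Maurer--Cartan elements. None of this is set up, and the flat twisting lemma you quote in Step~2 does not apply as stated.

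The paper avoids curvature by localizing.
\begin{itemize}
\item In a chart, $\localMC=\tauQ-d_{\DR}$ is an honest Maurer--Cartan element of the flat dgla $\Omega(\cV)\cotimes\Tpoly{}(\cV_{\formal})[1]$, since $d_{\DR}^2=0$ and $(d_{\DR}+\localMC)^2=(\tauQ)^2=0$.
\item The difference $\vfedosov|_U-\localMC$ is a \emph{linear} $\cF$-longitudinal vector field; see \eqref{eq:LocMC-Fedosov}.
\item By Theorem~\ref{KontsevichFormality}~\ref{LinVF-kontsevich} and Theorem~\ref{ShoikhetFormality}~\ref{palmatum}, every term containing this linear difference drops out. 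Hence $\tkontsevich_n|_U$ and $\tshoikhet_n|_U$ coincide with the Taylor coefficients of the genuinely twisted morphisms $(\id_{\Omega(\cV)}\otimes\kontsevich)_{\localMC}$ and $(\id_{\Omega(\cV)}\otimes\shoikhet)_{\localMC}$.
\item The $L_\infty$ relations are local, so they hold globally.
\item Lemma~\ref{lem:LooStr_loc} then identifies the twisted structures with those in the statement. It uses property~\ref{biloba} to get $Y=\localMC$, the same fact you correctly use in Step~2.
\end{itemize}

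Separately, your Step~3 requirement that $\fshoikhet_k(\vfedosov,\dots,\vfedosov;\argument)$ vanish for $k\ge 2$ is both unnecessary and false. It is unnecessary because twisting a module morphism needs no vanishing of its components; only $\sum_k\frac{1}{k!}\fkontsevich_k(\vfedosov,\dots,\vfedosov)=\vfedosov$ enters the module structures. It is false because these wheel contributions are exactly what produce $\tshoikhet_0=(\Ahatcocycle{\can}{\cF_{\cQ}})^{\frac12}\circ\hhkr$ in Proposition~\ref{prop:Florence}. Shoikhet's vanishing holds only for \emph{linear} vector-field inputs.
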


\subsubsection{Proof of Proposition~\ref{prop:FormalityMor-Fedosov}}

The reader will have noticed that the expressions \eqref{eq:TwistedVerKont} and~\eqref{eq:TwistedVerShoi} defining the sequences of twisted maps $(\tkontsevich_n)_{n\in\NN}$ and $(\tshoikhet_n)_{n\in\NO}$ are clearly motivated by the theory of twisted morphisms of $L_\infty$ algebras and $L_\infty$ modules.
However, we need to emphasize that $\vfedosov$ is \emph{not} a Maurer--Cartan element.
Nevertheless, we will show that, when restricted to a local coordinate system, the maps $(\tkontsevich_n)_{n\in\NN}$ and $(\tshoikhet_n)_{n\in\NO}$ are indeed the Taylor coefficients of $L_\infty$ morphisms twisted by a \emph{locally defined} Maurer-Cartan element.

To see this, let us restrict the discussion to a system of local coordinates:
Let $(U,\phi)$ be a local chart on $\cM$,
i.e. $U$ is an open subset of the support $M$ of $\cM$,
and $\phi$ is an isomorphism of graded manifolds
\begin{equation}\label{eq:LocChartOnM}
\phi: \cM|_U \to \cV,
\end{equation}
identifying $\cM|_U$ with the \emph{trivialized} $\ZZ$-graded manifold $\cV$ (with support $\RR^m$)
arising from the $\ZZ$-graded vector bundle $\RR^m\times V\to\RR^m$,
where $m=\dim(M)$ and $V$ is a graded vector space of dimension $r$.
See Appendix~\ref{BlackBoxes} for the notion of trivialized graded manifold.
Let $(x_1,\cdots,x_{\mr})$ be the associated local coordinates on $\cM$,
where $(x_1,\cdots,x_m)$ are smooth coordinates on $U$,
and $(x_{m+1},\cdots,x_{\mr})$ form a homogeneous basis for $V\dual$.
The latter are also regarded as virtual homogeneous coordinate functions on $\cM$.

The local chart $(U,\phi)$ on $\cM$ induces a local chart $(U,\psi)$
on the associated Fedosov manifold $\cN= T_{\cM} [1]\times_{\cM} T_{\cM}$,
i.e. an isomorphism of graded manifolds
\begin{equation}
\label{eq:LocChartOnN}
\psi: \cN|_U \to \cW
\end{equation}
identifying $\cN|_U$ with the \emph{trivialized} $\ZZ$-graded manifold $\cW$ (with support $\RR^m$)
arising from the $\ZZ$-graded vector bundle
$\RR^m\times (V\oplus \RR^m[1]\oplus V[1]\oplus \RR^m \oplus V) \to \RR^m$.

As in~\eqref{eq:InducedCoordFedosovMfd},
we denote the induced local coordinates on $\cN|_U$ by
$(x_1,\cdots,x_{\mr},\xi_1,\cdots,\xi_{\mr},y_1,\cdots,y_{\mr})$.

We think of $\cW$ as the product of two trivialized graded manifolds:
\begin{itemize}
\item the trivialized manifold to which $T_{\cM}[1]|_U$ is identified,
which arises from the $\ZZ$-graded vector bundle $\RR^m\times (V\oplus \RR^m[1]\oplus V[1]) \to \RR^m$
and corresponds to the coordinates $(x_1,\cdots,x_{\mr},\xi_1,\cdots,\xi_{\mr})$;
\item and the trivialized manifold $\cVformal$ corresponding to the coordinates $(y_1,\cdots,y_{\mr})$
arising from the $\ZZ$-graded vector bundle $\RR^m \oplus V \to \{*\}$ (over the one-point space).
\end{itemize}
In this trivialization,
the algebra of functions on $T_{\cM}[1]|_U$ is the algebra $\Omega(\cV)$ of differential forms on $\cV$,
and the leaves of the $\cF$-foliation of $\cN|_U$ are the $\cVformal$-slices of $\cW$.
Therefore, we get the natural isomorphisms of vector spaces
\begin{gather}
\Tpolyf{p}\big|_U \cong \Omega(\cV) \cotimes_\KK \Tpoly{p}(\cV_{\formal}) ,\label{eq:Tpoly_loc} \\
\Dpolyf{p}\big|_U \cong \Omega(\cV) \cotimes_\KK \Dpoly{p}(\cV_{\formal}) ,\label{eq:Dpoly_loc} \\
\Apolyf{-p}\big|_U \cong \Omega(\cV) \cotimes_\KK \Apoly{-p}(\cV_{\formal}) ,\label{eq:Apoly_loc} \\
\Cpolyf{-p}\big|_U \cong \Omega(\cV) \cotimes_\KK \Cpoly{-p}(\cV_{\formal}) .\label{eq:Cpoly_loc}
\end{gather}

Since $\Omega(\cV)$ endowed with the de~Rham differential
$d_{\DR}=\sum_k\xi_k\frac{\partial}{\partial x_k}$ is a dgca,
the canonical $L_\infty$ algebra structures carried by
$\Tpoly{}(\cV_{\formal})[1]$ and $\Dpoly{}(\cV_{\formal})[1]$
extend to $\Omega(\cV) \cotimes_\KK \Tpoly{}(\cV_{\formal})[1]$
and $\Omega(\cV) \cotimes_\KK \Dpoly{}(\cV_{\formal})[1]$.
Likewise, the canonical $L_\infty$ module structures
on $\Cpoly{}(\cV_{\formal})$ and $\Apoly{}(\cV_{\formal})$
extend to $\Omega(\cV) \cotimes_\KK \Cpoly{}(\cV_{\formal})$
and $\Omega(\cV) \cotimes_\KK \Apoly{}(\cV_{\formal})$.
Furthermore, Kontsevich's $L_\infty$ morphism
$\kontsevich:\Tpoly{}(\cV_{\formal})[1]\inftyto\Dpoly{}(\cV_{\formal})[1]$
for the $\ZZ$-graded vector space $\cV_{\formal}$
extends to a morphism of $L_\infty$ algebras
\begin{equation}\label{Tatsuki}
\id_{\Omega(\cV)} \otimes \kontsevich: \Omega(\cV) \cotimes_\KK \Tpoly{}(\cV_{\formal})[1]
\inftyto \Omega(\cV) \cotimes_\KK \Dpoly{}(\cV_{\formal})[1]
\end{equation}
and Shoikhet's $L_\infty$ morphism $\shoikhet: \kontsevich^\ast(\cCpoly{}(\cV_{\formal}))\inftyto\cApoly{}(\cV_{\formal})$
extends to a morphism of $L_\infty$ modules
\begin{equation}\label{Noriaki}
\id_{\Omega(\cV)} \otimes \shoikhet: (\id_{\Omega(\cV)} \otimes \kontsevich)^\ast\big(\Omega(\cV) \cotimes_\KK \cCpoly{}(\cV_{\formal})\big)
\inftyto \Omega(\cV) \cotimes_\KK \cApoly{}(\cV_{\formal})
\end{equation}
over the $L_\infty$ algebra
$\Omega(\cV) \cotimes_\KK \Tpoly{}(\cV_{\formal})[1]$.

It is immediate that, under the identifications \eqref{eq:Tpoly_loc} and \eqref{eq:Dpoly_loc},
the $L_\infty$ morphism $\id_{\Omega(\cV)}\otimes\kontsevich$
is the restriction to the open set $U$ of the $L_\infty$ morphism $\fkontsevich$
defined globally over $M$ --- see~\eqref{Masaki}.
Likewise, under the identifications \eqref{eq:Cpoly_loc} and~\eqref{eq:Apoly_loc},
$\id_{\Omega(\cV)}\otimes\shoikhet$ is the restriction to $U$ of the $\cF$-leafwise Shoikhet morphism
$\fshoikhet$ --- see~\eqref{Kenji}.

Consider the local section
\begin{equation}\label{eq:LocMC}
\localMC = \tauTpolyF(Q) + \fedosov - \sum_{k=1}^{\mr} \xi_k \frac{\partial}{\partial x_k}
\end{equation}
of $\cF\to\cN$.

Since the vector field $\tauTpolyF(Q)+\fedosov$ on $\cN$ is homological, we have
$(d_{\DR}+\localMC)^2=0$ and, consequently,
\[ \big(d_{\DR}\otimes\id_{\Tpoly{}(\cV_{\formal})[1]}\big)(\localMC)
+\frac{1}{2} \schouten{\localMC}{\localMC}=0 .\]
That is, the vector field $\localMC$, which is tangent to the foliation $\cF$,
is indeed a Maurer--Cartan element of the dgla $\Omega(\cV) \cotimes_\KK \Tpoly{}(\cV_{\formal})[1]$.

Twisting the $L_\infty$ morphisms \eqref{Tatsuki} and~\eqref{Noriaki} by the Maurer-Cartan element $\localMC$,
we obtain the pair of twisted morphisms
\begin{gather}
(\id_{\Omega(\cV)}\otimes\kontsevich)_{\localMC} :
\big(\Omega(\cV)\cotimes_\KK\Tpoly{}(\cV_{\formal})[1]\big)_{\localMC}
\inftyto \big(\Omega(\cV)\cotimes_\KK\Dpoly{}(\cV_{\formal})[1]\big)_Y
\\
(\id_{\Omega(\cV)}\otimes\shoikhet)_{\localMC} :
\big((\id_{\Omega(\cV)} \otimes \kontsevich)^\ast(\Omega(\cV) \cotimes_\KK \cCpoly{}(\cV_{\formal}))\big)_{\localMC}
\inftyto \big(\Omega(\cV)\cotimes_\KK\Apoly{}(\cV_{\formal})\big)_Y
,\end{gather}
where the Maurer--Cartan element $Y$ of the dgla $\Omega(\cV) \cotimes_\KK \Dpoly{}(\cV_{\formal})[1]$
is actually equal to
\[ Y = \sum_{k=1}^\infty \dfrac{1}{k!} \big(\id_{\Omega(\cV)}\otimes\kontsevich_{k}\big)(\localMC,\cdots,\localMC)
= \big(\id_{\Omega(\cV)}\otimes\kontsevich_{1}\big)(\localMC) = \localMC \]
according to Theorem~\ref{KontsevichFormality}~\ref{biloba}.

\begin{lemma}\label{lem:LooStr_loc}
The vector space isomorphisms \eqref{eq:Tpoly_loc} and~\eqref{eq:Dpoly_loc} identify the restrictions to $U$ of the dglas
\[ \big( \totTpolyF{\bullet}[1] \big)_{\tauQ} \quad\text{and}\quad \big( \totDpolyF{\bullet}[1] \big)_{\tauQ} \]
with the $\localMC$-twisted dglas
\[ \big(\Omega(\cV)\cotimes_\KK\Tpoly{}(\cV_{\formal})[1]\big)_{\localMC} \quad\text{and}\quad \big(\Omega(\cV)\cotimes_\KK\Dpoly{}(\cV_{\formal})[1]\big)_{\localMC}, \]
respectively. Furthermore, the vector space isomorphisms \eqref{eq:Cpoly_loc} and~\eqref{eq:Apoly_loc} identify the restrictions to $U$ of the $L_\infty$ modules
\[ \tkontsevich^\ast\big(\totcCpolyF{\bullet}\big)_{\tauQ} \quad\text{and}\quad \big(\totbApolyF{\bullet}\big)_{\tauQ} \]
over the dgla $\big( \totTpolyF{\bullet}[1] \big)_{\tauQ}$ with the $\localMC$-twisted $L_\infty$ modules
\[ \big((\id_{\Omega(\cV)} \otimes \kontsevich)^\ast(\Omega(\cV) \cotimes_\KK \cCpoly{}(\cV_{\formal}))\big)_{\localMC} \quad\text{and}\quad \big(\Omega(\cV)\cotimes_\KK\cApoly{}(\cV_{\formal})\big)_{\localMC} \]
over the dgla $\big(\Omega(\cV)\cotimes_\KK\Tpoly{}(\cV_{\formal})[1]\big)_{\localMC}$, respectively.
\end{lemma}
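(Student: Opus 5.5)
The proof is a direct comparison of structure maps under the identifications \eqref{eq:Tpoly_loc}--\eqref{eq:Cpoly_loc}. The guiding fact is the decomposition, on $\cN|_U$, of the homological vector field as
\[ \tauQ=\tauTpolyF(Q)+\fedosov = d_{\DR}\otimes\id + \localMC , \]
with $d_{\DR}=\sum_k\xi_k\partial_{x_k}$ acting on the factor $\Omega(\cV)$ and $\localMC$ tangent to the $\cF$-leaves (the $\cV_{\formal}$-slices). Twisting by $\localMC$ adds $\schouten{\localMC}{\argument}$, $\gerstenhaber{\localMC}{\argument}$ and $\iL_{\localMC}$ to the untwisted differentials. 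So each claim reduces to two things: the untwisted local structures extended over $\Omega(\cV)$ match the leafwise structures on $\cF$, and the $d_{\DR}$-parts match the $d_{\DR}$-summand of $\tauQ$.

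\textbf{Step 1: the dglas.} Under \eqref{eq:Tpoly_loc}, the Schouten bracket of $\cF$-polyvector fields is $\Omega(\cV)$-bilinear with Koszul signs, because $\cF$ is spanned by the $\partial_{y_k}$ and these commute with $x,\xi$. Hence it coincides with $\mu\otimes\schouten{\argument}{\argument}$, where $\mu$ is the product of $\Omega(\cV)$. For the differential, write
\[ \schouten{\tauQ}{\gamma}=\schouten{d_{\DR}}{\gamma}+\schouten{\localMC}{\gamma}. \]
Since $d_{\DR}$ involves only $x,\xi$, its bracket with $f\otimes X$ is $d_{\DR}f\otimes X$, which is the differential of $\Omega(\cV)\cotimes\Tpoly{}(\cV_{\formal})[1]$. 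The remaining term is exactly the twist. The argument for $\Dpoly{}$ is the same. One checks that the Gerstenhaber bracket of leafwise polydifferential operators (which involve only $\partial_y$) is $\Omega(\cV)$-linear, and that $\gerstenhaber{m_2}{\argument}$ is the Hochschild differential $\hochschild$ on the $\cV_{\formal}$ factor. I read $\tauDpolyF(Q)+\fedosov$ in the differential as the vector field $\tauQ$ viewed as a first-order operator; then $\gerstenhaber{\tauQ}{\argument}=d_{\DR}\otimes\id+\gerstenhaber{\localMC}{\argument}$.

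\textbf{Step 2: the modules.} On $\big(\totbApolyF{\bullet}\big)_{\tauQ}$, the operator $\iL_\gamma$ for $\gamma$ an $\cF$-polyvector field is the Cartan-formula Lie derivative \eqref{eq:LX20}, built from the leafwise de~Rham differential in $y$. It is $\Omega(\cV)$-linear, so it equals $\id\otimes\iL$. Also $\iL_{\tauQ}=\iL_{d_{\DR}}+\iL_{\localMC}$, and $\iL_{d_{\DR}}$ acts as $d_{\DR}\otimes\id$ on $\Omega(\cV)\cotimes\cApoly{}(\cV_{\formal})$. The twisted module structure of a dgla module with only binary action is
\[ \lambda_0^{\localMC}=\lambda_0+\iL_{\localMC},\qquad \lambda_1\ \text{unchanged},\qquad \lambda_n=0\ \ (n\ge2), \]
which matches \eqref{eq:TwistedApolyFedosov}.

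For polyjets the same reasoning gives
\[ \iL_{\tauQ+m_2}=d_{\DR}\otimes\id+\id\otimes\hochschildb+\iL_{\localMC}. \]
Higher brackets are handled through the pullback construction. The $\localMC$-twist of a pullback module $(\id\otimes\kontsevich)^\ast(\cdots)$ is the pullback, along the twisted morphism $(\id\otimes\kontsevich)_{\localMC}$, of the module over $(\Omega(\cV)\cotimes\Dpoly{})_Y$. Here $Y=\localMC$ by the computation preceding this lemma. The $n$-th Taylor coefficient of $(\id\otimes\kontsevich)_{\localMC}$ is
\[ \sum_k\tfrac{1}{k!}(\id\otimes\kontsevich)_{n+k}(\localMC^{k},\argument). \]

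\textbf{Step 3: the main obstacle.} I must show that this coefficient equals $\tkontsevich_n$ restricted to $U$. But $\tkontsevich_n$ is defined with $\vfedosov$, which is not $\localMC$: the two differ by $d^\nabla-d_{\DR}=-\sum_k\xi_k\,\Gamma\cdot y\,\partial_y$, a vector field linear in $y$. So I would invoke the standard property of Kontsevich's morphism for linear vector fields (Theorem~\ref{KontsevichFormality}, the item following \ref{biloba}): $\kontsevich_{n+1}(v,\argument)=0$ for $n\geq1$ when $v$ is linear. Because $\kontsevich$ is extended $\Omega(\cV)$-linearly, this persists after tensoring, and every term containing the difference $\localMC-\vfedosov$ vanishes. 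The analogous property of Shoikhet's morphism, $\shoikhet_{n}(\ldots,v,\ldots;\argument)=0$ for linear $v$ and $n\geq1$, gives the corresponding identification of the twisted $\shoikhet$-maps with $\tshoikhet_n$. Strictly, that identification is needed for Proposition~\ref{prop:FormalityMor-Fedosov} rather than for this lemma.

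The delicate point I expect is the $n=1$ and $\lambda_0$ bookkeeping. The term $\kontsevich_1(\localMC-\vfedosov)$ is $\hkr$ of a linear vector field and does not vanish. It must be absorbed by the $\iL_{d_{\DR}}$ versus $\iL_{d^\nabla}$ discrepancy in $\lambda_0$, which is exactly why the twisted differentials use $\tauQ$ and not $\vfedosov$. Checking that these signs and contributions line up is the calculation I would do most carefully.
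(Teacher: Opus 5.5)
Your Steps 1--3 are correct and follow the paper's proof. Locally $\tauQ=d_{\DR}+\localMC$, and the twisted target Maurer--Cartan element is $Y=(\id_{\Omega(\cV)}\otimes\kontsevich)_1(\localMC)=\localMC$, which gives $\lambda_0^{C,\tauQ}$; since $\vfedosov|_U-\localMC$ is linear, Theorem~\ref{KontsevichFormality}~\ref{LinVF-kontsevich} identifies $\tkontsevich_n|_U$ ($n\geq 1$) with the Taylor coefficients of $(\id_{\Omega(\cV)}\otimes\kontsevich)_{\localMC}$, which gives $\lambda_n^{C,\tauQ}$ for $n\geq1$. The ``delicate point'' you flag at the end is not actually there: $\vfedosov$ never enters $\lambda_0^{C,\tauQ}=\iL_{\tauQ+m_2}$ or the target differential, and in $\tkontsevich_n$ with $n\geq1$ every copy of the linear difference sits inside some $\kontsevich_{k}$ with $k\geq 2$ and vanishes, so no term $\kontsevich_1(\localMC-\vfedosov)$ arises and nothing needs to be absorbed.
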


\begin{proof}
We verify the lemma for $\tkontsevich^\ast\big(\totcCpolyF{\bullet}\big)_{\tauQ}$, as the verifications for the remaining cases follow immediately from standard definitions. Let $\lambda^{C}_{n}$ and $\lambda^{C,\localMC}_{n}$ denote the $L_\infty$ module structures on $(\id_{\Omega(\cV)} \otimes \kontsevich)^\ast(\Omega(\cV) \cotimes_\KK \cCpoly{}(\cV_{\formal}))$ before and after twisting by $\localMC$, respectively. By definition, in any system of local coordinates on $\cN$ arising from a local chart $(U,\phi)$ of $\cM$, the twisted zeroth structure map evaluates to
\begin{align*}
& \lambda^{C,\localMC}_{0}(\zeta|_U) = \sum_{k=0}^\infty \frac{1}{k!} \lambda^{C}_{k}(\localMC, \cdots, \localMC;\zeta|_U) && \\
&\qquad = (d_{\DR} \otimes \id)(\zeta|_U) +\hochschildb(\zeta)|_U + \sum_{k=1}^\infty \frac{1}{k!} \iL_{(\id_{\Omega(\cV)}\otimes\kontsevich)_k(\localMC, \cdots, \localMC)}(\zeta|_U) && \\
&\qquad = (d_{\DR} \otimes \id)(\zeta|_U) +\hochschildb(\zeta)|_U + \iL_{\localMC}(\zeta|_U) && \text{by Theorem~\ref{KontsevichFormality}~\ref{LinVF-kontsevich}} \\
&\qquad = \iL_{\tauTpolyF(Q)+\fedosov+m_2}(\zeta) |_U = \lambda^{C,\tauQ}_0(\zeta)|_U && \text{by Equation~\eqref{eq:TwistedCpolyFedosov}}
\end{align*}
for all $\zeta \in \totCpolyF{\bullet}$.

For the higher structure maps, observe that according to Equations~\eqref{eq:globalOmega}, \eqref{eq:LocMC}, \eqref{Fhvf}, and~\eqref{eq:LocCovDer},
\begin{equation}\label{eq:LocMC-Fedosov}
\vfedosov|_U - \localMC = \sum_{l=1}^{\mr}\sum_{j=1}^{\mr} {(-1)^{\degree{y_l}+\degree{y_l}\degree{y_j}}} \xi_k \Gamma_{k,l}^j \otimes y_l\frac{\partial}{\partial y_j}
\end{equation}
is a \emph{linear} $\cF$-longitudinal vector field.
Therefore, it follows from Theorem~\ref{KontsevichFormality}~\ref{LinVF-kontsevich} that
\begin{align*}
\tkontsevich_n(\gamma_1, \cdots, \gamma_n)|_U
&= \sum_{k=0}^\infty \frac{1}{k!} \fkontsevich_{k+n}(\vfedosov, \cdots, \vfedosov, \gamma_1, \cdots, \gamma_n)|_U \\
&= \sum_{k=0}^\infty \frac{1}{k!} \big(\id_{\Omega(\cV)}\otimes\kontsevich\big)_{k+n} (\vfedosov|_U, \cdots, \vfedosov|_U, \gamma_1|_U, \cdots, \gamma_n|_U) \\
&= \sum_{k=0}^\infty \frac{1}{k!} \big(\id_{\Omega(\cV)}\otimes\kontsevich\big)_{k+n} (\localMC, \cdots, \localMC, \gamma_1|_U, \cdots, \gamma_n|_U)
\end{align*}
for all $\gamma_1, \cdots, \gamma_n \in \totTpolyF{\bullet}[1]$. That is, we have
\begin{equation}\label{eq:tKont-local}
\tkontsevich_n(\gamma_1, \cdots, \gamma_n)|_U = \big(\big(\id_{\Omega(\cV)}\otimes\kontsevich\big)_{\localMC}\big)_{n}(\gamma_1|_U, \cdots, \gamma_n|_U).
\end{equation}
Consequently, for $n \geq 1$, the higher structure maps yield
\begin{align*}
\lambda^{C,\localMC}_{n}(\gamma_1|_U, \cdots, \gamma_n|_U;\zeta|_U)
&= \sum_{k=0}^\infty \frac{1}{k!} \iL_{(\id_{\Omega(\cV)}\otimes\kontsevich)_{n+k}(\localMC, \cdots, \localMC, \gamma_1|_U, \cdots, \gamma_n|_U)}(\zeta|_U) \\
&= \iL_{\tkontsevich_n(\gamma_1, \cdots, \gamma_n)|_U}(\zeta|_U) = \lambda^{C,\tauQ}_{n}(\gamma_1, \cdots, \gamma_n;\zeta)|_U
\end{align*}
by Equation~\eqref{eq:TwistedCpolyFedosov}, which completes the proof.
\end{proof}

We are now ready to prove Proposition~\ref{prop:FormalityMor-Fedosov}.

\begin{proof}[Proof of Proposition~\ref{prop:FormalityMor-Fedosov}]
By Lemma~\ref{lem:LooStr_loc} and Equation~\eqref{eq:tKont-local}, the vector space isomorphisms \eqref{eq:Tpoly_loc} and \eqref{eq:Dpoly_loc} associated with any choice of a local coordinate system identify the local restrictions of the sequence of globally defined maps $(\tkontsevich_n)_{n\in\NN}$ with the Taylor coefficients of an $L_\infty$ algebra morphism $(\id_{\Omega(\cV)}\otimes\kontsevich)_{\localMC}$. Consequently, the globally defined maps $\tkontsevich_n$ are themselves the Taylor coefficients of a globally defined morphism of $L_\infty$ algebras $\tkontsevich$.

Similarly to the derivation of Equation~\eqref{eq:tKont-local}, the linearity of $\vfedosov|_U - \localMC$ and Theorem~\ref{ShoikhetFormality}~\ref{palmatum} imply that
\[ \tshoikhet_n(\gamma_1, \cdots, \gamma_n;\zeta)|_U = \big(\big(\id_{\Omega(\cV)}\otimes\shoikhet\big)_{\localMC}\big)_{n}(\gamma_1|_U, \cdots, \gamma_n|_U;\zeta|_U) \]
for all $\gamma_1, \cdots, \gamma_n \in \totTpolyF{\bullet}[1]$ and $\zeta \in \totCpolyF{\bullet}$. Therefore, the local restrictions of the maps $(\tshoikhet_n)_{n\in\NO}$ to any local coordinate system on $\cN$ arising from a local chart $(U,\phi)$ of $\cM$ form the locally defined morphism $\big(\id_{\Omega(\cV)}\otimes\shoikhet\big)_{\localMC}$ of $L_\infty$ modules. This observation, together with Lemma~\ref{lem:LooStr_loc}, implies that the maps $\tshoikhet_n$ are themselves the Taylor coefficients of a globally defined morphism $\tshoikhet$ of $L_\infty$ modules from $\tkontsevich^\ast\big(\totcCpolyF{\bullet}\big)_{\tauQ}$ to $\big(\totbApolyF{\bullet}\big)_{\tauQ}$. This completes the proof.
\end{proof}

\subsection{\texorpdfstring{The maps $\tkontsevich_1$ and $\tshoikhet_0$}{The maps tkontsevich and tshoikhet}}

The aim of this section is to prove the following
\begin{proposition}\label{prop:Florence}
The first ``Taylor coefficient'' of the $L_\infty$ algebra morphism $\tkontsevich$ is the composition
\begin{equation} \label{Fukuoka} \tkontsevich_1 = \hkr \circ (\Ahatcocycle{\can}{\cF_{\cQ}})^{\frac{1}{2}} \end{equation}
of the action of the square root of the \Aroof\ cocycle on polyvector fields (by contraction)
with the cohomological HKR map.
Similarly, the zeroth ``Taylor coefficient'' of the $L_\infty$ module morphism $\tshoikhet$ is the composition
\begin{equation} \label{Fukushima} \tshoikhet_0 = (\Ahatcocycle{\can}{\cF_{\cQ}})^{\frac{1}{2}} \circ \hhkr \end{equation}
of the homological HKR map with the action of the square root of the \Aroof\ cocycle
on differential forms (by multiplication).
\end{proposition}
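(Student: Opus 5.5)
The plan is to reduce the global statement to a local computation, and then to read both Taylor coefficients off from the twisted local Kontsevich and Shoikhet morphisms. Both sides of \eqref{Fukuoka} and \eqref{Fukushima} are globally defined, since $\hkr$, $\hhkr$, contraction and multiplication by $\Ahatcocycle{\can}{\cF_{\cQ}}$ are all global operations. It therefore suffices to check the identities after restriction to an arbitrary chart $(U,\phi)$ of $\cM$, with induced coordinates $(x,\xi,y)$ on $\cN|_U$. By \eqref{eq:tKont-local} and its Shoikhet analogue from the proof of Proposition~\ref{prop:FormalityMor-Fedosov},
\[ \tkontsevich_1(\gamma)|_U=\sum_{k\ge0}\tfrac1{k!}\big(\id_{\Omega(\cV)}\otimes\kontsevich\big)_{k+1}(\localMC,\cdots,\localMC,\gamma|_U), \]
and similarly for $\tshoikhet_0(\zeta)|_U$. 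So the task becomes: compute the first (respectively zeroth) Taylor coefficient of Kontsevich's (respectively Shoikhet's) morphism on $\cV_{\formal}$, twisted by the $\Omega(\cV)$-valued Maurer--Cartan vector field $\localMC$, whose coefficients are formal power series in $y$.

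I would carry out the computation graph by graph. Consider a Kontsevich graph contributing to $\kontsevich_{k+1}(\localMC^{\otimes k},\gamma)$, with $\gamma$ a $p$-vector field and with $m$ ground vertices. Each $\localMC$ vertex emits exactly one edge, and the graph has $2(k+1)+m-2$ edges. Counting edges then gives $m=p-k'$, where $k'$ is the number of edges from the $\gamma$ vertex that land on aerial vertices. I would then show that in any graph with nonzero weight, each $\localMC$ vertex receives exactly one incoming edge. Vertices receiving no edge contribute through the $y$-constant part of $\localMC$ (the $-\delta$ term and the constant part of $\tauTpolyF(Q)$). Such graphs are killed by the standard vanishing lemma for constant vector fields in an aerial vertex, in the form stated in Theorem~\ref{KontsevichFormality}. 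Vertices receiving two or more edges are excluded by the edge count combined with the degree condition.

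It follows that the surviving graphs are disjoint unions of wheels whose spokes are emanating edges of $\gamma$, together with edges from $\gamma$ to the ground. These are exactly the wheel graphs of Kontsevich's Duflo computation (see also Calaque--Van den Bergh and Shoikhet). Each $\localMC$ vertex in a wheel is differentiated exactly twice in $y$ and evaluated at $y$-order $0$ after collecting terms. Its local contribution $\partial_y\partial_y\localMC$ should be identified with the Atiyah cocycle $\atiyahcocycle{\can}{\cF_{\cQ}}$ of the canonical $\cF$-connection. I would verify this identification directly from the definition $\atiyahcocycle{\can}{\cF_{\cQ}}(X,Y)=\cQ(\nabla^{\can}_XY)-\nabla^{\can}_{\cQ X}Y\pm\nabla^{\can}_X\cQ Y$ with $\etendu\tau(\partial_{x_i})=\partial_{y_i}$, in the spirit of \eqref{velociraptor}. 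The linear correction \eqref{eq:LocMC-Fedosov} is harmless by Theorem~\ref{KontsevichFormality}~\ref{LinVF-kontsevich}.

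Summing over wheels, the exponential formula gives
\[ \exp\Big(\sum_n w_{2n}\,\supertrace\big((\atiyahcocycle{\can}{\cF_{\cQ}})^{2n}\big)\Big), \]
contracted into $\gamma$ and followed by $\hkr$. The even-wheel weights $w_{2n}$ are Kontsevich's, the odd wheels vanish, and the product of the resulting factors over $n$ is $\Ber^{1/2}$ of $\frac{\sinh(x/2)}{x/2}$ inverted, that is, $(\Ahatcocycle{\can}{\cF_{\cQ}})^{\frac12}$. This proves \eqref{Fukuoka}. The Shoikhet case \eqref{Fukushima} is handled in the same way: by Theorem~\ref{ShoikhetFormality} the weights of wheels in Shoikhet's disk configurations coincide with Kontsevich's, and the graphs without wheels reproduce $\hhkr$. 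The main obstacle I expect is the vanishing step: showing that no graph in which some $\localMC$ vertex receives several edges contributes. Here $\localMC$ contains the $y$-jets of arbitrary order coming from $A^\nabla$ and $\tauTpolyF(Q)$. I would first try to combine the degree/edge count with the form-degree bookkeeping (the $\xi$-grading from $\Omega(\cV)$) used in the well-definedness lemma, and fall back on Kontsevich's vanishing results for the relevant weights if the count alone does not suffice. Keeping track of signs in the Berezinian is a secondary difficulty.
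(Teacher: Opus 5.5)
There is a genuine gap. Your reduction to a chart via \eqref{eq:tKont-local} is fine. After that, however, you are in effect reproving Propositions~\ref{prop:DK_vs} and~\ref{prop:DKS_vs} from scratch, in an $\Omega(\cV)$-parametrized form and for a Maurer--Cartan element $\localMC$ carrying $y$-jets of all orders, and the decisive step is missing.

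\textbf{The graph count does not give what you need.} Your sketch is internally inconsistent. You claim each $\localMC$-vertex receives exactly one incoming edge and that vertices hit twice are excluded by counting. Yet in the wheel graphs you end with, every $\localMC$-vertex is hit exactly twice (the wheel edge and the spoke from $\gamma$), and that is precisely where $\partial_y\partial_y\localMC$ comes from. What the edge count actually gives is $m=p-k$, not $p-k'$. Add the standard facts that every ground vertex must be hit and that a valence-one internal vertex has zero weight. (That is why a $\localMC$-vertex with no incoming edge drops out; nothing is ``evaluated at $y=0$''.) You then obtain only $\sum_i a_i+b\le 2k$ with $a_i\ge 1$, where $a_i$ and $b$ are the in-degrees of the $\localMC$-vertices and of $\gamma$.

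This bound admits non-wheel graphs. For example, with $k=2$, take $\gamma\to v_1\to v_2\to\gamma$ together with $\gamma\to v_2$ and $p-2$ edges from $\gamma$ to the ground. This graph produces a first derivative of $\localMC$ and a derivative of $\gamma$. Since $\localMC$ is not quadratic, vertices hit three or more times are likewise not excluded by the count. Ruling all of these out requires genuine weight-vanishing (or cancellation) results; that is exactly the content of the cited Shoikhet-conjecture results. The $\xi$-grading cannot substitute for them, since the identity is asserted in every form degree.

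\textbf{The Shoikhet half is only asserted.} Theorem~\ref{ShoikhetFormality} says nothing about wheel weights in disk configurations. The formula for $(\shoikhet_Q)_0$ is the separate input Proposition~\ref{prop:DKS_vs}.

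\textbf{How the paper avoids this.} The paper never redoes the graph analysis.
\begin{enumerate}
\item On a chart it regards $\tauQ|_U$ as an honest homological vector field on the whole trivialized manifold $\cW=\cN|_U$. Propositions~\ref{prop:DK_vs} and~\ref{prop:DKS_vs} then apply verbatim to the morphisms $\kontsevich'$ and $\shoikhet'$ on $\cW$.
\item Because $\tauQ-\cI(\localMC)=\sum_k\xi_k\partial_{x_k}$ is a linear vector field, Theorem~\ref{KontsevichFormality}~\ref{LinVF-kontsevich}, Theorem~\ref{ShoikhetFormality}~\ref{palmatum} and Lemma~\ref{otowa} give $\cJ\circ\tkontsevich_1=(\kontsevich'_{\tauQ})_1\circ\cI$ and $\tshoikhet_0\circ\cJ^\top=\cI^\top\circ(\shoikhet'_{\tauQ})_0$ (Lemma~\ref{Hakodate}). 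This is what lets one use results stated for a homological vector field on a trivialized graded manifold, rather than for an $\Omega(\cV)$-valued Maurer--Cartan element on $\cV_{\formal}$.
\item The only new computation is Lemma~\ref{lem:ToddFedosov}. Since $\tauQ(x_j)=\xi_j$ and $\tauQ(\xi_j)=0$, the Atiyah matrix of $(\cW,\tauQ)$ for the standard connection has vanishing $x$- and $\xi$-rows. Supertraces of its powers therefore see only the $YY$-block, whose image under $\cI^\top$ is the Atiyah matrix of $\nabla^{\can}$. Hence $\cI^\top\Ahatcocycle{\std}{(\cW,\tauQ)}=\Ahatcocycle{\can}{\cF_{\cQ}|_U}$.
\item Lemma~\ref{Sapporo}, the injectivity of $\cJ$ and the surjectivity of $\cJ^\top$ then give \eqref{Fukuoka} and \eqref{Fukushima}.
\end{enumerate}
Your identification of $\partial_y\partial_y\localMC$ with $\atiyahcocycle{\can}{\cF_{\cQ}}$ is the leafwise shadow of Lemma~\ref{lem:ToddFedosov}. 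It becomes usable only once the wheel-only structure is known, and that is exactly what the black-box results on $\cW$ supply.
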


As before, let $\phi:\cM|_U\to\cV$ be a trivialization of the restriction of $\cM$ to an open set $U$
and let $\psi:\cN|_U\to\cW$ be the induced trivialization of the restriction of $\cN$ to the same open set $U$
--- see~\eqref{eq:LocChartOnM} and~\eqref{eq:LocChartOnN}.
Consider the Kontsevich formality morphism and Shoikhet formality morphism
on the trivialized graded manifold $\cW$:
\[ \kontsevich': \Tpoly{}(\cW)[1] \inftyto \Dpoly{}(\cW)[1]
\qquad\text{and}\qquad
\shoikhet': \cCpoly{}(\cW) \inftyto \cApoly{}(\cW) .\]

The restriction to $\cW=\cN|_U$ of the homological vector field $\tauQ=\tauTpolyF(Q)+\fedosov$
is a Maurer-Cartan element of the dgla $\Tpoly{}(\cW)[1]$,
which we use to twist the $L_\infty$ morphisms $\kontsevich'$ and $\shoikhet'$.
Propositions~\ref{prop:DK_vs} and~\ref{prop:DKS_vs} assert that
the first Taylor coefficients of the resulting twisted $L_\infty$ morphisms
$\kontsevich'_{\tauQ}$ and $\shoikhet'_{\tauQ}$ are
\begin{equation}\label{eq:DK_Kont'_Shoi'}
(\kontsevich'_{\tauQ})_1 = \hkr \circ (\Ahatcocycle{\std}{(\cW,\tauQ)})^{\frac{1}{2}}
\qquad\text{and}\qquad
(\shoikhet'_{\tauQ})_0 = (\Ahatcocycle{\std}{(\cW,\tauQ)})^{\frac{1}{2}} \circ \hhkr
.\end{equation}

Consider the natural inclusions
\begin{equation}\label{eq:cite}
\cI: \Tpolyf{} \hookrightarrow \Tpoly{}(\cN)
\qquad\text{and}\qquad
\cJ: \Dpolyf{} \hookrightarrow \Dpoly{}(\cN)
\end{equation}
of $\cF$-longitudinal polyvector fields and polydifferential operators into
all polyvector fields and polydifferential operators
and the dual surjections
\begin{equation}\label{eq:Opera}
\cI^\top: \Apoly{}(\cN) \twoheadrightarrow \Apolyf{}
\qquad\text{and}\qquad
\cJ^\top: \Cpoly{}(\cN) \twoheadrightarrow \Cpolyf{}
\end{equation}
for differential forms and polyjets.

\begin{lemma}\label{otowa}
For all $\gamma_1, \cdots, \gamma_n \in \Tpolyf{}[1]$ and $\zeta \in \cCpolyf{}$, we have
\[ \kontsevich_n'\big(\cI(\gamma_1)|_U, \cdots, \cI(\gamma_n)|_U\big) = \big(\cJ \circ \fkontsevich_n(\gamma_1, \cdots, \gamma_n)\big)\big|_U \]
and
\[ \cI^\top \circ \shoikhet_n'\big(\cI(\gamma_1)|_U, \cdots, \cI(\gamma_n)|_U;\zeta|_U\big) = \fshoikhet_n\big(\gamma_1, \cdots, \gamma_n;\cJ^\top(\zeta)\big)\big|_U. \]
\end{lemma}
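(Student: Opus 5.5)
Both identities are local statements on $U$, so we work entirely in the coordinates $(x,\xi,y)$ of $\cW$ and compare the defining graph formulas of Kontsevich and Shoikhet. The key structural fact is that both formality morphisms on a trivialized graded manifold (Appendix~\ref{BlackBoxes}) are sums over admissible graphs $\Gamma$ of weights $w_\Gamma$ times bidifferential operators $B_\Gamma$. In $B_\Gamma$, each edge acts by a partial derivative $\partial_{z_a}$ applied to the target vertex and contracted with the corresponding $\pshift\partial_{z_a}$ slot of the source polyvector field. The weights depend only on the graph, not on the dimension or the coordinates. We therefore split the coordinates of $\cW$ into the ``horizontal'' group $(x,\xi)$ and the ``vertical'' group $y$, and write each edge label as a sum over these two groups.

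First identity. Each $\cI(\gamma_i)|_U$ is $\cF$-longitudinal, so in coordinates it only involves $\pshift\partial_{y}$'s; its coefficients are functions of $(x,\xi,y)$. Consequently every edge in $B_\Gamma$ that emanates from a vertex labelled by some $\gamma_i$ must carry a $y$-label, because the contraction with any $\pshift\partial_{x}$ or $\pshift\partial_{\xi}$ slot vanishes. Every edge therefore differentiates only in the $y$-directions. This means that the $(x,\xi)$-dependence of the coefficients $f_{i,J,K}^L\xi^{J}$ is carried along multiplicatively, with Koszul signs. The outputs are likewise polydifferential operators in $y$ only, and in particular lie in the image of $\cJ$. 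This is precisely the formula \eqref{Kyoji} for $\fkontsevich_n$, which applies $\kontsevich_n$ on $\cVformal$ to the $y$-parts with $\Omega(\cV)$-linear coefficients. Since the weights agree, the two sides coincide term by term. The one point that needs care is that the Koszul signs for moving $\xi^J$ past $y$-derivatives match the sign convention used in \eqref{Kyoji}; we will check this on a single edge and then invoke multiplicativity.

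Second identity. Here $\zeta$ is a general polyjet on $\cW$, and $\cJ^\top$ restricts it to $\cF$-directions, i.e.\ pairs it only against $y$-differential operators. We expand Shoikhet's graph formula for $\shoikhet'_n$. Edges leaving the polyvector vertices are again forced to be $y$-labelled, as above. The output is a differential form on $\cW$, and the edges ending at the ``form'' output produce $dz_a$'s. Applying $\cI^\top$ kills every term containing $dx$ or $d\xi$, so only $y$-labelled output edges survive. After this projection, every derivative that hits $\zeta$ is a $y$-derivative. As a result, the surviving expression depends on $\zeta$ only through its pairing with $y$-polydifferential operators, which is exactly $\cJ^\top(\zeta)$. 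What remains is the graph sum of \eqref{Kaoru} with $\Omega(\cV)$-coefficients, so it equals $\fshoikhet_n(\gamma_1,\dots,\gamma_n;\cJ^\top\zeta)|_U$.

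The main obstacle is the second identity, specifically justifying the factorization through $\cJ^\top$. Shoikhet's morphism is built from the jet/chain side via the map $\Phi$ of \eqref{eq:Phi}, so we must verify that the contributions of the horizontal directions of $\zeta$ cancel or vanish after $\cI^\top$. To do this, we would first reduce to decomposable $\zeta=\Phi(a_0\otimes\cdots\otimes a_s)$. Then we would use the Leibniz rule to split each derivative into its $x,\xi$ and $y$ parts, and check that every term carrying a horizontal derivative yields a form component outside the $\cF^\vee$-part. If this bookkeeping becomes unwieldy, a fallback is to argue by functoriality: both sides are $\Omega(\cV)$-linear, so they agree once we restrict to $x,\xi$ held fixed, which reduces the identity to Shoikhet's morphism for the product $\cW=T_{\cM}[1]|_U\times\cVformal$ composed with projection to the second factor.
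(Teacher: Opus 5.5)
Your proposal is correct and is essentially the paper's argument: the paper only says that both identities follow by inspecting the graph constructions of $\kontsevich'$, $\fkontsevich$, $\shoikhet'$ and $\fshoikhet$, and you carry out that inspection. Every edge is forced to carry a $y$-label, so the $(x,\xi)$-dependent coefficients factor out and the universal weights match.

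The step you flag as the main obstacle is not really one. In Shoikhet's graphs, vertex $0$ (the vertex carrying the polyvector field on which the output form is evaluated) is a source of edges, never a target, so these are not edges ``ending at'' the output as you wrote. After applying $\mathcal{I}^\top$, the edges leaving vertex $0$ are also $y$-labelled. Hence no horizontal derivative ever acts on $\zeta$, and the factorization through $\mathcal{J}^\top$ follows immediately, with no Leibniz splitting or fallback argument needed.
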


\begin{proof}
The assertions follow from an inspection of the construction of $\kontsevich'$, $\fkontsevich$, $\shoikhet'$, and $\fshoikhet$.
\end{proof}

\begin{lemma}\label{Hakodate}
For all $\gamma\in\Tpolyf{}[1]$ and $\zeta\in\cCpolyf{}$, we have
\[ (\kontsevich'_{\tauQ})_1\big(\cI(\gamma)|_U\big) = \big(\cJ\circ\tkontsevich_1(\gamma)\big)\big|_U \]
and
\[ \cJ^\top\circ(\shoikhet'_{\tauQ})_0\big(\zeta|_U\big) = \big(\tshoikhet_0\circ\cI^\top(\zeta)\big)\big|_U .\]
\end{lemma}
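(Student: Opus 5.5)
We expand both sides using the twisting formulas and then compare them term by term with Lemma~\ref{otowa}. On $\cW=\cN|_U$, the twisted first coefficient is
\[ (\kontsevich'_{\tauQ})_1(\cI(\gamma)|_U)=\sum_{k\ge0}\frac{1}{k!}\kontsevich'_{k+1}\big(\tauQ|_U,\cdots,\tauQ|_U,\cI(\gamma)|_U\big). \]
We split $\tauQ|_U = d_{\DR} + \localMC$, where $d_{\DR}=\sum_k\xi_k\partial_{x_k}$ and $\localMC$ is as in~\eqref{eq:LocMC}. The first summand $d_{\DR}$ is a vector field with constant coefficients in the coordinates of $\cW$; in particular it is linear. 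By Theorem~\ref{KontsevichFormality}~\ref{LinVF-kontsevich}, $\kontsevich'_{k+1}$ vanishes for $k\ge1$ as soon as one argument is a linear vector field. Hence every occurrence of $d_{\DR}$ may be dropped, and we may replace $\tauQ|_U$ by $\localMC$.

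For the same reason, because $\vfedosov|_U-\localMC$ is linear by~\eqref{eq:LocMC-Fedosov}, we may replace $\localMC$ by $\cI(\vfedosov)|_U$. This yields
\[ (\kontsevich'_{\tauQ})_1(\cI(\gamma)|_U)=\sum_k\frac{1}{k!}\kontsevich'_{k+1}\big(\cI(\vfedosov)|_U,\cdots,\cI(\gamma)|_U\big). \]
All arguments on the right are now $\cF$-longitudinal. The first identity of Lemma~\ref{otowa} turns each term into $\cJ\circ\fkontsevich_{k+1}(\vfedosov,\cdots,\vfedosov,\gamma)|_U$, and summing over $k$ with~\eqref{eq:TwistedVerKont} gives $\cJ\circ\tkontsevich_1(\gamma)|_U$. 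Strictly, Lemma~\ref{otowa} is a statement about $\cF$-longitudinal inputs, and $\localMC$ is indeed $\cF$-longitudinal. One can therefore apply Lemma~\ref{otowa} directly with $\localMC$ and afterwards use~\eqref{eq:tKont-local}; I would choose whichever route keeps the bookkeeping cleanest.

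The Shoikhet statement follows the same pattern. By definition,
\[ (\shoikhet'_{\tauQ})_0(\zeta|_U)=\sum_k\frac{1}{k!}\shoikhet'_k\big(\tauQ|_U,\cdots,\tauQ|_U;\zeta|_U\big). \]
Theorem~\ref{ShoikhetFormality}~\ref{palmatum} says that $\shoikhet'_k$ with $k\ge1$ vanishes when an input is a linear vector field. This again allows us to replace $\tauQ|_U$ first by $\localMC$ and then by $\cI(\vfedosov)|_U$.

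Next we apply the projection of differential forms onto $\cF$-longitudinal forms. The statement writes this projection as $\cJ^\top$ on the left-hand side; it is presumably the same as $\cI^\top$ in~\eqref{eq:Opera}. Its adjoint counterpart is the restriction of polyjets along $\cF$. The second identity of Lemma~\ref{otowa} then gives $\fshoikhet_k(\vfedosov,\cdots,\vfedosov;\cJ^\top\zeta)|_U$, and summing over $k$ with~\eqref{eq:TwistedVerShoi} gives $\tshoikhet_0$ of the projected polyjet. I would first reconcile the labels of the two projections with~\eqref{eq:Opera}, since the statement as written appears to swap the roles of $\cI^\top$ and $\cJ^\top$.

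\textbf{Main obstacle.} The step I expect to be delicate is justifying the removal of the linear parts in the Shoikhet case. The vanishing property for linear vector fields in Shoikhet's morphism holds only for $k\ge1$ inputs. More importantly, I must make sure that the twisting by $d_{\DR}$ does not survive through the module part, for instance as a $\lambda_1$-type action. I expect the $d_{\DR}$-contributions to appear only in the differential, never in $\shoikhet'_k$ for $k\ge1$, so that dropping them does not alter the Taylor coefficient. I would check this directly from the graph-weight description: in a graph, an edge landing on a linear vector field carries a weight that integrates to zero, except in the single-vertex case.

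A second point to confirm is that the sums converge after projection. Only finitely many terms contribute in each multidegree, by the degree count used in the well-definedness lemma for $\tkontsevich_n$ and $\tshoikhet_n$. This count justifies exchanging $\cI^\top$ with the infinite sum.
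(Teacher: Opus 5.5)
Your strategy is the paper's: remove linear vector fields from the twisting series, then use Lemma~\ref{otowa} and the definition of the twisted Fedosov morphisms. However, the justification you give for your preferred ordering is wrong at one step.

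\textbf{The invalid step.} You replace $\localMC$ by $\cI(\vfedosov)|_U$ inside $\kontsevich'$ and $\shoikhet'$ ``for the same reason'', i.e.\ by Theorem~\ref{KontsevichFormality}~\ref{LinVF-kontsevich} and Theorem~\ref{ShoikhetFormality}~\ref{palmatum} on $\cW$. Those properties concern vector fields that are linear in all coordinates of $\cW$. The difference $\vfedosov|_U-\localMC=\sum\pm\xi_k\Gamma^j_{k,l}\,y_l\,\partial_{y_j}$ from \eqref{eq:LocMC-Fedosov} is only leafwise linear: it is linear in $y$ with coefficients in $\Omega(\cV)$. On $\cW$ its coefficients are quadratic in $(\xi,y)$ and depend arbitrarily on $x$. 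In the Shoikhet case there is no reason for these terms to vanish before you apply the projection of forms, because the vertex carrying the form argument can differentiate that field in the $x$- and $\xi$-directions.

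\textbf{The correct order.} The only difference that is genuinely linear on $\cW$ is $\tauQ|_U-\cI(\localMC)|_U=d_{\DR}=\sum_k\xi_k\partial_{x_k}$. So the argument must run in the order you mention as an alternative, which is exactly the paper's proof:
\begin{enumerate}
\item Drop every occurrence of $d_{\DR}$ inside $\kontsevich'_{1+k}$ and $\shoikhet'_k$. These terms have $1+k\geq 2$, resp.\ $k\geq 1$, inputs, so the two black-box properties apply and kill them.
\item Apply Lemma~\ref{otowa} to the now $\cF$-longitudinal arguments $\cI(\localMC)|_U$, after projecting in the Shoikhet case.
\item Only then pass from $\localMC$ to $\vfedosov$ inside $\fkontsevich=\id_{\Omega(\cV)}\otimes\kontsevich$ and $\fshoikhet$, where leafwise linearity suffices. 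This is \eqref{eq:tKont-local} and its Shoikhet analogue from the proof of Proposition~\ref{prop:FormalityMor-Fedosov}. The paper uses it implicitly in its last equality.
\end{enumerate}

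\textbf{Other points.} Your reading of the statement is correct: what is actually proved is $\cI^\top\circ(\shoikhet'_{\tauQ})_0(\zeta|_U)=(\tshoikhet_0\circ\cJ^\top(\zeta))|_U$ for a polyjet $\zeta$ on $\cN$. Your ``main obstacle'' is not an obstacle. Every term of $(\shoikhet'_{\tauQ})_0$ containing $d_{\DR}$ has $k\geq 1$ vector-field inputs and is killed directly by Theorem~\ref{ShoikhetFormality}~\ref{palmatum}, while the $k=0$ term contains no twisting element at all. No graph-weight analysis is needed.
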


\begin{proof}
Note that the difference of the vector fields $\tauQ$ and $\cI(\localMC)$,
respectively defined by Equations~\eqref{eq:cQ-Fedosov} and~\eqref{eq:LocMC},
is a linear vector field on $\cW$.
Therefore, we have
\begin{align*}
(\kontsevich'_{\tauQ})_1\big(\cI(\gamma)|_U\big) & = \sum_{k=0}^\infty \dfrac{1}{k!}
\kontsevich'_{1+k}\big(\tauQ|_U,\cdots,\tauQ|_U,\cI(\gamma)|_U\big) && \\
& = \sum_{k=0}^\infty \dfrac{1}{k!}
\kontsevich'_{1+k}\big(\cI(\localMC)|_U,\cdots,\cI(\localMC)|_U,\cI(\gamma)|_U\big)
&& \text{by Theorem~\ref{KontsevichFormality}~\ref{LinVF-kontsevich}} \\
& = \sum_{k=0}^\infty \dfrac{1}{k!} \cJ\big(\fkontsevich_{1+k}(\localMC,\cdots,\localMC,\gamma) \big)\big|_U = \big(\cJ\circ\tkontsevich_1(\gamma)\big)\big|_U
&& \text{by Lemma~\ref{otowa}}
\end{align*}
and
\begin{align*}
\cI^\top\circ(\shoikhet'_{\tauQ})_0(\zeta|_U) & = \sum_{k=0}^\infty \dfrac{1}{k!}
\cI^\top\circ\shoikhet'_{k}(\tauQ|_U,\cdots,\tauQ|_U;\zeta|_U) && \\
& = \sum_{k=0}^\infty \dfrac{1}{k!}
\cI^\top\circ\shoikhet'_{k}(\cI(\localMC)|_U,\cdots,\cI(\localMC)|_U;\zeta|_U)
&& \text{by Theorem~\ref{ShoikhetFormality}~\ref{palmatum}} \\
& = \sum_{k=0}^\infty \dfrac{1}{k!}
\fshoikhet_{k}\big(\localMC,\cdots,\localMC;\cJ^\top(\zeta)\big)\big|_U = \big(\tshoikhet_0\circ\cJ^\top(\zeta)\big)\big|_U
&& \text{by Lemma~\ref{otowa}}
\end{align*}
for all $\gamma\in\Tpolyf{}[1]$ and $\zeta\in\cCpolyf{}$.
\end{proof}

\begin{lemma}\label{lem:ToddFedosov}
In $\totbApolyF{0}\big|_U$, we have the equality
\[ \cI\transpose\big(\Ahatcocycle{\std}{(\cW,\tauQ)}\big) = \Ahatcocycle{\can}{\cF_{\cQ}|_U} .\]
\end{lemma}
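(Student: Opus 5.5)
The plan is to reduce the identity to a statement about Atiyah cocycles and then use the naturality of the Berezinian construction. Both sides are built by the same recipe, the Berezinian of $\frac{A}{e^{A/2}-e^{-A/2}}$, out of an Atiyah cocycle. On the left, $\atiyahcocycle{\std}{(\cW,\tauQ)}$ is a section of $(T_{\cW}^\vee[1])\otimes\End T_{\cW}$. On the right, $\atiyahcocycle{\can}{\cF_{\cQ}|_U}$ is a section of $(\cF^\vee[1])\otimes\End\cF$. The map $\cI^\top$ restricts forms on $\cW$ to $\cF$-longitudinal forms and is an algebra morphism. So it suffices to show two things: first, that $T_{\cW}$ contains $\cF|_U$ as a subbundle which is stable under the standard Atiyah cocycle when the latter is evaluated on $\cF$-directions; second, that after applying $\cI^\top$ the Berezinian only sees the block acting on $\cF$.

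\textbf{Step 1 (local formulas).} In the coordinates $(x,\xi,y)$ on $\cW$, the trivial connection is $\nabla^{\std}$. Equation~\eqref{velociraptor} then gives $\atiyahcocycle{\std}{(\cW,\tauQ)}(\partial_a,\partial_b)=\pm\sum_c\partial_a\partial_b(\tauQ^c)\,\partial_c$, where $a,b,c$ run over all coordinates. Applying $\cI^\top$ restricts the form arguments $a,b$ to the $y$-directions. On the other side, $\nabla^{\can}$ satisfies $\nabla^{\can}_{\partial_{y_i}}\partial_{y_j}=0$, because $\etendu{\tau}(\partial_{x_i})=\partial_{y_i}$. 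Hence $\atiyahcocycle{\can}{\cF_{\cQ}}(\partial_{y_i},\partial_{y_j})=\pm\sum_k\partial_{y_i}\partial_{y_j}(\tauQ^{y_k})\,\partial_{y_k}$. The same computation as in \eqref{velociraptor} applies, now with $\cF$ as the dg Lie algebroid.

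\textbf{Step 2 (block structure).} Write $\tauQ=\sum_k \tauQ^{x_k}\partial_{x_k}+\tauQ^{\xi_k}\partial_{\xi_k}+\tauQ^{y_k}\partial_{y_k}$. From \eqref{Fhvf}--\eqref{Conakry}, the coefficients $\tauQ^{x_k}$ and $\tauQ^{\xi_k}$ do not depend on $y$. For $\tauQ^{x_k}$ this holds because the component is $\xi_k$. For $\tauQ^{\xi_k}$ it holds because $\delta$, $d^\nabla$, $A^\nabla$ and the $\cF$-part of $\tauTpolyF(Q)$ have no $\partial_\xi$ component, and $\tauTpolyF(Q)$ contributes only $\cF$-directions plus a $y$-independent piece. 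I would check this against the construction of $\tauTpolyF$ in \cite{paper-1B}. Consequently $\partial_{y_i}\partial_{y_j}\tauQ^{x_k}=0=\partial_{y_i}\partial_{y_j}\tauQ^{\xi_k}$. So $\cI^\top(\atiyahcocycle{\std}{})$, viewed as a matrix-valued longitudinal $1$-form, has image in $\cF$: its $(x,\xi)$-rows vanish. It is block upper triangular with diagonal blocks $\cI^\top(\atiyahcocycle{\std}{})|_{\cF}=\atiyahcocycle{\can}{\cF_{\cQ}}$ and $0$.

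\textbf{Step 3 (Berezinian).} $\cI^\top$ is a morphism of graded commutative algebras, so it commutes with power series and Berezinians: $\cI^\top\Ber(f(A))=\Ber(f(\cI^\top A))$. For a block triangular matrix, $\Ber$ of $f$ of it equals the product of $\Ber(f(\cdot))$ over the diagonal blocks. Since $f(0)=1$ for $f(A)=A/(e^{A/2}-e^{-A/2})$, the zero block contributes $1$, and we obtain $\Ahatcocycle{\can}{\cF_{\cQ}|_U}$.

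The main obstacle is Step 2. One has to verify carefully that no $y$-dependence enters the $\partial_x$ and $\partial_\xi$ components of $\tauTpolyF(Q)+\fedosov$, since $\tauTpolyF(Q)$ is defined through the Fedosov iteration. If a $\partial_\xi$-component with $y$-dependence did appear, one would still obtain block triangularity, provided the quadratic $y$-dependence of the off-$\cF$ components vanishes. I would establish that fact from the characterization of $\tauTpolyF(Q)$ as the unique $\cF$-valued lift.
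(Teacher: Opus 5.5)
Your proposal is correct and is essentially the paper's proof. In the coordinates $(x,\xi,y)$ one has $\tauQ(x_j)=\xi_j$ and $\tauQ(\xi_j)=0$, so the $X$- and $\Xi$-rows of the standard Atiyah matrix vanish, and $\cI^\top$ of its $YY$-block is the Atiyah matrix of $\cF_{\cQ}$. The only difference is cosmetic: the paper finishes with $\supertrace(\mathsf{M}^l)=\supertrace(\mathsf{M}_{YY}^l)$ for the scalar Atiyah cocycles, whereas you use block-triangular multiplicativity of $\Ber$ together with $f(0)=1$.

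The step you single out as the main obstacle is in fact immediate. The pieces $\tauTpolyF(Q)$, $\delta$ and $A^\nabla$ are $\cF$-valued by construction, and $d^\nabla$ contributes only $\sum_k\xi_k\partial_{x_k}$ outside $\cF$, so no inspection of the Fedosov iteration is needed.
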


\begin{proof}
Since the \Aroof\ cocycle
can be expressed in terms of scalar Atiyah cocycles, it suffices to prove that, for all $l\in\NN$,
we have the equality
\[ \supertrace\big((\atiyahcocycle{\can}{\cF_{\cQ}|_U})^l\big)
= \cI\transpose \big(\supertrace\big((\atiyahcocycle{\std}{(\cW,\tauQ)})^l\big)\big) \]
in $\Gamma\big(\cN|_U;S^l(\cF|_U^\vee[1])\big)^0$
--- recall that
\begin{gather*}
\atiyahcocycle{\can}{\cF_{\cQ}|_U} \in \Gamma\big(\cN|_U;\cF|_U^\vee \otimes \End(\cF|_U)\big)^1
= \Gamma\big(\cN|_U;S^1 (\cF|_U^\vee[1]) \otimes \End (\cF|_U)\big)^0 \\
\atiyahcocycle{\std}{(\cW,\tauQ)} \in \Gamma\big(\cW; T^\vee_{\cW} \otimes \End (T_{\cW})\big)^1
= \Gamma\big(\cW;S^1 (T^\vee_{\cW}[1]) \otimes \End (T_{\cW})\big)^0 .
\end{gather*}

The system of local coordinates
\[ (z_1,\cdots, z_{3 \mr}) = (x_1,\cdots, x_{\mr}, \xi_1,\cdots, \xi_{\mr}, y_1, \cdots, y_{\mr}) \]
on $\cN|_U=\cW$ mentioned earlier --- see~\eqref{eq:InducedCoordFedosovMfd} ---
induces dual local frames for $T_{\cW}$ and $T^\vee_{\cW}$ respectively:
\[ \frac{\partial}{\partial z_1}, \cdots, \frac{\partial}{\partial z_{3\mr}} \in \sections{\cW; T_{\cW}}; \qquad d z_1,\cdots, d z_{3\mr} \in \sections{\cW; T^\vee_{\cW}} .\]
Note that $\degree{d z_i} = \degree{z_i} = -\degree{\frac{\partial}{\partial z_i}}$.

Recall that, in this system of local coordinates, the $\cF$-longitudinal vector field $\tauTpolyF(Q)$ reads
\[ \tauTpolyF(Q) = \sum_{j=1}^{\mr} \sum_{L\in\ZZ_{\geq 0}^{\mr}} g_{L,j} y^L \frac{\partial}{\partial y_j} ,\]
where the $g_{L,j} \in C^\infty(\cW)$ are functions depending on the variables $x_1,\cdots,x_{\mr}$ only.
It follows from Equations~\eqref{Fhvf}, \eqref{Cotonou}, \eqref{eq:LocCovDer} and~\eqref{Conakry} that
the cohomological vector field $\tauQ=\tauTpolyF(Q)+\fedosov$ reads
\[ \tauQ = \sum_{j=1}^{3 \mr} \tauQ(z_j) \frac{\partial}{\partial z_j} = \sum_{j=1}^{\mr} \tauQ(x_j) \frac{\partial}{\partial x_j} + \sum_{j=1}^{\mr} \tauQ(\xi_j) \frac{\partial}{\partial \xi_j}+ \sum_{j=1}^{\mr} \tauQ(y_j) \frac{\partial}{\partial y_j} \]
with \[ \tauQ(x_j) = \xi_j, \qquad \tauQ(\xi_j) = 0 ,\]
and
\[ \tauQ(y_j) = \sum_{L\in\ZZ_{\geq 0}^{\mr}}g_{L,j} y^L - \xi_j - \sum_{k}^{\mr}\sum_{l}^{\mr} (-1)^{|y_l|+ |y_l||y_j|} \xi_k \Gamma_{k,l}^j y_l
+\sum_{k=1}^{\mr} \sum_{\substack{L\in\ZZ_{\geq 0}^{\mr} \\ \abs{L}\geq 2}}\xi_k A^k_{L,j} y^L ,\]
where the functions $g_{L,j}$, $\Gamma_{k,l}^j$, and $A^k_{L,j}$ depend on the variables $x_1,\cdots,x_{\mr}$ only.

According to Equation~\eqref{velociraptor}, we have
\[ \atiyahcocycle{\std}{(\cW,\tauQ)} = \sum_{i,j,k = 1}^{3\mr} \pm
\frac{\partial^2\big(\tauQ(z_j)\big)}{\partial z_i \partial z_k} \nshift(d z_i)
\otimes \left(\frac{\partial}{\partial z_j} \otimes dz_k \right) .\]

Working in the frame $\frac{\partial}{\partial z_1}, \cdots, \frac{\partial}{\partial z_{3\mr}}$ for $T_{\cW}$,
we can think of the endomorphism $\atiyahcocycle{\std}{(\cW,\tauQ)}$ as a $3\mr\times3\mr$ matrix $\mathsf{M}$
with coefficients in $\Gamma\big(\cW;S^1 (T^\vee_{\cW}[1])\big)$ and rows and columns indexed by the
variables $z_1,z_2,\cdots,z_{3\mr}$:
\[ \mathsf{M}_{z_j z_k} = \sum_{i=1}^{3\mr} \pm
\frac{\partial^2\big(\tauQ(z_j)\big)}{\partial z_i \partial z_k} \nshift(d z_i) .\]
Since the variables $(z_1,z_2,\cdots,z_{3\mr})$ are divided in the three subgroups
$(x_1,x_2,\cdots,x_{\mr})$; $(\xi_1,\xi_2,\cdots,\xi_{\mr})$; and $(y_1,y_2,\cdots,y_{\mr})$,
the matrix $\mathsf{M}$ decomposes into nine blocks of size $\mr\times\mr$:
\[ \mathsf{M}=\begin{bmatrix} \mathsf{M}_{XX} & \mathsf{M}_{X\Xi} & \mathsf{M}_{XY} \\
\mathsf{M}_{\Xi X} & \mathsf{M}_{\Xi\Xi} & \mathsf{M}_{\Xi Y} \\
\mathsf{M}_{YX} & \mathsf{M}_{Y\Xi} & \mathsf{M}_{YY} \end{bmatrix} \]

Since $\tauQ(x_j)=\xi_j$, we have $\mathsf{M}_{XX} = \mathsf{M}_{X\Xi} = \mathsf{M}_{XY} = 0$.
Furthermore, since $\tauQ(\xi_j)=0$, we have
$\mathsf{M}_{\Xi X} = \mathsf{M}_{\Xi\Xi} = \mathsf{M}_{\Xi Y} = 0$.
Therefore, we have
\[ \mathsf{M} = \begin{bmatrix} 0 & 0 & 0 \\ 0 & 0 & 0 \\ \mathsf{M}_{YX} & \mathsf{M}_{Y\Xi} & \mathsf{M}_{YY} \end{bmatrix} \]
and \[ \supertrace\big(\mathsf{M}^l\big) = \supertrace\big((\mathsf{M}_{YY})^l\big), \quad\forall l\in\NN .\]

On the other hand, we have
\[ \atiyahcocycle{\can}{\cF_{\cQ}|_U} = \sum_{i,j,k = 1}^{\mr} \pm
\frac{\partial^2\big(\tauQ(y_j)\big)}{\partial y_i \partial y_k} \nshift(d y_i)
\otimes \left(\frac{\partial}{\partial y_j} \otimes dy_k \right) \]
and, working in the frame $\frac{\partial}{\partial y_1},\cdots,\frac{\partial}{\partial y_{\mr}}$
for $\cF_{\cQ}|_U$, we can think of the endomorphism $\atiyahcocycle{\can}{\cF_{\cQ}|_U}$
as a $\mr\times\mr$ matrix $\mathsf{F}$ with coefficients in $\Gamma\big(\cW;S^1(\cF|_U^\vee[1])\big)$
and rows and columns indexed by the variables $y_1,y_2,\cdots,y_{\mr}$:
\[ \mathsf{F}_{y_j y_k} = \sum_{i=1}^{\mr} \pm
\frac{\partial^2\big(\tauQ(y_j)\big)}{\partial y_i \partial y_k} \nshift(d y_i) .\]

Since
\[ \cI\transpose(\mathsf{M}_{y_j y_k}) = \cI\transpose\left( \sum_{i=1}^{3\mr} \pm
\frac{\partial^2\big(\tauQ(y_j)\big)}{\partial z_i \partial y_k} \nshift(d z_i) \right)
= \sum_{i=1}^{\mr} \pm
\frac{\partial^2\big(\tauQ(y_j)\big)}{\partial y_i \partial y_k} \nshift(d y_i)
= \mathsf{F}_{y_j y_k} ,\]
we conclude that
\[ \cI\transpose \supertrace\big((\atiyahcocycle{\std}{T_{(\cW,\tauQ)}})^l\big)
= \cI\transpose \supertrace\big(\mathsf{M}^l\big)
= \cI\transpose \supertrace\big((\mathsf{M}_{YY})^l\big)
= \supertrace\big(\mathsf{F}^l\big)
= \supertrace\big((\atiyahcocycle{\can}{\cF_{\cQ}|_U})^l\big) \]
and the proof is complete.
\end{proof}

We have the following immediate corollary:

\begin{corollary}\label{cor:local td}
The diagrams
\[ \begin{tikzcd}
\cApoly{}(\cW) \arrow[r, "\Ahatcocycle{\std}{(\cW,\tauQ)}"] \arrow[d, "\cI^\top"', two heads]
& \cApoly{}(\cW) \ar[d,"\cI\transpose"', two heads] \\
\cApolyf{}\big|_U \arrow[r, "\Ahatcocycle{\can}{\cF_{\cQ}|_U}", swap]
& \cApolyf{}\big|_U
\end{tikzcd}
\quad\text{and}\quad
\begin{tikzcd}
\Tpoly{}(\cW) \arrow[r, "\Ahatcocycle{\std}{(\cW,\tauQ)}"] & \Tpoly{}(\cW) \\
\Tpolyf{
}\big|_U \arrow[u, "\cI", hook]
\arrow[r, "\Ahatcocycle{\can}{\cF_{\cQ}|_U}", swap] &
\Tpolyf{}\big|_U \arrow[u, "\cI", hook]
\end{tikzcd} \]
commute.
The \Aroof\ cocycles $\Ahatcocycle{\std}{(\cW,\tauQ)}$ and $\Ahatcocycle{\can}{\cF_{\cQ}|_U}$ act by multiplication on differential forms
and by contraction on polyvector fields.
\end{corollary}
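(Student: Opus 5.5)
The plan is to derive both commuting squares directly from Lemma~\ref{lem:ToddFedosov}, using only two facts: the transpose maps $\cI\transpose$ are multiplicative, and they are dual to $\cI$ under the pairing that defines contraction.

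\textbf{Left square (differential forms, multiplication).} First we check that $\cI\transpose:\cApoly{}(\cW)\to\cApolyf{}|_U$ is a morphism of graded commutative algebras. It is the restriction of forms on $\cW$ to the subbundle $\cF|_U\subset T_{\cW}$. In the coordinates $(x,\xi,y)$ it sends each $dy_i$ to the corresponding $\cF$-dual generator, and each $dx_i$, $d\xi_i$ to $0$. It is the identity on coefficient functions, and it extends to the completed symmetric algebras $\hat S(T^\vee_{\cW}[1])\to\hat S(\cF^\vee[1])$ as the symmetric power of a bundle map. Hence
\[
\cI\transpose\big(\Ahatcocycle{\std}{(\cW,\tauQ)}\wedge\alpha\big)=\cI\transpose\big(\Ahatcocycle{\std}{(\cW,\tauQ)}\big)\wedge\cI\transpose(\alpha)=\Ahatcocycle{\can}{\cF_{\cQ}|_U}\wedge\cI\transpose(\alpha),
\]
where the second equality is Lemma~\ref{lem:ToddFedosov}. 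This gives commutativity of the first diagram. One point needs care: the product in $\totbApolyF{}$ involves infinite products over form degree. This is harmless, because $\cI\transpose$ preserves form degree $p$ and therefore acts componentwise.

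\textbf{Right square (polyvector fields, contraction).} Let $\omega=\Ahatcocycle{\std}{(\cW,\tauQ)}$ and $X\in\Tpolyf{}|_U$. We must show that $\iI_\omega\cI(X)$ lies in the image of $\cI$ and equals $\cI\big(\iI_{\cI\transpose\omega}X\big)$. Contraction of a form of degree $p$ into a $q$-vector field is characterized by the pairing: for every form $\beta$,
\[
\langle\beta,\iI_\omega Y\rangle=\pm\langle\omega\wedge\beta,Y\rangle .
\]
The inclusion $\cI$ and its transpose satisfy $\langle\beta,\cI(Z)\rangle=\langle\cI\transpose\beta,Z\rangle$. We can also check the image claim directly in local coordinates. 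An element $\cI(X)$ involves only the factors $\pshift\partial_y$. Pairing a monomial containing some $dx_i$ or $d\xi_i$ with such a polyvector gives $0$, so only the $dy$-part of $\omega$ contributes, and that part is exactly $\cI\transpose\omega$ (viewed back in $\cW$). The contraction of a $\partial_y$-polyvector by a $dy$-form is again a $\partial_y$-polyvector with coefficients in $C^\infty(\cW)$, hence lies in $\cI(\Tpolyf{})$. Combining this with the multiplicative identity from the first square and with the duality between $\cI$ and $\cI\transpose$, we obtain
\[
\iI_\omega\cI(X)=\cI\big(\iI_{\cI\transpose\omega}X\big)=\cI\big(\iI_{\Ahatcocycle{\can}{\cF_{\cQ}|_U}}X\big).
\]

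\textbf{Expected obstacle.} The main care point is sign bookkeeping under the Koszul rule and the shifts $[1]$, $[-1]$. Since $\omega$ has total degree $0$, the signs coming from the pairing are the same on both sides, so the identity holds with no extra sign. The remaining verification is routine.
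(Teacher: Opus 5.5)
Your proposal is correct and follows essentially the same route as the paper, which states this as an immediate corollary of Lemma~\ref{lem:ToddFedosov}. You make explicit the two facts the paper leaves implicit: $\cI\transpose$ is multiplicative (left square), and contraction is compatible with $\cI$ and its transpose (right square). The same multiplicativity also gives $\cI\transpose\big((\Ahatcocycle{\std}{(\cW,\tauQ)})^{\frac{1}{2}}\big)=(\Ahatcocycle{\can}{\cF_{\cQ}|_U})^{\frac{1}{2}}$, which is the form in which the corollary is actually used in the proof of Proposition~\ref{prop:Florence}.
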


The next lemma is immediate.

\begin{lemma}\label{Sapporo}
The diagrams
\[ \begin{tikzcd}
\cCpoly{}(\cW) \arrow[r, "\hhkr"] \arrow[d, "\cJ^\top"', two heads]
& \cApoly{}(\cW) \ar[d,"\cI\transpose"', two heads] \\
\cCpolyf{}\big|_U \arrow[r, "\hhkr", swap]
& \cApolyf{}\big|_U
\end{tikzcd}
\quad\text{and}\quad
\begin{tikzcd}
\Tpoly{}(\cW) \arrow[r, "\hkr"] & \Dpoly{}(\cW) \\
\Tpolyf{}\big|_U \arrow[u, "\cI", hook]
\arrow[r, "\hkr", swap] &
\Dpolyf{}\big|_U \arrow[u, "\cJ", hook]
\end{tikzcd} \]
commute.

\end{lemma}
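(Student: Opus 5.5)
The two diagrams in Lemma~\ref{Sapporo} can be checked directly on local generators in the chart $(z_1,\cdots,z_{3\mr})=(x,\xi,y)$ on $\cW=\cN|_U$. Both $\cI$ and $\cJ$ are inclusions of $\cF$-longitudinal objects, i.e.\ those involving only $\partial/\partial y$ directions. Both $\cI^\top$ and $\cJ^\top$ are the dual restrictions, which kill every component pairing nontrivially with $\partial_x$ or $\partial_\xi$. All four maps are $C^\infty(\cW)$-linear, and so are both HKR maps (in the appropriate sense on each tensor factor). Hence it suffices to verify commutativity on local generators.

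For the second diagram, take $X_1,\cdots,X_k\in\Gamma(\cF|_U)$, that is, vector fields of the form $\sum_j \nu_j\,\partial/\partial y_j$. The inclusion $\cJ$ is induced by the inclusion $\enveloping{\cF}\hookrightarrow\cD(\cN)$ of leafwise differential operators into all differential operators, and it is compatible with $\otimes$. Therefore $\cJ(\pshift X_{\sigma(1)}\otimes\cdots\otimes\pshift X_{\sigma(k)})=\pshift\cI(X_{\sigma(1)})\otimes\cdots\otimes\pshift\cI(X_{\sigma(k)})$. Summing over $\sigma\in S_k$ with the same Koszul signs and the factor $1/k!$ gives $\cJ\circ\hkr=\hkr\circ\cI$. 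This is the same computation as in the proof of Lemma~\ref{lem:Tau-HKR}.

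For the first diagram, I would use the jet description. A polyjet $\nshift\xi_1\otimes\cdots\otimes\nshift\xi_p$ with $\xi_i\in\JJ(\cW)=\Hom_{C^\infty(\cW)}(\cD(\cW),C^\infty(\cW))$ is sent by $\cJ^\top$ to $\nshift(\xi_1|_{\enveloping{\cF}})\otimes\cdots\otimes\nshift(\xi_p|_{\enveloping{\cF}})$. On the other side, $\hhkr$ sends it to $\nshift\bar\xi_1\odot\cdots\odot\nshift\bar\xi_p$, where $\bar\xi_i$ is the restriction of $\xi_i$ to vector fields. Applying $\cI^\top$ then restricts each $\bar\xi_i$ further to $\Gamma(\cF)$. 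So the key identity is
\[ \overline{\xi|_{\enveloping{\cF}}}=(\bar\xi)|_{\Gamma(\cF)}, \]
and both sides equal the restriction of $\xi$ to $\Gamma(\cF)\subset\enveloping{\cF}\cap\XX(\cW)$. This follows from the commutative square of inclusions $\Gamma(\cF)\subset\enveloping{\cF}$, $\Gamma(\cF)\subset\XX(\cW)\subset\cD(\cW)$. The map $\cI^\top$ is induced on symmetric powers by this restriction and is multiplicative, so the equality extends to the $\odot$-products. Equivalently, in the $\Phi$-picture, $\hhkr(a_0\otimes\cdots\otimes a_p)=a_0\,da_1\cdots da_p$, and $\cI^\top$ replaces each $d$ by the leafwise differential $d_{\cF}=\sum_i dy_i\,\partial/\partial y_i$. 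This agrees with first restricting to $\cF$-jets and then applying the leafwise $\hhkr$.

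The only real subtlety is completeness and well-definedness: the spaces involve completed tensor products and formal power series in $y$. Every map here is continuous for the adic topologies, and $\Phi$ has dense image. So checking on the generators $a_0\otimes\cdots\otimes a_p$ (or on $\xi^P\nshift(y^{Q_1})\otimes\cdots$) is enough. I expect that verifying the Koszul signs agree is the main bookkeeping, and it is routine because all maps are degree-preserving and defined factorwise.
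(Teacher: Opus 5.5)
Your proposal is correct and is the direct verification the paper has in mind: the paper gives no argument beyond calling the lemma immediate. Your factorwise check has two parts, namely compatibility of the inclusion $\enveloping{\cF}\hookrightarrow\cD(\cN)$ with $\otimes$ for the cochain square, and the identity $\overline{\xi|_{\enveloping{\cF}}}=\bar\xi|_{\Gamma(\cF)}$ together with multiplicativity of $\cI^\top$ for the chain square (with density of elementary tensors handling the completions); this is exactly the computation the paper carries out for the analogous squares in Lemma~\ref{lem:Tau-HKR}.
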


\begin{proof}[Proof of Proposition~\ref{prop:Florence}]
It suffices to verify Equations~\eqref{Fukuoka} and~\eqref{Fukushima} locally over any open subset $U$ of the support manifold $M$.

Since
\begin{align*}
\cJ \circ \tkontsevich_1 & = (\kontsevich'_{\tauQ})_1\circ\cI && \text{by Lemma~\ref{Hakodate}} \\
& = \hkr \circ (\Ahatcocycle{\std}{(\cW,\tauQ)})^{\frac{1}{2}}\circ\cI
&& \text{by Equation~\eqref{eq:DK_Kont'_Shoi'}} \\
& = \hkr \circ \cI \circ (\Ahatcocycle{\can}{\cF_{\cQ}|_U})^{\frac{1}{2}}
&& \text{by Corollary~\ref{cor:local td}} \\
& = \cJ \circ \hkr \circ (\Ahatcocycle{\can}{\cF_{\cQ}|_U})^{\frac{1}{2}}
&& \text{by Lemma~\ref{Sapporo}}
\end{align*}
and
\begin{align*}
\tshoikhet_0\circ\cJ^\top & = \cI^\top\circ(\shoikhet'_{\tauQ})_0 && \text{by Lemma~\ref{Hakodate}} \\
& = \cI^\top\circ(\Ahatcocycle{\std}{(\cW,\tauQ)})^{\frac{1}{2}}\circ\hhkr
&& \text{by Equation~\eqref{eq:DK_Kont'_Shoi'}} \\
& = (\Ahatcocycle{\can}{\cF_{\cQ}|_U})^{\frac{1}{2}}\circ\cI^\top\circ\hhkr
&& \text{by Corollary~\ref{cor:local td}} \\
& = (\Ahatcocycle{\can}{\cF_{\cQ}|_U})^{\frac{1}{2}}\circ\hhkr\circ\cJ^\top
&& \text{by Lemma~\ref{Sapporo}},
\end{align*}
we have
$\cJ \circ \big( \tkontsevich_1 - \hkr \circ (\Ahatcocycle{\can}{\cF_{\cQ}})^{\frac{1}{2}} \big)=0$
and
$\big( \tshoikhet_0 - (\Ahatcocycle{\can}{\cF_{\cQ}})^{\frac{1}{2}}\circ\hhkr \big) \circ\cJ^\top=0$.

It follows from the injectivity of $\cJ$ that $\tkontsevich_1
= \hkr \circ (\Ahatcocycle{\can}{\cF_{\cQ}})^{\frac{1}{2}}$
and from the surjectivity of $\cJ^\top$ that $\tshoikhet_0
= (\Ahatcocycle{\can}{\cF_{\cQ}})^{\frac{1}{2}}\circ\hhkr$.
\end{proof}

\begin{corollary}\label{Yonago}
The $L_\infty$ morphisms $\tkontsevich$ and $\tshoikhet$ are $L_\infty$ \emph{quasi}-isomorphisms.
\end{corollary}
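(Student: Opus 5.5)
To prove Corollary~\ref{Yonago}, the plan is to use the standard criterion: an $L_\infty$ morphism of dglas (resp.\ of $L_\infty$ modules) is a quasi-isomorphism exactly when its first (resp.\ zeroth) Taylor coefficient is a quasi-isomorphism of the underlying cochain complexes. Proposition~\ref{prop:Florence} identifies these coefficients as
\[ \tkontsevich_1=\hkr\circ(\Ahatcocycle{\can}{\cF_{\cQ}})^{\frac12}
\qquad\text{and}\qquad
\tshoikhet_0=(\Ahatcocycle{\can}{\cF_{\cQ}})^{\frac12}\circ\hhkr , \]
so it suffices to show that each of these two compositions is a quasi-isomorphism.

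\textbf{Step 1: the HKR factors.} Corollary~\ref{cor:HKR-Fedosov} shows that
\[ \hkr:\big(\totTpolyF{\bullet},\schouten{\tauQ}{\argument}\big)\to\big(\totDpolyF{\bullet},\gerstenhaber{\tauQ}{\argument}+\hochschild\big) \]
is a quasi-isomorphism. The remark following it shows that
\[ \hhkr:\big(\totcCpolyF{\bullet}\big)_\tauQ\to\big(\totbApolyF{\bullet}\big)_\tauQ \]
is a quasi-isomorphism. Here $\gerstenhaber{m_2}{\argument}=\hochschild$ and $\iL_{m_2}=\hochschildb$, so the differentials agree with those in the statement.

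\textbf{Step 2: the $\widehat{A}$ factor.} Write $a=(\Ahatcocycle{\can}{\cF_{\cQ}})^{\frac12}\in\totbApolyF{0}$. First, $a$ is $\iL_\tauQ$-closed. Indeed, $\Ahatcocycle{\can}{\cF_{\cQ}}$ is a cocycle, and its square root is a formal power series in the closed scalar Atiyah cocycles $\supertrace\big((\atiyahcocycle{\can}{\cF_{\cQ}})^l\big)$. Alternatively, this follows from Lemma~\ref{lem:ToddFedosov} and the dg manifold case.

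Second, $a=1+(\text{terms of positive form degree } p)$, and it is invertible in $\totbApolyF{0}$. This is why the completed space $\totbApolyF{\bullet}$ is used: it is a product over $p$.

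Since $\iL_\tauQ$ is a derivation of the wedge product, multiplication by $a$ commutes with $\iL_\tauQ$ and so is a chain automorphism of $\big(\totbApolyF{\bullet}\big)_\tauQ$, with inverse given by multiplication by $a^{-1}$.

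For contraction on polyvector fields, I will use the Cartan-type identity
\[ \schouten{\tauQ}{\iI_\alpha X}=\pm\,\iI_{\iL_\tauQ\alpha}X\pm\iI_\alpha\schouten{\tauQ}{X}, \]
which holds because $\tauQ$ is a vector field and $\iL_\tauQ$ is a derivation compatible with contraction. Since $\iL_\tauQ a=0$, the map $\iI_a$ commutes with the differential. Moreover, $\iI_a=\id+(\text{terms lowering polyvector degree})$, and contraction by $a^{-1}$ inverts it because contractions by forms compose multiplicatively. Two points need care:
\begin{enumerate}
\item The direct-sum total space $\totTpolyF{\bullet}$ must be preserved. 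This holds because contraction by a form of degree $p$ kills $q$-vectors with $q<p$, so only finitely many terms of $a$ act on any given element.
\item The operators $\iI_a$ and $\iI_{a^{-1}}$ must be mutually inverse; this follows from $\iI_\alpha\iI_\beta=\iI_{\alpha\wedge\beta}$ up to sign.
\end{enumerate}

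\textbf{Step 3: conclusion.} Each Taylor coefficient is then a composition of a quasi-isomorphism (an HKR map) with a chain isomorphism (the action of $a$), so it is a quasi-isomorphism. Hence $\tkontsevich$ and $\tshoikhet$ are quasi-isomorphisms.

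The main obstacle is Step 2 for polyvector fields. There I need to verify the sign conventions in the identity $\schouten{\tauQ}{\iI_\alpha X}$ and check that contraction by an $\iL_\tauQ$-closed form really intertwines $\schouten{\tauQ}{\argument}$ with itself. If a direct check is awkward, I would reduce it to the tangent case on $\cW$ using the diagrams of Corollary~\ref{cor:local td}, where the corresponding statement for $\Ahatcocycle{\std}{(\cW,\tauQ)}$ is part of the classical theory.
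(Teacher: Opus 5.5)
Your proposal is correct and is essentially the paper's proof. The paper deduces the corollary in one line from Proposition~\ref{prop:Florence} and the HKR quasi-isomorphisms of Corollary~\ref{cor:HKR-Fedosov}, leaving implicit what you check in Step~2: that contraction and multiplication by the $\iL_{\tauQ}$-closed, invertible element $(\Ahatcocycle{\can}{\cF_{\cQ}})^{\frac12}$ are chain automorphisms. Your appeal to the remark following that corollary is in fact the more accurate citation for $\tshoikhet_0$, since it makes $\hhkr$ a quasi-isomorphism into $\totbApolyF{\bullet}$ rather than $\totcApolyF{\bullet}$.
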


\begin{proof}
It follows from Equations~\eqref{Fukuoka} and~\eqref{Fukushima} and Corollary~\ref{cor:HKR-Fedosov}
that the cochain maps $\tkontsevich_1$ and $\tshoikhet_0$ are quasi-isomorphisms.
\end{proof}

\subsection{Compatibility with the calculus structures}

In this subsection, we prove that the isomorphisms in cohomology induced by the $L_\infty$ quasi-isomorphisms
$\tkontsevich$ and $\tshoikhet$ define an isomorphism of calculi:

\begin{proposition}\label{prop:CompatibleWithCal_Fedosov}
The pair of maps
\begin{multline} \label{eq:tKont1_coh}
\hkr\circ(\Ahatclass{\cF_{\cQ}})^{\frac{1}{2}}:
\cohomology{}\big(\totTpolyF{\bullet},\schouten{\tauTpolyF(Q)+\fedosov}{\argument}\big) \\
\xto{\cong} \cohomology{}\big(\totDpolyF{\bullet},\gerstenhaber{\tauTpolyF(Q)+\fedosov+m_2}{\argument}\big)
\end{multline}
and
\begin{multline}\label{eq:tShoi0_coh_inv}
((\Ahatclass{\cF_{\cQ}})^{\frac{1}{2}} \circ
\hhkr)^{-1}:\cohomology{}\big(\totbApolyF{\bullet},\iL_{\tauTpolyF(Q)+\fedosov}\big) \xto{\cong} \cohomology{}\big(\totcCpolyF{\bullet},\iL_{\tauTpolyF(Q)+\fedosov+m_2}\big)
\end{multline}
is an isomorphism of calculi.
\end{proposition}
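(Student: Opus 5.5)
The plan is to deduce the calculus compatibility from the $L_\infty$ structure of $\tkontsevich$ and $\tshoikhet$ (Proposition~\ref{prop:FormalityMor-Fedosov}, Corollary~\ref{Yonago}), combined with Proposition~\ref{prop:Florence}, which identifies the induced cohomology maps with \eqref{eq:tKont1_coh} and \eqref{eq:tShoi0_coh_inv}. A calculus isomorphism means four things: compatibility with the Lie bracket, with the cup/wedge product, with the module actions $\iL$ and $\iI$, and with the differentials $d_{\dR}$ and $B$.

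\textbf{Brackets and the $\iL$-action.} The $L_\infty$ morphism $\tkontsevich$ intertwines brackets up to the homotopy $\tkontsevich_2$. Hence $H(\tkontsevich_1)$ is a Lie algebra isomorphism between $\schouten{\argument}{\argument}$ and $\gerstenhaber{\argument}{\argument}$. Likewise, the $L_\infty$ module morphism $\tshoikhet$ satisfies, in arity one, $\tshoikhet_0(\iL_{\tkontsevich_1\gamma}\zeta)-\iL_\gamma\tshoikhet_0(\zeta)=$ (a coboundary involving $\tshoikhet_1$). So on cohomology $H(\tshoikhet_0)$ intertwines $\iL_{H(\tkontsevich_1)\gamma}$ with $\iL_\gamma$. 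This gives the Lie and $\iL$ parts.

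\textbf{Products and the $\iI$-action.} These are not visible from $L_\infty$ data alone. I will reduce them locally to the flat case, as in Lemma~\ref{lem:LooStr_loc}. Over a chart $U$, $\tkontsevich$ and $\tshoikhet$ are the $\localMC$-twists of $\id_{\Omega(\cV)}\otimes\kontsevich$ and $\id_{\Omega(\cV)}\otimes\shoikhet$ on the formal graded vector space $\cV_{\formal}$. For these, Kontsevich's compatibility of $\kontsevich_1$ with cup products on cohomology holds, via his homotopy built from the Kontsevich graphs with two points on the boundary (cf.\ Manchon--Torossian). Shoikhet's and Tsygan's results give compatibility of $\shoikhet$ with the contraction $\iI$ and with $B$ versus $d_{\dR}$; the Calaque--Rossi/Dolgushev--Tamarkin--Tsygan treatment of the full calculus structure applies here. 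The essential point is that these homotopies are given by universal, $GL$-equivariant formulas. They are compatible with tensoring by $\Omega(\cV)$ and with twisting by a Maurer--Cartan element. Twisting by $\vfedosov$ differs from twisting by $\localMC$ only by a linear vector field, so by Theorem~\ref{KontsevichFormality}~\ref{LinVF-kontsevich} and Theorem~\ref{ShoikhetFormality}~\ref{palmatum} the twisted homotopies glue to global operators. Exactly as in the proof of Proposition~\ref{prop:FormalityMor-Fedosov}, this yields global homotopies showing that $\tkontsevich_1(\alpha\wedge\beta)-\tkontsevich_1\alpha\cupproduct\tkontsevich_1\beta$ is exact for cocycles $\alpha,\beta$. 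Similarly, $\tshoikhet_0\iI_{\tkontsevich_1\gamma}-\iI_\gamma\tshoikhet_0$ and $\tshoikhet_0B-d\,\tshoikhet_0$ are null-homotopic up to the relevant differentials.

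\textbf{Main obstacle.} The hard step is the product, $\iI$ and $B$ compatibility. It needs two things: that the universal homotopies involve only the $L_\infty$ data and graph weights, so they survive twisting by $\localMC$; and that the linearity argument (vanishing of graphs with a linear vector field inserted in arity $\ge2$) extends to these homotopies. The $B$/$d_{\dR}$ compatibility is also delicate. My first attempt there is to avoid homotopies and check on cohomology directly: $d_{\dR}$ commutes with multiplication by the closed, $d_{\dR}$-closed form $(\Ahatcocycle{\can}{\cF_{\cQ}})^{\frac12}$, since it is built from the scalar Atiyah cocycles, which are $d_{\dR}$-closed up to $\iL_\tauQ$-exact terms. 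And $\hhkr$ intertwines $B$ with $d_{\dR}$ exactly. If the closedness fails at cochain level, I would fall back on the Tsygan-type statement that $\shoikhet$ is $B$-equivariant up to homotopy.
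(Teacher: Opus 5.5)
Your treatment of the Lie bracket, $\iL$, cup product and $\iI$ is the paper's argument: the low-arity $L_\infty$ relations for $\tkontsevich$ and $\tshoikhet$, then Lemma~\ref{Kyushu}, with homotopies built on each chart and glued because the local Maurer--Cartan elements differ by linear vector fields. One correction concerns the gluing step. Theorem~\ref{KontsevichFormality}~\ref{LinVF-kontsevich} and Theorem~\ref{ShoikhetFormality}~\ref{palmatum} are statements about the formality morphisms, not about the homotopies, so by themselves they do not make the local homotopies chart-independent. What is actually needed is that $H^{\kontsevich}_\omega$ and $H^{\shoikhet}_\omega$ are unchanged when $\omega$ is shifted by a linear vector field. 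The paper takes this from the final clauses of Theorems~\ref{prop:HmtOpKont} and~\ref{prop:HmtOpShoi}, and it is exactly the point you flagged as the main obstacle.

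The genuine difference is the compatibility of $B$ with $d$. The paper uses Willwacher's theorem (Theorem~\ref{prop:CompatibleWithB_local}, hence \eqref{eq:cyclic}): every Taylor coefficient of Shoikhet's morphism, and therefore of its twist, strictly intertwines $B$ and $d$, with no need to know $\tshoikhet_0$ explicitly.

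Your first attempt also works, even on cochains, so the fallback is unnecessary. It uses $\tshoikhet_0=(\Ahatcocycle{\can}{\cF_{\cQ}})^{\frac{1}{2}}\circ\hhkr$ from Proposition~\ref{prop:Florence} and needs two facts:
\begin{enumerate}
\item $\hhkr=\shoikhet_0$ intertwines $B$ and $d_{\cF}$. This is the classical statement, i.e.\ the case $n=0$ of Theorem~\ref{prop:CompatibleWithB_local}.
\item $(\Ahatcocycle{\can}{\cF_{\cQ}})^{\frac{1}{2}}$ is exactly $d_{\cF}$-closed, not merely closed up to $\iL_{\tauQ}$-exact terms. You assert this without proof; it should be stated and proved.
\end{enumerate}

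For the second fact: $\nabla^{\can}$ is the flat coordinate connection in the $y$-directions. The computation in the proof of Lemma~\ref{lem:ScalarAtiyah-Div} then shows that, locally and up to Koszul signs, $\atiyahcocycle{\can}{\cF_{\cQ}}$ is $d_{\cF}$ applied entrywise to the fiberwise Jacobian matrix $J=\big(\partial\tauQ(y_j)/\partial y_k\big)$. Hence $d_{\cF}\supertrace\big((\atiyahcocycle{\can}{\cF_{\cQ}})^l\big)$ is a sum of supertraces of products each containing a factor $d_{\cF}^2J=0$. Any power series in these scalar cocycles, in particular the square root of the $\widehat{A}$ cocycle, is therefore $d_{\cF}$-closed.

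Your route for $B$ is more elementary once Proposition~\ref{prop:Florence} is available. The paper's route does not depend on identifying $\tshoikhet_0$ explicitly and holds uniformly for every Taylor coefficient.
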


We first need the following

\begin{lemma}\label{Kyushu}
\begin{enumerate}
\item
The map \eqref{eq:tKont1_coh} is a morphism of associative algebras.
\item
The map \eqref{eq:tShoi0_coh_inv} is a morphism of modules over the associative algebra
$\cohomology{}\big(\totTpolyF{\bullet},\schouten{\tauTpolyF(Q)+\fedosov}{\argument}\big)$.
\end{enumerate}
\end{lemma}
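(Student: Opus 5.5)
The plan is to reduce both assertions to local statements in a chart $(U,\phi)$ and then invoke the corresponding facts for Kontsevich's and Shoikhet's formality morphisms on the trivialized graded manifold $\cW$, twisted by the Maurer--Cartan element $\tauQ|_U$. Recall that Proposition~\ref{prop:Florence} identifies the cochain maps $\tkontsevich_1$ and $\tshoikhet_0$ with $\hkr\circ(\Ahatcocycle{\can}{\cF_{\cQ}})^{\frac{1}{2}}$ and $(\Ahatcocycle{\can}{\cF_{\cQ}})^{\frac{1}{2}}\circ\hhkr$. The maps in cohomology in \eqref{eq:tKont1_coh} and \eqref{eq:tShoi0_coh_inv} are therefore induced by the first Taylor coefficient of the $L_\infty$ morphism $\tkontsevich$ and the zeroth coefficient of the module morphism $\tshoikhet$.

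\textbf{Part (i).} I would follow Kontsevich's argument that the first Taylor coefficient of the formality morphism intertwines the wedge product with the cup product up to homotopy. Locally, the homotopy is provided by the Kontsevich integrals $\kontsevich'_{2}$ together with an auxiliary Stokes argument on configuration spaces; this is the classical ``compatibility with cup products'' theorem, and for the twisted morphism the same identity holds with $\kontsevich'_{n+k}$ fed $k$ copies of the Maurer--Cartan element. Concretely, I would show that for $\tauQ$-cocycles $\gamma_1,\gamma_2$ the difference
\[ \tkontsevich_1(\gamma_1\wedge\gamma_2)-\tkontsevich_1(\gamma_1)\cupproduct\tkontsevich_1(\gamma_2) \]
is exact. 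The exactness would be checked first in the chart, using Lemma~\ref{lem:LooStr_loc} and \eqref{eq:tKont-local}. To globalize, I would express the homotopy leafwise: as in \eqref{Kyoji}, it is built from $\fkontsevich$-type operators twisted by $\vfedosov$, and the linearity of $\vfedosov|_U-\localMC$ (Theorem~\ref{KontsevichFormality}) makes these operators independent of the chart. A convenient alternative, avoiding configuration space arguments, is to import the statement for $\cW$ directly: by \eqref{eq:DK_Kont'_Shoi'}, $\hkr\circ(\Ahatcocycle{\std}{(\cW,\tauQ)})^{\frac12}$ is multiplicative in cohomology (this is the Kontsevich--Shoikhet statement for dg manifolds with trivial connection). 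One would then restrict along $\cI,\cJ$ using Lemma~\ref{Hakodate}, Corollary~\ref{cor:local td}, Lemma~\ref{Sapporo}, and the fact that $\cJ$ preserves the cup product.

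\textbf{Part (ii).} Here I would use Shoikhet's (equivalently Calaque--Rossi's) compatibility: the zeroth coefficient $\shoikhet_0$ intertwines $\iI_{\kontsevich_1(\gamma)}$ with contraction $\iI_\gamma$ up to a homotopy built from $\shoikhet_1$ and a configuration-space Stokes argument. Twisting by $\tauQ|_U$ preserves this identity up to homotopy. Transporting to $\cF$ via $\cI^\top,\cJ^\top$ (Lemma~\ref{Hakodate}) and checking that $\cI^\top$ and $\cJ^\top$ intertwine contractions with $\cF$-longitudinal elements gives $\tshoikhet_0\circ\iI_{\tkontsevich_1(\gamma)}\simeq \iI_\gamma\circ\tshoikhet_0$ on cohomology. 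Inverting $\tshoikhet_0$ in cohomology then yields the module property.

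\textbf{Main obstacle.} The hard step is globalizing the homotopies, since these are not a priori built from the twisted Taylor coefficients. The homotopies come from extra configuration-space integrals, and I need them to be defined leafwise by universal formulas that are insensitive to linear vector fields. I would try first to prove the analogue of Theorem~\ref{KontsevichFormality}~\ref{LinVF-kontsevich} for these homotopy operators, namely that they vanish when one argument is linear. If that works, the same twisting-and-gluing argument used in the proof of Proposition~\ref{prop:FormalityMor-Fedosov} applies verbatim. Failing that, I would work purely in cohomology using the existence result on $\cW$ and the injectivity and surjectivity of $\cJ$ and $\cJ^\top$, arguing via the quasi-isomorphisms of Theorem~\ref{thm:mainT} to descend the cohomological identity.
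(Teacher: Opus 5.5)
Your main line is the paper's proof. The paper applies Theorems~\ref{prop:HmtOpKont} and~\ref{prop:HmtOpShoi} with $C=\Omega(\cV)$ to the chart-wise twisted morphisms of Lemma~\ref{lem:LooStr_loc}, and glues the local homotopy operators because the local Maurer--Cartan elements of overlapping charts differ by a linear vector field (Equation~\eqref{eq:LocMC-Fedosov}). It takes the insensitivity of these homotopies to linear shifts directly from the ``furthermore'' clauses of those two theorems, rather than proving it anew as you planned.

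Your detour through $\cW$ is unnecessary, and your purely cohomological fallback would not work: exactness established chart by chart does not glue to global exactness. The chart-independence of the homotopy operators is exactly what makes the argument go through.
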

\begin{proof}
Under the identifications \eqref{eq:Tpoly_loc}, \eqref{eq:Dpoly_loc}, \eqref{eq:Apoly_loc}
and~\eqref{eq:Cpoly_loc} induced by a local chart $(U,\phi)$ on $\cM$,
the restrictions to the open set $U$ of the $L_\infty$ morphisms $\tkontsevich$ and $\tshoikhet$ are identified
with $(\id_{\OO(\cV)}\otimes\kontsevich)_{\localMC}$ and $(\id_{\OO(\cV)}\otimes\shoikhet)_{\localMC}$ respectively.
According to Propositions~\ref{prop:HmtOpKont} and~\ref{prop:HmtOpShoi},
there exist locally defined maps
\[ H_{\localMC}^{\tkontsevich}: \totTpolyF{\bullet} \Big|_U \times \totTpolyF{\bullet} \Big|_U \to \totDpolyF{\bullet} \Big|_U \]
and
\[ H_{\localMC}^{\tshoikhet}: \totTpolyF{\bullet} \Big|_U \times \totcCpolyF{\bullet} \Big|_U \to \totbApolyF{\bullet} \Big|_U \]
satisfying homotopy relations parallel to Equations~\eqref{eq:HtpEqKont} and~\eqref{eq:HtpEqShoi}, respectively.
Note that, if $(U',\phi')$ is another local chart such that $U \cap U' \neq \emptyset$,
then the difference $\localMC_U|_{U \cap U'} - \localMC_{U'}|_{U \cap U'}$ is,
according to Equation~\eqref{eq:LocMC-Fedosov}, a linear vector field.
According to Propositions~\ref{prop:HmtOpKont} and~\ref{prop:HmtOpShoi},
the locally defined operators $H_{\localMC}^{\tkontsevich}$ and $H_{\localMC}^{\tshoikhet}$
are therefore independent of the choice of local chart.
Consequently, they glue together uniquely to form a pair of well-defined global operators
\[ H_{\localMC}^{\tkontsevich}: \totTpolyF{\bullet} \times \totTpolyF{\bullet} \to \totDpolyF{\bullet} \]
and
\[ H_{\localMC}^{\tshoikhet}: \totTpolyF{\bullet} \times \totcCpolyF{\bullet} \to \totbApolyF{\bullet} \]
satisfying the relations
\begin{multline}\label{Honshu}
\pshift\big(\tkontsevich_1 ( \nshift(\gamma_1 \odot \gamma_2) )\big) - \pshift\big(\tkontsevich_1(\nshift\gamma_1)\big) \cupproduct \pshift\big(\tkontsevich_1(\nshift\gamma_2)\big)
= \gerstenhaber{\tauTpolyF(Q)+\fedosov+m_2}{H_{\localMC}^{\tkontsevich}(\gamma_1,\gamma_2)} \\
+ H_{\localMC}^{\tkontsevich}\big(\schouten{\tauTpolyF(Q)+\fedosov}{\gamma_1},\gamma_2\big)
+ (-1)^{|\gamma_1|} H_{\localMC}^{\tkontsevich} \big(\gamma_1,\schouten{\tauTpolyF(Q)+\fedosov}{\gamma_2}\big),
\end{multline}
and
\begin{multline}\label{Shikoku}
\tshoikhet_0 \big(\iI_{\pshift\tkontsevich_1(\nshift\gamma)} (\zeta) \big) - \iI_\gamma \big( \tshoikhet_0(\zeta) \big)
= ( \iL_{\tauTpolyF(Q)+\fedosov})\big(H_{\localMC}^{\tshoikhet}(\gamma; \zeta) \big) \\
\qquad\quad + H_{\localMC}^{\tshoikhet}\big( \schouten{\tauTpolyF(Q)+\fedosov}{\gamma} ; \zeta\big)
+ (-1)^{|\gamma|} H_{\localMC}^{\tshoikhet}\big(\gamma;\iL_{\tauTpolyF(Q)+\fedosov+m_2}(\zeta)\big),
\end{multline}
for all $\gamma,\gamma_1,\gamma_2\in\totTpolyF{\bullet}$ and $\zeta\in\totcCpolyF{\bullet}$.
Upon applying the cohomology functor to Equations~\eqref{Honshu} and~\eqref{Shikoku},
the desired result follows immediately from Equations~\eqref{Fukuoka} and~\eqref{Fukushima}.
\end{proof}

We are now ready to complete the proof of Proposition~\ref{prop:CompatibleWithCal_Fedosov}.

\begin{proof}[Proof of Proposition~\ref{prop:CompatibleWithCal_Fedosov}]
According to Proposition~\ref{prop:Florence} and Corollary~\ref{Yonago},
the quasi-isomorphisms $\tkontsevich_1=\hkr\circ(\Ahatclass{\cF_{\cQ}})^{\frac{1}{2}}$
and $\tshoikhet_0=(\Ahatclass{\cF_{\cQ}})^{\frac{1}{2}} \circ \hhkr$ are respectively
the first Taylor coefficient of the morphism of $L_\infty$ algebras $\tkontsevich$
and the zeroth Taylor coefficient of the morphism of $L_\infty$ modules $\tshoikhet$.
Consequently, on the cohomology level, the map \eqref{eq:tKont1_coh} is an isomorphism of Lie algebras
and the map \eqref{eq:tShoi0_coh_inv} is an isomorphism of Lie modules (over the Lie algebra
$\cohomology{}\big(\totTpolyF{\bullet}[1],\schouten{\tauTpolyF(Q)+\fedosov}{\argument}\big)$).
Furthermore, Lemma~\ref{Kyushu} asserts that the map \eqref{eq:tKont1_coh} is a morphism
of associative algebras (since it preserves the cup products) and the map \eqref{eq:tShoi0_coh_inv}
is a morphism of modules (over the associative algebra
$\cohomology{}\big(\totTpolyF{\bullet},\schouten{\tauTpolyF(Q)+\fedosov}{\argument}\big)$).
Finally, it follows from Proposition~\ref{prop:CompatibleWithB_local} that the map \eqref{eq:tShoi0_coh_inv}
intertwines the operators $B$ acting on polyjets and $d_{\dR}$ acting on differential forms.
\end{proof}

\subsection{From the \Aroof\ class to the Todd class}

To prove Proposition~\ref{thm:DK_Fedosov}, note that the difference between Propositions~\ref{prop:CompatibleWithCal_Fedosov} and~\ref{thm:DK_Fedosov}
is the modification from $\Ahatclass{\cF_{\cQ}}$ to $\toddclass{\cF_{\cQ}}$.
On the cochain level, the cocycles $\Ahatcocycle{\can}{\cF_{\cQ}}$ and $\toddcocycle{\can}{\cF_{\cQ}}$
differ by a factor $e^{\frac{1}{2}\supertrace(\atiyahcocycle{\can}{\cF_{\cQ}})}$:
\[ \toddcocycle{\can}{\cF_{\cQ}} = e^{\frac{1}{2}\supertrace(\atiyahcocycle{\can}{\cF_{\cQ}})}
\cdot \Ahatcocycle{\can}{\cF_{\cQ}} .\]

In a system of coordinates of type \eqref{eq:InducedCoordFedosovMfd} on the Fedosov manifold $\cN$
associated with a local chart $(U,\phi)$ of $\cM$, consider the $\cF$-longitudinal vector field
\[ \tauTpolyF(Q) + A^\nabla = \sum_{j=1}^{\mr} f_j \frac{\partial}{\partial y_j}
\qquad \text{with $f_j \in C^\infty(\cN|_U)$} ,\]
where $A^\nabla$ is the $\cF$-longitudinal vector field on $\cN$ appearing
in Equations~\eqref{Fhvf} and~\eqref{Conakry}.
Its divergence relatively to the locally defined $\cF$-longitudinal volume form
$d y_1 \wedge \cdots \wedge dy_{\mr} \in \sections{\Lambda^{\mr}\cF\dual}$
is the function
\begin{equation}\label{locdivfunc}
\divergence(\tauTpolyF(Q) + A^\nabla) = \sum_{j=1}^{\mr}
\frac{\partial f_j}{\partial y_j} \in C^\infty(\cN|_U)
.\end{equation}
It is straightforward to see that, if $(x_1,\cdots,x_{\mr},\xi_1,\cdots,\xi_{\mr},y_1,\cdots,y_{\mr})$
and $(x'_1,\cdots,x'_{\mr},\xi'_1,\cdots,\xi'_{\mr},y'_1,\cdots,y'_{\mr})$
are two systems of local coordinates on $\cN$
associated with two overlapping local charts $(U,\phi)$ and $(U',\phi')$ of $\cM$,
the partial Jacobian matrix $\frac{\partial(y'_1,\cdots,y'_{\mr})}{\partial(y_1,\cdots,y_{\mr})}$
depends exclusively on the variables $x_1,\cdots,x_{\mr}$ on the open set $U\cap U'$.
Therefore, the local divergence function \eqref{locdivfunc} is independent
of the choice of local coordinates \eqref{eq:InducedCoordFedosovMfd}
on the Fedosov manifold $\cN$. Consequently, the local divergence functions
of the globally defined $\cF$-longitudinal vector field $\tauTpolyF(Q)+A^\nabla$ glue together
to yield a globally well-defined divergence function
$\divergence(\tauTpolyF(Q)+A^\nabla)$.

Similarly to \cite[Lemma~2.19]{MR3964152}, we have

\begin{lemma}\label{lem:ScalarAtiyah-Div}
$\supertrace\big(\atiyahcocycle{\can}{\cF_{\cQ}}\big) = d_{\cF}\big(\divergence(\tauTpolyF(Q)+A^\nabla)\big).$
\end{lemma}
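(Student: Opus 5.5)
The identity is local, so I will verify it in a coordinate system $(x,\xi,y)$ of type \eqref{eq:InducedCoordFedosovMfd} attached to a chart $(U,\phi)$ of $\cM$. Both sides are globally defined: the left side because $\nabla^{\can}$ is global, and the right side because $\divergence(\tauTpolyF(Q)+A^\nabla)$ glues, as shown above, and $d_{\cF}$ is the global leafwise differential. The plan is to compute $\atiyahcocycle{\can}{\cF_{\cQ}}$ explicitly in the frame $\frac{\partial}{\partial y_1},\dots,\frac{\partial}{\partial y_{\mr}}$. On this frame $\nabla^{\can}$ is the trivial connection, since $\etendu{\tau}(\partial_{x_k})=\partial_{y_k}$. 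From the proof of Lemma~\ref{lem:ToddFedosov} we already have the matrix
\[
\mathsf{F}_{y_jy_k}=\sum_{i}\pm\frac{\partial^2(\tauQ(y_j))}{\partial y_i\partial y_k}\,\nshift(dy_i).
\]
Taking its supertrace gives
\[
\supertrace\big(\atiyahcocycle{\can}{\cF_{\cQ}}\big)=\sum_{i,j}\pm\,\nshift(dy_i)\,\frac{\partial}{\partial y_i}\frac{\partial\,\tauQ(y_j)}{\partial y_j},
\]
up to Koszul signs.

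Next I decompose $\tauQ(y_j)$ into its four pieces: $g_{L,j}y^L$ coming from $\tauTpolyF(Q)$, the term $-\xi_j$ coming from $-\delta$, the linear Christoffel term coming from $d^\nabla$, and the $A^\nabla$ term. The term $-\xi_j$ is independent of $y$, so it contributes nothing. The Christoffel term is linear in $y$, so its second $y$-derivatives vanish. What survives is $\sum_{i,j}\pm\,\nshift(dy_i)\,\partial_{y_i}\partial_{y_j}f_j$, where $f_j$ are the components of $\tauTpolyF(Q)+A^\nabla$. Since $d_{\cF}$ acting on a function $h$ is $\sum_i \nshift(dy_i)\,\partial_{y_i}h$ (leafwise de~Rham in the $y$-directions), this expression equals $d_{\cF}\big(\sum_j\partial_{y_j}f_j\big)=d_{\cF}\,\divergence(\tauTpolyF(Q)+A^\nabla)$ by \eqref{locdivfunc}.

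The main obstacle is the sign bookkeeping. I need to check that the Koszul signs in the supertrace, namely the factor $(-1)^{|y_j|}$ from the super structure and the signs from moving $\partial_{y_j}$ past $\partial_{y_i}$ and $\nshift(dy_i)$, combine to give exactly the supertrace-compatible divergence $\sum_j \partial_{y_j}f_j$ with the signs used in \eqref{locdivfunc}. A mismatch would instead produce a sign-twisted sum. I would first do this in the purely even case and then track the parity factors $(-1)^{|y_j|}$ and $(-1)^{|y_i||y_j|}$, following the analogous computation in \cite[Lemma~2.19]{MR3964152}. A secondary point is to confirm that the $d^\nabla$ and $\delta$ parts of $\fedosov$ really drop out after taking the supertrace, which holds because both are at most linear in $y$.
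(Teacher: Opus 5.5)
Your proposal is correct and follows the paper's proof. The paper also works in the $\frac{\partial}{\partial y}$-frame, where $\nabla^{\can}$ is trivial, and writes $\atiyahcocycle{\can}{\cF_{\cQ}}$ via second $y$-derivatives of $\tauQ(y_k)$, which equal those of $f_k$ because the $-\xi_k$ and Christoffel terms are at most linear in $y$. For the sign bookkeeping you flag, the paper contracts both sides with $\frac{\partial}{\partial y_i}$: the identity $\iI_{\frac{\partial}{\partial y_i}}\supertrace\big(\atiyahcocycle{\can}{\cF_{\cQ}}\big)=(-1)^{|y_i|}\supertrace\big(\atiyahcocycle{\can}{\cF_{\cQ}}(\frac{\partial}{\partial y_i},\argument)\big)$ makes the parity factors cancel, and both sides reduce to $\sum_j \frac{\partial^2 f_j}{\partial y_i\partial y_j}$.
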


\begin{proof}
In a system of local coordinates of type \eqref{eq:InducedCoordFedosovMfd} on $\cN$,
a direct computation shows that
\[ \atiyahcocycle{\can}{\cF_{\cQ}}\left(\dfrac{\partial}{\partial y_i},\dfrac{\partial}{\partial y_j}\right)
= (-1)^{|y_i|+|y_j|} \sum_{k=1}^{n} \dfrac{\partial^2 \tauQ(y_k)}{\partial y_i \partial y_j}
\dfrac{\partial}{\partial y_k}
= (-1)^{|y_i|+|y_j|} \sum_{k=1}^{n} \dfrac{\partial^2 f_k}{\partial y_i \partial y_j}
\dfrac{\partial}{\partial y_k} .\]
Therefore, we obtain
\begin{align*}
\iI_{\frac{\partial}{\partial y_i}} \supertrace\big(\atiyahcocycle{\can}{\cF_{\cQ}}\big) & = (-1)^{|y_i|}
\supertrace\left(\atiyahcocycle{\can}{\cF_Q}\left(\dfrac{\partial}{\partial y_i},\argument\right)\right)
= \sum_{j=1}^n \dfrac{\partial^2 f_j}{\partial y_i \partial y_j} \\
& = \iI_{\frac{\partial}{\partial y_i}} d_{\cF}\left(\sum_{j=1}^n \frac{\partial f_j}{\partial y_j}\right)
= \iI_{\frac{\partial}{\partial y_i}} d_{\cF}\big(\divergence(\tauTpolyF(Q)+A^\nabla)\big).
\end{align*}
This completes the proof.
\end{proof}

We also need the following
\begin{lemma} \label{cherry}
Given a dg Lie algebroid $\cL \to \cM$, let $\cQ$ denote the endomorphism of $\Gamma\big(\hat{S}(\cL^\vee[1])\big)$ encoding the dg structure, and let $d_{\cL} : \Gamma\big(\hat{S}^\bullet(\cL^\vee[1])\big) \to \Gamma\big(\hat{S}^{\bullet+1}(\cL^\vee[1])\big)$ be the Chevalley--Eilenberg differential of the underlying (graded) Lie algebroid.
Suppose that $\alpha \in \big(\Gamma(\cL^\vee[1])\big)^0$ is a $\cL$-longitudinal $1$-form of degree $0$ on $\cM$ satisfying $\cQ(\alpha) = 0$ and $d_{\cL}(\alpha) = 0$.
Denote by $\alpha^\flat$ the action of $\alpha$ on the space $\Theta^\bullet = \tot_{\Pi}^\bullet\big(\prescript{\cL}{}{\sA}^{\poly}_{\bullet}(\cM)\big)$ of $\cL$-longitudinal differential forms by multiplication.
Likewise, denote by $\alpha^\sharp$ the action of $\alpha$ on the space
$\Xi_\bullet = \tot_{\oplus}^\bullet\big(\prescript{\cL}{}{\sT}_{\poly}^{\bullet}(\cM)\big)$
of $\cL$-longitudinal polyvector fields by contraction.
Then the pair of maps $(e^{\alpha^\sharp}, e^{-\alpha^\flat})$
constitutes an automorphism of the dg calculus $(\Theta^\bullet,\Xi_\bullet)$.
\end{lemma}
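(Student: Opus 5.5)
Note first that, as stated, the lemma has the roles of the two spaces interchanged: in the calculus $(\Theta^\bullet,\Xi_\bullet)$, $\Theta^\bullet$ should be the $\cL$-polyvector fields (the Gerstenhaber algebra) and $\Xi_\bullet$ the $\cL$-forms. In what follows, $e^{\alpha^\sharp}$ acts on polyvector fields and $e^{-\alpha^\flat}$ on forms. The plan is to check directly that the pair $(e^{\alpha^\sharp},e^{-\alpha^\flat})$ intertwines every structure operation. These are: the differentials $\cQ$ on both sides, $\wedge$, $\schouten{\argument}{\argument}$, $\iI$, $\iL$, and the de~Rham differential $d_{\cL}$ on forms. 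Both exponentials are well defined. The operator $\alpha^\sharp$ lowers polyvector degree by one, so its exponential is a finite sum on $\Tpoly{}$. Multiplication by $\alpha$ raises form degree, which is harmless in the direct-product total space. Since $\alpha$ has degree $0$ and is even in $S(\cL^\vee[1])$, both exponentials are degree preserving and invertible, with inverses $e^{-\alpha^\sharp}$ and $e^{\alpha^\flat}$.

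\textbf{Step 1: the structures preserved by both maps.} The map $\alpha^\sharp=\iI_\alpha$ is a derivation of the wedge product on polyvector fields, since contraction with a $1$-form is a derivation of degree $0$ here. Hence $e^{\alpha^\sharp}$ is an algebra automorphism. Because $\cQ(\alpha)=0$ and $\cQ$ acts as a derivation compatible with contraction and multiplication, $\cQ$ commutes with both $\alpha^\sharp$ and $\alpha^\flat$. So the pair is a morphism of complexes.

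For contractions, I would show $e^{-\alpha^\flat}\iI_{X}=\iI_{e^{\alpha^\sharp}X}\,e^{-\alpha^\flat}$. This follows from the commutator identity $[\iI_X,\alpha^\flat]=\iI_{\alpha^\sharp X}$, which holds because $\iI_X$ acts on forms by contraction and the graded Leibniz rule applies to multiplication by the $1$-form $\alpha$. Iterating it gives $e^{-\alpha^\flat}\iI_X e^{\alpha^\flat}=\iI_{e^{\alpha^\sharp}X}$; the identity needs only this nested-commutator formula, and the expansion terminates since $\alpha^\sharp$ is nilpotent on each $X$. For the de~Rham differential, $d_{\cL}\alpha=0$ gives $[d_{\cL},\alpha^\flat]=0$, so $e^{-\alpha^\flat}$ commutes with $d_{\cL}$.

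\textbf{Step 2: the bracket and the Lie derivative.} Since $\iL_X=\pm[d_{\cL},\iI_X]$ by the Cartan formula, Step 1 gives $e^{-\alpha^\flat}\iL_X e^{\alpha^\flat}=\iL_{e^{\alpha^\sharp}X}$ directly. For the Schouten bracket, I would use $\iI_{\schouten{X}{Y}}=\pm[\iL_X,\iI_Y]$. Conjugating by $e^{-\alpha^\flat}$ yields $\iI_{e^{\alpha^\sharp}\schouten{X}{Y}}=\iI_{\schouten{e^{\alpha^\sharp}X}{e^{\alpha^\sharp}Y}}$. Equality of the polyvector fields then follows from the injectivity of $X\mapsto\iI_X$ on $\cL$-forms; this is the fiberwise perfect pairing in finite rank, and faithfulness holds degreewise in the completed spaces.

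Alternatively, one can argue directly that $\alpha^\sharp$ is a derivation of the Schouten bracket. Its defect is governed by $d_{\cL}\alpha$, which vanishes, because $[\alpha^\sharp,\schouten{X}{\argument}]$ equals contraction with $\iL_X\alpha$ up to a term involving $d_{\cL}\alpha$.

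\textbf{Main obstacle.} I expect the sign bookkeeping to be the hardest part. This includes the Koszul signs in the commutator identities for graded $\cL$ (with the degree conventions $|\iL_X|=|X|-1$), and verifying that the sign in $e^{-\alpha^\flat}$, as opposed to $e^{+\alpha^\flat}$, is the correct match for the contraction identity. The sign is fixed by the requirement $\iI_{e^{\alpha^\sharp}X}\circ e^{-\alpha^\flat}=e^{-\alpha^\flat}\circ\iI_X$. I would settle it by a local-coordinate check on a single vector field $X$ and a $0$-form, where $e^{\alpha^\sharp}X=X+\langle\alpha,X\rangle$.
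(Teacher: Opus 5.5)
Your proposal is correct and follows essentially the same route as the paper. Both arguments rest on the identity $\iI_\gamma\circ\alpha^\flat-\alpha^\flat\circ\iI_\gamma=\iI_{\alpha^\sharp(\gamma)}$ to obtain $e^{-\alpha^\flat}\circ\iI_\gamma=\iI_{e^{\alpha^\sharp}(\gamma)}\circ e^{-\alpha^\flat}$ (the paper by a $\frac{d}{dt}$ argument, you by the terminating conjugation series), check $d_{\cL}$ and $\cQ$ directly, and get the Lie derivatives from Cartan's formula; the only differences are that you add an alternative proof of bracket preservation via $\iI_{\schouten{X}{Y}}=\pm[\iL_X,\iI_Y]$ and faithfulness of contraction, where the paper simply asserts that $\alpha^\sharp$ is a derivation of the dg Gerstenhaber algebra, and that you correctly flag the swapped labels $\Theta^\bullet$ and $\Xi_\bullet$ in the statement, which the paper's proof also silently uses the other way round.
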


\begin{proof}
It is simple to show that $\alpha^\sharp$ is a derivation of the dg Gerstenhaber algebra $\Theta^\bullet$.
It thus follows that $e^{\alpha^\sharp}$ is an automorphism of $\Theta^\bullet$.
It remains to show that, for all $\gamma\in\Theta^\bullet$ and $\omega\in\Xi_\bullet$,
\begin{gather}
e^{-\alpha^\flat} (\iI_\gamma\omega) = \iI_{e^{\alpha^\sharp}(\gamma)} \big(e^{-\alpha^\flat}(\omega)\big), \label{eq:cherry-InteriorProd} \\
e^{-\alpha^\flat} (\iL_\gamma\omega) = \iL_{e^{\alpha^\sharp}(\gamma)} \big(e^{-\alpha^\flat}(\omega)\big), \label{eq:cherry-LieDer} \\
e^{-\alpha^\flat} \big(d_{\cL}(\omega)\big) = d_{\cL} \big(e^{-\alpha^\flat}(\omega)\big), \label{eq:cherry-CE} \\
\intertext{and}
e^{-\alpha^\flat} \big(\cQ(\omega)\big) = \cQ \big(e^{-\alpha^\flat}(\omega)\big). \label{eq:cherry-DG}
\end{gather}
In order to prove Equation~\eqref{eq:cherry-InteriorProd}, we start with the observation that
\begin{equation}\label{ARIN}
\iI_\gamma\circ\alpha^\flat-\alpha^\flat\circ\iI_\gamma=\iI_{\alpha^\sharp(\gamma)}
.\end{equation}
Indeed, for all $X_1,X_2,\cdots,X_p\in\Gamma(\cL[-1])$ and $\omega\in\Gamma\big(\hat{S}(\cL^\vee[1])\big)$, we have
\begin{multline*}
\iI_{X_1\cdots X_p}\circ\alpha^\flat(\omega)=\iI_{X_1\cdots X_p}(\alpha\cdot\omega)
=\sum_{k=1}^p \pm \iI_{X_k}(\alpha) \cdot \iI_{X_1\cdots\widehat{X_k}\cdots X_p}(\omega)
+\alpha\cdot \iI_{X_1\cdots X_p}(\omega)
\\
=\iI_{\sum_{k=1}^p \pm \iI_{X_k}(\alpha) \cdot X_1\cdots\widehat{X_k}\cdots X_p}(\omega)
+\alpha^\flat\big(\iI_{X_1\cdots X_p}(\omega)\big)
=\iI_{\alpha^\sharp(X_1\cdots X_p)}\omega +\alpha^\flat\circ \iI_{X_1\cdots X_p}(\omega)
.\end{multline*}
It follows from Equation~\eqref{ARIN} that
\[ \frac{d}{dt}\big(e^{t\alpha^\flat}\circ\iI_{e^{t\alpha^\sharp}(\gamma)}\circ e^{-t\alpha^\flat}\big)
= e^{t\alpha^\flat}\circ
(\alpha^\flat\circ\iI_{e^{t\alpha^\sharp}(\gamma)}
+\iI_{\alpha^\sharp\circ e^{t\alpha^\sharp}(\gamma)}
-\iI_{e^{t\alpha^\sharp}(\gamma)}\circ\alpha^\flat)
\circ e^{-t\alpha^\flat} = 0 .\]
Therefore, $e^{t\alpha^\flat}\circ\iI_{e^{t\alpha^\sharp}(\gamma)}\circ e^{-t\alpha^\flat}$ is independent of $t$
and we can conclude that
\[ \iI_{e^{\alpha^\sharp}(\gamma)}\circ e^{-\alpha^\flat}=e^{-\alpha^\flat}\circ\iI_{\gamma} .\]
Equations~\eqref{eq:cherry-CE} and~\eqref{eq:cherry-DG} can be verified directly.
Finally, Equation~\eqref{eq:cherry-LieDer} follows immediately from Cartan's formula.
\end{proof}

Applying Lemma~\ref{cherry} to the dg Lie algebroid $\cFQ\to\cNQ$ with $\alpha =\frac{1}{2}\supertrace\big(\atiyahcocycle{\can}{\cF_{\cQ}}\big)$, we have the following:

\begin{corollary}\label{cor:AroofToTodd}
Let $\cFQ\to\cNQ$ be a Fedosov dg manifold associated with a dg manifold $(\cM,Q)$, and $\alpha =\frac{1}{2}\supertrace\big(\atiyahcocycle{\can}{\cF_{\cQ}}\big)$.
Then the pair of maps $(e^{\alpha^\sharp},e^{-\alpha^\flat})$ constitutes an automorphism of the dg calculus $\big(\big( \totTpolyF{\bullet}[1] \big)_{\tauQ},\big( \totbApolyF{\bullet}\big)_{\tauQ}\big).$
\end{corollary}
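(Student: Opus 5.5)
The plan is to apply Lemma~\ref{cherry} to the dg Lie algebroid $\cL=\cF_{\cQ}\to\cN_{\cQ}$ with $\alpha=\frac{1}{2}\supertrace\big(\atiyahcocycle{\can}{\cF_{\cQ}}\big)$. For this we must check the three hypotheses of that lemma: $\alpha$ is an $\cF$-longitudinal $1$-form of degree $0$, $\cQ(\alpha)=0$ (that is, $\iL_{\tauQ}\alpha=0$), and $d_{\cF}\alpha=0$.

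\textbf{Degree and form type.} The Atiyah cocycle $\atiyahcocycle{\can}{\cF_{\cQ}}$ is a degree~$0$ section of $S^1(\cF^\vee[1])\otimes\End(\cF)$. Taking the supertrace in the $\End(\cF)$ factor therefore produces a degree~$0$ element of $\Gamma\big(S^1(\cF^\vee[1])\big)$. This is exactly a longitudinal $1$-form of degree $0$.

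\textbf{Closedness under $\cQ$.} The general theory recalled in Section~\ref{sec:Atiyah} says that the scalar Atiyah cocycles $\supertrace\big((\atiyahcocycle{\nabla}{\cL})^p\big)$ are $\cQ$-cocycles. This follows from $\cQ(\atiyahcocycle{\nabla}{\cL})=0$ together with the fact that $\cQ$ commutes with the supertrace. Applying this with $p=1$ gives $\cQ(\alpha)=0$.

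\textbf{Closedness under $d_{\cF}$.} We invoke Lemma~\ref{lem:ScalarAtiyah-Div}, which gives $\alpha=\frac12 d_{\cF}\big(\divergence(\tauTpolyF(Q)+A^\nabla)\big)$. Here the divergence is a globally defined function, as explained just before that lemma. Since $d_{\cF}^2=0$, it follows that $d_{\cF}\alpha=0$. This is the only step that is not purely formal. Its real content is the global well-definedness of the divergence function, which rests on the partial Jacobian $\partial y'/\partial y$ depending only on the $x$-variables. That fact has already been established in the text.

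\textbf{Conclusion.} With all three hypotheses verified, Lemma~\ref{cherry} says that $(e^{\alpha^\sharp},e^{-\alpha^\flat})$ is an automorphism of the dg calculus of $\cF$-longitudinal polyvector fields and forms. Some care is needed with which forms are meant. Lemma~\ref{cherry} is phrased with direct-product longitudinal forms, and these are exactly $\totbApolyF{\bullet}$. Moreover, $e^{-\alpha^\flat}$ is well defined there because $\alpha$ raises form degree and the total space is a product over form degree. On the polyvector side, $e^{\alpha^\sharp}$ is a finite sum on each element, since contraction lowers polyvector degree. The differentials on both spaces are $\schouten{\tauQ}{\argument}$ and $\iL_{\tauQ}$, and these correspond to the operator $\cQ$ in the lemma, so the structures match. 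This gives the claimed automorphism of $\big(\big(\totTpolyF{\bullet}[1]\big)_{\tauQ},\big(\totbApolyF{\bullet}\big)_{\tauQ}\big)$.
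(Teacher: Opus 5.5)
Your proposal is correct and follows the paper's own proof essentially step for step. You get $d_{\mathcal F}\alpha=0$ from Lemma~\ref{lem:ScalarAtiyah-Div} and $\mathcal{Q}$-closedness from the differential commuting with the supertrace, then apply Lemma~\ref{cherry} to the Fedosov dg Lie algebroid (as a small aside, $d_{\mathcal F}$-closedness is a local property, so local exactness of $\alpha$ already suffices and the global gluing of the divergence is not actually needed for that step).
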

\begin{proof}
From Lemma~\ref{lem:ScalarAtiyah-Div}, it follows that the degree-zero $\cF$-longitudinal $1$-form
$\alpha:=\frac{1}{2}\supertrace\big(\atiyahcocycle{\can}{\cF_{\cQ}}\big)$
satisfies $d_{\cF}(\alpha)=0$.
Since the Lie derivative $\iL_{\fedosov+\tauTpolyF(Q)}$ commutes with the supertrace,
we also have $\iL_{\fedosov+\tauTpolyF(Q)}(\alpha)=0$.
Therefore, applying Lemma~\ref{cherry} to the Fedosov dg Lie algebroid $\cF_{\cQ}\to\cN_{\cQ}$
and the section $\alpha\in\big(\Gamma(\cF^\vee[1])\big)^0$,
we can conclude that the pair of maps $(e^{\alpha^\sharp},e^{-\alpha^\flat})$ constitutes
an automorphism of the Cartan dg calculus of $\cF$-longitudinal polyvector fields and differential forms.
\end{proof}

Now we are ready to prove the three main propositions of this section.

\begin{proof}[Proof of Proposition~\ref{thm:KontsevichFormalityFedosov}]
Since the map $e^{\alpha^\sharp}$ is an automorphism of the dgla $\big(\totTpolyF{\bullet}[1]\big)_{\tauQ}$ by Corollary~\ref{cor:AroofToTodd}, the composition
\[ \ukontsevich := \tkontsevich \circ e^{\alpha^\sharp}: \big(\totTpolyF{\bullet}[1]\big)_{\tauQ} \inftyto \big(\totDpolyF{\bullet}[1]\big)_{\tauQ} \]
defines an $L_\infty$ morphism. Furthermore, by Equation~\eqref{Fukuoka}, its first Taylor coefficient satisfies $\ukontsevich_1=\tkontsevich_1\circ e^{\alpha^\sharp}=\hkr\circ\big(\todd^{\can}_{\cF_{\cQ}}\big)^{\frac{1}{2}}$, where $\tkontsevich$ is the $L_\infty$ morphism \eqref{eq:tkonsevich}. This completes the proof.
\end{proof}

\begin{proof}[Proof of Proposition~\ref{thm:TsyganFormalityFedosov}]
To prove the proposition, first observe that the map
\[ e^{\alpha^\flat}: (e^{\alpha^\sharp})^\ast\big( \totbApolyF{\bullet} \big)_{\tauQ}
\to \big( \totbApolyF{\bullet}\big)_{\tauQ} \]
is a strict $L_\infty$ module morphism over $\big(\totTpolyF{\bullet}[1]\big)_{\tauQ}$ by Equation~\eqref{eq:cherry-LieDer}.
In addition, pulling back the $L_\infty$ module morphism \eqref{eq:tshoikhet} via the dgla automorphism $e^{\alpha^\sharp}$ yields the $L_\infty$ module morphism
\[ (e^{\alpha^\sharp})^\ast \tshoikhet: \ukontsevich^\ast\big(\totcCpolyF{\bullet}\big)_{\tauQ} \inftyto (e^{\alpha^\sharp})^\ast\big(\totbApolyF{\bullet}\big)_{\tauQ} .\]
Let $\ushoikhet$ be the composition of these $L_\infty$ module morphisms:
\[ \ushoikhet:= e^{\alpha^\flat} \circ \big((e^{\alpha^\sharp})^\ast \tshoikhet\big):\ukontsevich^\ast\big(\totcCpolyF{\bullet}\big)_{\tauQ} \inftyto \big( \totbApolyF{\bullet} \big)_{\tauQ} .\]
Since $\big((e^{\alpha^\sharp})^\ast\tshoikhet\big)_0=\tshoikhet_0$ (see Appendix~\ref{sec:PullBack}),
it follows from Equation~\eqref{Fukushima} that
$\ushoikhet_0=\big(\todd^{\can}_{\cF_{\cQ}}\big)^{\frac{1}{2}}\circ\hhkr$.
This completes the proof.
\end{proof}

\begin{proof}[Proof of Proposition~\ref{thm:DK_Fedosov}]
By Corollary~\ref{cor:AroofToTodd}, the pair of maps $(e^{\alpha^\sharp}, e^{-\alpha^\flat})$ defines an automorphism of the calculi at the level of cohomology. The conclusion thus follows directly from Corollary~\ref{cor:AroofToTodd} and Proposition~\ref{prop:CompatibleWithCal_Fedosov}.
\end{proof}

\section{Formality theorems for dg manifolds}\label{sec:FormalityDGmfd}

This section is devoted to the proof of the main results in this paper: Theorem~\ref{thm:main} and Theorem~\ref{thm:formalitydg}.

\subsection{Important lemmas}
In this section, we establish a series of technical lemmas required for the proofs of our main theorems. We begin by introducing an $\cF$-connection $\nablapushed$ on $\cF$ induced by an affine connection $\nabla$ on $\cM$. Recall from~\cite{paper-1B} that the space of sections $\sections{\cN;\cF}$ is generated over $C^\infty(\cN)$ by $\tauTpolyF\big(\XX(\cM)\big)$. Consequently, we can uniquely specify $\nablapushed$ via the relation
\begin{equation}\label{eq:nablapushed}
\nablapushed_{\tauTpolyQ(X)} \tauTpolyQ(Y) = \tauTpolyQ(\nabla_X Y) \qquad \text{for all } X, Y \in \XX(\cM).
\end{equation}

We have the following

\begin{lemma}\label{maple}
The following identities hold:
\begin{enumerate}
\item $\tauTpolyQ(\atiyahcocycleQ) = \atiyahcocycle{\nablapushed}{\cFQ}$;
\item $\tauTpolyQ(\toddcocycleQ) = \toddcocycle{\nablapushed}{\cFQ}$.
\end{enumerate}
\end{lemma}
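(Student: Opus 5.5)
Part (i) comes from the fact that $\tauTpolyQ$ is compatible with every structure entering the definition of the Atiyah cocycle; part (ii) then follows formally from (i).

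\textbf{Part (i).} Both sides are $C^\infty(\cN)$-multilinear in their arguments. Since $\sections{\cN;\cF}$ is generated over $C^\infty(\cN)$ by $\tauTpolyF(\XX(\cM))$, it suffices to evaluate both sides on pairs $\tauTpolyQ(X),\tauTpolyQ(Y)$ with $X,Y\in\XX(\cM)$.

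By definition of the Atiyah cocycle of the dg Lie algebroid $\cFQ\to\cNQ$, the $\cQ$-action on $\sections{\cF}$ is $\schouten{\tauQ}{\argument}$. The contraction of Theorem~\ref{thm:mainT}~\ref{thm:mainT-TA} is a map of complexes, so
\[
\schouten{\tauQ}{\tauTpolyQ(X)}=\tauTpolyQ(\schouten{Q}{X}).
\]
Combining this with the defining relation~\eqref{eq:nablapushed} of $\nablapushed$, we get
\begin{align*}
\atiyahcocycle{\nablapushed}{\cFQ}\big(\tauTpolyQ X,\tauTpolyQ Y\big)
&= \schouten{\tauQ}{\tauTpolyQ(\nabla_XY)}-\nablapushed_{\tauTpolyQ\schouten{Q}{X}}\tauTpolyQ Y
\mp\nablapushed_{\tauTpolyQ X}\tauTpolyQ\schouten{Q}{Y} \\
&= \tauTpolyQ\big(\atiyahcocycleQ(X,Y)\big).
\end{align*}

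It remains to identify the right-hand side with the evaluation of $\tauTpolyQ(\atiyahcocycleQ)$ on $\tauTpolyQ X,\tauTpolyQ Y$. For this, regard $\atiyahcocycleQ$ as an element of $\sections{T_\cM^\vee\otimes\End T_\cM}$. Because $\tauTpolyQ$ preserves tensor products and the pairing (as used in the proof of Lemma~\ref{lem:Tau-HKR}), it intertwines evaluation, which gives
\[
\tauTpolyQ(\atiyahcocycleQ)\big(\tauTpolyQ X,\tauTpolyQ Y\big)=\tauTpolyQ\big(\atiyahcocycleQ(X,Y)\big).
\]

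\textbf{Part (ii).} The Todd cocycle is $\Ber$ of a universal power series in the Atiyah cocycle. Equivalently, it is a universal power series in the scalar quantities $\supertrace\big((\atiyahcocycle{}{})^p\big)$, via $\log\Ber=\supertrace\log$.

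The map $\tauTpolyQ$ is an algebra morphism: it preserves products of forms and composition in $\End$, because it preserves tensor products and pairings. It also commutes with the supertrace. This last point is checked in the frame $\tauTpolyQ(\partial_{x_i})$: the supertrace of an endomorphism is the signed sum of pairings $\pair{e^i}{Ae_i}$ over dual frames, and pairings are preserved.

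Finally, $\tauTpolyQ$ is continuous for the product topology on $\totApolyM{0}\to\totbApolyF{0}$, since it respects form degree. It therefore passes through the infinite series, and (i) gives (ii).

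\textbf{Main obstacle.} The delicate step is the evaluation argument in (i). One must check that the signs and the $C^\infty(\cN)$-linear extension are compatible, i.e. that the Atiyah cocycle of $\cF$ is genuinely tensorial so that checking on generators suffices. One must also check that $\tauTpolyQ$ on $\End$-valued forms is defined by the same tensor-product extension.

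If the tensor-product compatibility of $\tauTpolyQ$ on mixed tensors is not directly available from \cite{paper-1B}, I would instead work in local coordinates. There $\tauTpolyQ(\partial_{x_i})$ equals $\partial_{y_i}$ plus higher-order corrections in $y$ with coefficients in forms, and the comparison can be carried out directly in that frame.
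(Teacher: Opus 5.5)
Your proposal is correct and is essentially the paper's proof. Part (i) is reduced to the generators $\tauTpolyQ(X),\tauTpolyQ(Y)$ using the evaluation compatibility $\tauTpolyQ(\Phi)(\tauTpolyQ X,\tauTpolyQ Y)=\tauTpolyQ(\Phi(X,Y))$ and the defining relation~\eqref{eq:nablapushed}, and part (ii) follows by writing the Todd cocycle through the scalar Atiyah cocycles $\supertrace\big((\atiyahcocycleQ)^k\big)$, with which $\tauTpolyQ$ commutes. The only cosmetic difference is that you use the chain-map property $\schouten{\tauQ}{\tauTpolyQ(X)}=\tauTpolyQ(\schouten{Q}{X})$ from Theorem~\ref{thm:mainT} in one step, while the paper splits it into bracket preservation for $\tauTpolyF(Q)$ plus $\iL_{\fedosov}\circ\tauTpolyQ=0$.
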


\begin{proof}
For the first assertion, since $\sections{\cN;\cF}$ is $C^\infty(\cN)$-spanned by the image $\tauTpolyF\big(\XX(\cM)\big)$, it suffices to show that
\[
\tauTpolyQ(\atiyahcocycleQ)(\tauTpolyQ(X),\tauTpolyQ(Y)) = \atiyahcocycle{\nablapushed}{\cFQ}(\tauTpolyQ(X),\tauTpolyQ(Y))
\]
for all $X,Y \in \XX(\cM)$. Since $\tauTpolyF(\Phi)(\tauTpolyF(X),\tauTpolyF(Y)) = \tauTpolyF(\Phi(X,Y))$ for any $\Phi \in \sections{T_{\cM}\dual \otimes T_{\cM}\dual \otimes T_{\cM}}$ and $X,Y \in \XX(\cM)$ (see~\cite{paper-1B}), Equation~\eqref{eq:nablapushed} implies that
\begin{align*}
\tauTpolyQ(\atiyahcocycleQ)&(\tauTpolyQ(X),\tauTpolyQ(Y)) = \tauTpolyQ(\iL_Q \nabla_X Y -\nabla_{\iL_Q X} Y -(-1)^{|X|} \nabla_X (\iL_Q Y)) \\
&= \iL_{\tauTpolyQ(Q)} (\nablapushed_{\tauTpolyQ(X)} \tauTpolyQ(Y)) - \nablapushed_{ (\iL_{\tauTpolyQ(Q)} \tauTpolyQ(X))} \tauTpolyQ(Y) -(-1)^{|X|} \nablapushed_{\tauTpolyQ(X)} ( \iL_{\tauTpolyQ(Q)} \tauTpolyQ(Y)).
\end{align*}
On the other hand, since $L_{\fedosov} \circ \tauTpolyQ = 0$, we have $\iL_{\fedosov } (\nablapushed_{\tauTpolyQ(X)} \tauTpolyQ(Y)) = \iL_{\fedosov } \tauTpolyQ(\nablapushed_{X} Y) = 0$, as well as $\iL_{\fedosov } \tauTpolyQ(X) = 0$ and $\iL_{\fedosov } \tauTpolyQ(Y) = 0$. Consequently,
\begin{align*}
\atiyahcocycle{\nablapushed}{\cFQ}(\tauTpolyQ(X),\tauTpolyQ(Y))
&= \iL_{\fedosov +\tauTpolyQ(Q)} (\nablapushed_{\tauTpolyQ(X)} \tauTpolyQ(Y)) \\
&\qquad - \nablapushed_{ (\iL_{\fedosov +\tauTpolyQ(Q)} \tauTpolyQ(X))} \tauTpolyQ(Y) -(-1)^{|X|} \nablapushed_{\tauTpolyQ(X)} ( \iL_{\fedosov +\tauTpolyQ(Q)} \tauTpolyQ(Y)) \\
&= \iL_{\tauTpolyQ(Q)} (\nablapushed_{\tauTpolyQ(X)} \tauTpolyQ(Y)) - \nablapushed_{ (\iL_{\tauTpolyQ(Q)} \tauTpolyQ(X))} \tauTpolyQ(Y) -(-1)^{|X|} \nablapushed_{\tauTpolyQ(X)} ( \iL_{\tauTpolyQ(Q)} \tauTpolyQ(Y)) \\
&= \tauTpolyQ(\atiyahcocycleQ)(\tauTpolyQ(X),\tauTpolyQ(Y)).
\end{align*}

For the second assertion, since Todd cocycles can be expressed in terms of scalar Atiyah cocycles, it suffices to establish that
\begin{equation}\label{eq:maple-1}
\tauTpolyQ\big(\supertrace\big((\atiyahcocycleQ)^k\big)\big) = \supertrace\big((\atiyahcocycle{\nablapushed}{\cFQ})^k\big)
\end{equation}
for each $k \in \NN$. Equation~\eqref{eq:maple-1} follows directly from the first assertion along with the structural properties of $\tauTpolyQ$ detailed in~\cite{paper-1B}, via the chain of equalities
\[
\tauTpolyQ\big(\supertrace\big((\atiyahcocycleQ)^k\big)\big) = \supertrace\big(\tauTpolyQ\big((\atiyahcocycleQ)^k\big)\big) = \supertrace\big(\big(\tauTpolyQ(\atiyahcocycleQ)\big)^k\big) = \supertrace\big((\atiyahcocycle{\nablapushed}{\cFQ})^k\big).
\]
This completes the proof.
\end{proof}

\begin{lemma}\label{lem:Tolbiac}
The two cochain maps
\begin{enumerate}
\item \label{lem:Tolbiac-(1)}
$\big(\todd^{\can}_{\cF_Q}\big)^{-\frac{1}{2}} \circ \big(\todd^{\breve\nabla}_{\cF_Q} \big)^{\frac{1}{2}}: \big( \totTpolyF{\bullet}[1] \big)_{\tauQ} \to \big( \totTpolyF{\bullet}[1] \big)_{\tauQ}$,
\item \label{lem:Tolbiac-(2)}
$\big(\todd^{\breve\nabla}_{\cF_Q}\big)^{\frac{1}{2}} \circ \big(\todd^{\can}_{\cF_Q} \big)^{-\frac{1}{2}}: \Big(\totbApolyF{\bullet}, \iL_{\fedosov + \tauTpolyF(Q)}\Big) \to \Big(\totbApolyF{\bullet}, \iL_{\fedosov + \tauTpolyF(Q)}\Big)$,
\end{enumerate}
(where the Todd cocycles act by contraction in~\ref{lem:Tolbiac-(1)}
and by multiplication in~\ref{lem:Tolbiac-(2)})
are both homotopic to the identity.
\end{lemma}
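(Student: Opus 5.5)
The plan is to show that the ratio of the two square-root Todd cocycles is of the form $e^{\beta}$ with $\beta$ a coboundary, or at least to write the ratio as $1$ plus an exact element in the dg algebra $\big(\totbApolyF{\bullet},\iL_{\tauQ}\big)$. Contraction (resp. multiplication) by such an element is then homotopic to the identity. The guiding principle is the standard independence-of-connection argument for characteristic classes of dg Lie algebroids \cite{MR3319134}. For the two $\cF$-connections $\nabla^{\can}$ and $\nablapushed$ on $\cF$, the difference $\Delta=\nablapushed-\nabla^{\can}\in\big(\Gamma(\cF^\vee\otimes\End\cF)\big)^0$ satisfies
\[
\atiyahcocycle{\nablapushed}{\cFQ}-\atiyahcocycle{\can}{\cFQ}=\cQ(\Delta),
\]
where $\cQ$ denotes the differential on $\Gamma(\cF^\vee[1]\otimes\End\cF)$ induced by $\tauQ$.

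First I would interpolate with the path $\nabla^t=\nabla^{\can}+t\Delta$ and set $R_t=\atiyahcocycle{\nabla^t}{\cFQ}=R_0+t\,\cQ(\Delta)$. Because the coefficients of $\cQ$ involve only $\tauQ$, this path is affine in $t$. For each $p$, one has the transgression identity
\[
\supertrace(R_1^p)-\supertrace(R_0^p)=\cQ\Big(\int_0^1 p\,\supertrace\big(\Delta\, R_t^{p-1}\big)\,dt\Big),
\]
using that $\cQ(R_t)=0$ and that $\supertrace$ is $\cQ$-equivariant. The $\widehat{A}$/Todd factor $f(R)=\Ber\big(R/(1-e^{-R})\big)^{1/2}$ is an exponential of a power series in the scalars $\supertrace(R^p)$. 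It follows that
\[
\log(\todd^{\breve\nabla})^{\frac12}-\log(\todd^{\can})^{\frac12}=\cQ(\eta)
\]
for an explicit $\eta\in\big(\totbApolyF{\bullet}\big)^{-1}$, namely $\eta$ is the integral of a $t$-dependent expression that is linear in $\Delta$. Convergence in the product total space follows from form-degree counting: $\supertrace(\Delta R_t^{p-1})$ has form degree $p-1\geq 0$ in $\cF^\vee[1]$, which is why $\totbApolyF{\bullet}$ is used.

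The key step, and the one I expect to be the main obstacle, is to confirm that the ratio $g:=(\todd^{\can})^{-\frac12}(\todd^{\breve\nabla})^{\frac12}=e^{\cQ(\eta)}$ can be written as $1+\cQ(\theta)$ with $\theta=\eta\,\frac{e^{\cQ(\eta)}-1}{\cQ(\eta)}$. This works because $\cQ(\eta)$ is $\cQ$-closed and even, since we are in a commutative algebra. The subtle point is the commutativity and sign conventions of the graded Berezinian, so I would first reduce to scalar Atiyah classes as in Lemma~\ref{lem:ToddFedosov}.

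With $g=1+\cQ(\theta)$ in hand, the two homotopies are as follows. On forms, for $\omega\in\totbApolyF{\bullet}$ I would take $h(\omega)=\theta\cdot\omega$. Since $\iL_{\tauQ}$ is a derivation of the wedge product, $g\omega-\omega=\iL_{\tauQ}(\theta\omega)+\theta\,\iL_{\tauQ}\omega$ up to sign, which gives \ref{lem:Tolbiac-(2)}. On polyvector fields, I would take $h=\iI_\theta$ (contraction by $\theta$). The identity $\schouten{\tauQ}{\iI_\theta\gamma}=\iI_{\iL_{\tauQ}\theta}\gamma\pm\iI_\theta\schouten{\tauQ}{\gamma}$ holds because $\tauQ$ is a homological vector field, so its Lie derivative is compatible with the pairing of $\cF$-forms and $\cF$-polyvectors. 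This identity yields \ref{lem:Tolbiac-(1)}, noting that contraction by a product is the composite of contractions.

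The remaining checks are the following. One must verify that $\Delta$ is a genuine tensor, which holds since both connections are $\cF$-connections on $\cF$. One must also verify that $\iI_\theta$ preserves the direct-sum total space $\totTpolyF{\bullet}$; this holds because contraction lowers polyvector degree and so only finitely many components of $\theta$ act on any given element.
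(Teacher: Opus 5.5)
Your argument is correct and is essentially the paper's: both write the ratio of the two square-root Todd cocycles as $1+\iL_{\tauQ}(\text{primitive})$ and use multiplication (resp.\ contraction) by that primitive as the homotopy, exactly as in $\iI_{1+\iL_{\tauQ}\xi}=\id+\iL_{\tauQ}\circ\iI_\xi+\iI_\xi\circ\iL_{\tauQ}$. The only difference is in how the primitive is obtained: the paper invokes connection-independence of the Todd class to write $(\todd^{\breve\nabla}_{\cF_Q})^{\frac12}=(\todd^{\can}_{\cF_Q})^{\frac12}+\iL_{\tauQ}\zeta$ and sets $\xi=(\todd^{\can}_{\cF_Q})^{-\frac12}\wedge\zeta$, whereas your transgression along $\nabla^t$ constructs the primitive explicitly and so also justifies the passage from the Todd class to its square root, which the paper leaves implicit (minor point: $\supertrace(\Delta R_t^{p-1})$ has $\cF$-form degree $p$, not $p-1$, which does not affect the argument).
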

\begin{proof}
Let $\tauQ = \fedosov + \tauTpolyF(Q)$ as before. Since the Todd class of a dg vector bundle is independent of the choice of connection, it follows that $(\todd^{\nablapushed}_{\cF_Q})^{\frac{1}{2}} = (\todd^{\can}_{\cF_Q})^{\frac{1}{2}} + \iL_{\tauQ} \zeta$ for some $\zeta \in \totbApolyF{-1}$. Because $\iL_{\tauQ}(\todd^{\can}_{\cF_Q}) = 0$, we have
\begin{gather*}
(\todd^{\can}_{\cF_Q})^{-\frac{1}{2}} \wedge (\todd^{\nablapushed}_{\cF_Q})^{\frac{1}{2}} = 1 + (\todd^{\can}_{\cF_Q})^{-\frac{1}{2}} \wedge \iL_{\tauQ} \zeta = 1 + \iL_{\tauQ} \xi
\\ \intertext{and}
(\todd^{\nablapushed}_{\cF_Q})^{\frac{1}{2}} \wedge (\todd^{\can}_{\cF_Q})^{-\frac{1}{2}} = 1 + (\iL_{\tauQ} \zeta) \wedge (\todd^{\can}_{\cF_Q})^{-\frac{1}{2}} = 1 + \iL_{\tauQ} \eta
,\end{gather*}
where $\xi = (\todd^{\can}_{\cF_Q})^{-\frac{1}{2}} \wedge \zeta$ and $\eta = \zeta \wedge (\todd^{\can}_{\cF_Q})^{-\frac{1}{2}}$ are elements in $\totbApolyF{-1}$. Consequently,
\[ \iI_{(\todd^{\can}_{\cF_Q})^{-\sfrac{1}{2}} \wedge (\todd^{\breve\nabla}_{\cF_Q})^{\sfrac{1}{2}}} = \id + \iI_{\iL_{\tauQ} \xi} = \id + \iL_{\tauQ} \circ \iI_\xi + \iI_\xi \circ \iL_{\tauQ}, \]
which demonstrates that the cochain map in Lemma~\ref{lem:Tolbiac}~\ref{lem:Tolbiac-(1)} is homotopic to the identity via the homotopy operator $\iI_\xi$.
Similarly, the cochain map in Lemma~\ref{lem:Tolbiac}~\ref{lem:Tolbiac-(2)} is homotopic to the identity with homotopy operator $\eta \wedge \argument$, since
\[ \big(\todd^{\breve\nabla}_{\cF_Q}\big)^{\frac{1}{2}} \circ \big(\todd^{\can}_{\cF_Q} \big)^{-\frac{1}{2}} = \id + (\iL_{\tauQ} \eta) \wedge \argument = \id + \iL_{\tauQ} \circ (\eta \wedge \argument) + (\eta \wedge \argument) \circ \iL_{\tauQ}. \]
This completes the proof.
\end{proof}

\begin{lemma}\label{lem:Tau-Todd}
The following diagrams
\[ \begin{tikzcd}[column sep=large]
\totTpolyF{\bullet} \ar[r,"(\todd^{\breve\nabla}_{\cF_Q})^{\frac{1}{2}}"] & \totTpolyF{\bullet} \\
\totTpolyM{\bullet} \ar[r,"(\toddcocycleQ)^{\frac{1}{2}}"] \ar[u,"\tauDpolyF"] & \totTpolyM{\bullet} \ar[u,"\tauTpolyF"']
\end{tikzcd} \]
and
\[ \begin{tikzcd}[column sep=large]
\totbApolyF{\bullet} \ar[r,"(\todd^{\breve\nabla}_{\cF_Q})^{\frac{1}{2}}"] & \totbApolyF{\bullet} \\
\totApolyM{\bullet} \ar[r,"(\toddcocycleQ)^{\frac{1}{2}}"] \ar[u,"\tauDpolyF"] & \totApolyM{\bullet} \ar[u,"\tauTpolyF"']
\end{tikzcd} \]
commute.
\end{lemma}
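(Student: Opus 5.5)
The plan is to reduce both diagrams to two facts: Lemma~\ref{maple}, which gives $\tauTpolyF(\toddcocycleQ)=\todd^{\breve\nabla}_{\cF_Q}$, and the fact from \cite{paper-1B} (Theorem~\ref{thm:mainT}) that the injection $\tauTpolyF$ preserves the wedge product and the interior products $\iI$.

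For the second diagram (differential forms), take $\omega\in\totApolyM{\bullet}$. First note that the square root $(\toddcocycleQ)^{\frac12}$ is a formal power series in $\toddcocycleQ-1$, which lies in $\prod_{p\ge1}\ApolyM{-p}{0}$. Since $\tauTpolyF$ is an algebra morphism for $\wedge$, it commutes with all finite wedge powers of this element. It is also compatible with the product topologies, because it respects the form degree $p$, so it commutes with convergent series as well. Hence
\[
\tauTpolyF\big((\toddcocycleQ)^{\frac12}\big)=\big(\tauTpolyF(\toddcocycleQ)\big)^{\frac12}=(\todd^{\breve\nabla}_{\cF_Q})^{\frac12},
\]
and therefore $\tauTpolyF\big((\toddcocycleQ)^{\frac12}\wedge\omega\big)=(\todd^{\breve\nabla}_{\cF_Q})^{\frac12}\wedge\tauTpolyF(\omega)$. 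This is exactly the commutativity of the second square; note that the left vertical arrow, labelled $\tauDpolyF$, is the same injection on forms.

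For the first diagram (polyvector fields), take $X\in\totTpolyM{\bullet}$. Theorem~\ref{thm:mainT} says that $\tauTpolyF$ intertwines contractions, $\tauTpolyF(\iI_\beta X)=\iI_{\tauTpolyF(\beta)}\tauTpolyF(X)$. Apply this with $\beta=(\toddcocycleQ)^{\frac12}$. Contraction of a finite-rank polyvector field by a form involves only finitely many form components, so the infinite product causes no convergence issue. Combining with the identity above gives the commutativity of the first square.

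The only step requiring care is the claim that $\tauTpolyF$ commutes with the formal square root. I would justify it degree by degree in $p$: the component of $(\toddcocycleQ)^{\frac12}$ in $\ApolyM{-p}{0}$ is a finite polynomial in the components of $\toddcocycleQ$ of degree at most $p$. If instead one prefers to argue through Lemma~\ref{maple}(2)'s proof, the alternative is to express the square root directly as $\Ber$ of a power series in the Atiyah cocycle, and use that $\tauTpolyF$ commutes with supertraces of powers of the Atiyah cocycle, which follows from Lemma~\ref{maple}(1).
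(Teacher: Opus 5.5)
Your proposal is correct and follows the paper's proof: both squares reduce to Lemma~\ref{maple} together with the compatibility of the injection $\tau$ with wedge products and contractions. The paper takes for granted that $\tau$ commutes with the formal square root of the Todd cocycle, and your degree-by-degree justification of that step (constant term $1$, each form-degree component a finite polynomial in lower components) fills in that detail without changing the route.
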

\begin{proof}
By Lemma~\ref{maple}, for any $\xi \in \totTpolyM{\bullet}$, we have
\[ \tauTpolyQ\big(\iI_{(\toddcocycleQ)^{\sfrac{1}{2}}} \xi\big) = \iI_{\tauTpolyQ((\toddcocycleQ)^{\sfrac{1}{2}})} \tauTpolyQ(\xi) = \iI_{(\todd^{\breve\nabla}_{\cF_Q})^{\sfrac{1}{2}}} \tauTpolyQ(\xi) .\]
Similarly, for any $\omega \in \totApolyM{\bullet}$, we have
\[ \tauTpolyF\big((\toddcocycleQ)^{\sfrac{1}{2}} \wedge \omega \big) = \tauTpolyF\big((\toddcocycleQ)^{\sfrac{1}{2}}\big) \wedge \tauTpolyF(\omega) = (\todd^{\breve\nabla}_{\cF_Q})^{\sfrac{1}{2}} \wedge \tauTpolyF(\omega). \]
This completes the proof.
\end{proof}

\subsection{Proof of main theorems}

We are now ready to prove the main theorems of the paper.

The proof of Theorem~\ref{thm:formalitydg}~\ref{thm:formalitydg-K} was sketched in~\cite{MR3754617}.
For completeness, we provide a full proof below.

\begin{proof}[Proof of Theorem~\ref{thm:formalitydg}~\ref{thm:formalitydg-K}]
By
\begin{equation}\label{eq:TauAsLooMor-Tpoly}
\Tau_T: \big( \totTpolyM{\bullet}[1] \big)_Q \inftyto \big( \totTpolyF{\bullet}[1] \big)_{\tauQ},
\end{equation}
we denote the $L_\infty$ quasi-isomorphism having
$\tauTpolyQ$ as the first Taylor coefficient and
all higher Taylor coefficients equal to zero.
Next, since Lemma~\ref{lem:Tolbiac} implies that the cochain map
$\big(\todd^{\can}_{\cF_Q}\big)^{-\frac{1}{2}} \circ \big(\todd^{\breve\nabla}_{\cF_Q} \big)^{\frac{1}{2}} : \big( \totTpolyF{\bullet}[1] \big)_{\tauQ} \to \big( \totTpolyF{\bullet}[1] \big)_{\tauQ}$
is homotopic to the identity map, Lemma~\ref{lem:LiftedLooMor-htp} ensures the existence of an $L_\infty$ quasi-isomorphism
\begin{equation}\label{eq:Loo-ChangeTodd}
\Xi^{\breve\nabla} : \big( \totTpolyF{\bullet}[1] \big)_{\tauQ} \inftyto \big( \totTpolyF{\bullet}[1] \big)_{\tauQ},
\end{equation}
homotopic to the identity, such that $\Xi^{\breve\nabla}_1 = \big(\todd^{\can}_{\cF_Q}\big)^{-\frac{1}{2}} \circ \big(\todd^{\breve\nabla}_{\cF_Q} \big)^{\frac{1}{2}}$.
Recall that according to Proposition~\ref{thm:KontsevichFormalityFedosov},
we have a Kontsevich formality morphism for the Fedosov dg Lie algebroid:
\[ \ukontsevich:
\big( \totTpolyF{\bullet}[1] \big)_{\tauQ}
\inftymorphism \big( \totDpolyF{\bullet} [1] \big)_{\tauQ} \]
Finally, note that
by applying Proposition~\ref{prop:LooTransfer-Inj-Surj}~\ref{prop:LooTransfer-Inj} to the contraction \eqref{eq:Contraction-Dpoly},
there exists an $L_\infty$ quasi-isomorphism
\[ \Sigma_D: \big( \totDpolyF{\bullet} [1] \big)_{\tauQ}
\inftyto \big( \totDpolyM{\bullet} [1] \big)_Q \]
such that $\Sigma_{D,1} = \sigmaDpolyQ$.
Consider the $L_\infty$ morphism
\[ \dgkont : \big( \totTpolyM{\bullet}[1] \big)_Q \inftymorphism \big( \totDpolyM{\bullet} [1] \big)_Q \]
defined as the composition:
\begin{equation}\label{eq:dgkont-construction}
\dgkont = \Sigma_D \circ \ukontsevich \circ \Xi^{\breve\nabla} \circ \Tau_T
.\end{equation}
It remains to prove that $\dgkont_1=\hkr\circ(\toddcocycleQ)^{\frac{1}{2}}$. We have
\begin{align*}
\dgkont_1 & = \Sigma_{D,1} \circ \ukontsevich_1 \circ \Xi^{\breve\nabla}_1 \circ \Tau_{T,1} && \\
& = \sigmaDpolyQ \circ \hkr \circ \big(\todd^{\can}_{\cF_Q}\big)^{\frac{1}{2}} \circ
\big(\todd^{\can}_{\cF_Q}\big)^{-\frac{1}{2}} \circ \big(\todd^{\breve\nabla}_{\cF_Q} \big)^{\frac{1}{2}}
\circ \tauTpolyQ && \text{by Proposition~\ref{thm:KontsevichFormalityFedosov}} \\
& = \sigmaDpolyQ \circ \hkr \circ \tauTpolyQ \circ (\toddcocycleQ)^{\frac{1}{2}}
&& \text{by Lemma~\ref{lem:Tau-Todd}} \\
& = \sigmaDpolyQ \circ \tauTpolyQ \circ \hkr \circ (\toddcocycleQ)^{\frac{1}{2}}
&& \text{by Lemma~\ref{lem:Tau-HKR}} \\
& = \hkr \circ (\toddcocycleQ)^{\frac{1}{2}}. &&
\end{align*}
This completes the proof.
\end{proof}

Similarly to Equation~\eqref{eq:Loo-ChangeTodd}, in order to prove Theorem~\ref{thm:formalitydg}~\ref{thm:formalitydg-S}, we first modify the $L_\infty$ module formality morphism $\ushoikhet$ introduced in Proposition~\ref{thm:TsyganFormalityFedosov}.

\begin{lemma}\label{lem:TsyganFormality}
There exists an $L_\infty$ module quasi-isomorphism
\begin{equation}
\tshoikhet^{\breve\nabla}:
(\ukontsevich \circ \Xi^{\breve\nabla})^* \big( \totcCpolyF{\bullet} \big)_{\tauQ} \inftyto \big( \totbApolyF{\bullet} \big)_{\tauQ}
\end{equation}
over the dgla $\big( \totTpolyF{\bullet}[1] \big)_{\tauQ}$ whose zeroth Taylor coefficient is given by $\tshoikhet^{\breve\nabla}_0 = (\todd^{\breve\nabla}_{\cF_Q})^{\frac{1}{2}} \circ \hhkr$.
\end{lemma}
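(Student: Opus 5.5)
Since $\Xi^{\breve\nabla}$ is a dgla $L_\infty$ morphism, the plan is to build $\tshoikhet^{\breve\nabla}$ as a composition of three $L_\infty$ module morphisms over $\big(\totTpolyF{\bullet}[1]\big)_{\tauQ}$.

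First, pull back the module morphism $\ushoikhet$ of Proposition~\ref{thm:TsyganFormalityFedosov} along $\Xi^{\breve\nabla}$. By the pullback construction of Appendix~\ref{sec:PullBack}, this yields an $L_\infty$ module morphism
\[ (\Xi^{\breve\nabla})^\ast\ushoikhet : (\ukontsevich\circ\Xi^{\breve\nabla})^\ast\big(\totcCpolyF{\bullet}\big)_{\tauQ} \inftyto (\Xi^{\breve\nabla})^\ast\big(\totbApolyF{\bullet}\big)_{\tauQ} \]
whose zeroth coefficient is still $\ushoikhet_0=(\todd^{\can}_{\cF_Q})^{\frac12}\circ\hhkr$. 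It remains to produce an $L_\infty$ module morphism
\[ \Psi:(\Xi^{\breve\nabla})^\ast\big(\totbApolyF{\bullet}\big)_{\tauQ}\inftyto\big(\totbApolyF{\bullet}\big)_{\tauQ} \]
with $\Psi_0=(\todd^{\breve\nabla}_{\cF_Q})^{\frac12}\circ(\todd^{\can}_{\cF_Q})^{-\frac12}$. Composing gives $\tshoikhet^{\breve\nabla}=\Psi\circ(\Xi^{\breve\nabla})^\ast\ushoikhet$, with zeroth coefficient $(\todd^{\breve\nabla}_{\cF_Q})^{\frac12}\circ\hhkr$, as desired.

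For $\Psi$, I would use that $\Xi^{\breve\nabla}$ is homotopic to the identity. The homotopy, as supplied by Lemma~\ref{lem:LiftedLooMor-htp}, gives a homotopy between the two pulled-back module structures $(\Xi^{\breve\nabla})^\ast\big(\totbApolyF{\bullet}\big)_{\tauQ}$ and $\big(\totbApolyF{\bullet}\big)_{\tauQ}$, and such a homotopy integrates to an $L_\infty$ module isomorphism between them. The zeroth coefficient of this isomorphism is a priori only a chain map homotopic to the identity, so the next step is to adjust it. Lemma~\ref{lem:Tolbiac}\ref{lem:Tolbiac-(2)} says the desired map $\Psi_0$ equals $\id+[\iL_{\tauQ},\eta\wedge\argument]$. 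The adjustment would therefore use a module analogue of Lemma~\ref{lem:LiftedLooMor-htp}: an $L_\infty$ module morphism whose zeroth coefficient is altered by a null-homotopic amount can be corrected by higher Taylor coefficients built from the homotopy $\eta\wedge\argument$, so that the result is again a module morphism with the prescribed zeroth coefficient.

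If this abstract route is awkward, a more concrete alternative is available, since $\Xi^{\breve\nabla}$ is itself built from the explicit homotopy $\iI_\xi$. One would then construct $\Psi$ directly: take $\Psi_0$ as above, define $\Psi_n$ recursively through the obstruction-theoretic equations, and solve each one using the contracting homotopy $\eta\wedge\argument$, in the spirit of the homotopy transfer of Proposition~\ref{prop:LooTransfer-Inj-Surj}.

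Finally, $\tshoikhet^{\breve\nabla}$ is a quasi-isomorphism. Its zeroth coefficient is $\Psi_0\circ\ushoikhet_0$; here $\ushoikhet_0$ is a quasi-isomorphism by Proposition~\ref{thm:TsyganFormalityFedosov}, and $\Psi_0$ is homotopic to the identity by Lemma~\ref{lem:Tolbiac}.

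The main obstacle is constructing $\Psi$: making precise the step from a homotopy of dgla morphisms to a morphism of pulled-back modules with the prescribed zeroth term. The key check there is convergence in the product total space $\totbApolyF{\bullet}$, which I would control by the filtration argument already used to show that $\tshoikhet_n$ is well defined.
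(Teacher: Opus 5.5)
Your proposal is correct and is essentially the paper's argument. The paper sets $\tshoikhet^{\breve\nabla}=\mho^{\breve\nabla}\circ\ushoikhet$, with $\ushoikhet$ tacitly pulled back along $\Xi^{\breve\nabla}$ exactly as you make explicit. Its $\mho^{\breve\nabla}$ plays the role of your $\Psi$ and comes from Lemma~\ref{lem:Pyramides}, which is proved by your two steps: the isomorphism $(\Xi^{\breve\nabla})^\ast M\to M$ with zeroth coefficient $\id$ from Lemma~\ref{lem:PullbackViaHtpMor} (here $M=\big(\totbApolyF{\bullet}\big)_{\tauQ}$), followed by a module endomorphism of $M$ with the prescribed zeroth coefficient.

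For that last step the paper uses the path object $\KK[t,dt]\otimes M$ and homotopy transfer. Your direct correction also works, and it is not a real obstacle. The $L_\infty$ module morphism equations are linear in the morphism, so adding $[D,F_H]$, with $H$ concentrated in arity zero and equal to $\eta\wedge\argument$, again gives a module morphism. Its zeroth coefficient changes by $\iL_{\tauQ}\circ(\eta\wedge\argument)+(\eta\wedge\argument)\circ\iL_{\tauQ}$. This makes both the recursive obstruction-theoretic alternative and the convergence worry unnecessary.
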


\begin{proof}
Note that, by Lemma~\ref{lem:Tolbiac}~\ref{lem:Tolbiac-(2)}, the cochain map
\[ \big(\todd^{\breve\nabla}_{\cF_Q}\big)^{\frac{1}{2}} \circ \big(\todd^{\can}_{\cF_Q} \big)^{-\frac{1}{2}}: \Big(\totbApolyF{\bullet}, \iL_{\fedosov + \tauTpolyF(Q)}\Big) \to \Big(\totbApolyF{\bullet}, \iL_{\fedosov + \tauTpolyF(Q)}\Big) \]
is homotopic to the identity.
Since the $L_\infty$ morphism $\Xi^{\breve\nabla}$ in~\eqref{eq:Loo-ChangeTodd}
is homotopic to the identity, it follows from Lemma~\ref{lem:Pyramides}
that there exists an $L_\infty$ module morphism over the dgla $\big( \totTpolyM{\bullet}[1] \big)_Q$,
\begin{equation}\label{eq:LooMod-ChangeTodd}
\mho^{\breve\nabla}: (\ukontsevich\circ \Xi^{\breve\nabla})^*
\big(\totbApolyF{\bullet} \big)_{\tauQ} \inftyto
\ukontsevich^* \big(\totbApolyF{\bullet}\big)_{\tauQ},
\end{equation}
such that $\mho^{\breve\nabla}_0 = \big(\todd^{\breve\nabla}_{\cF_Q}\big)^{\frac{1}{2}} \circ \big(\todd^{\can}_{\cF_Q} \big)^{-\frac{1}{2}}$.
Consequently, the composition $\tshoikhet^{\breve\nabla} = \mho^{\breve\nabla} \circ \ushoikhet$ gives the desired quasi-isomorphism,
where $\ushoikhet$ is the Shoikhet formality morphism for the Fedosov dg Lie algebroid, as in Proposition~\ref{thm:TsyganFormalityFedosov}.
\end{proof}

\begin{proof}[Proof of Theorem~\ref{thm:formalitydg}~\ref{thm:formalitydg-S}]
Let $\dgkont : \big( \totTpolyM{\bullet}[1] \big)_Q \inftymorphism \big( \totDpolyM{\bullet} [1] \big)_Q$ be the formality morphism defined by Equation~\eqref{eq:dgkont-construction}. Denote by
\begin{equation}\label{eq:TsyganFormality-pf-1}
\Tau_C: \dgkont^* \big(\totCpolyM{\bullet}\big)_Q \inftyto
(\tauDpolyF \circ \dgkont)^* \big(\totCpolyF{\bullet}\big)_{\tauQ}
\end{equation}
the $L_\infty$ module quasi-isomorphism over the dgla $\big( \totTpolyM{\bullet}[1] \big)_Q$
having $\tauTpolyQ$ as the zeroth Taylor coefficient and all higher Taylor coefficients equal to zero.

Next, consider the following two $L_\infty$ morphisms from $\big(\totTpolyM{\bullet}[1]\big)_Q$ to $\big(\totDpolyF{\bullet}[1]\big)_{\tauQ}$:
\[ \Tau_D \circ \dgkont \qquad \text{and} \qquad \ukontsevich \circ \Xi^{\breve\nabla} \circ \Tau_T ,\]
where $\Tau_D: \big(\totDpolyM{\bullet}[1]\big)_Q \inftyto \big(\totDpolyF{\bullet}[1]\big)_{\tauQ}$ is the $L_\infty$ morphism whose first Taylor coefficient is $\tauDpolyQ$ and whose higher Taylor coefficients all vanish. By Equation~\eqref{eq:dgkont-construction}, we have the relation $\Tau_D \circ \dgkont = \Tau_D \circ \Sigma_D \circ \ukontsevich \circ \Xi^{\breve\nabla} \circ \Tau_T$. By Lemma~\ref{lem:HtpMor-HtpTransfer}, the composition $\Tau_D \circ \Sigma_D$ is homotopic to the identity morphism. It follows that $\Tau_D \circ \dgkont$ and $\ukontsevich \circ \Xi^{\breve\nabla} \circ \Tau_T$ are homotopic as $L_\infty$ morphisms. Consequently, Lemma~\ref{lem:PullbackViaHtpMor} implies the existence of an $L_\infty$ module isomorphism
\begin{equation}\label{eq:TsyganFormality-pf-2}
\mathfrak{I}: (\tauDpolyQ \circ \dgkont)^* \big( \totcCpolyF{\bullet}\big)_{\tauQ} \inftyto (\ukontsevich \circ \Xi^{\breve\nabla} \circ \Tau_T)^* \big( \totcCpolyF{\bullet} \big)_{\tauQ},
\end{equation}
whose zeroth Taylor coefficient is the identity map, i.e.\ $\mathfrak{I}_0 = \id$.

Note that the $L_\infty$ module morphism $\tshoikhet^{\breve\nabla}$ in Lemma~\ref{lem:TsyganFormality} is over $\big(\totTpolyF{\bullet}[1]\big)_{\tauQ}$.
By pulling back $\tshoikhet^{\breve\nabla}$ along the $L_\infty$ quasi-morphism $\Tau_T$ as in~\eqref{eq:TauAsLooMor-Tpoly}, we obtain an $L_\infty$ module morphism
\begin{equation}\label{eq:TsyganFormality-pf-3}
(\Tau_T)^{\ast}\tshoikhet^{\breve\nabla}: (\ukontsevich \circ \Xi^{\breve\nabla} \circ \Tau_T)^*\big( \totcCpolyF{\bullet} \big)_{\tauQ} \inftyto (\Tau_T)^{\ast}\big( \totbApolyF{\bullet} \big)_{\tauQ}
\end{equation}
over $\big( \totTpolyM{\bullet}[1] \big)_Q$.

Lastly, since the injection $\tauTpolyQ$ in the contraction \eqref{eq:Contraction-Apoly} defines a strict $L_\infty$ module morphism from $\big(\totApolyM{\bullet}\big)_{Q}$ to $(\Tau_T)^\ast\big(\totApolyF{\bullet}\big)_{\tauQ}$ over the dgla $\big( \totTpolyM{\bullet}[1] \big)_Q$ according to Theorem~\ref{thm:mainT}~\ref{thm:mainT-TA}, it follows from Proposition~\ref{prop:LooModTransfer-Inj-Surj}~\ref{prop:LooModTransfer-Inj} that there exists an $L_\infty$ module quasi-isomorphism over the dgla $\big( \totTpolyM{\bullet}[1] \big)_Q$:
\begin{equation}\label{eq:TsyganFormality-pf-4}
\Sigma_A: (\Tau_T)^{\ast} \big(\totbApolyF{\bullet}\big)_{\tauQ} \inftyto \big(\totApolyM{\bullet}\big)_Q
\end{equation}
such that its zeroth Taylor coefficient is given by $\Sigma_{A,0} = \sigmaTpolyQ$, where $\sigmaTpolyQ$ is the surjection in the contraction \eqref{eq:Contraction-Apoly}.

Consider the composition of $L_\infty$ module morphisms \eqref{eq:TsyganFormality-pf-1}-\eqref{eq:TsyganFormality-pf-4}:
\[ \dgshoi := \Sigma_A \circ ((\Tau_T)^\ast \tshoikhet^{\breve\nabla} )\circ \mathfrak{I} \circ \Tau_C:
\dgkont^* \big( \totCpolyM{\bullet} \big)_Q \inftymorphism \big( \totApolyM{\bullet} \big)_Q. \]
It remains to show that $\dgshoi_0 = (\toddcocycleQ)^{\frac{1}{2}} \circ \hhkr$. We have
\begin{align*}
\dgshoi_0 & = \Sigma_{A,0} \circ \tshoikhet^{\breve\nabla}_0 \circ \mathfrak{I}_0 \circ \Tau_{C,0} && \\
& = \sigmaTpolyQ \circ (\todd^{\breve\nabla}_{\cF_Q})^{\frac{1}{2}} \circ \hhkr \circ \tauTpolyQ
&& \text{by Lemma~\ref{lem:TsyganFormality}} \\
& = \sigmaTpolyQ \circ (\todd^{\breve\nabla}_{\cF_Q})^{\frac{1}{2}} \circ \tauTpolyQ \circ \hhkr
&& \text{by Lemma~\ref{lem:Tau-HKR}} \\
& = \sigmaTpolyQ \circ \tauTpolyQ \circ (\toddcocycleQ)^{\frac{1}{2}} \circ \hhkr
&& \text{by Lemma~\ref{lem:Tau-Todd}} \\
& = (\toddcocycleQ)^{\frac{1}{2}} \circ \hhkr. &&
\end{align*}
This completes the proof of Theorem~\ref{thm:formalitydg}.
\end{proof}

\begin{remark}[\cite{MR3754617}]
Given a pair of torsion-free affine connections $\nabla$ and $\nabla'$ on $(\cM,Q)$ with corresponding Todd cocycles $\toddcocycle{\nabla}{T_{(\cM,Q)}}$ and $\toddcocycle{\nabla'}{T_{(\cM,Q)}}$, there exist an $L_\infty$ algebra (auto)morphism $\Xi^{\nabla,\nabla'}$ of $\big(\totTpolyM{\bullet}[1]\big)_Q$
with the cochain complex isomorphism
$\big(\toddcocycle{\nabla}{T_{(\cM,Q)}}\big)^{-\shalf}\circ\big(\toddcocycle{\nabla'}{T_{(\cM,Q)}}\big)^{\shalf}$
as first Taylor coefficient $\Xi^{\nabla,\nabla'}_1$
and an $L_\infty$ module isomorphism
\begin{equation}
\mho^{\nabla,\nabla'}: \big(\Xi^{\nabla,\nabla'}\big)^*
\big(\totApolyM{\bullet}\big)_Q \inftyto
\big(\totApolyM{\bullet}\big)_Q
,\end{equation}
with the cochain complex isomorphism
$\big(\toddcocycle{\nabla}{T_{(\cM,Q)}}\big)^{-\shalf}\circ\big(\toddcocycle{\nabla'}{T_{(\cM,Q)}}\big)^{\shalf}$
as zeroth Taylor coefficient $\mho^{\nabla,\nabla'}_0$.
\end{remark}

Finally, we are ready to prove Theorem~\ref{thm:main}.

\begin{proof}[Proof of Theorem~\ref{thm:main}]
By the construction of $\dgkont$ and $\dgshoi$, we have the following commutative diagrams:
\[ \begin{tikzcd}
\cohomology{}\big(\totTpolyF{\bullet},\iL_{\tauTpolyF(Q) + \fedosov} \big) \ar[r,"\ukontsevich_1"] & \cohomology{}\big(\totDpolyF{\bullet},\iL_{\tauTpolyF(Q) + \fedosov} + \hochschild\big) \\
\HTT \ar[r,"\dgkont_{1}"] \ar[u,"\tauTpolyQ"] & \HDD \ar[u,"\tauDpolyQ"']
\end{tikzcd} \]
and
\[ \begin{tikzcd}
\cohomology{}\big(\totbApolyF{\bullet},\iL_{\tauTpolyF(Q) + \fedosov} \big)
& \ar[l,"\ushoikhet_0"'] \cohomology{}\big(\totcCpolyF{\bullet},\iL_{\tauTpolyF(Q) + \fedosov} + \hochschildb\big) \\
\HOM \ar[u,"\tauTpolyQ"] & \ar[l,"\dgshoi_{0}"'] \HCC . \ar[u,"\tauDpolyQ"']
\end{tikzcd} \]
On the cohomology level, the pair $(\ukontsevich_1, \ushoikhet_0^{-1}) = \big( \hkr \circ (\toddclass{\cF_Q})^{\frac{1}{2}}, \, \big( (\toddclass{\cF_Q})^{\frac{1}{2}} \circ \hhkr \big)^{-1} \big)$ and the vertical maps are isomorphisms of calculi (by Proposition~\ref{thm:DK_Fedosov} and Theorem~\ref{thm:mainT}). It follows immediately that
\[
(\dgkont_1, \dgshoi_0^{-1}) = \big( \hkr \circ (\toddclassQ)^{\frac{1}{2}}, \, \big( (\toddclassQ)^{\frac{1}{2}} \circ \hhkr \big)^{-1} \big) : \calculus_C(\cM, Q) \longrightarrow \calculus_H(\cM, Q)
\]
is likewise an isomorphism of calculi. This completes the proof of Theorem~\ref{thm:main}.
\end{proof}

\appendix

\section{\texorpdfstring{$L_\infty$ algebras, $L_\infty$ modules, and homotopy transfer}{L-infinity algebras, L-infinity modules, and homotopy transfer}}

This appendix summarizes results on $L_\infty$ algebras, $L_\infty$ modules, and homotopy transfer
employed throughout this work.
Further details can be found in~\cite{MR4485797,MR4693987} and the references therein.

\subsection{\texorpdfstring{$L_\infty$ modules over an $L_\infty$ algebra}{L-infinity modules over an L-infinity algebra}}

Recall that an \textbf{$L_\infty$ algebra} is a graded vector space $\frakg = \bigoplus_{n \in \ZZ} \frakg^n$ equipped with a sequence $(\lambda_i)_{i\in\NN}$ of maps $\lambda_i: \Lambda^i \frakg \to \frakg$ of degree $2-i$ satisfying the relations
\begin{equation}\label{eq:LooAlg_def}
\sum_{\substack{p+q=k \\ p\geq 1 \\ q\geq 0}} \sum_{\sigma\in \shuf(p,q)} (\pm 1)\cdot
\lambda_{q+1}\big(\lambda_p(x_{\sigma(1)}\wedge \cdots \wedge x_{\sigma(p)})
\wedge x_{\sigma(p+1)} \wedge \cdots \wedge x_{\sigma(p+q)}\big) =0
\end{equation}
for all $k\geq 1$ and all homogeneous vectors $x_1,\cdots,x_k\in\frakg$.
The notation $\shuf(p,q)$ refers to the set of permutations $\sigma$ of the finite ordered set $\{1,2,\cdots,p+q\}$ such that $\sigma(1) < \sigma(2) < \cdots < \sigma(p)$ and $\sigma(p+1) < \sigma(p+2) < \cdots < \sigma(p+q)$.

If Equation~\eqref{eq:LooAlg_def} is rewritten in terms of the associated sequence
$(\tilde{\lambda}_i)_{i\in\NN}$ of maps of degree one
\[ \tilde{\lambda}_i:= \nshift \circ \lambda_i \circ (\pshift)^{\otimes i}
: S^i(\frakg[1]) \to \frakg[1] \]
where $(\pshift)^{\otimes i}: S^i(\frakg[1]) \to \Lambda^i \frakg$ is the natural décalage map,
it takes the form
\begin{equation}\label{eq:Loo[1]Alg_def}
\sum_{\substack{p+q=k \\ p\geq 1 \\ q\geq 0}} \sum_{\sigma\in \shuf(p,q)} \sgn(\sigma) \cdot
\tilde{\lambda}_{q+1}\big( \tilde{\lambda}_p(\nshift x_{\sigma(1)}\odot \cdots \odot \nshift x_{\sigma(p)}) \odot \nshift x_{\sigma(p+1)} \odot \cdots \odot \nshift x_{\sigma(p+q)}\big) =0,
\end{equation}
where the (now much simpler) sign $\sgn(\sigma)$ is determined by the Koszul sign convention.
Such a sequence $(\tilde{\lambda}_i)_{i=1}^\infty$ of maps is called an \textbf{$L_\infty[1]$ algebra} structure on $\frakg[1]$.

Denoting by $Q_{\tilde{\lambda}}$ the degree-one coderivation of the cofreely cogenerated coalgebra $S(\frakg[1])$ whose corestriction is the degree-one map $\tilde{\lambda} := \sum_{i=1}^\infty \tilde{\lambda}_i: S(\frakg[1]) \to \frakg[1]$, the relations~\eqref{eq:Loo[1]Alg_def} can be rewritten as a single equation:
\[ Q_{\tilde{\lambda}} \circ Q_{\tilde{\lambda}} = 0 .\]
Hence $\big(S(\frakg[1]),Q_{\tilde{\lambda}}\big)$ is a dg coalgebra.
For more explicit formulae, see~\cite{arXiv:1705.02880,MR4735657,MR4693987,MR4485797}.

A \textbf{morphism of $L_\infty$ algebras} from an $L_\infty$ algebra $(\frakg,\lambda)$ to an $L_\infty$ algebra $(\frakh,\mu)$ is a sequence $(\Xi_k)_{k\in\NO}$ of maps $\Xi_k: \Lambda^k \frakg \to \frakh$ of degree $1-k$ satisfying certain relations.
In terms of the morphism of cofreely cogenerated coalgebras
\[ F_{\tilde{\Xi}}:S(\frakg[1]) \to S(\frakh[1]) \]
whose corestriction is the sum
$\tilde{\Xi}:= \sum_{k=1}^\infty \tilde{\Xi}_k: S(\frakg[1]) \to \frakh[1]$
of the associated degree-zero maps
\[ \tilde{\Xi}_k := \nshift \circ \Xi_k \circ (\pshift)^{\otimes k}: S^k(\frakg[1]) \to \frakh[1] ,\]
these relations can be rewritten as a single equation:
\[ F_{\tilde{\Xi}} \circ Q_{\tilde{\lambda}} = Q_{\tilde{\mu}} \circ F_{\tilde{\Xi}} .\]
Hence $F_{\tilde{\Xi}}$ is a morphism of dg coalgebras from $\big(S(\frakg[1]),Q_{\tilde{\lambda}}\big)$
to $\big(S(\frakh[1]),Q_{\tilde{\mu}}\big)$.
Such an $L_\infty$ morphism $\Xi$ is said to be curved if $\Xi_0\neq 0$ and flat if $\Xi_0=0$.

An \textbf{$L_\infty$ module} over an $L_\infty$ algebra $(\frakg,\lambda)$ is a graded vector space $V = \bigoplus_{n \in \ZZ} V^n$ endowed with a sequence $(\alpha_j)_{j\in\NO}$ of maps $\alpha_j: \Lambda^j \frakg \otimes V \to V$ of degree $1-j$ satisfying the relations
\begin{multline}\label{eq:LooMod_def}
\sum_{\substack{p+q=k \\ p\geq 1 \\ q\geq 0}} \sum_{\sigma\in \shuf(p,q)} (\pm 1)\cdot
\alpha_{q+1}\big(\lambda_p(x_{\sigma(1)}\wedge\cdots\wedge x_{\sigma(p)})
\wedge x_{\sigma(p+1)}\wedge\cdots\wedge x_{\sigma(p+q)} \otimes v\big) \\
+\sum_{\substack{p+q=k \\ p\geq 0 \\ q\geq 0}} \sum_{\sigma\in \shuf(p,q)} (\pm 1)\cdot
\alpha_{p}\big(x_{\sigma(1)}\wedge\cdots\wedge x_{\sigma(p)}\otimes
\alpha_q(x_{\sigma(p+1)}\wedge\cdots\wedge x_{\sigma(p+q)}\otimes v)\big)=0
\end{multline}
for all $k\geq 0$ and all homogeneous vectors $x_1,\cdots,x_k\in\frakg$ and $v\in V$.

If Equation~\eqref{eq:LooMod_def} is rewritten in terms of the associated sequence
\( (\tilde{\alpha}_j)_{j\in\NO} \) of maps of degree one
\[ \tilde{\alpha}_j:= \alpha_j \circ \big((\pshift)^{\otimes j} \otimes \id_V \big)
: S^j(\frakg[1]) \otimes V \to V ,\]
it takes the form
\begin{multline}\label{eq:Loo[1]Mod_def}
\sum_{\substack{p+q=k \\ p\geq 1 \\ q\geq 0}} \sum_{\sigma\in \shuf(p,q)} \sgn(\sigma) \cdot
\tilde{\alpha}_{q+1}\big(\tilde{\lambda}_p(\nshift x_{\sigma(1)} \odot\cdots\odot \nshift x_{\sigma(p)})
\odot \nshift x_{\sigma(p+1)} \odot\cdots\odot \nshift x_{\sigma(p+q)} \otimes v\big) \\
+ \sum_{\substack{p+q=k \\ p\geq 0 \\ q\geq 0}} \sum_{\sigma\in \shuf(p,q)} \sgn(\sigma) \cdot
\tilde{\alpha}_{p}\big(\nshift x_{\sigma(1)} \odot\cdots\odot \nshift x_{\sigma(p)} \otimes
\tilde{\alpha}_q( \nshift x_{\sigma(p+1)} \odot\cdots\odot \nshift x_{\sigma(p+q)} \otimes v)\big) =0
,\end{multline}
where the (now much simpler) signs $\sgn(\sigma)$ are determined by the Koszul sign convention.

Denoting by $D_{\tilde{\alpha}}$ the degree-one codifferential on the cofreely cogenerated comodule $S(\frakg[1]) \otimes V$ (over the dg coalgebra $(S(\frakg[1]),Q_{\tilde{\lambda}})$) whose corestriction is the degree-one map $\tilde{\alpha} := \sum_{j=0}^\infty \tilde{\alpha}_j: S(\frakg[1]) \otimes V \to V$, we can rewrite the relations \eqref{eq:Loo[1]Mod_def} as a single equation:
\[ D_{\tilde{\alpha}} \circ D_{\tilde{\alpha}} =0 .\]
Hence $\big(S(\frakg[1]) \otimes V,D_{\tilde{\alpha}}\big)$ is a dg comodule.
See, for example, \cite{MR2199629,MR4693987,MR2004726}.

Let $(V,\alpha)$ and $(W,\beta)$ be two $L_\infty$ modules over an $L_\infty$ algebra $(\frakg,\lambda)$.
A \textbf{morphism of $L_\infty$ modules} from $(V,\alpha)$ to $(W,\beta)$ is a sequence $(\Phi_l)_{l\in\NO}$ of maps $\Phi_l: \Lambda^l \frakg \otimes V \to W$ of degree $-l$ satisfying the relations
\begin{multline}\label{eq:LooModMor_def}
\sum_{\substack{p+q=k \\ p\geq 1 \\ q\geq 0}} \sum_{\sigma\in \shuf(p,q)} (\pm 1)\cdot
\Phi_{q+1}\big(\lambda_p(x_{\sigma(1)} \wedge\cdots\wedge x_{\sigma(p)})
\wedge x_{\sigma(p+1)} \wedge\cdots\wedge x_{\sigma(p+q)} \otimes v\big) \\
+ \sum_{\substack{p+q=k \\ p\geq 0 \\ q\geq 0}} \sum_{\sigma\in \shuf(p,q)} (\pm 1)\cdot
\Phi_{p}\big(x_{\sigma(1)} \wedge\cdots\wedge x_{\sigma(p)}
\otimes \alpha_q(x_{\sigma(p+1)} \wedge\cdots\wedge x_{\sigma(p+q)} \otimes v)\big) \\
= \sum_{\substack{p+q=k \\ p\geq 0 \\ q\geq 0}} \sum_{\sigma\in \shuf(p,q)} (\pm 1)\cdot
\beta_{p}\big(x_{\sigma(1)} \wedge\cdots\wedge x_{\sigma(p)} \otimes
\Phi_q(x_{\sigma(p+1)} \wedge\cdots\wedge x_{\sigma(p+q)} \otimes v)\big)
.\end{multline}

In terms of the morphism of cofreely cogenerated comodules
\[ F_{\tilde{\Phi}}: S(\frakg[1]) \otimes V \to S(\frakg[1]) \otimes W \]
whose corestriction is the sum
$\tilde{\Phi} := \sum_{l=0}^\infty \tilde{\Phi}_l : S(\frakg[1]) \otimes V \to W$
of the associated degree-zero maps
\[ \tilde{\Phi}_l := \Phi_l \circ \big((\pshift)^{\otimes l} \otimes \id_V \big) :
S^l(\frakg[1]) \otimes V \to W ,\]
these relations (which involve rather complicated signs) can be rewritten as a single simple equation:
\[ F_{\tilde{\Phi}} \circ D_{\tilde{\alpha}} = D_{\tilde{\beta}} \circ F_{\tilde{\Phi}} .\]
Hence $F_{\tilde{\Phi}}$ is a morphism of dg comodules
from $\big(S(\frakg[1]) \otimes V,D_{\tilde{\alpha}}\big)$
to $\big(S(\frakg[1]) \otimes W,D_{\tilde{\beta}}\big)$.

The following two remarks are well-known --- see, for example, \cite[Remark~3.2.1]{MR1729368}.

\begin{remark}\label{rmk:LooModAsLooAlg}
The direct sum vector space $\frakg\oplus V$, which is automatically $\ZZ$-graded, can be thought of as a $\ZZ$-graded vector space with an alternate grading, called \emph{weight}, in which the direct summand $\frakg$ is assigned weight $0$ and the direct summand $V$ is assigned weight $1$.
With that convention, an $L_\infty$ module structure $\alpha$ on $V$ over an $L_\infty$ algebra $(\frakg,\lambda)$ is equivalent to an $L_\infty$ algebra structure $(\lambda_k^\alpha)_{k\in\NN}$ on $\frakg \oplus V$ satisfying the following conditions:
\begin{enumerate}[series=conditions,label=\textbf{(C\arabic*)},ref=(C\arabic*)]
\item\label{prima} $(\frakg,\lambda)$ is an $L_\infty$ subalgebra of $(\frakg \oplus V,\lambda^\alpha)$;
\item\label{seconda} $\weight(\lambda^\alpha_k)=0$ for all $k\in\NN$.
\end{enumerate}
Note that $\lambda_k^\alpha$ is a map from $\Lambda^k(\frakg \oplus V)$ to $\frakg \oplus V$ of degree $2-k$.
Condition~\ref{seconda} ensures that $\lambda^\alpha_k(y_1 \wedge \cdots \wedge y_k) = 0$
if two or more vectors amongst $y_1,\cdots, y_k \in \frakg \oplus V$ belong to the direct summand $V$.
For all $k\geq 0$; $x_1,\dots,x_k\in\frakg$; and $v\in V$, we have
\[ \alpha_k(x_1\wedge\cdots\wedge x_k\otimes v)=\lambda^\alpha_{k+1}(x_1\wedge\cdots\wedge x_k\wedge v) .\]
\end{remark}

\begin{remark}\label{rmk:LooModMorphAsLooAlgMorph}
Similarly, given two $L_\infty$ modules $(V,\alpha)$ and $(W,\beta)$ over the same $L_\infty$ algebra $(\frakg,\lambda)$, an $L_\infty$ module morphism $(\Theta_l)_{l\in\NO}$ from $(V,\alpha)$ to $(W,\beta)$ (with $\Theta_l: \Lambda^l \frakg \otimes V \to W$) is equivalent to an $L_\infty$ algebra morphism $(\Phi_k)_{k\in\NN}$ from $(\frakg\oplus V,\lambda^\alpha)$ to $(\frakg\oplus W,\lambda^\beta)$ (with $\Phi_k: \Lambda^k (\frakg \oplus V) \to \frakg \oplus W$)
satisfying the following conditions:
\begin{enumerate}[resume=conditions,label=\textbf{(C\arabic*)},ref=(C\arabic*)]
\item\label{quarta} $\Phi_1(x) = x$ if the vector $x$ belongs to the direct summand $\frakg$;
\item\label{quinta} $\Phi_k(x_1\wedge\cdots\wedge x_k) = 0$ if $x_1,\cdots,x_k\in\frakg$ and $k\geq 2$;
\item\label{terza} $\weight(\Phi_k) =0$ for all $k\in\NN$.
\end{enumerate}
For all $k\geq 0$; $x_1,\dots,x_k\in\frakg$; and $v\in V$, we have
\[ \Theta_k(x_1\wedge\cdots\wedge x_k\otimes v)=\Phi_{k+1}(x_1\wedge\cdots\wedge x_k\wedge v) .\]
\end{remark}

\subsection{Homotopy transfer theorem}

Let $(V,d)$ and $(W,\delta)$ be two dg vector spaces over a field $\KK$.
Recall that a \textbf{contraction} is the data
\begin{equation}\label{eq:contraction}
\begin{tikzcd}
(V,d) \arrow[r, "\tau", shift left,hook] & (W ,\delta)
\arrow[l, " \sigma", shift left,two heads] \arrow[loop, "h",out=12,in= -12,looseness = 3]
\end{tikzcd}
\end{equation}
where $\tau:V\to W$ is an injective cochain map,
$\sigma:W\to V$ is a surjective cochain map,
and $h: W \to W$ is a linear map of degree $-1$, which satisfy the relations
\[ \sigma\tau=\id_V, \qquad \id_W-\tau\sigma=h\delta+\delta h, \qquad
\sigma h=0, \qquad h\tau=0, \qquad h h=0 \]
--- see, for example, \cite[Appendix~A]{MR4665716} and references therein.

We begin by briefly recalling the well-known \emph{homotopy transfer theorem for $L_\infty$ algebras}.
More details can be found in~\cite[Theorem~12.4.1]{MR4485797} and~\cite{arXiv:1705.02880,MR4693987}.

\begin{theorem}\label{thm:LooTransfer}
Suppose we are given a contraction \eqref{eq:contraction} and an $L_\infty$ algebra structure
$\mu=(\mu_i)_{i=1}^\infty$ on $W$ such that $\mu_1=\delta$.
Then there exist an $L_\infty$ algebra structure $\lambda = (\lambda_i)_{i=1}^\infty$ on $V$
such that $\lambda_1 = d$, and two $L_\infty$ quasi-isomorphisms
\[ \Phi=(\Phi_j)_{j=1}^\infty : (V,\lambda) \inftyto (W,\mu)
\qquad \text{and} \qquad
\Psi=(\Psi_j)_{j=1}^\infty : (W,\mu) \inftyto (V,\lambda) \]
such that $\Phi_1=\tau$, $\Psi_1=\sigma$, and $F_{\tilde{\Psi}}\circ F_{\tilde{\Phi}} =\id_{S(V[1])}$.
\end{theorem}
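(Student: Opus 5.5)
The plan is to use the standard perturbative construction of the transferred structure via the tree formulas, organized through the symmetric coalgebra so that signs are handled by the Koszul rule. Passing to the $L_\infty[1]$ picture, write $\tilde\mu = \tilde\mu_1 + \tilde\mu_{\geq 2}$ on $W[1]$ with $\tilde\mu_1$ the suspension of $\delta$. Extend the contraction data to the symmetric coalgebras: $\tau$ and $\sigma$ induce coalgebra morphisms $S(\tau)$ and $S(\sigma)$, and $h$ extends to a homotopy $H$ on $S(W[1])$ by the symmetrized tensor trick, i.e.\ $H$ on $S^n$ is $\frac{1}{n!}\sum$ over positions of $(\tau\sigma)^{\otimes}\otimes h\otimes \id^{\otimes}$ with the usual averaging. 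This gives a contraction of $(S(V[1]),\tilde d)$ into $(S(W[1]),\tilde\delta)$ whose side conditions $\sigma h=0$, $h\tau=0$, $hh=0$ propagate to $H$.

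Next, regard the coderivation $Q_{\tilde\mu} - Q_{\tilde\delta}$ as a perturbation of $\tilde\delta$. It strictly increases the word-length filtration in reverse (it lowers tensor length), so $\id - (Q_{\tilde\mu}-Q_{\tilde\delta})H$ is invertible by a locally finite geometric series. Apply the homological perturbation lemma to obtain a perturbed contraction: a new differential $D = \tilde d + S(\sigma)\,P\,S(\tau)$ on $S(V[1])$, where $P = (Q_{\tilde\mu}-Q_{\tilde\delta})\sum_{k\geq 0}(H(Q_{\tilde\mu}-Q_{\tilde\delta}))^k$, together with perturbed maps $\Phi' = S(\tau) + H P S(\tau)$-type series and $\Psi' = S(\sigma)+S(\sigma)PH$. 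The lemma guarantees $D^2=0$, that $\Phi'$ and $\Psi'$ are chain maps, and the side conditions give $\Psi'\Phi' = \id$.

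The main obstacle is checking that $D$ is a coderivation and $\Phi',\Psi'$ are coalgebra morphisms; the perturbation lemma only yields chain-level statements. I would handle this as in the cited references: show that $H$ is a homotopy of comodules relative to $S(\tau\sigma)$ (the symmetrization is exactly what makes it compatible with the coproduct), and then verify by induction on word length that each term in the series is compatible with the coalgebra structure, so that $D$ is the coderivation determined by its corestriction $\tilde\lambda$ and $\Phi',\Psi'$ are determined by their corestrictions. Reading off components gives $\lambda_1 = d$, $\Phi_1=\tau$, $\Psi_1=\sigma$ (higher terms of the series land in higher tensor degree), and $F_{\tilde\Psi}\circ F_{\tilde\Phi} = \id$. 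Finally, both are quasi-isomorphisms since their linear parts $\tau,\sigma$ are quasi-isomorphisms of the underlying complexes.
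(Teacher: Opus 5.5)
\emph{Verdict: there is a genuine gap.} The chain-level part of your argument is sound, but the coalgebra-compatibility step rests on a lemma that is false.

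\emph{What works.} The symmetrized tensor-trick homotopy $H$ has components equal to the $\tilde H_i$ of \eqref{eq:LooTransfer_Htp}. It satisfies $S(\sigma)H=0$, $HS(\tau)=0$ and $H^2=0$. So with $t=Q_{\tilde\mu}-Q_{\tilde\delta}$ the perturbation lemma gives you:
\begin{itemize}
\item the differential $D=Q_{\tilde d}+S(\sigma)\,t\,\Phi'$;
\item the maps $\Phi'=\sum_{k\geq 0}(Ht)^kS(\tau)$ and $\Psi'=S(\sigma)\sum_{k\geq 0}(tH)^k$;
\item the identity $\Psi'\Phi'=\id$.
\end{itemize}

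\emph{What fails.} The gap is exactly the step you flag as the main obstacle. Write $\Pi=S(\tau\sigma)$. No homotopy $H$ on $S(W[1])$ extending $h$ can satisfy $\Delta H=(H\otimes\Pi+\id\otimes H)\Delta$, or the mirror identity. Since $\Delta$ is cocommutative, the $S^1\otimes S^1$-component of $\Delta H(w_1w_2)$ must be a graded-symmetric tensor. When $\sigma w_1=\sigma w_2=0$, your identity forces that component to be $\pm w_1\otimes hw_2\pm w_2\otimes hw_1$, which is not symmetric in general.

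Symmetrization is what lets $H$ descend from the tensor coalgebra to $S(W[1])$. But it destroys the $(\id,\Pi)$-coderivation property that the tensor trick has on $T^c(W[1])$. What does hold is $H=\int_0^1\mathcal D_s\,ds$. Here $g_s=(1-s)\id+s\,\tau\sigma$, and $\mathcal D_s(w_1\cdots w_n)=\sum_i\pm g_s(w_1)\cdots h(w_i)\cdots g_s(w_n)$ is the $S(g_s)$-coderivation extending $h$. Integrating $s^a(1-s)^{n-1-a}$ gives the weights $\frac{a!\,(n-1-a)!}{n!}$ implicit in \eqref{eq:LooTransfer_Htp}. So your word-length induction has nothing to run on. Coalgebra compatibility of $D$, $\Phi'$ and $\Psi'$ is the whole content of the theorem beyond the chain level, and it remains unproved.

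\emph{How to repair it for $\Phi'$.} You need an actual combinatorial identity.
\begin{itemize}
\item In $\sum_k(Ht)^kS(\tau)$, the operator $H$ only ever meets words $a\cdot h(b_1)\cdots h(b_q)\cdot\tau(c_1)\cdots\tau(c_r)$.
\item By the side conditions and the integral formula, $H$ sends such a word to $\frac{1}{q+1}\,h(a)h(b_1)\cdots h(b_q)\tau(c_1)\cdots\tau(c_r)$, up to sign.
\item A forest with $p$ non-trivial trees therefore arises in $HtF_{\tilde\Phi}$ in exactly $p$ ways, one for each tree whose root is the $\tilde\mu_k$ just applied by $t$. Each way has weight $\frac1p$.
\item Hence the coalgebra morphism $F_{\tilde\Phi}$ with corestriction \eqref{eq:LooTransfer_Inj} satisfies the fixed-point equation $F=S(\tau)+HtF$, which characterizes $\Phi'$. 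So $\Phi'=F_{\tilde\Phi}$.
\item $D$ is then a coderivation, because $S(\sigma)\Phi'=\id$.
\end{itemize}

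\emph{For $\Psi'$.} Here $H$ is applied to arbitrary words, so a separate and less transparent verification is required.

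\emph{Alternative route.} You could instead proceed as the cited references do; the paper itself only quotes the theorem and records the formulas.
\begin{itemize}
\item Define $F_{\tilde\Phi}$, $Q_{\tilde\lambda}$ and $F_{\tilde\Psi}$ directly through the recursions \eqref{eq:LooTransfer_Inj}--\eqref{eq:LooTransfer_Suj}. Compatibility with the coalgebra structure is then built in.
\item $F_{\tilde\Psi}F_{\tilde\Phi}=\id$ follows quickly from $\sigma h=0$ and $\tilde H_j\circ\tilde\Phi_i^j=0$.
\item The real work is checking $Q_{\tilde\lambda}^2=0$ and the two intertwining relations by induction on arity.
\end{itemize}
The paper's $\tilde\Psi_i$ is precisely the corestriction of your $\Psi'$, so you are aiming at the right maps; only the justification of coalgebra compatibility fails.

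\emph{Minor point.} $\lambda_1=d$, $\Phi_1=\tau$ and $\Psi_1=\sigma$ hold because $t$ vanishes on $S^1$. The higher terms of the series lower the word length; they do not raise it.
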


Instead of giving formulae for the $L_\infty$ sequences $\lambda$; $\Phi$; and $\Psi$, we will describe explicitly the associated (and equivalent) $L_\infty[1]$ sequences $\tilde{\lambda}$; $\tilde{\Phi}$; and $\tilde{\Psi}$.

Recall that the coalgebra morphism $F_{\tilde{\Phi}}:S(V[1]) \to S(W[1])$ induced by $\tilde{\Phi}:S(V[1]) \to W[1]$ is given by
\begin{multline*} F_{\tilde{\Phi}}( v_1 \odot \cdots \odot v_n) = \sum_{k=1}^n \dfrac{1}{k!} \sum_{\substack{\sigma \in \shuf(i_1,\cdots, i_k) \\ i_1 + \cdots + i_k =n \\ i_1,\cdots, i_k \geq 1}} \epsilon \cdot \tilde{\Phi}_{i_1}(v_{\sigma(1)} \odot \cdots \odot v_{\sigma(i_1)} ) \\ \odot \tilde{\Phi}_{i_2}(v_{\sigma(i_1 +1)} \odot \cdots \odot v_{\sigma(i_1+i_2)} ) \odot \cdots \odot \tilde{\Phi}_{i_k}(v_{\sigma(i_1+\cdots+i_{k-1} +1)}\odot\cdots \odot v_{\sigma(n)})
,\end{multline*}
where $v_1,\cdots, v_n \in V[1]$; $\shuf(i_1,\cdots, i_k)$ denotes the set of permutations $\sigma$ of $\{1,\cdots, n\}$ such that $\sigma(j) < \sigma(j+1)$ for every $j\notin\{i_1,i_1+i_2,\cdots,i_1+i_2+\cdots+i_{k-1}\}$; and the sign $\epsilon =\pm 1$ is determined by the Koszul convention.
We denote by $\tilde{\Phi}_i^j$ the composition
\[ \begin{tikzcd}
S^i(V[1]) \ar[r,hook] \ar[rrr,bend left = 15,"\tilde{\Phi}_i^j"] & S(V[1]) \ar[r,"F_{\tilde{\Phi}}"'] & S(W[1]) \ar[r, two heads] & S^j(W[1]) .
\end{tikzcd} \]
It is clear that the map $\tilde{\Phi}_i^j$ is determined by $\tilde{\Phi}_1,\cdots,\tilde{\Phi}_{i-j+1}$.
(Here $\tilde{\Phi}_1=\nshift \circ \Phi_1 \circ \pshift=\nshift \circ \tau \circ \pshift=\tilde{\tau}$.)

Also recall that the coderivation $Q_{\tilde{\mu}}:S(W[1]) \to S(W[1])$ induced by $\tilde{\mu}:S(W[1]) \to W[1]$ is given by the formula
\[ Q_{\tilde{\mu}}(w_1\odot \cdots \odot w_n) = \sum_{i=1}^n \sum_{\sigma \in \shuf(i,n-i)} \epsilon\cdot \tilde{\mu}_i(w_{\sigma(1)} \odot \cdots \odot w_{\sigma(i)}) \odot w_{\sigma(i+1)} \odot \cdots \odot w_{\sigma(n)} ,\]
where $w_1,\cdots, w_n \in W[1]$ and the sign $\epsilon =\pm 1$ is given by the Koszul sign convention.
We denote by $\tilde{\mu}_i^j$ the composition
\[ \begin{tikzcd}
S^i(W[1]) \ar[r,hook] \ar[rrr,bend left = 15,"\tilde{\mu}_i^j"] & S(W[1]) \ar[r,"Q_{\tilde{\mu}}"'] & S(W[1]) \ar[r, two heads] & S^j(W[1])
.\end{tikzcd} \]
It is clear that $\tilde{\mu}_i^j$ (with $i\geq j$) is determined by $\tilde{\mu}_{i-j+1}$:
\[ \tilde{\mu}_i^j(w_1\odot \cdots \odot w_i) = \sum_{\sigma\in\shuf(i-j+1,j-1)} \epsilon\cdot \tilde{\mu}_{i-j+1}(w_{\sigma(1)} \odot \cdots \odot w_{\sigma(i-j+1)}) \odot w_{\sigma(i-j+2)} \odot \cdots \odot w_{\sigma(i)} .\]

The ``Taylor coefficients'' $\tilde{\Phi}_i$, $\tilde{\mu}_i$, $\tilde{\Psi}_i$
(which are equivalent to the ones in Theorem~\ref{thm:LooTransfer})
are determined by the following recursive formulae:
\begin{align}
\tilde{\Phi}_i & = \sum_{j=2}^i \tilde{h} \circ \tilde{\mu}_j \circ \tilde{\Phi}_i^j,
& \tilde{\Phi}_1 & =\tilde{\tau},
\label{eq:LooTransfer_Inj} \\
\tilde{\lambda}_i & = \sum_{j=2}^i \tilde{\sigma} \circ \tilde{\mu}_j \circ \tilde{\Phi}_i^j,
& \tilde{\lambda}_1 & = \tilde{d},
\label{eq:LooTransfer_Loo} \\
\tilde{\Psi}_i & = \sum_{j=1}^{i-1} \tilde{\Psi}_j \circ \tilde{\mu}_i^j \circ \tilde{H}_i,
& \tilde{\Psi}_1 & = \tilde{\sigma},
\label{eq:LooTransfer_Suj}
\end{align}
where $\tilde{h} = \nshift \circ h \circ \pshift$;
$\tilde{\sigma} = \nshift \circ \sigma \circ \pshift$;
$\tilde{\tau} = \nshift \circ \tau \circ \pshift$;
$\tilde{d} = \nshift \circ d \circ \pshift$
and
\begin{multline}\label{eq:LooTransfer_Htp}
\tilde H_i(w_1 \odot \cdots \odot w_i) \\
= \dfrac{1}{i !} \sum_{\sigma \in S_i} \sum_{j=1}^i \epsilon \cdot \tilde{\tau}\tilde{\sigma}(w_{\sigma(1)} ) \odot \cdots \odot \tilde{\tau}\tilde{\sigma}(w_{\sigma(j-1)} ) \odot \tilde h(w_{\sigma(j)}) \odot w_{\sigma(j+1)} \odot \cdots \odot w_{\sigma(i)},
\end{multline}
for all $w_1,\cdots, w_i \in W[1]$.
The sign $\epsilon = \pm 1$ in \eqref{eq:LooTransfer_Htp} is determined by the Koszul sign convention.

\begin{proposition}[{\cite[Lemma~1.11]{arXiv:1705.02880}}]
\label{prop:LooTransfer-Inj-Surj}
Suppose we are given a contraction \eqref{eq:contraction},
an $L_\infty$ algebra structure $\mu=(\mu_i)_{i=1}^\infty$ on $W$ with $\mu_1=\delta$,
and an $L_\infty$ algebra structure $\lambda'=(\lambda'_i)_{i=1}^\infty$ on $V$ with $\lambda'_1=d$.
\begin{enumerate}
\item\label{prop:LooTransfer-Inj}
If $\tau$ is a strict $L_\infty$ algebra morphism from $(V,\lambda')$ to $(W,\mu)$,
i.e. $\tau\circ\lambda'_i=\mu_i\circ\tau^{\otimes i}$ for all $i\in\NN$,
then $\lambda'$ is precisely the homotopy transferred $L_\infty$ algebra structure $\lambda$ on $V$
(whose existence is asserted by Theorem~\ref{thm:LooTransfer}),
and the $L_\infty$ quasi-isomorphism $\Phi:(V,\lambda)\inftyto(W,\mu)$
(with $\Phi_1=\tau$) of Theorem~\ref{thm:LooTransfer} satisfies $\Phi_j=0$ for all $j\geq 2$.
\item\label{prop:LooTransfer-Surj}
If $\sigma$ is a strict $L_\infty$ algebra morphism from $(W,\mu)$ to $(V,\lambda')$,
i.e. $\sigma\circ\mu_i=\lambda'_i\circ\sigma^{\otimes i}$ for all $i\in\NN$,
then $\lambda'$ is precisely the homotopy transferred $L_\infty$ algebra structure $\lambda$ on $V$
(whose existence is asserted by Theorem~\ref{thm:LooTransfer}),
and the $L_\infty$ quasi-isomorphism $\Psi:(W,\mu)\inftyto(V,\lambda)$ (with $\Psi_1=\sigma$)
of Theorem~\ref{thm:LooTransfer} satisfies $\Psi_j=0$ for all $j\geq 2$.
\end{enumerate}
\end{proposition}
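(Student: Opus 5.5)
We prove both parts by working directly with the explicit recursive formulae \eqref{eq:LooTransfer_Inj}--\eqref{eq:LooTransfer_Suj}, which express the transferred structure $\tilde\lambda$ and the morphisms $\tilde\Phi$, $\tilde\Psi$ in terms of $\tilde\tau$, $\tilde\sigma$, $\tilde h$, and $\tilde\mu$. The two parts are dual, and the same two side conditions of the contraction, $h\tau=0$ and $\sigma h=0$, drive each.

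\textbf{Part (i).} We argue by induction on $i\geq 1$, proving jointly that $\tilde\Phi_i=0$ for $i\geq2$ and $\tilde\lambda_i=\tilde\lambda'_i$ for all $i$. The base case holds because $\tilde\Phi_1=\tilde\tau$ and $\tilde\lambda_1=\tilde d=\tilde\lambda'_1$.

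Now fix $i\geq 2$ and assume $\tilde\Phi_k=0$ for $2\le k<i$. Then the coalgebra morphism $F_{\tilde\Phi}$ restricted to $S^{<i}$ in arity is just $S(\tilde\tau)$. In particular, for $j\geq 2$ the component $\tilde\Phi_i^j$ vanishes unless $j=i$, and $\tilde\Phi_i^i=\tilde\tau^{\odot i}$. Indeed, $\tilde\Phi_i^j$ only involves $\tilde\Phi_1,\dots,\tilde\Phi_{i-j+1}$, and any component with $j<i$ must use some $\tilde\Phi_k$ with $k\ge2$.

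Substituting into the recursion gives
\[
\tilde\Phi_i=\tilde h\circ\tilde\mu_i\circ\tilde\tau^{\odot i}, \qquad \tilde\lambda_i=\tilde\sigma\circ\tilde\mu_i\circ\tilde\tau^{\odot i}.
\]
The strictness hypothesis reads $\tilde\mu_i\circ\tilde\tau^{\odot i}=\tilde\tau\circ\tilde\lambda'_i$. Using $h\tau=0$ we get $\tilde\Phi_i=\tilde h\tilde\tau\tilde\lambda'_i=0$, and using $\sigma\tau=\id$ we get $\tilde\lambda_i=\tilde\lambda'_i$. This closes the induction.

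\textbf{Part (ii).} First we show $\lambda=\lambda'$. Apply $\tilde\sigma$ to the defining relation $F_{\tilde\Phi}\circ Q_{\tilde\lambda}=Q_{\tilde\mu}\circ F_{\tilde\Phi}$ and project to arity one. This gives
\[
\tilde\lambda=\tilde\sigma\circ\tilde\mu\circ F_{\tilde\Phi},
\]
which is consistent with \eqref{eq:LooTransfer_Loo}. By strictness of $\sigma$, we have $\tilde\sigma\circ\tilde\mu_j=\tilde\lambda'_j\circ\tilde\sigma^{\odot j}$. Moreover $\tilde\sigma^{\odot j}\circ\tilde\Phi_i^j$ equals $\tilde\tau^{\odot}$-free terms only when $j=i$: every $\tilde\Phi_k$ with $k\ge2$ lies in the image of $\tilde h$, which $\tilde\sigma$ kills since $\sigma h=0$. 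So only the $j=i$ term survives, and there $\tilde\sigma^{\odot i}\tilde\tau^{\odot i}=\id$. Hence $\tilde\lambda_i=\tilde\lambda'_i$.

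Next we show $\tilde\Psi_i=0$ for $i\ge2$, by induction using \eqref{eq:LooTransfer_Suj}. By the inductive hypothesis, only $j=1$ contributes, so
\[
\tilde\Psi_i=\tilde\sigma\circ\tilde\mu_i^1\circ\tilde H_i=\tilde\sigma\circ\tilde\mu_i\circ\tilde H_i=\tilde\lambda'_i\circ\tilde\sigma^{\odot i}\circ\tilde H_i.
\]
Each summand of $\tilde H_i$ contains a factor $\tilde h(w)$, and $\tilde\sigma\tilde h=0$. Therefore $\tilde\sigma^{\odot i}\circ\tilde H_i=0$, and so $\tilde\Psi_i=0$.

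\textbf{Main obstacle.} The one delicate point is bookkeeping the combinatorics of $\tilde\Phi_i^j$: we must confirm that components with $j<i$ genuinely involve some higher $\tilde\Phi_k$, so that they vanish in part (i) and are killed by $\tilde\sigma$ in part (ii). Koszul signs play no role, since every term we discard is identically zero.
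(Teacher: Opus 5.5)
Your proposal is correct. Part (i) is the paper's argument essentially word for word. Both run an induction on arity showing that only the $j=i$ term $\tilde h\circ\tilde\mu_i\circ\tilde\tau^{\odot i}=\tilde h\tilde\tau\tilde\lambda'_i=0$ survives in \eqref{eq:LooTransfer_Inj}. Both then compute $\sigma\circ\mu_i\circ\tau^{\otimes i}=\sigma\tau\lambda'_i=\lambda'_i$ for the transferred brackets.

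For part (ii), the paper gives no argument and simply defers to the cited reference, so your proof there supplements the paper rather than competing with it. It is sound:
\begin{itemize}
\item Every $\tilde\Phi_k$ with $k\geq 2$ lies in the image of $\tilde h$, so $\sigma h=0$ gives $\tilde\sigma^{\odot j}\circ\tilde\Phi_i^j=\delta_{ij}\,\id$. Together with strictness of $\sigma$, \eqref{eq:LooTransfer_Loo} then yields $\tilde\lambda_i=\tilde\lambda'_i$.
\item Every summand of $\tilde H_i$ carries a factor $\tilde h$, so $\tilde\sigma^{\odot i}\circ\tilde H_i=0$. This kills $\tilde\Psi_i=\tilde\lambda'_i\circ\tilde\sigma^{\odot i}\circ\tilde H_i$, inductively via \eqref{eq:LooTransfer_Suj}.
\end{itemize}

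Two cosmetic points. First, the phrase ``equals $\tilde\tau^{\odot}$-free terms only when $j=i$'' should read ``vanishes unless $j=i$''. Second, the detour through $F_{\tilde\Phi}\circ Q_{\tilde\lambda}=Q_{\tilde\mu}\circ F_{\tilde\Phi}$ is unnecessary. It also tacitly uses $\tilde\sigma\circ\tilde\Phi=\mathrm{pr}_1$, which is exactly the fact you only state afterwards. Start directly from \eqref{eq:LooTransfer_Loo} instead.
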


\begin{proof}
The proof of the second assertion can be found in \cite{arXiv:1705.02880} and is therefore omitted.

Assuming $\tau$ is a strict morphism, we begin by proving that $\Phi_j=0$ for all $j\geq 2$.
We will reason by induction on $j$.
From Equation~\eqref{eq:LooTransfer_Inj}, it follows that
\[ \Phi_2=\pm h\circ\mu_2\circ(\Phi_1\otimes\Phi_1)=\pm h\circ\mu_2\circ(\tau\otimes\tau)
=\pm h\circ\tau\circ\lambda'_2=0 \] since $\Phi_1=\tau$, $\tau$ is a strict $L_\infty$ morphism, and $h\circ\tau=0$.
Assuming that $\Phi_3=\Phi_4=\cdots=\Phi_{j-1}=0$, it follows from Equation~\eqref{eq:LooTransfer_Inj} again that
\begin{multline*}
\Phi_j(v_1,\cdots,v_j) = \dfrac{1}{j!}\sum_{\sigma\in S_j} (\pm 1)\cdot h
\circ\mu_j \big(\Phi_1(v_{\sigma(1)}),\cdots,\Phi_1(v_{\sigma(j)})\big) \\
= h\circ\mu_j \big(\Phi_1(v_1),\cdots,\Phi_1(v_j)\big)
= h\circ\mu_j \big(\tau(v_1),\cdots,\tau(v_j)\big) \\
= h\circ\tau\circ\lambda'_j (v_1,\cdots,v_j) =0
.\end{multline*}
This completes the induction.

Next, we show that $\lambda'=\lambda$. By assumption $\lambda'_1=d=\lambda_1$.
Since $\Phi_k=0$ for all $k\geq 2$; since $\Phi_1=\tau$;
$\tau$ is a strict $L_\infty$ morphism; and $\sigma\circ\tau=\id_V$,
it follows from Equation~\eqref{eq:LooTransfer_Loo} that, for all $i\geq 2$ and all $v_1,\cdots,v_i\in V$,
\begin{multline*}
\lambda_i(v_1,\cdots,v_i) =\sigma\circ\mu_i \big(\Phi_1(v_1),\cdots,\Phi_1(v_i)\big)
=\sigma\circ\mu_i \big(\tau(v_1),\cdots,\tau(v_i)\big) \\
=\sigma\circ\tau\circ\lambda'_i (v_1,\cdots,v_i) =\lambda'_i (v_1,\cdots,v_i)
.\end{multline*}
The first assertion is proven.
\end{proof}

Next, we derive a \emph{homotopy transfer theorem for $L_\infty$ modules}
from the homotopy transfer theorem for $L_\infty$ algebras.

\begin{theorem}\label{thm:LooModTransfer}
Suppose we are given a contraction
\begin{equation}\label{gemelli}
\begin{tikzcd}
(V,d) \arrow[r, "\tau", shift left,hook] & (W,\delta)
\arrow[l, " \sigma", shift left,two heads] \arrow[loop, "h",out=12,in= -12,looseness = 3]
\end{tikzcd}
\end{equation}
and an $L_\infty$ module structure $\beta = (\beta_j)_{j=0}^\infty$ on $W$ over an $L_\infty$ algebra $(\frakg,\lambda)$ such that $\beta_0 = \delta$.
Then there exist an $L_\infty$ module structure $\alpha = (\alpha_j)_{j=0}^\infty$ on $V$ over the same $L_\infty$ algebra $(\frakg,\lambda)$ with $\alpha_0=d$, and two $L_\infty$ module quasi-isomorphisms
\[ \Theta = (\Theta_k)_{k=0}^\infty : (V,\alpha) \inftyto (W,\beta)
\quad \text{and} \quad
\Xi = (\Xi_k)_{k=0}^\infty : (W,\beta) \inftyto (V,\alpha) \]
such that $\Theta_0 = \tau$, $\Xi_0 = \sigma$,
and $F_{\tilde{\Xi}} \circ F_{\tilde{\Theta}} = \id_{S(\frakg[1])\otimes V}$.
\end{theorem}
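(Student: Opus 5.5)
The plan is to reduce Theorem~\ref{thm:LooModTransfer} to the homotopy transfer theorem for $L_\infty$ algebras, Theorem~\ref{thm:LooTransfer}, using the dictionary of Remarks~\ref{rmk:LooModAsLooAlg} and~\ref{rmk:LooModMorphAsLooAlgMorph}.

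\textbf{Step 1: set up an enlarged contraction.} By Remark~\ref{rmk:LooModAsLooAlg}, the module structure $\beta$ is the same as an $L_\infty$ algebra structure $\mu:=\lambda^\beta$ on $\frakg\oplus W$. This structure has weight zero and contains $(\frakg,\lambda)$ as a subalgebra. Its unary bracket is $\lambda_1\oplus\delta$. I form the direct-sum contraction
\[ (\frakg\oplus V,\lambda_1\oplus d)\rightleftarrows(\frakg\oplus W,\lambda_1\oplus\delta) \]
with data $\id\oplus\tau$, $\id\oplus\sigma$ and $0\oplus h$. The contraction identities hold componentwise, and all three maps have weight zero.

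\textbf{Step 2: apply Theorem~\ref{thm:LooTransfer}.} This gives a transferred structure $\lambda'$ on $\frakg\oplus V$ together with quasi-isomorphisms $\Phi$ and $\Psi$ satisfying $F_{\tilde\Psi}\circ F_{\tilde\Phi}=\id$. It remains to check conditions (C1)--(C5), so that these translate back into a module structure $\alpha$ and module morphisms $\Theta$ and $\Xi$.

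\textbf{Step 3: weights.} This comes from the recursive formulas \eqref{eq:LooTransfer_Inj}, \eqref{eq:LooTransfer_Loo}, \eqref{eq:LooTransfer_Suj} and \eqref{eq:LooTransfer_Htp}. Every ingredient ($\tilde\mu_j$, $\tilde h$, $\tilde\tau$, $\tilde\sigma$, and $\tilde H_i$ built from them) preserves weight. Hence, by induction on $i$, so do $\tilde\Phi_i$, $\tilde\lambda'_i$ and $\tilde\Psi_i$. This gives (C2) and (C5)'s weight counterpart (C3).

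\textbf{Step 4: the $\frakg$ part.} This is the main point to check carefully. On inputs lying entirely in $\frakg$, we have $\tilde\Phi_1=\id$. The higher $\tilde\Phi_i$ for $i\ge2$ are of the form $\tilde h\circ(\cdots)$ applied to elements of $\frakg$, because $\mu$ restricted to $\frakg$ lands in $\frakg$ by (C1). Since $h$ vanishes on $\frakg$, these terms are zero. So $\Phi$ restricted to $\frakg$ is the identity with vanishing higher terms, which is (C4)--(C5).

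The same argument gives $\lambda'|_{\frakg}=\sigma\circ\mu|_{\frakg}=\lambda$, which is (C1). For $\Psi$, the operator $\tilde H_i$ on pure $\frakg$-inputs involves $\tilde h$ of a $\frakg$-element, hence vanishes. So $\Psi$ is also the identity on $\frakg$.

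\textbf{Step 5: conclude.} By Remark~\ref{rmk:LooModMorphAsLooAlgMorph}, $\Phi$ and $\Psi$ yield $L_\infty$ module morphisms $\Theta$ and $\Xi$ with $\Theta_0=\tau$ and $\Xi_0=\sigma$. Their quasi-isomorphism property follows from that of $\tau$ and $\sigma$.

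The identity $F_{\tilde\Xi}\circ F_{\tilde\Theta}=\id$ follows by restricting $F_{\tilde\Psi}\circ F_{\tilde\Phi}=\id$ to the weight-one part, $S(\frakg[1])\otimes V\subset S((\frakg\oplus V)[1])$. Both compositions are determined by the same Taylor components there. Finally, $\alpha_0=d$ because $\lambda'_1=\lambda_1\oplus d$.
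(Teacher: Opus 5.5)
Your proposal is correct and follows essentially the same route as the paper: you encode $\beta$ as a weight-zero $L_\infty$ structure on $\frakg\oplus W$, transfer it along the contraction $(\id_{\frakg}\oplus\tau,\ \id_{\frakg}\oplus\sigma,\ 0_{\frakg}\oplus h)$ via Theorem~\ref{thm:LooTransfer}, check by induction on the recursive formulas that the transferred bracket and both morphisms have weight zero and are trivial on $\frakg$, and translate back through the two remarks. Two cosmetic points: your condition labels are permuted relative to the paper, where (C3) is $\Phi_1|_{\frakg}=\id$, (C4) is the vanishing of $\Phi_{k\geq 2}$ on $\frakg$, and (C5) is the weight condition; and your weight-one restriction argument for $F_{\tilde\Xi}\circ F_{\tilde\Theta}=\id$ spells out a point the paper leaves implicit.
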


\begin{proof}
Following Remark~\ref{rmk:LooModAsLooAlg}, we denote by $\lambda^\beta = (\lambda_j^\beta)_{j=1}^\infty$ the $L_\infty$ algebra structure on $\frakg \oplus W$ induced by the $L_\infty$ module structure $\beta$ on $W$.
The contraction \eqref{gemelli} induces a contraction
\begin{equation}\label{eq:contraction_Mod}
\begin{tikzcd}
(\frakg \oplus V,\lambda_1\oplus d) \arrow[r, "\id_{\frakg}\oplus\tau", shift left,hook]
& (\frakg \oplus W,\lambda_1\oplus\delta) \arrow[l, "\id_{\frakg}\oplus\sigma", shift left,two heads]
\arrow[loop, "0_{\frakg}\oplus h", out=12, in=-12, looseness=3]
\end{tikzcd}.
\end{equation}

It follows from the assumptions that $\lambda_1^\beta= \lambda_1\oplus\beta_0 =\lambda_1\oplus\delta$.
Thus, according to the homotopy transfer theorem for $L_\infty$ algebras (Theorem~\ref{thm:LooTransfer}),
there exist an $L_\infty$ algebra structure $\kappa$ on $\frakg\oplus V$ and two $L_\infty$ quasi-isomorphisms
$\Phi: (\frakg\oplus V,\kappa) \inftyto (\frakg\oplus W, \lambda^\beta)$
and $\Psi: (\frakg\oplus W, \lambda^\beta) \inftyto (\frakg\oplus V,\kappa)$,
which are respectively determined by the recursive equations
\begin{align}
\Phi_i & = \sum_{j=2}^i (0_{\frakg}\oplus h) \circ \lambda^\beta_j \circ \Phi_i^j,
& \Phi_1 & = \id_{\frakg}\oplus\tau, \label{recursive-into} \\
\kappa_i & = \sum_{j=2}^i (\id_{\frakg}\oplus\sigma) \circ \lambda^\beta_j \circ \Phi_i^j,
& \kappa_1 & =\lambda_1\oplus d, \label{recursive-Loo} \\
\Psi_i & = \sum_{j=1}^{i-1} \Psi_j \circ \big(\lambda^\beta\big)_i^j \circ H_i,
& \Psi_1 & = \id_{\frakg}\oplus\sigma, \label{recursive-onto}
\end{align}
with
\begin{multline}\label{tensor-trick-homotopy}
H_i(y_1\wedge\cdots\wedge y_i) \\
= \dfrac{1}{i !} \sum_{\sigma\in S_i} \sum_{j=1}^i (\pm 1)\cdot
(\id_{\frakg}\oplus\tau\sigma)(y_{\sigma(1)}) \wedge\cdots\wedge
(\id_{\frakg}\oplus\tau\sigma)(y_{\sigma(j-1)}) \\
\wedge (0_{\frakg}\oplus h)(y_{\sigma(j)}) \wedge y_{\sigma(j+1)}
\wedge\cdots\wedge y_{\sigma(i)},
\end{multline}
for all $y_1,\cdots,y_i\in\frakg\oplus W$.

We proceed with showing that the transferred $L_\infty$ structure $\kappa$ satisfies the conditions in Remark~\ref{rmk:LooModAsLooAlg} and that the $L_\infty$ algebra morphisms $\Phi$ and $\Psi$ satisfy the conditions in Remark~\ref{rmk:LooModMorphAsLooAlgMorph}.
Once that is done, setting
\begin{gather*}
\alpha_k(x_1,\cdots,x_k,v)=\kappa_{k+1}(x_1,\cdots,x_k,v) ,\\
\Theta_k(x_1,\cdots,x_k,v)=\Phi_{k+1}(x_1,\cdots,x_k,v) ,\\
\intertext{and}
\Xi_k(x_1,\cdots,x_k,w)=\Psi_{k+1}(x_1,\cdots,x_k,w)
\end{gather*}
for all $k\in\NO$, $x_1,\dots,x_k\in\frakg$, $v\in V$ and $w\in W$,
we obtain the desired $L_\infty$ module structure $\alpha$ on $V$
and $L_\infty$ module morphisms $\Theta:(V,\alpha)\inftyto(W,\beta)$
and $\Xi:(W,\beta)\inftyto(V,\alpha)$.
It follows immediately from $\kappa_1=\lambda_1\oplus d$; $\Phi_1=\id_{\frakg}\oplus\tau$;
and $\Psi_1=\id_{\frakg}\oplus\sigma$ that $\alpha_0=d$; $\Theta_0=\tau$, and $\Xi_0=\sigma$.

\textsc{Step~1.} We start with verifying, by induction on $i$, that $\Phi = (\Phi_i)_{i=1}^\infty$ satisfies the conditions \ref{quarta}, \ref{quinta} and \ref{terza} in Remark~\ref{rmk:LooModMorphAsLooAlgMorph}.

Since $\Phi_1=\id_{\frakg}\oplus\tau$, it satisfies \ref{quarta} and \ref{terza}.

According to Equation~\eqref{recursive-into}, the map $\Phi_i$ is given by
\begin{multline}\label{batterie}
\Phi_i(y_1,\cdots,y_i) = \sum_{j=2}^i \dfrac{1}{j!}
\sum_{\substack{\sigma\in\shuf(k_1,\cdots,k_j) \\ k_1+\cdots+k_j=i \\ k_1,\cdots,k_j\geq 1}}
(\pm 1)\cdot (0_{\frakg}\oplus h)\circ\lambda^\beta_j\big(
\Phi_{k_1}(y_{\sigma(1)},\cdots,y_{\sigma(k_1)}), \\ \Phi_{k_2}(y_{\sigma(k_1+1)},\cdots,y_{\sigma(k_1+k_2)}),
\cdots,\Phi_{k_j}(y_{\sigma(k_1+\cdots+k_{j-1}+1)},\cdots,y_{\sigma(i)})\big)
\end{multline}
for all $y_1,\cdots,y_i\in\frakg\oplus V$.
Assuming that $\Phi_2,\cdots,\Phi_{i-1}$ satisfy \ref{terza},
it follows that $\Phi_i$ does satisfy \ref{terza} as well because $\lambda^\beta_j$ has weight zero.

Furthemore, assuming $\Phi_2,\cdots,\Phi_{i-1}$ satisfy \ref{quinta},
it follows that $\Phi_i$ does satisfy \ref{quinta} as well.
Indeed, if $x_1,\cdots,x_i$ do all belong to the direct summand $\frakg$, then
\[ \lambda^\beta_j\big(
\Phi_{k_1}(x_{\sigma(1)},\cdots,x_{\sigma(k_1)}),\Phi_{k_2}(x_{\sigma(k_1+1)},\cdots,x_{\sigma(k_1+k_2)}),
\cdots,\Phi_{k_j}(x_{\sigma(k_1+\cdots+k_{j-1}+1)},\cdots,x_{\sigma(i)})\big) \]
also belongs to the direct summand $\frakg$ and Equation~\eqref{batterie} yields
$\Phi_i(x_1,\cdots,x_i)=0$.

This completes the induction.

\textsc{Step~2.} Next, we verify that $\kappa = (\kappa_i)_{i=1}^\infty$ satisfies the conditions \ref{prima} and \ref{seconda} in Remark~\ref{rmk:LooModAsLooAlg}.

According to Equation~\eqref{recursive-Loo}, we have
\begin{multline*}
\kappa_i(y_1,\cdots,y_i)=\sum_{j=2}^i \dfrac{1}{j!}
\sum_{\substack{\sigma \in \shuf(k_1,\cdots, k_j) \\ k_1 + \cdots + k_j = i \\ k_1,\cdots, k_j \geq 1}}
(\pm 1)\cdot (\id_{\frakg}\oplus\sigma)\circ\lambda^\beta_j
\big( \Phi_{k_1}(y_{\sigma(1)},\cdots,y_{\sigma(k_1)}), \\
\Phi_{k_2}(y_{\sigma(k_1+1)},\cdots,y_{\sigma(k_1+k_2)}), \cdots,
\Phi_{k_j}(y_{\sigma(k_1+\cdots+k_{j-1}+1)},\cdots,y_{\sigma(i)}) \big)
\end{multline*}
for all $y_1,\cdots,y_i\in\frakg\oplus V$.
In particular, since $\Phi$ satisfies \ref{quarta} and \ref{quinta}, we have
\begin{multline*}
\kappa_i(x_1,\cdots,x_i)=\dfrac{1}{i!}\sum_{\sigma\in S_i} (\pm 1)\cdot
(\id_{\frakg}\oplus\sigma)\circ\lambda^\beta_i(x_{\sigma(1)},\cdots,x_{\sigma(i)})
\\
=\dfrac{1}{i!}\sum_{\sigma\in S_i} (\pm 1)\cdot
(\id_{\frakg}\oplus\sigma)\circ\lambda_i(x_{\sigma(1)},\cdots,x_{\sigma(i)})
=\lambda_i(x_1,\cdots,x_i)
\end{multline*}
for all $x_1,\cdots,x_i\in\frakg$.
Condition \ref{prima} is thus satisfied by $\kappa$.
Furthermore, since the $\Phi_k$'s and $\lambda^\beta_j$'s all have weight $0$,
every $\kappa_i$ also has weight zero.
Condition \ref{seconda} is thus satisfied by $\kappa$.

\textsc{Step~3.} Finally, we verify, by induction on $i$, that $\Psi = (\Psi_i)_{i=1}^\infty$ satisfies conditions \ref{quarta}, \ref{quinta} and \ref{terza} in Remark~\ref{rmk:LooModMorphAsLooAlgMorph}.

Since $\Psi_1=\id_{\frakg}\oplus\sigma$, it satisfies conditions \ref{quarta} and \ref{terza}.

It is immediate that $H_i(x_1,\cdots,x_i)=0$ if $x_1,\cdots,x_i$ do all belong to the direct summand $\frakg$.
It follows from Equation~\eqref{recursive-onto} that $\Psi_i(x_1,\cdots,x_i)=0$
if $i\geq 2$ and $x_1,\cdots,x_i\in\frakg$.
Condition \ref{quinta} is thus satisfied by $\Psi_i$

Assuming that $\Psi_2,\cdots,\Psi_{i-1}$ satisfy condition \ref{terza},
it follows from Equation~\eqref{recursive-onto} that $\Psi_i$ does satisfy \ref{terza} as well
because $H_i$ and $\lambda^\beta$ have weight zero.

This completes the proof.
\end{proof}

The next proposition follows from Proposition~\ref{prop:LooTransfer-Inj-Surj}
and Remarks~\ref{rmk:LooModAsLooAlg} and~\ref{rmk:LooModMorphAsLooAlgMorph}.

\begin{proposition}\label{prop:LooModTransfer-Inj-Surj}
Suppose we are given a contraction \eqref{eq:contraction},
an $L_\infty$ module structure $\beta=(\beta_j)_{j=0}^\infty$ on $W$
over an $L_\infty$ algebra $(\frakg,\lambda)$ with $\beta_0=\delta$,
and an $L_\infty$ module structure $\alpha'=(\alpha'_j)_{j=0}^\infty$ on $V$
over the same $L_\infty$ algebra $(\frakg,\lambda)$ with $\alpha'_0=d$.
\begin{enumerate}
\item\label{prop:LooModTransfer-Inj}
If $\tau$ is a strict $L_\infty$ module morphism from $(V,\alpha')$ to $(W,\beta)$,
i.e. $\tau\circ\alpha'_j=\beta_j\circ(\id_{S^j(\frakg)}\otimes\tau)$ for all $j\in\NO$,
then $\alpha'$ is precisely the homotopy transferred $L_\infty$ module structure $\alpha$ on $V$
(whose existence is asserted by Theorem~\ref{thm:LooModTransfer})
and the $L_\infty$ module quasi-isomorphism $\Theta:(V,\alpha)\inftyto(W,\beta)$
(with $\Theta_0=\tau$) of Theorem~\ref{thm:LooModTransfer} satisfies
$\Theta_j=0$ for all $j\geq 1$.
\item\label{prop:LooModTransfer-Surj}
If $\sigma$ is a strict $L_\infty$ module morphism from $(W,\beta)$ to $(V,\alpha')$,
i.e. $\sigma\circ\beta_j=\alpha'_j\circ(\id_{S^j(\frakg)}\otimes\sigma)$ for all $j\in\NO$,
then $\alpha'$ is precisely the homotopy transferred $L_\infty$ module structure $\alpha$ on $V$
(whose existence is asserted by Theorem~\ref{thm:LooModTransfer})
and the $L_\infty$ module quasi-isomorphism $\Xi:(W,\beta)\inftyto(V,\alpha)$
(with $\Xi_0=\sigma$) of Theorem~\ref{thm:LooModTransfer} satisfies
$\Xi_j=0$ for all $j\geq 1$.
\end{enumerate}
\end{proposition}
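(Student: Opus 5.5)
The statement to prove is Proposition~\ref{prop:LooModTransfer-Inj-Surj}. I will reduce it to Proposition~\ref{prop:LooTransfer-Inj-Surj}, using the dictionary of Remarks~\ref{rmk:LooModAsLooAlg} and~\ref{rmk:LooModMorphAsLooAlgMorph}, in the same way that Theorem~\ref{thm:LooModTransfer} was derived from Theorem~\ref{thm:LooTransfer}.

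First, I encode the module data as $L_\infty$ algebras. Let $\lambda^\beta$ be the weight-zero $L_\infty$ algebra structure on $\frakg\oplus W$ determined by $\beta$, and let $\lambda^{\alpha'}$ be the one on $\frakg\oplus V$ determined by $\alpha'$. Then $\lambda^\beta_1=\lambda_1\oplus\delta$ and $\lambda^{\alpha'}_1=\lambda_1\oplus d$. I use the induced contraction \eqref{eq:contraction_Mod}, with maps $\id_{\frakg}\oplus\tau$, $\id_{\frakg}\oplus\sigma$ and homotopy $0_{\frakg}\oplus h$. The side conditions $\sigma h=0$, $h\tau=0$, $hh=0$ and $\sigma\tau=\id$ clearly carry over to the direct sums.

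\textbf{Case (i).} The key step is to show that if $\tau$ is a strict module morphism, then $\id_{\frakg}\oplus\tau$ is a strict $L_\infty$ algebra morphism $(\frakg\oplus V,\lambda^{\alpha'})\to(\frakg\oplus W,\lambda^\beta)$. I check the identity $(\id\oplus\tau)\circ\lambda^{\alpha'}_k=\lambda^\beta_k\circ(\id\oplus\tau)^{\otimes k}$ by splitting inputs according to how many lie in the weight-one summand.
\begin{itemize}
\item If all inputs lie in $\frakg$, both sides equal $\lambda_k$, by condition~\ref{prima}.
\item If exactly one input lies in $V$, the identity is exactly the hypothesis $\tau\circ\alpha'_{k-1}=\beta_{k-1}\circ(\id\otimes\tau)$, up to the Koszul signs from reordering. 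Those signs agree on both sides because $\tau$ has degree zero.
\item If two or more inputs lie in $V$, both sides vanish by condition~\ref{seconda}.
\end{itemize}
Proposition~\ref{prop:LooTransfer-Inj-Surj}\ref{prop:LooTransfer-Inj} then gives two conclusions. The transferred structure $\kappa$ equals $\lambda^{\alpha'}$, so $\alpha_k=\kappa_{k+1}(\cdots,v)=\lambda^{\alpha'}_{k+1}(\cdots,v)=\alpha'_k$. Also $\Phi_j=0$ for $j\ge2$, so $\Theta_j=\Phi_{j+1}|_{\frakg^{\wedge j}\otimes V}=0$ for $j\ge1$.

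\textbf{Case (ii).} The argument is symmetric. The same case split shows that $\id_{\frakg}\oplus\sigma$ is a strict $L_\infty$ algebra morphism $(\frakg\oplus W,\lambda^\beta)\to(\frakg\oplus V,\lambda^{\alpha'})$. Proposition~\ref{prop:LooTransfer-Inj-Surj}\ref{prop:LooTransfer-Surj} then yields $\kappa=\lambda^{\alpha'}$ and $\Psi_j=0$ for $j\ge2$, hence $\alpha=\alpha'$ and $\Xi_j=0$ for $j\ge1$.

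The one point to verify carefully is that the transferred structure $\kappa$, and the morphisms $\Phi$ and $\Psi$ produced by Theorem~\ref{thm:LooTransfer} for the enlarged contraction, are literally the ones used in the proof of Theorem~\ref{thm:LooModTransfer} to define $\alpha$, $\Theta$ and $\Xi$. This holds by construction, since both come from the same recursions \eqref{recursive-into}--\eqref{recursive-onto}. Beyond that, the only obstacle is sign bookkeeping in the mixed case, and it is harmless because the maps involved have degree zero.
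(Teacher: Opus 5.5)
Your proposal is correct and takes the paper's route. The paper gives no separate proof; it only states that the result follows from Proposition~\ref{prop:LooTransfer-Inj-Surj} together with Remarks~\ref{rmk:LooModAsLooAlg} and~\ref{rmk:LooModMorphAsLooAlgMorph}, and your argument fills in that reduction, in particular the check, split by how many inputs lie in the weight-one summand, that $\id_{\frakg}\oplus\tau$ (resp.\ $\id_{\frakg}\oplus\sigma$) is strict for the enlarged contraction \eqref{eq:contraction_Mod}.
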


\subsection{\texorpdfstring{Pullbacks of $L_\infty$ modules}{Pullbacks of L-infinity modules}}
\label{sec:PullBack}

We summarize the necessary definitions and properties of pullbacks of $L_\infty$ modules in this section. See~\cite{MR4693987} for further details.

Let $\frakg$ be an $L_\infty$ algebra and let $V$ be a graded vector space.
It is well-known that an $L_\infty$ module structure $\alpha$ on $V$ over $\frakg$
is equivalent to a curved morphism of $L_\infty$ algebras
\[ \hat{\alpha}: \frakg \inftyto \End(V), \]
where $\End(V)$ denotes the dgla of endomorphisms on $V$, equipped with the zero differential and the bracket of graded commutators. See \cite[Lemma~7.7]{MR4693987}.

Let $\frakh$ be another $L_\infty$ algebra, and let $\Xi:\frakh \inftyto \frakg$ be an $L_\infty$ morphism. The composition
\[ \hat{\alpha} \circ \Xi : \frakh \inftyto \End(V) \]
defines an $L_\infty$ module structure on $V$ over $\frakh$, denoted by $\Xi^\ast\alpha$. Given an $L_\infty$ module $M=(V,\alpha)$ over $\frakg$, the notation $\Xi^\ast M$ refers to the {\em pullback $L_\infty$ module} $(V,\Xi^\ast\alpha)$.
Furthermore, if $\Theta:\frakh' \inftyto \frakh$ is another $L_\infty$ morphism, then $(\Xi \circ \Theta)^\ast M = \Theta^\ast(\Xi^\ast M)$.

Let $N = (W,\beta)$ be another $L_\infty$ module over $\frakg$, and let $\Phi: M \inftyto N$ be a morphism of $L_\infty$ modules over $\frakg$. Such a morphism can be equivalently encoded as a curved morphism of $L_\infty$ algebras
\[ \hat{\Phi}: \frakg \inftyto \End(V[1] \oplus W) \]
such that
\[ \hat{\Phi}_k(x_1, \cdots, x_k) = \begin{pmatrix} \bar\alpha_k(x_1, \cdots, x_k) & 0 \\ \bar\Phi_k(x_1, \cdots, x_k) & \beta_k(x_1, \cdots, x_k) \end{pmatrix}, \]
where
\[ \bar\alpha_k(x_1, \cdots, x_k)(\nshift v)
= (-1)^{|x_1|+ \cdots + |x_k|+ 1 -k}\nshift(\alpha_k(x_1, \cdots, x_k)(v)) \]
and
\[ \bar\Phi_k(x_1, \cdots, x_k)(\nshift v)
= (-1)^{|x_1|+ \cdots + |x_k| - k}\nshift(\Phi_k(x_1, \cdots, x_k)(v)) .\]
The composition $\hat{\Phi} \circ \Xi: \frakh \inftyto \End(V[1]\oplus W)$
therefore defines an $L_\infty$ module morphism
\[ \Xi^\ast \Phi: \Xi^\ast M \inftyto \Xi^\ast N \]
over $\frakh$, satisfying $(\Xi \circ \Theta)^\ast \Phi = \Theta^\ast(\Xi^\ast \Phi)$.

According to~\cite[Section~6]{MR4693987},
a morphism $\Xi:\frakh \inftyto \frakg$ of $L_\infty$ algebras can be identified
with a Maurer--Cartan element in the convolution $L_\infty$ algebra $\Hom(S^{\geq 1}(\frakh[1]),\frakg)$
defined in~\cite[Proposition~6.1]{MR4693987}.
Two $L_\infty$ morphisms $\Xi,\Theta$ from $\frakh$ to $\frakg$ are said to be \emph{homotopic},
which is denoted by $\Xi\sim\Theta$,
if they are homotopy equivalent as Maurer--Cartan elements in $\Hom(S^{\geq 1}(\frakh[1]),\frakg)$
in the sense of \cite[Definition~5.17]{MR4693987} --- see \cite[Definition~6.3]{MR4693987}.

The next two lemmas are used in Section~\ref{sec:FormalityDGmfd}.

\begin{lemma}[{\cite[Corollary~7.19]{MR4693987}}]\label{lem:PullbackViaHtpMor}
Let $\Xi$ and $\Theta$ be two homotopic morphisms of $L_\infty$ algebras from $\frakh$ to $\frakg$,
and let $M$ be an $L_\infty$ module over $\frakg$. Then there exists an isomorphism
\[ \Phi:\Xi^\ast M \inftyto \Theta^\ast M \]
of $L_\infty$ modules such that $\Phi_0=\id_{M}$.
\end{lemma}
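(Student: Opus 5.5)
The plan is to work entirely in the language of curved $L_\infty$ morphisms into $\End(V)$, as recalled above. Write $M=(V,\alpha)$ and let $\hat\alpha:\frakg\inftyto\End(V)$ be the associated curved $L_\infty$ morphism. Then the pullbacks $\Xi^\ast M$ and $\Theta^\ast M$ are encoded by the curved morphisms $\hat\alpha\circ\Xi$ and $\hat\alpha\circ\Theta$ from $\frakh$ to $\End(V)$.

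Equivalently, these are two Maurer--Cartan elements $m_0,m_1$ in the convolution $L_\infty$ algebra $\Hom(S(\frakh[1]),\End(V))$. Their arity-zero components agree: both equal $\alpha_0$, the differential of $V$.

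\textbf{Step 1: transport the homotopy.} Since $\Xi\sim\Theta$, there is a Maurer--Cartan element in
\[ \Hom(S^{\geq 1}(\frakh[1]),\frakg)\otimes\KK[t,dt] \]
restricting to $\Xi$ at $t=0$ and to $\Theta$ at $t=1$. Composing this family with $\hat\alpha$ (post-composition is an $L_\infty$ morphism of convolution algebras, compatible with tensoring by $\KK[t,dt]$) yields a Maurer--Cartan element $m_t+b_t\,dt$ in $\Hom(S(\frakh[1]),\End(V))\otimes\KK[t,dt]$ with $m_t|_{t=0}=m_0$ and $m_t|_{t=1}=m_1$.

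The crucial bookkeeping is the behaviour in $S$-arity. The $dt$-component $b_t$ only has components of arity $\geq 1$ in $\frakh$, because it comes from the $dt$-part of a family of morphisms with vanishing constant term. Likewise the arity-zero part of $m_t$ stays constantly equal to $\alpha_0$.

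\textbf{Step 2: integrate the flow.} Translate $m_t$ into a family of codifferentials $D_t$ on the cofree comodule $S(\frakh[1])\otimes V$, and $b_t$ into a degree-$0$ comodule coderivation $B_t$. The Maurer--Cartan equation in the $dt$-direction reads $\tfrac{d}{dt}D_t=[D_t,B_t]$. Solve
\[ \tfrac{d}{dt}F_t=-F_t\circ B_t,\qquad F_0=\id, \]
for comodule endomorphisms $F_t$. Then
\[ \tfrac{d}{dt}\big(F_tD_0-D_tF_t\big)=-\big(F_tD_0-D_tF_t\big)\circ B_t, \]
which gives $F_tD_0=D_tF_t$, so $F_1$ is a morphism of dg comodules from $\Xi^\ast M$ to $\Theta^\ast M$. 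Its corestriction gives $\Phi$.

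Because $B_t$ strictly raises nothing in $V$ but consumes at least one $\frakh$-input, the ODE is triangular with respect to the arity filtration. The arity-$k$ component of $F_t$ is obtained by finitely many integrations of polynomials in $t$ from lower arity components. Hence $F_t$ exists with polynomial coefficients, and $(F_1)_0=\id_V$, so $\Phi_0=\id_M$. Running the same ODE with the reversed homotopy (or noting that $F_1$ is unipotent with respect to the arity filtration) shows $\Phi$ is an isomorphism.

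\textbf{Main obstacle.} The delicate point is Step 1: checking that post-composition with the \emph{curved} morphism $\hat\alpha$ carries homotopic flat morphisms to homotopic Maurer--Cartan elements while preserving the arity-zero part. I would handle this by using the equivalent description of the module structures as $L_\infty$ algebras on $\frakh\oplus V$ (Remark~\ref{rmk:LooModAsLooAlg}) to avoid curvature issues, and by citing the functoriality of the convolution algebra from \cite{MR4693987}. Convergence is not an issue thanks to the arity filtration.
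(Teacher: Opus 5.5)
The paper gives no argument for this lemma; it imports it from \cite[Corollary~7.19]{MR4693987}. Your proposal is a self-contained proof in the framework the paper recalls around it: modules as curved morphisms into $\End(V)$, and homotopies as Maurer--Cartan homotopies in the convolution algebra. Step~1 and its arity bookkeeping are sound. The $dt$-part of $\hat\alpha_*(\pi+\varpi\,dt)$ always contains a $\varpi$, so it has arity $\geq 1$. The arity-zero part of $m_t$ is the constant $\alpha_0$, and the curvature of $\hat\alpha$ contributes only this term, so it is not really an obstacle. One small point: the paper's notion of homotopy (recalled in the proof of Lemma~\ref{lem:LiftedLooMor-htp}) is phrased with power series $\pi(t),\varpi(t)$. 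Your ``polynomial coefficients'' claim therefore needs the homotopy to live in the arity-completed tensor product with $\KK[t,dt]$, where each arity component is polynomial in $t$. That is also what makes evaluation at $t=1$ meaningful.

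\textbf{Error in Step 2.} Step~2, as written, does not work. Take your conventions $\tfrac{d}{dt}D_t=[D_t,B_t]$ and $\tfrac{d}{dt}F_t=-F_t\circ B_t$, and set $E_t:=F_tD_0-D_tF_t$. Then one actually gets
\[ \tfrac{d}{dt}E_t+E_t\circ B_t=F_t\circ[D_0,B_t]-[D_t,B_t]\circ F_t , \]
which does not vanish in general, so nothing forces $E_1=0$.

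\textbf{The fix.} The generator of the flow must act on the left. The condition $F_tD_0=D_tF_t$ means $D_t=F_tD_0F_t^{-1}$, so $\tfrac{d}{dt}D_t=[\dot F_tF_t^{-1},D_t]$, which forces $\tfrac{d}{dt}F_t=-B_t\circ F_t$. With this choice, $\tfrac{d}{dt}E_t=-B_t\circ E_t$ and $E_0=0$. To conclude $E_t\equiv 0$, you should also observe that $E_t$ is a degree-$1$ comodule map. Indeed, with $\Delta$ the coaction and $Q_{\frakh}$ the codifferential of $S(\frakh[1])$, both $G=F_tD_0$ and $G=D_tF_t$ satisfy $\Delta\circ G=(Q_{\frakh}\otimes F_t+\id\otimes G)\circ\Delta$. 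Hence $E_t$ is determined by its corestriction, and the corestrictions satisfy a recursion that is triangular in arity, because $b_t$ consumes at least one input. Alternatively, keep right multiplication but flip the sign, $\tfrac{d}{dt}F_t=F_t\circ B_t$. This gives $F_tD_t=D_0F_t$, a morphism $\Theta^\ast M\to\Xi^\ast M$, which you then invert. Also, $B_t$ must be the comodule endomorphism with corestriction $b_t$ (a coderivation over $0$, not over $Q_{\frakh}$), so that the flow stays inside comodule maps.

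With these corrections, the rest of your argument goes through: arity-wise solvability, $F_t|_V=\id_V$, and invertibility by unipotence. It yields the lemma directly with an explicit unipotent comodule isomorphism, instead of appealing to the general machinery of \cite{MR4693987}.
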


\begin{lemma}[{\cite[Corollary~6.14]{MR4693987}}]\label{lem:HtpMor-HtpTransfer}
In Theorem~\ref{thm:LooTransfer}, the morphisms of $L_\infty$ algebras
$\Phi:(V,\lambda)\inftyto(W,\mu)$ and $\Psi:(W,\mu)\inftyto(V,\lambda)$
satisfy $\Phi\circ\Psi\sim\id_W$.
\end{lemma}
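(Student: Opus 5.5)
The statement to prove is Lemma~\ref{lem:HtpMor-HtpTransfer}: in the homotopy transfer of Theorem~\ref{thm:LooTransfer}, the composite $\Phi\circ\Psi$ is homotopic to $\id_W$. I would argue in the coalgebra language of the appendix. Write $F_{\tilde\Phi}\colon S(V[1])\to S(W[1])$ and $F_{\tilde\Psi}\colon S(W[1])\to S(V[1])$ for the dg coalgebra morphisms. The plan is to exhibit an explicit homotopy between $F_{\tilde\Phi}\circ F_{\tilde\Psi}$ and $\id_{S(W[1])}$. This homotopy is a coderivation relative to the two morphisms, built from the tensor-trick homotopy $\tilde H=\sum_i \tilde H_i$ of \eqref{eq:LooTransfer_Htp}. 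I would then translate it into a homotopy of Maurer--Cartan elements in the convolution $L_\infty$ algebra $\Hom(S^{\geq1}(W[1]),W)$, in the sense of \cite[Definition~6.3]{MR4693987}.

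\textbf{Step 1: a path of morphisms.} I would introduce a polynomial de~Rham parameter, working over $\KK[t,dt]$. Set $\Pi=\tilde\tau\tilde\sigma$, viewed as a map $W[1]\to W[1]$. The one-parameter family of contractions
\[ \bigl(V,d\bigr)\rightleftarrows(W,\delta),\qquad \text{with inclusion }\tau,\ \text{projection }\sigma,\ \text{homotopy } h, \]
gives rise to the family $\Pi_t=t\,\Pi+(1-t)\,\id$ on $W$. The differential of $\Pi_t$ is $dt\,(\Pi-\id)=-dt\,(h\delta+\delta h)$. Applying the perturbation and recursion formulae \eqref{eq:LooTransfer_Inj}--\eqref{eq:LooTransfer_Htp} with $\tau\sigma$ replaced by $\Pi_t$ should produce an $L_\infty$ morphism
\[ \mathcal G\colon (W,\mu)\inftyto (W,\mu)\otimes\Omega(\Delta^1). \]
Its value at $t=0$ should be $\id_W$, and its value at $t=1$ should be $\Phi\circ\Psi$. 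Such a morphism is precisely a Sullivan-type homotopy, and by \cite{MR4693987} this is equivalent to a homotopy equivalence of the corresponding Maurer--Cartan elements.

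\textbf{Step 2: the main obstacle.} The hard part is verifying that $\mathcal G$ is indeed an $L_\infty$ morphism, and that its endpoint at $t=1$ coincides with $F_{\tilde\Phi}F_{\tilde\Psi}$. For this I would use the side conditions $\sigma h=0$, $h\tau=0$, $hh=0$ together with the recursion \eqref{eq:LooTransfer_Suj}. Concretely, I would check by induction on the arity $i$ that
\[ F_{\tilde\Phi}F_{\tilde\Psi}-\id = Q_{\tilde\mu}\,\mathcal K+\mathcal K\,Q_{\tilde\mu}, \]
where $\mathcal K$ is the $(F_{\tilde\Phi}F_{\tilde\Psi},\id)$-coderivation whose corestriction is
\[ \tilde h\circ\tilde\mu\circ(\text{symmetrized }\tilde H\text{-type terms}). \]
This is the standard tensor-trick identity on $S(W[1])$, namely $\id-\mathcal P=Q\mathcal H+\mathcal H Q$ after perturbation, with $\mathcal P=F_{\tilde\Phi}F_{\tilde\Psi}$. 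Its unperturbed form is exactly what $\tilde H$ in \eqref{eq:LooTransfer_Htp} encodes. The perturbation lemma then upgrades it, because $\tilde\mu-\tilde\mu_1$ strictly lowers the symmetric degree. This degree lowering makes the homological perturbation series finite in each arity.

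\textbf{Step 3: conclusion.} Once the coderivation homotopy is available, \cite[Proposition~6.10 and Corollary~6.14]{MR4693987} identify such a coderivation homotopy with a gauge homotopy of Maurer--Cartan elements in $\Hom(S^{\geq1}(W[1]),W)$. This identification yields $\Phi\circ\Psi\sim\id_W$. Since the statement is exactly \cite[Corollary~6.14]{MR4693987}, in the paper one could simply cite it. The steps above are how I would reconstruct that result.
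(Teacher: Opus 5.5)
The paper gives no proof of its own here; the lemma is simply imported from \cite[Corollary~6.14]{MR4693987}. Your reconstruction has a genuine gap at its core.

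The homotopy you want in Step~2 is an $(F_{\tilde\Phi}F_{\tilde\Psi},\id)$-coderivation $\mathcal K$ with $F_{\tilde\Phi}F_{\tilde\Psi}-\id=Q_{\tilde\mu}\mathcal K+\mathcal K Q_{\tilde\mu}$. That is the notion of homotopy used for $A_\infty$ morphisms on tensor coalgebras, and it generally does not exist on the cocommutative coalgebra $S(W[1])$. Indeed, cocommutativity of $\Delta$ forces $((F-G)\otimes\mathcal K)\Delta=(\mathcal K\otimes(F-G))\Delta$ for any $(F,G)$-coderivation. Here is a concrete example:
\begin{itemize}
\item Let $V=0$, and let $W$ be the abelian acyclic complex $\KK^2\xrightarrow{\id}\KK^2$ in degrees $-1,0$, with bases $a_i\mapsto b_i$ and $h(b_i)=a_i$.
\item Then $F_{\tilde\Phi}F_{\tilde\Psi}$ kills $S^{\geq1}(W[1])$.
\item Since $\mathcal K$ sends primitives to primitives, degree reasons give $\mathcal K(a_i)=0$. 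The weight-one part of the homotopy relation then forces $\mathcal K(b_i)=c\,a_i$ with $c=\pm1$.
\item The coderivation rule now gives $\bar\Delta\,\mathcal K(b_1\odot b_2)=c\,(a_1\otimes b_2-a_2\otimes b_1)$. This element is not invariant under the Koszul flip, whereas the reduced coproduct of every element of $S(W[1])$ is.
\end{itemize}
So no such $\mathcal K$ exists, even though $\Phi\circ\Psi\sim\id_W$ is true in this example. What the homological perturbation lemma actually gives you is a perturbed \emph{linear} homotopy on $S(W[1])$, which is not a coderivation of any kind. A bare chain homotopy between two dg coalgebra morphisms does not by itself yield a homotopy in the sense of \cite[Definition~6.3]{MR4693987}. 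Appealing to Corollary~6.14 at that point is circular.

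Step~1 does not close the gap either. The recursion \eqref{eq:LooTransfer_Inj}--\eqref{eq:LooTransfer_Htp} needs a contraction, not the non-idempotent family $\Pi_t$, which is not even a chain map $W\to\KK[t,dt]\otimes W$. The honest version transfers along the contraction of $\KK[t,dt]\otimes W$ onto $W$ with injection $w\mapsto(1-t)w+t\,\tau\sigma(w)-dt\,h(w)$ and strict projection $\ev_0$, as in Lemma~\ref{lem:Pyramides}. This produces an $L_\infty$ morphism $J$ with $\ev_0\circ J=\id_W$. However, $\ev_1\circ J$ is merely \emph{some} morphism with linear part $\tau\sigma$, which only recovers Lemma~\ref{lem:LiftedLooMor-htp}. $L_\infty$ morphisms with equal linear parts need not be homotopic, so identifying $\ev_1\circ J$ with $\Phi\circ\Psi$ is exactly the missing point.

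A route that does work starts from $F_{\tilde\Psi}\circ F_{\tilde\Phi}=\id_{S(V[1])}$. This gives $\Psi\circ(\Phi\circ\Psi)=\Psi=\Psi\circ\id_W$ on the nose, so it suffices that post-composition with the quasi-isomorphism $\Psi$ is injective on homotopy classes. This holds for the following reasons:
\begin{itemize}
\item Post-composition with $\Psi$ is induced by an $L_\infty$ morphism of convolution algebras $\Hom(S^{\geq1}(W[1]),W)\to\Hom(S^{\geq1}(W[1]),V)$.
\item This morphism respects the complete arity filtrations, and on the associated graded it is $\sigma\circ(\cdot)$, a quasi-isomorphism.
\item The Dolgushev--Rogers form of the Goldman--Millson theorem then yields a bijection on homotopy classes of Maurer--Cartan elements, that is, of $L_\infty$ morphisms.
\end{itemize}
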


Since we could not find a proof of the following lemma in the literature,
we provide one here for completeness.

\begin{lemma}\label{lem:LiftedLooMor-htp}
Let $\frakg$ be an $L_\infty$ algebra with structure maps $\lambda=(\lambda_i)_{i=1}^\infty$,
and let $f$ be a cochain endomorphism of the underlying cochain complex $(\frakg,\lambda_1)$.
If $f$ is cochain homotopic to the identity endomorphism $\id_{\frakg}$,
then there exists an $L_\infty$ morphism $F:\frakg \inftyto \frakg$ such that $F\sim\id_{\frakg}$ and $F_1=f$.
\end{lemma}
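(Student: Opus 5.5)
Write $f=\id_{\frakg}+\lambda_1 s+s\lambda_1$ with $s$ of degree $-1$. The plan is to obtain $F$ as the endpoint of a gauge flow, that is, by exponentiating an infinitesimal homotopy. I work in the $L_\infty[1]$ picture on the dg coalgebra $\big(S(\frakg[1]),Q_{\tilde\lambda}\big)$. Put $\tilde s=\nshift\circ s\circ\pshift$, a degree $-1$ map $\frakg[1]\to\frakg[1]$, and let $S$ be the coderivation of $S(\frakg[1])$ whose only nonzero Taylor coefficient is $S_1=\tilde s$.

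Then $D:=[Q_{\tilde\lambda},S]$ is a degree-zero coderivation. Its linear Taylor coefficient is $\tilde\lambda_1\tilde s+\tilde s\tilde\lambda_1$, which corresponds to $f-\id$. Its higher coefficients $D_n$ are the maps $\tilde\lambda_n\circ(\tilde s\odot\id\odot\cdots\odot\id)$ symmetrized, together with $\tilde s\circ\tilde\lambda_n$. Because $D$ graded-commutes with $Q_{\tilde\lambda}$, the formal flow $e^{tD}$ is a family of dg coalgebra automorphisms, and hence an $L_\infty$ automorphism of $\frakg$. It is homotopic to $\id$ through the path generated by the exact coderivation $[Q,S]$, which is exactly the homotopy notion of \cite[Definition~5.17, 6.3]{MR4693987}. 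Its linear part, however, is $e^{\lambda_1 s+s\lambda_1}$, not $f$.

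To fix the linear part, I would instead use a time-dependent gauge. Set $f_t=\id+t(\lambda_1s+s\lambda_1)$ for $t\in[0,1]$. Working in the convolution $L_\infty$ algebra $\Hom(S^{\geq1}(\frakg[1]),\frakg)$ of \cite{MR4693987}, I look for a path of Maurer--Cartan elements $F(t)$ with $F(0)=\id$ and $F(t)_1=f_t$, solving
\[ \dot F(t)=d_{F(t)}\,\eta(t) \]
for a suitable degree-$0$ element $\eta(t)$, where $d_{F(t)}$ denotes the differential twisted by $F(t)$. Any solution of this ODE is homotopic to $\id$ by definition. I would choose $\eta(t)$ to have only an arity-one component $\eta(t)_1$, solving $\dot f_t=\lambda_1\eta_1+\eta_1\lambda_1$, and otherwise the minimal correction. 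For example, take $\eta(t)_1=s\circ f_t^{-1}$ if $f_t$ is invertible as a formal power series in $t$; otherwise reparametrize via $s_t$.

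The hard part is making sure the arity-one component of the ODE is not disturbed by the higher arities. In the twisted differential $d_{F}\eta$, the arity-one component only involves $F_1$ and $\eta_1$, because of the arity filtration. So the linear equation decouples, and the higher components $F(t)_n$ are determined recursively by integrating polynomial expressions in $t$; these converge, being filtered by arity. Concretely, I would take $\eta_1$ with $\lambda_1\eta_1+\eta_1\lambda_1$ matching $\dot f_t$. Since $\dot f_t=\lambda_1s+s\lambda_1$ does not depend on $t$, the arity-one ODE for $F_1$ reads
\[ \dot F_1=\lambda_1\eta_1 F_1+\cdots, \]
where the twisting enters through composition with $F_1$. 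This is where care is needed. I would first try composing on the right, $\eta_1=s\circ F_1^{-1}$ applied in convolution form, so that $\dot F_1=\lambda_1s+s\lambda_1$ exactly.

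If this bookkeeping fails, the fallback is the cylinder-object approach. Build the $L_\infty$ algebra $\frakg\otimes\Omega(\Delta^1)$ and define a morphism $\frakg\to\frakg\otimes\Omega(\Delta^1)$ whose linear part is $x\mapsto (\id+t(f-\id))x+dt\,s(x)$. Extend it to higher arities by obstruction theory, using the contraction $\Omega(\Delta^1)\simeq\KK$ to kill obstructions arity by arity. Finally, evaluate at $t=1$ to get $F$, which is then homotopic to its value $\id$ at $t=0$.
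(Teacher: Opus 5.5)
Your primary route is the paper's route: a Maurer--Cartan path in the convolution algebra, driven by an arity-one gauge parameter. However, you mis-compute the one step you flag as delicate, and the fix you propose would break the argument.

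For $k\geq 2$ the convolution brackets $\tilde\nu_k$ on $\Hom\big(S^{\geq 1}(\frakg[1]),\frakg[1]\big)$ factor through the reduced coproduct of $S^{\geq 1}(\frakg[1])$, which vanishes on $S^1(\frakg[1])$. Hence the arity-one part of $\sum_{j\geq 0}\frac{1}{j!}\tilde\nu_{1+j}(\eta,F,\dots,F)$ is just $\tilde\lambda_1\eta_1+\eta_1\tilde\lambda_1$. $F_1$ does not enter at all, so your formula $\dot F_1=\lambda_1\eta_1F_1+\cdots$ is wrong. It belongs to the flow $\dot F=D\circ F$ of a coderivation from your first paragraph, not to the convolution gauge flow.

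Your first instinct, an arity-one $\eta$ with $\lambda_1\eta_1+\eta_1\lambda_1=\dot f_t$, i.e.\ $\eta_1=s$, was already correct. Take the constant choice $\eta=\tilde s\circ\tilde q$, with $\tilde q\colon S^{\geq 1}(\frakg[1])\to\frakg[1]$ the projection. This gives $F(t)_1=\id+t(\lambda_1 s+s\lambda_1)$, hence $F(1)_1=f$, and the higher arities are polynomial in $t$ by induction. This is precisely the paper's proof.

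Your choice $\eta_1=s\circ F_1^{-1}$ instead gives $\dot F_1=(f-\id)F_1^{-1}$, whose value at $t=1$ is not $f$ in general. It also needs an invertibility that can fail. On an acyclic $\frakg$, $f=0$ is homotopic to $\id$, but $f_t=(1-t)\id$ is singular at $t=1$, and its formal inverse $\sum_k t^k\,\id$ diverges there.

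Your cylinder fallback, by contrast, is a sound and genuinely different route; it is the argument the paper itself uses for the module analogue, Lemma~\ref{lem:Pyramides}. Two points are needed to make it a proof.

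First, the obstructions vanish only after you normalize $\ev_0\circ\Phi=\id$, i.e.\ $\ev_0\circ\Phi_n=0$ for $n\geq 2$. Then each obstruction is a cocycle in $\Hom\big(S^n(\frakg[1]),\frakg[1]\otimes\ker\ev_0\big)$, which is acyclic because $\ker\ev_0=t\KK[t]\oplus\KK[t]\,dt$ is. Without this normalization, $\ev_0\circ\Phi_{<n}$ is an arbitrary partial morphism whose obstruction need not vanish. More economically, $j(x)=(1-t)x+tf(x)+dt\,s(x)$ and $\ev_0$, together with the evident homotopy as in Step~1 of the proof of Lemma~\ref{lem:Pyramides}, form a contraction with $\ev_0$ a strict $L_\infty$ morphism. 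Proposition~\ref{prop:LooTransfer-Inj-Surj}~\ref{prop:LooTransfer-Surj} then produces $J\colon\frakg\inftyto\KK[t,dt]\otimes\frakg$ with $J_1=j$ and $\ev_0\circ J=\id$ in one stroke, and you set $F=\ev_1\circ J$.

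Second, you must connect the cylinder to the relation $\sim$ in the statement. Write $J=\pi(t)+dt\,\varpi(t)$. The $L_\infty$-morphism equations for $J$ then say exactly that $\pi(t)$ is Maurer--Cartan and that $\pi'(t)=\sum_j\frac{1}{j!}\tilde\nu_{1+j}\big(\varpi(t),\pi(t),\dots,\pi(t)\big)$, up to sign conventions, with $\pi(0)=\tilde q$ and $\pi(1)=\tilde F$.

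Comparing the two: the paper's route is a two-line computation once the arity-one decoupling is noticed. The cylinder route treats the algebra and module cases uniformly, at the cost of this identification of homotopy notions.
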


\begin{proof}
First, observe that the identity $L_\infty$ morphism
$\id_{\frakg}:\frakg\inftyto\frakg$ is characterized by
the sequence $(\tilde{q}_k)_{k=1}^\infty$ of degree-zero maps
$\tilde{q}_k:S^k(\frakg[1])\to\frakg[1]$ in which
$\tilde{q}_1=\id_{\frakg[1]}$ and $\tilde{q}_k=0$ for all $k\geq 2$.
Note that $\tilde{q}:=\sum_{k=1}^\infty \tilde{q}_k$ is the canonical projection
of $S^{\geq 1}(\frakg[1])$ onto $\frakg[1]$.

According to \cite[Definitions~5.17 and~6.3]{MR4693987},
an $L_\infty$ morphism $F:\frakg\inftyto\frakg$
(with associated sequence $(\tilde{F}_k)_{k=1}^\infty$ of degree-zero maps
$\tilde{F}_k:S^k(\frakg[1])\to\frakg[1]$) is homotopic to
the identity $L_\infty$ morphism $\id_{\frakg}$
if there exists a solution
\[ \begin{cases} \pi(t)=\sum_{i=0}^\infty \pi_i \cdot t^i
& \text{with}\ \pi_i\in\Hom^1\big(S^{\geq 1}(\frakg[1]),\frakg[1]\big) \\
\varpi(t)=\sum_{i=0}^\infty \varpi_i \cdot t^i
& \text{with}\ \varpi_i\in\Hom^0\big(S^{\geq 1}(\frakg[1]),\frakg[1]\big)
\end{cases} \]
to the boundary value problem
\[ \pi'(t) = \sum_{j=0}^\infty \frac{1}{j!} \tilde{\nu}_{1+j}\big(\varpi(t),\pi(t),\dots,\pi(t)\big),
\qquad \pi(0)=\sum_{k=1}^\infty\tilde{q}_k, \qquad \pi(1)=\sum_{k=1}^\infty \tilde{F}_k .\]
Here, the sequence $(\tilde{\nu}_j)_{j=1}^\infty$ of degree-zero maps
\[ \tilde{\nu}_j:S^j\big(\Hom(S^{\geq 1}(\frakg[1]),\frakg[1])\big) \to \Hom(S^{\geq 1}(\frakg[1]),\frakg[1]) \]
is the convolution $L_\infty$ algebra structure on $\Hom(S^{\geq 1}(\frakg[1]),\frakg[1])$.

Let $\kappa:\frakg^\bullet \to \frakg^{\bullet-1}$ be a cochain homotopy between $\id_{\frakg}$ and $f$,
so that $f - \id_{\frakg} = \lambda_1 \kappa + \kappa \lambda_1$.
Suspending the coboundary operator $\lambda_1$, the homotopy $\kappa$,
and the chain map $f$, we obtain the endomorphisms $\tilde{\lambda}_1=\nshift\circ\lambda_1\circ\pshift$, $\tilde{\kappa}=\nshift\circ\kappa\circ\pshift$, and $\tilde{f}=\nshift\circ f\circ\pshift$ of $\frakg[1]$.

Set $\varpi_0=\tilde{\kappa}\circ\tilde{q}$ and $\varpi_i=0$ for all $i\geq 1$.
Then define $\pi(t)$ as the solution of the integral equation
\begin{equation}\label{integral_equation}
\pi(t) = \pi(0) + \int_0^t \sum_{j=0}^\infty \frac{1}{j!}
\tilde{\nu}_{1+j}\big(\varpi_0,\pi(s),\dots,\pi(s)\big) \ ds
\end{equation}
with initial condition $\pi(0)=\pi_0=\tilde{q}$.
The coefficients $\pi_i$ of the power series $\pi(t)$ can be computed recursively.

According to \cite[Proposition~5.19]{MR4693987},
evaluation at any number $t\in[0,1]$ of the power series $\pi(t)$ defined in this way
yields an $L_\infty$ endomorphism of $\frakg$.
In particular, evaluation of $\pi(t)$ at $t=1$ yields an $L_\infty$ morphism
$F:\frakg\inftyto\frakg$ whose $k$-th Taylor coefficient $\tilde{F}_k=\pi(1)\circ\tilde{\jmath}_k$
is the composition of the canonical inclusion $\tilde{\jmath}_k:S^k(\frakg[1])\big)\into S^{\geq 1}(\frakg[1])$
and $\pi(1):S^{\geq 1}(\frakg[1])\to\frakg[1]$.
By its very construction, this $L_\infty$ morphism $F$ satisfies $F\sim\id_{\frakg}$.

It follows from Equation~\eqref{integral_equation} that
\[ \pi(1)=\pi_0+\nu_1(\varpi_0)+\int_0^1 \sum_{j=1}^\infty\frac{1}{j!}
\nu_{1+j}\big(\varpi_0,\pi(s),\cdots,\pi(s)\big) \ ds .\]
According to \cite[Equation~(6.1)]{MR4693987}, we have
\[ \nu_1(\varpi_0)=\nu_1(\tilde{\kappa}\circ\tilde{q}\big)
=\tilde{\lambda}_1\circ\tilde{\kappa}\circ\tilde{q}
-(-1)^1\tilde{\kappa}\circ\tilde{q}\circ Q_{\tilde{\lambda}} .\]
Furthermore, it is straightforward to check that
\[ \nu_{1+j}(\varpi_0,\pi_{i_1},\cdots,\pi_{i_j})\in\Hom\big(S^{\geq 1}(\frakg[1]),\frakg[1]\big) \]
vanishes on $S^1(\frakg[1])$ for all $j\geq 1$ and all $i_1,\dots,i_j$.
Therefore, we obtain
\begin{multline*}
\tilde{F}_1=\pi(1)\circ\tilde{\jmath}_1=\big(\pi_0+\nu_1(\varpi_0)\big)\circ\tilde{\jmath}_1
=\tilde{q}\circ\tilde{\jmath}_1
+\tilde{\lambda}_1\circ\tilde{\kappa}\circ\tilde{q}\circ\tilde{\jmath}_1
+\tilde{\kappa}\circ\tilde{q}\circ Q_{\tilde{\lambda}}\circ\tilde{\jmath}_1
\\
=\id_{\frakg[1]}+\tilde{\lambda}_1\circ\tilde{\kappa}+\tilde{\kappa}\circ\tilde{\lambda}_1=\tilde{f}
.\end{multline*}
The proof is complete.
\end{proof}

The following lemma is also used in Section~\ref{sec:FormalityDGmfd}.

\begin{lemma}\label{lem:Pyramides}
Let $M$ be an $L_\infty$ module (with $L_\infty$ module structure maps $\alpha=(\alpha_i)_{i=0}^\infty$) over an $L_\infty$ algebra $\frakg$, and let $F: \frakg \inftyto \frakg$ be an $L_\infty$ morphism that is homotopic to the identity morphism $\id_{\frakg}$. If a cochain map $\phi:(M, \alpha_0) \to (M, \alpha_0)$ is homotopic to the identity map $\id_M$, then there exists an $L_\infty$ module quasi-isomorphism $\Phi: F^\ast M \inftyto M$ whose zeroth Taylor coefficient is $\phi$.
\end{lemma}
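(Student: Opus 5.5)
The plan is to reduce the statement to two earlier results. Lemma~\ref{lem:PullbackViaHtpMor} handles the change of base morphism from $F$ to $\id_{\frakg}$, and Lemma~\ref{lem:LiftedLooMor-htp} handles the change of zeroth coefficient from $\id_M$ to $\phi$. For the second, I would apply Lemma~\ref{lem:LiftedLooMor-htp} to the $L_\infty$ algebra $\frakg\oplus M$ of Remark~\ref{rmk:LooModAsLooAlg}, so that a module morphism becomes an algebra morphism.

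\emph{Step 1.} Since $F\sim\id_{\frakg}$, Lemma~\ref{lem:PullbackViaHtpMor} gives an isomorphism of $L_\infty$ modules $\Psi: F^\ast M\inftyto \id_{\frakg}^\ast M = M$ with $\Psi_0=\id_M$. It then suffices to build an $L_\infty$ module endomorphism $\Phi':M\inftyto M$ over $\frakg$ with $\Phi'_0=\phi$. Indeed, we can then set $\Phi:=\Phi'\circ\Psi$, whose zeroth coefficient is $\Phi'_0\circ\Psi_0=\phi$. This $\Phi$ is a quasi-isomorphism, because $\phi$ is homotopic to $\id_M$ and hence a quasi-isomorphism of $(M,\alpha_0)$.

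\emph{Step 2.} Form the $L_\infty$ algebra $(\frakg\oplus M,\lambda^\alpha)$ as in Remark~\ref{rmk:LooModAsLooAlg}, whose unary bracket is $\lambda_1\oplus\alpha_0$. Let $\kappa$ be a homotopy with $\phi-\id_M=\alpha_0\kappa+\kappa\alpha_0$. Then $f:=\id_{\frakg}\oplus\phi$ is a cochain endomorphism of $(\frakg\oplus M,\lambda_1\oplus\alpha_0)$, and it is homotopic to the identity via $0_{\frakg}\oplus\kappa$. Lemma~\ref{lem:LiftedLooMor-htp} then produces an $L_\infty$ morphism $G$ of $\frakg\oplus M$ with $G_1=f$. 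The morphism $G$ is $\pi(1)$, where $\pi(t)$ solves the integral equation \eqref{integral_equation} with $\varpi_0=\tilde\kappa\circ\tilde q$ and $\tilde\kappa$ the suspension of $0_{\frakg}\oplus\kappa$.

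\emph{Step 3 (main obstacle).} I must check that $G$ satisfies conditions \ref{quarta}, \ref{quinta} and \ref{terza} of Remark~\ref{rmk:LooModMorphAsLooAlgMorph}. Once this holds, $\Phi'_k(x_1,\dots,x_k,v):=G_{k+1}(x_1,\dots,x_k,v)$ is the required module morphism with $\Phi'_0=\phi$.

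For \ref{terza}, I would argue by induction on the power of $t$. The element $\varpi_0$ has weight $0$, and $\pi_0=\tilde q$ has weight $0$. The convolution brackets $\tilde\nu_j$ are built from compositions with $Q_{\tilde\lambda^\alpha}$, from the structure maps $\tilde\lambda^\alpha_j$, and from coalgebra comultiplications, all of which preserve weight. Hence every coefficient $\pi_i$ has weight $0$.

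For \ref{quarta} and \ref{quinta}, I would show that every coefficient $\pi_i$ with $i\ge1$ vanishes on $S(\frakg[1])$, so that $\pi(1)$ agrees with $\tilde q$ on pure $\frakg$-inputs. The key facts are these:
\begin{itemize}
\item $Q_{\tilde\lambda^\alpha}$ preserves $S(\frakg[1])$, by condition \ref{prima};
\item the comultiplication of a pure $\frakg$-input has only pure $\frakg$ tensor factors;
\item $\varpi_0$ kills $\frakg[1]$.
\end{itemize}
Every term of $\tilde\nu_{1+j}(\varpi_0,\pi,\dots,\pi)$ contains $\varpi_0$ applied either to a factor of a comultiplied input or to an output of $Q_{\tilde\lambda^\alpha}$. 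On pure $\frakg$-inputs these are pure $\frakg$ elements, so each such term vanishes. Consequently $\pi'(t)$ vanishes on $S(\frakg[1])$, and $\pi(t)|_{S(\frakg[1])}=\tilde q$ for all $t$, which gives \ref{quarta} and \ref{quinta}.

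The only delicate point is the explicit form of the convolution brackets from \cite[Proposition~6.1]{MR4693987}. If that description does not support the argument above, the fallback is a direct recursive construction of $\Phi'$: set $\Phi'_0=\phi$, and solve for the higher coefficients order by order with correction terms of the form $\kappa\circ(\cdots)$, checking the module morphism relations \eqref{eq:LooModMor_def} inductively.
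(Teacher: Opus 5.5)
Your proposal is correct, but its core step takes a different route from the paper's. The reduction from $F$ to $\id_{\frakg}$ via Lemma~\ref{lem:PullbackViaHtpMor} is exactly the paper's Step~2; you simply do it first.

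For the case $F=\id_{\frakg}$, the paper does not use Lemma~\ref{lem:LiftedLooMor-htp}. Instead it works with the path object $\KK[t,dt]\otimes M$ and the contraction whose strict surjection is $\ev_0$, whose injection is $j(v)=(1-t)v+t\phi(v)+dt\,\kappa(v)$, and which has an explicit homotopy $K$. It then applies module homotopy transfer (Theorem~\ref{thm:LooModTransfer} with Proposition~\ref{prop:LooModTransfer-Inj-Surj}) to obtain an $L_\infty$ module morphism $J$ with $J_0=j$, and sets $\Phi=\ev_1\circ J$, so that $\Phi_0=\ev_1\circ j=\phi$. The advantage of that route is that conditions \ref{quarta}, \ref{quinta} and \ref{terza} were already verified in the proof of Theorem~\ref{thm:LooModTransfer}. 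Nothing about the convolution $L_\infty$ algebra needs to be reopened.

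You instead apply Lemma~\ref{lem:LiftedLooMor-htp} to $\frakg\oplus M$ with $f=\id_{\frakg}\oplus\phi$. The statement of that lemma says nothing about weights or about the restriction to $\frakg$, so, as you recognize, you must open its proof. Your check does go through with the standard convolution brackets $\nu_1(g)=\tilde\lambda^\alpha_1\circ g\mp g\circ Q_{\tilde\lambda^\alpha}$ and $\nu_k(g_1,\dots,g_k)=\tilde\lambda^\alpha_k\circ(g_1\odot\cdots\odot g_k)\circ\Delta^{(k)}$. These are consistent with the formulas used in the proof of Lemma~\ref{lem:LiftedLooMor-htp}. Every ingredient there has weight $0$. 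Moreover, on inputs from $S(\frakg[1])$, every term of $\pi'(t)$ evaluates $\varpi_0=\tilde\kappa\circ\tilde q$ on a pure-$\frakg$ element, so $\pi(t)$ restricts to $\tilde q$ there; your fallback is therefore unnecessary.

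What your route buys is that it avoids building the cylinder and running the transfer. It also shows directly that the lifted endomorphism of $\frakg\oplus M$ is homotopic to the identity. The cost is that it relies on the internal construction of a lemma rather than on its statement.
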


\begin{proof}
The lemma follows by an argument similar to the proof of \cite[Proposition~6.8]{MR3754617}; we include the proof here for completeness.

\textsc{Step~1.} We first consider the case where $F=\id_{\frakg}$. Let $\kappa : M^\bullet \to M^{\bullet -1}$ be a chain homotopy between $\id_M$ and $\phi$, so that $\phi - \id_M = \alpha_0 \kappa + \kappa \alpha_0$. Consider the $L_\infty$ module $\KK[t,dt] \otimes M$ over the $L_\infty$ algebra $\frakg$ obtained by tensoring the cdga $\KK[t,dt]$ (where the scalars have degree $0$, the variable $t$ has degree $0$, and its differential image $dt$ has degree $1$) with the $L_\infty$ module $M$. It is straightforward to see that the evaluation maps $\ev_0 : \KK[t,dt] \otimes M \to M$ and $\ev_1 : \KK[t,dt] \otimes M \to M$ defined by
\[ \ev_0\Big(\sum_{p\geq 0} t^p v_p + dt \sum_{q\geq 0} t^q w_q \Big) = v_0 \quad\text{and}\quad \ev_1\Big(\sum_{p\geq 0} t^p v_p + dt \sum_{q\geq 0} t^q w_q \Big) = \sum_{p\geq 0} v_p \]
are strict $L_\infty$ module morphisms. The chain homotopy $\kappa : M^\bullet \to M^{\bullet -1}$ induces a contraction
\begin{equation}\label{eq:lem:Pyramides-contraction}
\begin{tikzcd}
(M,\alpha_0) \arrow[r, "j", shift left, hook] & \big(\KK[t,dt] \otimes M, d\otimes\id_M+\id_{\KK[t,dt]}\otimes \alpha_0\big) \arrow[l, "\ev_0", shift left, two heads] \arrow[loop, "K", out=3, in=357, looseness=5, distance=2em]
\end{tikzcd}
\end{equation}
with surjection $\ev_0$, injection $j(v) = (1-t) \cdot v + t \cdot \phi(v) + dt \cdot \kappa(v)$, and homotopy
\[ K\Big(\sum_{p\geq 0} t^p v_p + dt \sum_{q\geq 0} t^q w_q \Big) = t \cdot \kappa(v_0) - \sum_{q\geq 0} \frac{1}{q+1} t^{q+1} w_q. \]
According to Proposition~\ref{prop:LooModTransfer-Inj-Surj}~\ref{prop:LooModTransfer-Surj}, there exists an $L_\infty$ module morphism $J : M \inftyto \KK[t,dt] \otimes M$ whose zeroth Taylor coefficient is $j$. The desired $L_\infty$ module morphism $\Phi : M \inftyto M$ is then the composition $\Phi=\ev_1\circ J$. Its zeroth Taylor coefficient is $\ev_1\circ j=\phi$.

\textsc{Step~2.} Assume now that $F:\frakg \inftyto \frakg$ is any $L_\infty$ morphism homotopic to $\id_{\frakg}$. According to Lemma~\ref{lem:PullbackViaHtpMor}, there exists an isomorphism $\Psi: F^\ast M \inftyto M$ of $L_\infty$ modules such that $\Psi_0 = \id_{M}$. Let $\Phi: M \inftyto M$ be the $L_\infty$ module morphism constructed in Step~1. The composition $\Phi \circ \Psi: F^\ast M \inftyto M$ then yields an $L_\infty$ module morphism satisfying $(\Phi \circ \Psi)_0 = \phi$. This completes the proof.
\end{proof}

\section{Formality theorems for trivialized \texorpdfstring{$\ZZ$}{Z}-graded manifolds}
\label{BlackBoxes}

In this section we recall formality theorems for trivialized $\ZZ$-graded manifolds
needed for the proofs of our main theorems.
The material is well-known; see~\cite{MR2062626,MR2004726,MR3522653,MR2304327} for details.

\subsection{Formality theorems for trivialized \texorpdfstring{$\ZZ$}{Z}-graded manifolds}

By a trivialized $\ZZ$-graded manifold we mean a graded manifold $\cV$,
with support $\RR^d$ and algebra of functions
\[ C^\infty(\cV) = C^\infty(\RR^d) \cotimes \widehat{S} (V\dual) ,\]
arising from the graded vector bundle $\RR^d \times V\to \RR^d$.
Here $V$ is a $\ZZ$-graded vector space of finite dimension $r$.
We briefly recall extensions of the well-known formality theorems of Kontsevich and Shoikhet
to the setting of such trivialized $\ZZ$-graded manifolds.
We will often use the shorthand $\cA$ for the algebra of functions $C^\infty(\cV)$.
Then $\Gamma(T_{\cV})$ is the space of derivations of the algebra $\cA$
and $\DD(\cV)$ is the space of differential operators on the algebra $\cA$.
Their respective duals are $\Gamma(T^\vee_{\cV})=\Hom_{\cA}\big(\Der(\cA),\cA\big)$
and $\JJ(\cV)=\Hom_{\cA}\big(\DD(\cV),\cA\big)$.

\subsubsection{Kontsevich's formality morphism}

Essentially, we will follow the presentation of Cattaneo--Felder \cite{MR2304327}.
The spaces of polyvector fields and polydifferential operators on $\cV$ are the direct sums
$\Tpoly{}(\cV)=\bigoplus_{k\geq 0} \Tpoly{k}(\cV)$ and $\Dpoly{}(\cV)=\bigoplus_{k\geq 0} \Dpoly{k}(\cV)$, where $\Tpoly{k}(\cV)$ and $\Dpoly{k}(\cV)$ are defined as in Section~\ref{corniche}.

Given $k$ differential operators $D_1,D_2,\cdots,D_k\in\DD(\cV)$,
we think of the $k$-differential operator
\[ \pshift D_1\otimes\pshift D_2\otimes\cdots\otimes\pshift D_k\in \big(\DD(\cV)[-1]\big)^{\otimes_{\cA} k} \]
as a Hochschild $k$-cochain of the algebra $\cA$.
Indeed, there is a canonical inclusion
\[ \Psi : \Dpoly{k}(\cV)= \big(\DD(\cV)[-1]\big)^{\otimes_{\cA} k} \to
\Hom(\underset{\text{$k$ factors}}{\underbrace{\cA[1]\otimes\cdots\otimes\cA[1]}},\cA) :\]
for all homogeneous $f_1,\cdots,f_k\in\cA$, we have
\begin{multline*}
\Psi\big( \pshift D_1\otimes\pshift D_2\otimes\cdots\otimes D_k \big)
(\nshift f_1\otimes\nshift f_2\otimes\cdots\nshift f_k)
\\
= (-1)^{\sum_{i=2}^k\sum_{j=1}^{i-1}\degree{\pshift D_i}\degree{\nshift f_j}}
\cdot (-1)^{\sum_{i=1}^k\degree{D_i}} \cdot
D_1(f_1)\cdot D_2(f_2)\cdot\cdots\cdot D_k(f_k)
.\end{multline*}

We choose a homogeneous basis for $\RR^d\oplus V$
and denote the induced system of linear coordinate functions on $\cV$ by $(x_1,x_2,\cdots,x_{d+r})$.
Consequently, the elements of the algebra $\cA=C^\infty(\cV)$ are identified
with formal power series in the variables $x_{d+1},\cdots,x_{d+r}$
whose coefficients are smooth functions of the variables $x_1,\cdots,x_d$.
It will be convenient to write $k$-vector fields on $\cV$,
i.e. elements of $S^k_{\cA}\big(\Gamma(T_{\cV})[-1]\big)$
of the form \[ \sum_{i_1,\cdots,i_k}\gamma^{i_1,\cdots,i_k}(x_1,\cdots,x_{d+r})\cdot
\pshift\big(\tfrac{\partial}{\partial x_{a_1}}\big)\odot\cdots
\odot\pshift\big(\tfrac{\partial}{\partial x_{a_k}}\big) \]
as polynomials \[ \sum_{i_1,\cdots,i_k}\gamma^{i_1,\cdots,i_k}(x_1,\cdots,x_{d+r})\cdot p_{i_1}\cdots p_{i_k} \]
of order $k$ in auxiliary variables $p_1,\cdots,p_{d+r}$ with coefficients $\gamma^{i_1,\cdots,i_k}$ in $\cA$.
We declare that the auxiliary variable $p_i$ replacing $\pshift\big(\frac{\partial}{\partial x_i}\big)$
has degree $\degree{p_i}=1-\degree{x_i}$, where $\degree{x_i}$ denotes the degree of $x_i$ in $\cA$.

While the multiplication $\odot_{\cA}$ turns $\Tpoly{}(\cV)$ into a graded associative algebra,
the Schouten bracket (and the zero differential) turn $\frakg=\big(\Tpoly{}(\cV)\big)[1]$ into a dgla.
Likewise, while the cup product $\otimes_{\cA}$ turns
$\Dpoly{}(\cV)$ into a graded associative algebra,
the Gerstenhaber bracket and the Hochschild coboundary turn $\frakh=\big(\Dpoly{}(\cV)\big)[1]$ into a dgla.

\begin{theorem}[Kontsevich \cite{MR2062626}]\label{KontsevichFormality}
There exists a $\GL(\RR^d)\times\GL(V)$-equivariant quasi-isomorphism of $L_\infty$ algebras
\begin{equation}\label{eq:KontsevichFormality}
\kontsevich : \Tpoly{}(\cV)[1] \inftyto \Dpoly{}(\cV)[1]
\end{equation}
from the dgla $\frakg=\big(\Tpoly{}(\cV)[1],0,\schouten{\argument}{\argument}\big)$ of polyvector fields on $\cV$
to the dgla $\frakh=\big(\Dpoly{}(\cV)[1],\hochschild,\gerstenhaber{\argument}{\argument}\big)$
of polydifferential operators on $\cV$,
whose sequence $(\kontsevich_n)_{n=1}^\infty$ of Taylor coefficient maps
\[ \kontsevich_n: \overbrace{\Tpoly{}(\cV)[1]\wedge\cdots\wedge\Tpoly{}(\cV)[1]}^{\text{$n$ factors}}
\to \Dpoly{}(\cV)[1] \]
satisfy the properties listed below:
\begin{enumerate}
\item \label{gingko} The first Taylor coefficient $\kontsevich_1$ is (the suspension of) the Hochschild--Kostant--Rosenberg map
\begin{equation}\label{eq:HKR_vecsp}
\Tpoly{}(\cV) \ni \pshift X_1\odot\cdots\odot\pshift X_n \longmapsto
\frac{1}{n!} \sum_{\sigma\in S_n} \pm\ \pshift X_{\sigma(1)}\otimes\cdots\otimes\pshift X_{\sigma(n)}
\in \Dpoly{}(\cV)
.\end{equation}
\item \label{biloba} For $n\geq 2$, given any collection of vector fields $X_1,X_2,\cdots,X_n$ on $\cV$,
we have \[ \kontsevich_n(X_1,X_2,\cdots,X_n)=0 .\]
\item \label{LinVF-kontsevich}
For $n\geq 2$, given any \emph{linear}\footnote{Written as elements of $\cA[p_1,\cdots,p_{d+r}]$, vector fields are linear w.r.t. the auxiliary variables $p_1,\cdots,p_{d+r}$. A vector field is said to be \emph{linear} if it is linear w.r.t. the variables $x_1,\cdots,x_{d+r}$ as well.}
vector field $X$ on $\cV$
and any collection of polyvector fields $\gamma_2,\gamma_3,\cdots,\gamma_n$ on $\cV$,
we have \[ \kontsevich_n(X,\gamma_2,\gamma_3,\cdots,\gamma_n)=0 .\]
\item \label{iris} Given any finite collection of polyvector fields $\gamma_1,\gamma_2,\cdots,\gamma_n$ on $\cV$,
if $\kontsevich_n(\gamma_1,\gamma_2,\cdots,\gamma_n)$ is a $k$-differential operator on $\cV$,
then $\kontsevich_{n+1}(X,\gamma_1,\gamma_2,\cdots,\gamma_n)$ is a $(k-1)$-differential operator on $\cV$
for every vector field $X$ on $\cV$.
\end{enumerate}
\end{theorem}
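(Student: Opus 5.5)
The plan is to build $\kontsevich$ by Kontsevich's graph-sum construction, transplanted to the superfield setting of Cattaneo--Felder~\cite{MR2304327}, and then read off properties \ref{gingko}--\ref{iris} from elementary facts about the graphs and their weights.

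\textbf{Construction.} Write polyvector fields as elements of $\cA[p_1,\cdots,p_{d+r}]$ with $\degree{p_i}=1-\degree{x_i}$. For an admissible graph $\Gamma$ with $n$ aerial vertices, $m$ ground vertices and $2n+m-2$ edges, define the polydifferential operator $B_\Gamma(\gamma_1,\cdots,\gamma_n)$ as follows. Each edge $e$ acts by the contraction $\sum_i \frac{\partial}{\partial p_i}\otimes\frac{\partial}{\partial x_i}$ between its source and its target. We then multiply and set all $p$'s to zero. Signs are fixed by the Koszul rule, using the total degrees of the $x_i$ and $p_i$.

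Set $\kontsevich_n=\sum_{m}\sum_\Gamma w_\Gamma B_\Gamma$, where $w_\Gamma$ are Kontsevich's universal weights $\int_{\bar C_{n,m}}\bigwedge_e d\phi_e$. These weights depend only on $\Gamma$, not on $\cV$. Since the pairing $\partial_{p_i}\otimes\partial_{x_i}$ is invariant under linear changes of the homogeneous coordinates, $\GL(\RR^d)\times\GL(V)$-equivariance is automatic. Smooth dependence on $x_1,\cdots,x_d$ together with formal dependence on $x_{d+1},\cdots,x_{d+r}$ causes no trouble, because $B_\Gamma$ only involves finitely many derivatives.

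\textbf{$L_\infty$ relations.} These come from Stokes' theorem on the compactified configuration spaces $\bar C_{n,m}$, exactly as in \cite{MR2062626}. There are two kinds of codimension-one boundary faces.
\begin{itemize}
\item Collisions of $\ge 3$ aerial points contribute zero by Kontsevich's vanishing lemma, while collisions of two aerial points produce the Schouten bracket.
\item Collapses of a cluster onto the real line produce the Gerstenhaber bracket and the Hochschild differential $\hochschild$.
\end{itemize}
The only new point is to check that the Koszul signs of the graded variables match the signs of the graded Schouten and Gerstenhaber brackets. I expect this bookkeeping, rather than any analytic issue, to be the main obstacle. I would follow Cattaneo--Felder's treatment of the superfield/AKSZ sign conventions, since the orientation of the configuration spaces is unchanged. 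Once the $L_\infty$ relations hold, $\kontsevich$ is a quasi-isomorphism by \ref{gingko} and the HKR theorem for $\cV$.

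\textbf{The listed properties.}

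\ref{gingko} For $n=1$, the admissible graphs have one aerial vertex with $k$ edges to $k$ ground vertices. The weight of each is $1/k!$, which yields the symmetrized HKR map \eqref{eq:HKR_vecsp}.

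\ref{biloba} If every $\gamma_i$ is a vector field, each aerial vertex has exactly one outgoing edge. Hence $n=2n+m-2$, so $m=2-n\le 0$. For $n\ge 3$ no admissible graph exists. For $n=2$ we have $m=0$, and the graph must consist of two opposite edges between the two aerial vertices. Its integrand is $d\phi_{12}\wedge d\phi_{21}$, and this vanishes by symmetry of the angle form, as in Kontsevich's argument.

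\ref{LinVF-kontsevich} If $X$ is linear, its vertex $v$ has one outgoing edge and at most one incoming edge; otherwise $B_\Gamma$ vanishes because a second derivative of a linear function is zero.
\begin{itemize}
\item If $v$ has no incoming edge, integrate first over the two-dimensional position of $v$. The integrand restricted to that fiber is a $1$-form, so the fiber integral is zero.
\item If $v$ has exactly one incoming and one outgoing edge, use the standard lemma (Kontsevich, Shoikhet) that $\int_{z\in\mathbb H}d\phi(w_1,z)\wedge d\phi(z,w_2)=0$.
\end{itemize}

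\ref{iris} Adding a vector field vertex $X$ raises the required edge count by $2$, while $X$ itself supplies only one edge. Comparing $2(n+1)+m'-2$ with $(2n+m-2)+1$ forces $m'=m-1$, so the output is a $(k-1)$-differential operator.
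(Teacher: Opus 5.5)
Your proposal is correct and takes essentially the route the paper relies on. The paper does not re-prove this theorem: it recalls exactly this graph-sum construction in the graded setting, with edge operators $\sum_k \frac{\partial}{\partial p_k}\otimes\frac{\partial}{\partial x_k}$, evaluation at $p=0$ and Kontsevich weights, and cites Kontsevich and Cattaneo--Felder for the $L_\infty$ relations and properties (i)--(iv); your dimension counts for (ii) and (iv) and your fiber-integration arguments for (iii) are the standard ones. One small point in (ii): it is the integral of $d\phi_{12}\wedge d\phi_{21}$ over $C_{2,0}$ that vanishes, not the integrand, e.g.\ because swapping the two points preserves the orientation of $C_{2,0}$ but changes the sign of the form.
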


We can think of Kontsevich's formality $L_\infty$ quasi-isomorphism
from the dgla of polyvector fields $\frakg$ to the dgla of polydifferential operators $\frakh$ as
\begin{itemize}
\item either a sequence $(\kontsevich_n)_{n\in\NN}$ of maps
$\kontsevich_n:\Lambda^n(\frakg)\to\frakh$ of degree $1-n$,\\
i.e. $\kontsevich_n:\Lambda^n(\Tpoly{}(\cV)[1])\to\Dpoly{}(\cV)[1]$;
\item or a sequence $(\tilde{\kontsevich}_n)_{n\in\NN}$ of maps
$\tilde{\kontsevich}_n:S^n(\frakg[1])\to\frakh[1]$ of degree $0$,\\
i.e. $\tilde{\kontsevich}_n:S^n(\Tpoly{}(\cV)[2])\to\Dpoly{}(\cV)[2]$;
\item or a morphism (of degree $0$) of dg coalgebras
$F_{\tilde{\kontsevich}}:S(\frakg[1])\to S(\frakh[1])$,\\
i.e. $F_{\tilde{\kontsevich}}:S(\Tpoly{}(\cV)[2])\to S(\Dpoly{}(\cV)[2])$.
\end{itemize}

Adopting the second point of view, the $n$-th Taylor coefficient
of Kontsevich's formality morphism is a map of degree $0$
\[ S^n\Big(\big(S_{\cA}(\Der(\cA)[-1])\big)[2]\Big) \to \Big(\bigoplus_{m\geq 0}
\Hom\big(\underset{\text{$m$ factors}}{\underbrace{\cA[1]\otimes\cdots\otimes\cA[1]}},\cA\big)\Big)[2] \]
or, equivalently, a map of degree $0$
\[ S^n\big(S_{\cA}(\Der(\cA)[-1])\big) \to \bigoplus_{m\geq 0}
\Big(\Hom\big(\underset{\text{$m$ factors}}{\underbrace{\cA\otimes\cdots\otimes\cA}},\cA\big)[2-m-2n]\Big) .\]
In other words, Kontsevich's formality is really a collection
(indexed by the pair of numbers $n$ and $m$) of maps
\begin{equation}\label{nmMAPk}
S^n\big(S_{\cA}(\Der(\cA)[-1])\big)\to
\Hom(\underset{\text{$m$ factors}}{\underbrace{\cA\otimes\cdots\otimes\cA}},\cA)
\end{equation}
of degree $2-m-2n$ swallowing $n$ polyvector fields on $\cV$
(written as elements of the graded algebra $\cA[p_1,\cdots,p_{d+r}]$)
and returning a polydifferential operator on $\cV$ (written as a Hochschild $m$-cochain of the algebra $\cA$ of functions on $\cV$).

More explicitly, the ``Taylor coefficients'' of Kontsevich's formality $L_\infty$ quasi-isomorphism are a sequence of maps of the form
\begin{equation}\label{def:Kontsevich}
\kontsevich_n = \sum_{m \geqslant 0} \sum_{\Gamma \in \graphs_{n,m}} w_\Gamma \kontsevich_\Gamma
,\end{equation}
where $\graphs_{n,m}$ denotes the set of admissible graphs of type $(n,m)$,
$w_\Gamma$ is a real number called the Kontsevich weight of the graph $\Gamma$,
and $\kontsevich_\Gamma$ is a map that assembles $n$ polyvector fields
into an $m$-differential operator in a way determined by the graph $\Gamma$.

A directed graph $\Gamma$ is a pair of (finite) sets $V_\Gamma$ and $E_\Gamma$ together with two maps $s,t: E_\Gamma\to V_\Gamma$.
The elements of $V_\Gamma$ are called vertices.
The elements of $E_\Gamma$ are called edges.
Each edge $e \in E_\Gamma$ starts at its source $s(e)\in V_\Gamma$ and ends at its target $t(e)\in V_\Gamma$.

An \emph{admissible graph} of type $(n,m)$ is a directed graph $\Gamma=(V_\Gamma, E_\Gamma)$
with ordered labels on its vertices and edges satisfying the following requirements:
\begin{enumerate}
\item The set of vertices is partitioned into two subsets: $V_\Gamma = V_\Gamma^1 \sqcup V_\Gamma^2$.
The elements of $V_\Gamma^1$ are labeled $1,2,\cdots,n$ and called vertices of the first type or internal vertices.
The elements of $V_\Gamma^2$ are labeled $\bar{1},\bar{2},\cdots,\bar{m}$ and called vertices of the second type or external vertices.
\item For all $e\in E_\Gamma$, $s(e)\in V_\Gamma^1$.
\item For all $e\in E_\Gamma$, $s(e)\neq t(e)$.
\item No two edges have the same source and the same target.
\item For every vertex $k \in V_\Gamma^1$ of the first type,
there is a specified ordering on the set $s^{-1}(k)$ of all edges with source $k$.
As a result, there is an ordering on the set $E_\Gamma$ of all edges compatible with
the labeling of the vertices: the edges with source $1$ are listed in their prescribed order first,
then the edges with source $2$ are listed in their prescribed order, and so on.
\end{enumerate}

We recall the construction of $\kontsevich_\Gamma$ in the graded setting \cite{MR2304327,MR3522653}.

Fix an admissible graph $\Gamma\in\graphs_{n,m}$.
Identifying $S_{\cA}\big(\Der(\cA)[-1]\big)$ with $\cA[p_1,\cdots,p_{d+r}]$,
i.e. writing polyvector fields on $\cV$ as polynomials in the auxiliary variables $p_1,\cdots,p_{d+r}$
with coefficients in $\cA$, we get an inclusion
\[ \big(S_{\cA}(\Der(\cA)[-1])\big)^{\otimes n} \otimes
\underset{\text{$m$ factors}}{\underbrace{\cA\otimes\cdots\otimes\cA}}
\into\big(\cA[p_1,\cdots,p_{d+r}]\big)^{\otimes n+m} .\]
Each edge $e\in E_\Gamma$ induces an operator
\[ H_e = \sum_{k=1}^{d+r} \frac{\partial\argument_{s(e)}}{\partial p_k}
\circ \frac{\partial\argument_{t(e)}}{\partial x_k} \]
of degree $-1$ on $\big(\cA[p_1,\cdots,p_{d+r}]\big)^{\otimes n+m}$,
whose building blocks $\frac{\partial\argument_{s(e)}}{\partial p_k}$
and $\frac{\partial\argument_{t(e)}}{\partial x_k}$ are defined as follows.
Given $n$ polyvector fields $\gamma_1,\cdots,\gamma_n\in\Tpoly{}(\cV)=\cA[p_1,\cdots,p_{d+r}]$
(which we think of as functions of the variables $x_1,\cdots,x_d,p_1,\cdots,p_{d+r}$)
and $m$ functions $f_{\bar{1}},\cdots,f_{\bar{m}}\in C^\infty(\cV)=\cA$
(of the variables $x_1,\cdots,x_d$ exclusively), we set
\[ \frac{\partial\argument_{s(e)}}{\partial p_k}(\gamma_1 \otimes \cdots \otimes \gamma_n \otimes f_{\bar{1}} \otimes \cdots \otimes f_{\bar{m}}) = \pm\, \gamma_1 \otimes \cdots \otimes \frac{\partial}{\partial p_k}(\gamma_{s(e)}) \otimes \cdots \otimes \gamma_n \otimes f_{\bar{1}} \otimes \cdots \otimes f_{\bar{m}} \]
and
\begin{multline*}
\frac{\partial\argument_{t(e)}}{\partial x_k}(\gamma_1 \otimes \cdots \otimes \gamma_n \otimes f_{\bar{1}} \otimes \cdots \otimes f_{\bar{m}}) \\
= \begin{cases}
\pm\, \gamma_1 \otimes \cdots \otimes \frac{\partial}{\partial x_k}(\gamma_{t(e)}) \otimes \cdots \otimes \gamma_n \otimes f_{\bar{1}} \otimes \cdots \otimes f_{\bar{m}} & \text{if}\ t(e) \in V^1_\Gamma, \\
\pm\, \gamma_1 \otimes \cdots \otimes \gamma_n \otimes f_{\bar{1}} \otimes \cdots \otimes \frac{\partial}{\partial x_k}(f_{t(e)}) \otimes\cdots \otimes f_{\bar{m}} & \text{if}\ t(e) \in V^2_\Gamma,
\end{cases}
\end{multline*}
where the signs $\pm$ are determined by the usual Koszul convention.

The map
\[ \kontsevich_\Gamma: \big(S_{\cA}(\Der(\cA)[-1])\big)^{\otimes n} \to
\Hom(\underset{\text{$m$ factors}}{\underbrace{\cA\otimes\cdots\otimes\cA}},\cA) \]
is then defined by
\[ \big(\kontsevich_\Gamma(\gamma_1\otimes\cdots\otimes\gamma_n)\big)
(f_{\bar{1}}\otimes\cdots\otimes f_{\bar{m}})
= \mu \circ (\ev_0)^{\otimes m+n} \circ \Big(\prod_{e\in E_\Gamma} H_e\Big)
(\gamma_1\otimes\cdots\otimes\gamma_n\otimes f_{\bar{1}}\otimes\cdots\otimes f_{\bar{m}}) ,\]
where the operators $H_e$ are composed in the order dictated by the ordering of the set of edges $E_\Gamma$;
the map $\ev_0:\cA[p_1,\cdots,p_{d+r}]\to\cA$ is the evaluation at $p_1=p_2=\cdots=p_{d+r}=0$;
and $\mu:\cA^{\otimes m+n}\to\cA$ is the multiplication of $m+n$ elements in $\cA$.

Since each edge operator has degree $-1$, the map $\tilde{\kontsevich}_\Gamma$ lowers the degree
by the number $\# E_\Gamma$ of edges of the graph $\Gamma$.
As explained by Kontsevich in~\cite{MR2062626},
the weight $w_\Gamma$ is the integral of a differential form $\omega_\Gamma$
over a compact manifold with corners of dimension $2n+m-2$,
the compactification of a certain configuration space.
Since the form $\omega_\Gamma$ arises as the wedge product of $1$-forms
each associated to an edge of the graph $\Gamma$,
the weight $w_\Gamma$ is zero unless the number $\# E_\Gamma$ of edges in the graph $\Gamma$
(i.e. the degree of the differential form $\omega_\Gamma$) is exactly
equal to the dimension $2n+m-2$ of the manifold over which $\omega_\Gamma$ is integrated.
Therefore, the sum $\sum_{\Gamma\in\graphs_{n,m}} w_\Gamma \kontsevich_\Gamma$ is a map of degree $2-m-2n$.

Because the angular $1$-forms corresponding to the edges of the graph $\Gamma$ are multiplied
in the order prescribed by the ordering of the edges in order to obtain $\omega_\Gamma$,
the map $w_\Gamma \kontsevich_\Gamma$ does actually not dependent
on the ordering of the edges of the graph $\Gamma$.
Finally, we note that the sum $\sum_{\Gamma\in\graphs_{n,m}} w_\Gamma \kontsevich_\Gamma$ is,
by construction, invariant under the action of the symmetric group $S_n$ on $\big(S_{\cA}(\Der(\cA)[-1])\big)^{\otimes n}$.
The resulting map
\[ \sum_{\Gamma\in\graphs_{n,m}} w_\Gamma \kontsevich_\Gamma :
S^n\big(S_{\cA}(\Der(\cA)[-1])\big) \to
\Hom\big(\underset{\text{$m$ factors}}{\underbrace{\cA\otimes\cdots\otimes\cA}},\cA\big) \]
of degree $2-m-2n$ is the map \eqref{nmMAPk} we alluded to earlier.

\subsubsection{Shoikhet's formality morphism}

The spaces of polyjets and differential forms on $\cV$ are the direct products
$\cCpoly{}(\cV)=\prod_{p\geq 0}\Cpoly{-p}(\cV)$ and $\cApoly{}(\cV)=\prod_{p\geq 0}\Apoly{-p}(\cV)$, where $\Apoly{-p}(\cV)$ and $\Cpoly{-p}(\cV)$ are defined as in Section~\ref{corniche}.

The duality pairing $\duality{\argument}{\argument}:\JJ(\cV)\times\DD(\cV)\to\cA$
of the space $\DD(\cV)$ of differential operators
and its dual $\JJ(\cV)=\Hom_{\cA}(\DD(\cV),\cA)$, the space of jets, induces a pairing $\pduality{\argument}{\argument}:\Cpoly{-p}(\cV)\times\Dpoly{p}(\cV)\to\cA$ of $p$-jets with $p$-differential operators: for all homogeneous $\xi_1,\cdots,\xi_p\in\JJ(\cV)$
and $u_1,\cdots,u_p\in\DD(\cV)$,
\begin{multline*}
\pduality{\nshift\xi_1\otimes\nshift\xi_2\otimes\cdots\otimes\nshift\xi_p}
{\pshift u_1\otimes\pshift u_2\otimes\cdots\otimes\pshift u_p} \\
=(-1)^{\sum_{i=2}^p \sum_{j=1}^{i-1} \degree{\nshift\xi_i}\degree{\pshift u_j}}
\cdot (-1)^{\sum_{i=1}^p \degree{\xi_i}} \cdot \duality{\xi_1}{u_1} \cdot \duality{\xi_2}{u_2}
\cdot \cdots \cdot \duality{\xi_p}{u_p}
.\end{multline*}

There is a canonical surjection $\Phi: \cA\otimes\underset{\text{$p$ factors}}{\underbrace{\cA[1]\otimes\cdots\otimes\cA[1]}}
\to \Cpoly{-p}(\cV)$ defined by
\begin{multline*}
\pduality{\Phi(f_0\otimes\nshift f_1\otimes\cdots\otimes\nshift p)}{\pshift u_1\otimes\cdots\otimes\pshift u_p}
\\ = (-1)^{\sum_{i=1}^p \sum_{j=1}^i \degree{\nshift f_i}\degree{\pshift u_j}}
\cdot (-1)^{\sum_{i=1}^p \degree{u_i}} \cdot f_0 \cdot u_1(f_1) \cdot u_2(f_2) \cdot \cdots \cdot u_p(f_p)
.\end{multline*}

The Lie derivative and the trivial differential turn $\cApoly{}(\cV)$
into a dgla module over the dgla $\frakg=\Tpoly{}(\cV)[1]$ of polyvector fields.
Likewise, the generalized Lie derivative of Tamarkin--Tsygan and the Hochschild boundary operator
turn $\cCpoly{}(\cV)$ into a dgla module over the dgla $\frakh=\Dpoly{}(\cV)[1]$ of polydifferential operators.

Pulling back the $\Dpoly{}(\cV)[1]$-module structure on $\cCpoly{}(\cV)$
via Kontsevich's $L_\infty$ quasi-isomorphism $\kontsevich: \Tpoly{}(\cV)[1] \inftyto \Dpoly{}(\cV)[1]$,
one obtains an $L_\infty$ module structure $(\alpha_j)_{j\in\NO}$ on $\cCpoly{}(\cV)$
over the dgla $\Tpoly{}(\cV)[1]$. Explicitly, the maps $\alpha_n:\Lambda^n\big(\Tpoly{}(\cV)[1]\big)\otimes\cCpoly{}(\cV)\to \cCpoly{}(\cV)[1-n]$ are given by $\alpha_0=\hochschildb=\liederivative{m_2}$ and $\alpha_n(\gamma_1\wedge\cdots\wedge\gamma_n\otimes\xi)
=\liederivative{\kontsevich_n(\gamma_1\wedge\cdots\wedge\gamma_n)}\xi$ for all $n\geq 1$, $\gamma_1,\cdots,\gamma_n\in\Tpoly{}(\cV)[1]$, and $\xi\in\cCpoly{}(\cV)$. As in Section~\ref{sec:PullBack}, the $L_\infty$ module $(\cCpoly{}(\cV),\alpha_n)$ will be denoted by $\kontsevich^\ast \cCpoly{}(\cV)$.

\begin{theorem}[{Shoikhet \cite{MR2004726} and Dolgushev \cite[Theorem~3]{MR2199629}}]
\label{ShoikhetFormality}
There exists a $\GL(\RR^d)\times\GL(V)$-equivariant quasi-isomorphism
of $L_\infty$ modules over the dgla $(\Tpoly{}(\cV)[1],0,\schouten{\argument}{\argument})$
\begin{equation}\label{eq:ShoikhetFormality}
\shoikhet: \kontsevich^\ast \cCpoly{}(\cV) \inftyto \cApoly{}(\cV)
,\end{equation}
whose sequence $(\shoikhet_n)_{n=0}^\infty$ of Taylor coefficient maps
\[ \shoikhet_n: \overbrace{\Tpoly{}(\cV)[1] \wedge\cdots\wedge \Tpoly{}(\cV)[1] }^{\text{$n$ factors}}
\otimes \cCpoly{}(\cV) \to \cApoly{}(\cV) \]
satisfy the properties listed below:
\begin{enumerate}
\item \label{acer} The zeroth Taylor coefficient map $\shoikhet_0=\hhkr$ is the Hochschild--Kostant--Rosenberg map
\begin{equation}\label{eq:hHKR_vecsp}
\cCpoly{}(\cV) \ni \Phi(a_0\otimes \nshift a_1 \otimes \cdots \otimes \nshift a_p)
\longmapsto a_0\cdot \nshift(da_1) \odot \cdots \odot \nshift(da_p) \in \cApoly{}(\cV)
.\end{equation}
\item \label{palmatum} For $n \geq 1$, given any \emph{linear} vector field $X$;
any collection of polyvector fields $\gamma_2,\gamma_3,\cdots,\gamma_n\in\Tpoly{}(\cV)[1]$;
and any polyjet $\alpha\in\cCpoly{}(\cV)$ on $\cV$, we have
\[ \shoikhet_n(X, \gamma_2, \cdots, \gamma_n; \alpha) = 0 .\]
\item \label{soja} Given any finite collection of polyvector fields $\gamma_1,\cdots,\gamma_n\in\Tpoly{}(\cV)[1]$
and any polyjet $\alpha\in\cCpoly{}(\cV)$,
if $\shoikhet_n(\gamma_1,\cdots,\gamma_n;\alpha)$ is a $p$-form on $\cV$, then
\[ \shoikhet_{n+1}(X,\gamma_1,\cdots,\gamma_n;\alpha) \]
is a $(p+1)$-form on $\cV$ for every vector field $X$.
\end{enumerate}
\end{theorem}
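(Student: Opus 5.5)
The plan is to reduce to the classical (ungraded) statement of Shoikhet, as refined by Dolgushev, and to check that every ingredient survives the passage to the trivialized graded manifold $\cV$, in the same way Cattaneo--Felder extend Kontsevich's construction (Theorem~\ref{KontsevichFormality}). Concretely, I would define $\shoikhet_n$ by a graph sum $\shoikhet_n=\sum_{\Gamma} w'_\Gamma\,\shoikhet_\Gamma$. Here $\Gamma$ runs over Shoikhet's admissible graphs: $n$ internal vertices carrying the polyvector fields $\gamma_1,\dots,\gamma_n$, a distinguished central vertex together with boundary vertices on the unit circle carrying the factors $a_0,\nshift a_1,\dots,\nshift a_p$ of a polyjet $\Phi(a_0\otimes\nshift a_1\otimes\cdots\otimes\nshift a_p)$, and some edges left ``open'' to produce the form factors $\nshift(dx_k)$. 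The weights $w'_\Gamma$ are the integrals of Shoikhet's angle forms over the compactified configuration spaces of points in the disc. The operator $\shoikhet_\Gamma$ is built exactly like $\kontsevich_\Gamma$. Each edge acts by $\sum_k \frac{\partial}{\partial p_k}\otimes\frac{\partial}{\partial x_k}$ on $\cA[p_1,\dots,p_{d+r}]^{\otimes}$. Each open edge replaces a $\frac{\partial}{\partial p_k}$ by multiplication by $\nshift(dx_k)$. The whole thing is followed by evaluation at $p=0$ and multiplication, with Koszul signs. Since $\cA=C^\infty(\RR^d)\cotimes\widehat S(V\dual)$, all operators involved are polynomial in finitely many partial derivatives, so the formulas make sense verbatim, and the weights are the same real numbers as in the ungraded case.

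The $L_\infty$ module relations \eqref{eq:LooModMor_def} for $\shoikhet$ over $\Tpoly{}(\cV)[1]$, with source the pullback module $\kontsevich^\ast\cCpoly{}(\cV)$, then follow from Stokes' theorem on the compactified configuration spaces. The codimension-one boundary strata produce exactly three kinds of terms:
\begin{itemize}
\item the Schouten bracket terms, from internal points colliding;
\item the terms $\iL_{\kontsevich_k(\cdots)}$ and $\hochschildb$, from points collapsing onto the circle;
\item the vanishing contributions of the remaining strata, by Kontsevich's vanishing lemmas.
\end{itemize}
The only new point in the graded setting is that the combinatorial identity among the operators $\shoikhet_\Gamma$ holds with Koszul signs. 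This is formal: the identity is a statement about the action of $\frac{\partial}{\partial x_k}$ and $\frac{\partial}{\partial p_k}$ (which graded-commute appropriately) on a free graded-commutative algebra. Equivariance under $\GL(\RR^d)\times\GL(V)$ holds because the expression $\sum_k\frac{\partial}{\partial p_k}\otimes\frac{\partial}{\partial x_k}$ and the open-edge insertion $\sum_k\frac{\partial}{\partial p_k}\otimes \nshift(dx_k)$ are invariant under linear changes of the coordinates $x_k$ preserving the splitting $\RR^d\oplus V$.

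For the listed properties I would argue as follows.
\begin{itemize}
\item \ref{acer}: for $n=0$ there are no internal vertices, and the only graphs with nonzero weight send each boundary factor $a_i$, $i\ge1$, to one open edge. The corresponding weight is normalized to $1$, which yields $a_0\,\nshift(da_1)\odot\cdots\odot\nshift(da_p)$, i.e.\ $\hhkr$. Quasi-isomorphism then follows from \ref{acer} and the HKR theorem for $\cV$, since a morphism of $L_\infty$ modules whose zeroth coefficient is a quasi-isomorphism is an $L_\infty$ quasi-isomorphism.
\item \ref{soja}: this is degree counting. Inserting a vector field adds one vertex with one outgoing edge and two dimensions to the configuration space, so exactly one extra edge must be used. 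Since the number of closed edges is fixed by the target polydifferential degree, the extra edge must be open, raising the form degree by one.
\item \ref{palmatum}: a linear vector field $X$ has coefficients at most linear in $x$, so its vertex has exactly one outgoing edge and at most one incoming edge. One then shows, as in Dolgushev's proof of \cite[Theorem~3]{MR2199629}, that the weight of any such graph with $n\ge1$ vanishes: integrating out the position of that vertex gives an angle-form integral that is zero by a symmetry (reflection/rotation) argument.
\end{itemize}

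I expect \ref{palmatum} to be the main obstacle. The vanishing must hold for every graph in which the $X$-vertex has valence $(1,\le1)$, including the cases where its edge points to the center or is open, and Dolgushev's argument has to be checked case by case in Shoikhet's disc compactification. I would first try to reduce it to the analogous statement \ref{LinVF-kontsevich} for Kontsevich's graphs, by the standard doubling that relates Shoikhet's disc configuration spaces to Kontsevich's upper half-plane ones, and treat separately only the graphs whose $X$-edge is open.
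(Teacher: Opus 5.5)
Your overall route matches the paper's. The paper does not reprove this theorem. It imports Shoikhet's graph construction, in which the central vertex $0$ carries an auxiliary polyvector field and is never the target of an edge, so your ``open'' edges are exactly the edges leaving the center. It cites Dolgushev's Theorem~3 for \ref{acer}--\ref{soja}, and notes that the graph formulas make sense verbatim on a trivialized graded manifold, as Cattaneo--Felder did for Kontsevich's morphism. Your treatment of \ref{acer}, \ref{soja}, equivariance and the quasi-isomorphism claim is essentially fine.

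The genuine gap is in your account of the Stokes argument for the $L_\infty$ module relations. You say the only non-vanishing codimension-one faces are interior collisions (Schouten terms) and collapses onto the circle ($\iL_{\kontsevich_k(\cdots)}$ and $\hochschildb$). But the target $\cApoly{}(\cV)$ is a nontrivial module. The right-hand side of \eqref{eq:LooModMor_def} therefore contains the terms $\iL_{\gamma_i}\shoikhet_{n-1}(\gamma_1,\dots,\widehat{\gamma_i},\dots,\gamma_n;\alpha)$, and none of your faces produces them. They come from the faces where a single vertex $\gamma_i$ collapses onto the central vertex. That cluster has only two points, so Kontsevich's vanishing lemma does not kill it, and it yields exactly the Lie derivative action; only collapses of two or more $\gamma$'s onto the center vanish. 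With your bookkeeping you would be proving the morphism equation into forms with the \emph{trivial} module structure. That equation is false already for $n=1$: take $\gamma$ a bivector field and $\alpha=a_0\otimes a_1$. Then $\hochschildb\alpha=0$, and the equation would force $\shoikhet_0(\iL_{\kontsevich_1(\gamma)}\alpha)$, a nonzero multiple of $\gamma(da_0,da_1)$, to vanish. In the graded extension, these center faces are also among the operator identities whose Koszul signs you must check.

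Your discussion of \ref{palmatum} also needs repair. No edge ends at the central vertex, and the form-producing edges all start there. So the outgoing edge of the $X$-vertex can neither point to the center nor be open, and those cases are vacuous. The cases that actually occur are these:
\begin{itemize}
\item No incoming edge. Then a single angle form depends on the two-dimensional position of $X$, and the fiber integral vanishes.
\item One incoming edge from the central vertex. Reflection in the diameter through the other endpoint settles this case.
\item One incoming edge from an interior vertex $\gamma_j$. For endpoints in general position, no reflection of the disc fixes both endpoints and the central vertex, which Shoikhet's angle function singles out. So a symmetry argument is not enough; you need an actual evaluation of the bivalent integral $\int_z d\phi(a,z)\wedge d\phi(z,b)$, for instance by a Stokes argument.
\end{itemize}
The ``doubling'' reduction to Theorem~\ref{KontsevichFormality}~\ref{LinVF-kontsevich} is not available either. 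Shoikhet's weights are built from a different angle function, and an edge from the center into $X$ has no counterpart among Kontsevich's graphs.
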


We can think of Shoikhet's quasi-isomorphism of $L_\infty$ modules
over the dgla $\frakg=\Tpoly{}(\cV)[1]$ from the polyjets $\cCpoly{}(\cV)$ to the
differential forms $\cApoly{}(\cV)$ as
\begin{itemize}
\item either a sequence $(\shoikhet_n)_{n\in\NO}$ of maps
$\shoikhet_n:\Lambda^n(\frakg)\otimes\cCpoly{}(\cV)\to\cApoly{}(\cV)$ of degree $-n$,\\
i.e. $\shoikhet_n:\Lambda^n(\Tpoly{}(\cV)[1])\otimes\cCpoly{}(\cV)\to\cApoly{}(\cV)$;
\item or a sequence $(\tilde{\shoikhet}_n)_{n\in\NO}$ of maps
$\tilde{\shoikhet}_n:S^n(\frakg[1])\otimes\cCpoly{}(\cV)\to\cApoly{}(\cV)$ of degree $0$,\\
i.e. $\tilde{\shoikhet}_n:S^n(\Tpoly{}(\cV)[2])\otimes\cCpoly{}(\cV)\to\cApoly{}(\cV)$;
\item or a morphism $F_{\tilde{\shoikhet}}:S(\frakg[1])\otimes\cCpoly{}(\cV)\to S(\frakg[1])\otimes\cApoly{}(\cV)$
of degree $0$ of dg comodules over the dg coalgebra $S(\frakg[1])$, \\
i.e. $F_{\tilde{\shoikhet}}:S(\Tpoly{}(\cV)[2])\otimes\cCpoly{}(\cV)
\to S(\Tpoly{}(\cV)[2])\otimes\cApoly{}(\cV)$.
\end{itemize}

Adopting the second point of view,
the $n$-th Taylor coefficient of Shoikhet's formality morphism is a map
\begin{multline*}
S^n\Big(\big(S_{\cA}(\Der(\cA)[-1])\big)[2]\Big)\otimes\prod_{m\geq 0}\Phi\big(\cA\otimes
\underset{\text{$m$ factors}}{\underbrace{\cA[1]\otimes\cdots\otimes\cA[1]}}\big) \\
\to \prod_{l\geq 0} \Hom_{\cA}\Big(S^l_{\cA}\big(\Der(\cA)[-1]\big),\cA\Big) = \Hom_{\cA}\Big(\bigoplus_{l\geq 0} S^l_{\cA}\big(\Der(\cA)[-1]\big),\cA\Big)
\end{multline*}
of degree $0$ or, equivalently, a collection (indexed by the pair of nonnegative integers $n$ and $m$) of maps
\begin{equation}\label{nmMAPs}
S^n\Big(S_{\cA}\big(\Der(\cA)[-1]\big)\Big)\otimes \cA^{\otimes (1+m)}
\to \Hom_{\cA}\Big(\bigoplus_{l\geq 0} S^l_{\cA}\big(\Der(\cA)[-1]\big),\cA\Big)
\end{equation}
of degree $-m-2n$.

More explicitly, the ``Taylor coefficients'' of Shoikhet's formality $L_\infty$ quasi-isomorphism
are a sequence of maps of the form
\begin{equation}
\shoikhet_n = \sum_{m\geqslant 0} \sum_{\Gamma\in\graphs^\circ_{n,m}}
w^\circ_\Gamma \shoikhet_\Gamma
,\end{equation}
where $\graphs^\circ_{n,m}$ is a certain subset of admissible directed graphs
with $1+n$ internal and $1+m$ external vertices,
$w^\circ_\Gamma$ is a real number called the Shoikhet weight \cite{MR2004726} of the graph $\Gamma$,
and $\shoikhet_\Gamma$ is a map that assembles $n$ polyvector fields
and a Hochschild $m$-chain into a single differential form in a way determined by the graph $\Gamma$.

More precisely, an element of $\graphs^\circ_{n,m}$ is a directed graph $\Gamma=(V_\Gamma,E_\Gamma)$
with ordered labels on its vertices and edges satisfying the following requirements:
\begin{enumerate}
\item The set of vertices is partitioned into two subsets: $V_\Gamma = V_\Gamma^1 \sqcup V_\Gamma^2$.
The elements of $V_\Gamma^1$ are labeled $0,1,2,\cdots,n$ and called vertices of the first type or internal vertices.
The elements of $V_\Gamma^2$ are labeled $\bar{0},\bar{1},\bar{2},\cdots,\bar{m}$ and called vertices of the second type or external vertices.
\item \emph{The internal vertex $0$ is not the target of any edge: $t(e)\neq 0$ for all $e\in E_\Gamma$.}
\item For all $e\in E_\Gamma$, $s(e)\in V_\Gamma^1$
\item For all $e\in E_\Gamma$, $s(e)\neq t(e)$.
\item No two edges have the same source and the same target.
\item For every vertex $k \in V_\Gamma^1$ of the first type,
there is a specified ordering on the set $s^{-1}(k)$ of all edges with source $k$.
As a result, there is an ordering on the set $E_\Gamma$ of all edges compatible with
the labeling of the vertices.
\end{enumerate}

The construction of $\shoikhet_\Gamma$ described in \cite{MR2004726,MR2836399}
can be adapted to the graded setting.

More explicitly, fix a graph $\Gamma\in\graphs^\circ_{n,m}\subset\graphs_{1+n,1+m}$. The map
\[ \shoikhet_\Gamma: \Big(S_{\cA}\big(\Der(\cA)[-1]\big)\Big)^{\otimes n} \otimes \cA^{\otimes (1+m)}
\to \Hom_{\cA}\Big(S_{\cA}\big(\Der(\cA)[-1]\big),\cA\Big) \]
is defined by
\[ \big(\shoikhet_\Gamma(\gamma_1\otimes\cdots\otimes\gamma_n;f_0\otimes f_1\otimes\cdots\otimes f_m)\big)
(\beta) \\ =\pm \big(\kontsevich_\Gamma(\beta \otimes
\gamma_1\otimes\cdots\otimes\gamma_n)\big)(f_0\otimes f_1\otimes\cdots\otimes f_m) \]
for all polyvector fields $\beta,\gamma_1,\cdots,\gamma_n\in\Tpoly{}(\cV)$,
and all functions $f_0,f_1,\cdots,f_m\in C^\infty(\cM)=\cA$.
The Koszul sign $\pm$ in the r.h.s. is due to the polyvector field $\beta$ leapfrogging over the $\gamma$'s and the $f$'s.

Since the map $\kontsevich_\Gamma$ lowers the degree
by the number $\# E_\Gamma$ of edges of the graph $\Gamma$,
so does the map $\shoikhet_\Gamma$.
As explained by Shoikhet in~\cite{MR2004726},
the weight $w^\circ_\Gamma$ is the integral of a differential form $\omega^\circ_\Gamma$
over a compact manifold of dimension $2n+m$ with corners. This manifold is the compactification
of the space of embeddings of $V_\Gamma$ into $D$, the closed disk of radius $1$ centered at $0$ in $\CC$,
that map internal vertices of $\Gamma$ to points in the interior of $D$, external vertices of $\Gamma$ to points
on the boundary circle of $D$, the internal vertex $0\in V^1_\Gamma$ to $0\in\mathring{D}$,
and the external vertex $\bar{0}\in V^2_\Gamma$ to $1\in \partial D$.
Since the form $\omega^\circ_\Gamma$ arises as the wedge product of $1$-forms
each corresponding to an edge of the graph $\Gamma$,
the weight $w^\circ_\Gamma$ is zero unless the number $\# E_\Gamma$ of edges in the graph $\Gamma$
(i.e. the degree of the differential form $\omega^\circ_\Gamma$)
is exactly equal to the dimension $2n+m$ of the manifold over which $\omega^\circ_\Gamma$ is integrated.
Therefore, the sum $\sum_{\Gamma\in\graphs^\circ_{n,m}} w^\circ_\Gamma \shoikhet_\Gamma$
is a map of degree $-m-2n$.

Because the angular $1$-forms corresponding to the edges of the graph $\Gamma$ are multiplied
in the order prescribed by the ordering of the edges in order to obtain $\omega^\circ_\Gamma$,
the map $w^\circ_\Gamma \shoikhet_\Gamma$ does not actually depend on the edge ordering.
Finally, we note that the sum $\sum_{\Gamma\in\graphs^\circ_{n,m}} w^\circ_\Gamma \shoikhet_\Gamma$ is,
by construction, invariant under the action of the symmetric group $S_n$ on $\big(S_{\cA}(\Der(\cA)[-1])\big)^{\otimes n}$.
The resulting map
\[ \sum_{\Gamma\in\graphs^\circ_{n,m}} w^\circ_\Gamma \shoikhet_\Gamma :
S^n\Big(S_{\cA}\big(\Der(\cA)[-1]\big)\Big) \otimes T^{1+m}(\cA) \to
\Hom_{\cA}\Big(S_{\cA}\big(\Der(\cA)[-1]\big),\cA\Big) \]
of degree $-m-2n$ is the map \eqref{nmMAPs} we alluded to earlier.

\subsection{Twisting by a homological vector field}

In this section, we will essentially follow the presentations of Dolgushev \cite{MR2199629,MR2102846}
and Shoikhet \cite{arXiv:math/9812009}.

Let $Q$ be a homological vector field on $\cV$.
Since $Q$ is a Maurer--Cartan element in the dgla $\frakg=\Tpoly{}(\cV)[1]$,
Kontsevich's $L_\infty$ algebra morphism $\kontsevich$
and Shoikhet's $L_\infty$ module morphism $\shoikhet$ can both be twisted by $Q$.
For every $n\in\NN$, the $n$-th ``Taylor coefficient'' $\big(\kontsevich_Q\big)_n$
of the twisted $L_\infty$ morphism $\kontsevich_Q$
(from the dgla $\frakg=\Tpoly{}(\cV)[1]$ twisted by $Q$ to the dgla $\frakh=\Dpoly{}(\cV)[1]$ twisted by
$\kontsevich(Q)$) is defined by the series
\[ \big(\kontsevich_Q\big)_n(\gamma_1, \cdots, \gamma_n) = \sum_{j=0}^\infty \frac{1}{j!} \kontsevich_{n+j}
(\overbrace{Q, \cdots, Q}^{\text{$j$ copies}}, \gamma_1, \cdots, \gamma_n) \]
for all $\gamma_1,\gamma_2,\cdots,\gamma_n\in\Tpoly{}(\cV)[1]$
--- the convergence of the series is guaranteed by Theorem~\ref{KontsevichFormality}~\ref{iris}.
It follows immediately from properties \ref{biloba} and \ref{gingko} of Theorem~\ref{KontsevichFormality} that
\begin{equation}\label{eq:MC-Loo}
\kontsevich(Q) = \sum_{j=1}^\infty \frac{1}{j!}
\kontsevich_{j}(\overbrace{Q,\cdots,Q}^{\text{$j$ copies}})
= \kontsevich_1(Q) = \hkr(Q) = Q
.\end{equation}
Thus $\kontsevich_Q$ is an $L_\infty$ morphism from the twisted dgla
\[ \big(\Tpoly{}(\cV)[1]\big)_Q =
\big(\Tpoly{}(\cV)[1],\schouten{Q}{\argument},\schouten{\argument}{\argument}\big) \]
to the twisted dgla
\[ \big(\Dpoly{}(\cV)[1]\big)_Q =
\big(\Dpoly{}(\cV)[1],\hochschild+\gerstenhaber{Q}{\argument},\gerstenhaber{\argument}{\argument}\big) .\]
For every $n\in\NO$, the $n$-th ``Taylor coefficient'' $\big(\shoikhet_Q\big)_n$
of the twisted morphism $\shoikhet_Q$
of $L_\infty$ modules over the $L_\infty$ algebra $\frakg=\Tpoly{}(\cV)[1]$
from $\cCpoly{}(\cV)$ to $\cApoly{}(\cV)$ is defined by the series
\[ \big(\shoikhet_Q\big)_n (\gamma_1,\cdots,\gamma_n;\alpha) = \sum_{j=0}^\infty \frac{1}{j!}
\shoikhet_{n+j}(\overset{\text{$j$ copies}}{\overbrace{Q,\cdots,Q}}, \gamma_1,\cdots,\gamma_n; \alpha) \]
for all $\gamma_1,\gamma_2,\cdots,\gamma_n\in\Tpoly{}(\cV)[1]$ and $\alpha\in\cCpoly{}(\cV)$.
Since $\cApoly{}(\cV)$ was defined as the direct product
rather than the direct sum of the $\Apoly{-p}(\cV)$'s,
it follows immediately from Theorem~\ref{ShoikhetFormality}~\ref{soja}
that each Taylor coefficient of $\shoikhet_Q$ is well-defined.
Thus we obtain a morphism
\[ \shoikhet_Q: \big(\cCpoly{}(\cV)\big)_Q \inftyto \big(\cApoly{}(\cV)\big)_Q \]
of $L_\infty$ modules over the dgla $\big(\Tpoly{}(\cV)[1]\big)_Q$.
As cochain complexes, we have
\[ \big(\cCpoly{}(\cV)\big)_Q = \big(\cCpoly{}(\cV), \hochschildb + L_Q \big)
\qquad\text{and}\qquad
\big(\cApoly{}(\cV)\big)_Q = \big(\cApoly{}(\cV), L_Q \big) .\]

Consider the standard flat connection $\nabla^\std: \Gamma(T_{\cV}) \times \Gamma(T_{\cV}) \to \Gamma(T_{\cV})$ on $\cV$, defined by setting $\nabla^\std_{\partial_{x_i}} \partial_{x_j} = 0$ for all $i,j \in \{1, 2, \cdots, d+r\}$, and the induced Atiyah and \Aroof\ cocycles $\atiyahcocycle{\std}{(\cV,Q)}$ and $\Ahatcocycle{\std}{(\cV,Q)}$ introduced in Section~\ref{sec:Atiyah}.

The following two propositions follow from
\cite{MR2501196,MR2355492}.
The first proposition was actually conjectured by Shoikhet \cite{MR1854132}. For more details, the reader might want to consult \cite{MR2950766}.

\begin{proposition}\label{prop:DK_vs}
The first Taylor coefficient of the twisted Kontsevich morphism $\kontsevich_Q$ is the composition
\[ (\kontsevich_Q)_1 = \hkr \circ \big(\Ahatcocycle{\std}{(\cV,Q)}\big)^{\sfrac{1}{2}}: \Tpoly{}(\cV) \to \Dpoly{}(\cV) \]
of the action (by contraction) of the square root of the \Aroof\ cocycle on $\Tpoly{}(\cV)$
followed by the Hochschild-Kostant-Rosenberg map (for cochains).
\end{proposition}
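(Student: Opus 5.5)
The plan is to compute $(\kontsevich_Q)_1(\gamma)=\sum_{j\ge0}\frac1{j!}\kontsevich_{1+j}(Q,\dots,Q,\gamma)$ graph by graph. The goal is to show that the only graphs contributing are those in which the $j$ copies of $Q$ act purely through their second-order Taylor parts, and that these assemble into contractions of $\gamma$ with traces of powers of the Atiyah cocycle \eqref{velociraptor}.

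\textbf{Step 1: reduction to the wheel graphs.} Every internal vertex carrying $Q$ has exactly one outgoing edge. By degree counting ($\#E_\Gamma=2n+m-2$ with $n=1+j$), the vertex carrying $\gamma\in\Tpoly{k}$ has $k$ outgoing edges, the output is an $m$-differential operator, and we need $m=k$ for the graph to land in the HKR component. The $Q$-vertices then form a subgraph in which each vertex has out-degree one. Such a subgraph is a union of directed trees flowing into $\gamma$, into the external vertices, or into oriented cycles ("wheels"). I would invoke the standard vanishing lemmas for Kontsevich weights:
\begin{itemize}
\item a $Q$-vertex with no incoming edges that points into another vertex is killed by the Kontsevich lemma (the weight of a bivalent-type degenerate configuration vanishes);
\item by $\GL$-equivariance and the form-degree count, each tree attached to $\gamma$ or to the boundary contributes zero unless it is a single edge;
\item the surviving configurations are disjoint unions of wheels $W_\ell$ of $Q$-vertices, each $Q$-vertex in a wheel additionally receiving exactly one incoming "spoke" from $\gamma$.
\end{itemize}
In coordinates, each such wheel applies $\iota$-contraction of $\gamma$ with $\supertrace\big((\partial^2 Q)^{\ell}\big)$, that is, with $\supertrace\big((\atiyahcocycle{\std}{(\cV,Q)})^\ell\big)$ up to sign. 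The remaining edges of $\gamma$ go to the boundary and reproduce $\hkr$.

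\textbf{Step 2: factorization of the weights.} The weight of a disjoint union of wheels factorizes as a product of the individual wheel weights divided by the symmetry factors. This yields
\[
(\kontsevich_Q)_1=\hkr\circ \exp\Big(\sum_{\ell} c_\ell\,\supertrace\big((\atiyahcocycle{\std}{(\cV,Q)})^\ell\big)\Big)
\]
acting by contraction, where $c_\ell$ is the weight of the wheel $W_\ell$ with one spoke per vertex.

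\textbf{Step 3 (main obstacle): computing the wheel weights.} One must show that $c_\ell=0$ for odd $\ell$ and that the even weights match $\frac12\log\frac{x/2}{\sinh(x/2)}$, so that the exponential is $\Ber$ applied to the square root of $\frac{x}{e^{x/2}-e^{-x/2}}$, i.e.\ $(\Ahatcocycle{\std}{(\cV,Q)})^{1/2}$. The vanishing for odd $\ell$ follows from the reflection symmetry of the upper half-plane. For the even weights, I would not recompute the integrals. Instead, I would cite the identification of the wheel weights with the Bernoulli-type coefficients established in \cite{MR2501196,MR2355492}, building on Shoikhet's computation \cite{MR1854132}. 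The graded setting only introduces Koszul signs, and these turn traces into supertraces and determinants into Berezinians. Hence the formula carries over verbatim, giving $(\kontsevich_Q)_1=\hkr\circ(\Ahatcocycle{\std}{(\cV,Q)})^{1/2}$.
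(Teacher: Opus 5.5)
The paper gives no argument of its own for this proposition; it only states that it follows from \cite{MR2501196,MR2355492}. Your Step~3 relies on the same references for the even wheel weights. Your reflection argument for odd wheels is fine: with the vertex of $\gamma$ fixed at $i$, the map $z\mapsto-\bar z$ flips all $2\ell$ angle forms and the orientation of $\ell$ half-planes, giving a factor $(-1)^\ell$. So what you add beyond the paper is the graph reduction of Steps~1--2, and Step~1 as written does not establish it.

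First, the count is wrong. With $j+1$ internal vertices and $j+k$ edges, $\#E_\Gamma=2n+m-2$ gives $m=k-j$, not $m=k$; taken literally, $m=k$ forces $j=0$ and no wheels.

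More importantly, the tools you list do not kill the non-wheel graphs. $\mathrm{GL}$-equivariance is a property of the total sum and eliminates no individual graph. A ``tree that is a single edge'' is just a $Q$-vertex with no incoming edge, which your first bullet already discards. That first bullet is the trivial fibre-degree argument (one $1$-form on a $2$-dimensional fibre), not a bivalent lemma. For a concrete counterexample to your reasoning, take $k\ge3$ and the graph with edges $\gamma\to Q_1\to Q_2\to\bar1$ and $\gamma\to Q_2$, with the remaining $k-2$ edges of $\gamma$ going to $\bar1,\dots,\bar m$. It has the right number of edges, no double edges and no univalent vertex. Yet it contributes $\partial Q\cdot\partial^2Q$ times a second-order operator on $f_{\bar1}$, which is not of Atiyah/HKR type, and nothing in your Step~1 shows its weight is zero.

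The missing ingredient is Kontsevich's bivalent-vertex lemma, the one behind Theorem~\ref{KontsevichFormality}~\ref{LinVF-kontsevich}: an internal vertex with exactly one incoming and one outgoing edge makes the weight vanish. With it the reduction is a short count. Write $d_v$, $d_\gamma$, $e_r$ for the in-degrees of a $Q$-vertex, of $\gamma$, and of the external vertex $\bar r$. The univalent and bivalent lemmas give $d_v\ge2$. An external vertex with no incoming edge also kills the weight, so $e_r\ge1$. Hence
\[ 2j+d_\gamma+m\le\sum_v d_v+d_\gamma+\sum_r e_r=j+k=2j+m, \]
forcing $d_\gamma=0$, $d_v=2$ and $e_r=1$. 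Since there are no double edges, each $Q$-vertex receives at most one edge from $\gamma$, hence at least one from a $Q$-vertex. Only $j$ edges leave $Q$-vertices, so they all land on $Q$-vertices, exactly one on each. Thus the $Q$-edges form a fixed-point-free permutation, i.e.\ disjoint wheels of length $\ge2$ with one spoke from $\gamma$ per vertex. The other $k-j$ edges of $\gamma$ hit the $m=k-j$ external vertices, one each.

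In Step~2, note that the wheels are not disjoint, since they all share $\gamma$. Factorization comes from fixing $\gamma$ at $i$ via the affine group and applying Fubini; the boundary factor $\int_{x_1<\cdots<x_m}\prod_r d\phi(i,x_r)=1/m!$ reproduces $\hkr$.

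With these repairs your argument is a sound unpacking of what the cited references provide. Its ultimate dependence on them is the same as the paper's.
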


\begin{proposition}\label{prop:DKS_vs}
The zeroth Taylor coefficient of the twisted Shoikhet morphism $\shoikhet_Q$ is the composition
\[ (\shoikhet_Q)_0 = \big(\Ahatcocycle{\std}{(\cV,Q)}\big)^{\sfrac{1}{2}} \circ \hhkr: \cCpoly{}(\cV) \to \cApoly{}(\cV) \]
of the Hochschild-Kostant-Rosenberg map (for chains) followed by the action (by multiplication)
of the square root of the \Aroof\ cocycle on $\cApoly{}(\cV)$.
\end{proposition}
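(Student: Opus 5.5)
The plan is to reduce Proposition~\ref{prop:DKS_vs} to the Kontsevich-side statement, Proposition~\ref{prop:DK_vs}, using the graph description of Shoikhet's morphism. By definition,
\[ (\shoikhet_Q)_0(\alpha)=\sum_{j\geq 0}\tfrac{1}{j!}\shoikhet_j(Q,\dots,Q;\alpha), \]
and each term is a sum over graphs $\Gamma\in\graphs^\circ_{j,m}$ with $j$ internal vertices decorated by $Q$, a distinguished internal vertex $0$ decorated by a test polyvector $\beta$, and external vertices decorated by $f_0,\dots,f_m$. Since $Q$ is a vector field, each $Q$-vertex emits exactly one edge. Counting edges, $\#E_\Gamma=2j+m$, shows that $\beta$ must emit $m+j$ edges.

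The first step is a combinatorial classification of the graphs with nonzero weight. I would show that the $Q$-vertices can only form ``wheel'' subgraphs: directed cycles of $Q$-vertices, each $Q$-vertex additionally receiving one edge from the vertex $0$. The remaining edges from $0$ go to the external vertices $\bar 1,\dots,\bar m$, one each. The argument is the usual vanishing lemma: a $Q$-vertex whose edge lands on an external vertex, or on a tree-like configuration, gives a form on the configuration space that is pulled back from a lower-dimensional space, hence its integral vanishes. The same argument underlies property~\ref{biloba} and the proofs in \cite{MR2501196,MR2355492}.

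With this classification the $\beta$ and $f$ part factorizes. The edges $0\to\bar i$ produce $f_0\,df_1\cdots df_m$ contracted with $\beta$, i.e.\ exactly $\hhkr$. The wheels produce products of $\supertrace$ of powers of the matrix $(\partial_i\partial_k Q^j)$ contracted with $\beta$, i.e.\ the scalar Atiyah cocycles $\supertrace\big((\atiyahcocycle{\std}{(\cV,Q)})^{l}\big)$ evaluated by \eqref{velociraptor}. Wheels with $Q$ acting as a derivation recombine into an exponential generating function. Hence
\[ (\shoikhet_Q)_0=\exp\Big(\sum_{l} c^\circ_l\,\supertrace\big((\atiyahcocycle{\std}{(\cV,Q)})^l\big)\Big)\circ\hhkr, \]
where $c^\circ_l$ is the weight of the $l$-wheel in Shoikhet's disk configuration.

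The main obstacle is identifying the wheel weights $c^\circ_l$ with the coefficients of $\tfrac12\log\big(\frac{x/2}{\sinh(x/2)}\big)$, which is what yields $(\Ahatcocycle{\std}{(\cV,Q)})^{1/2}$. I would do this by comparison with Proposition~\ref{prop:DK_vs}, where the same wheel weights of Kontsevich's configuration space are known to give the $\hat A^{1/2}$ generating series. The argument would use the compatibility of Shoikhet's morphism with the cap-product pairing $\pduality{\cdot}{\cdot}$ (Stokes on the disk configuration spaces) together with the relation between $\iI$ and $\iL$. This forces $(\shoikhet_Q)_0$ to be adjoint to $(\kontsevich_Q)_1$ up to homotopy, so the scalar factors agree; the odd wheels vanish by the reflection symmetry of the disk. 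Failing that, I would cite the direct computation of the disk wheel integrals in \cite{MR2501196}.
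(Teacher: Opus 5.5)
The paper gives no argument of its own for this statement; it simply appeals to the wheel-weight computations \cite{MR2501196,MR2355492}. Your route is genuinely different, and it is viable. A graph classification reduces $(\shoikhet_Q)_0$ to $E^\circ\circ\hhkr$, with $E^\circ=\exp\big(\sum_l c^\circ_l\,\supertrace\big((\atiyahcocycle{\std}{(\cV,Q)})^l\big)\big)$ for universal constants $c^\circ_l$. These constants are then fixed from Proposition~\ref{prop:DK_vs} via the cap-product homotopy of Theorem~\ref{prop:HmtOpShoi}, so no disk integral has to be evaluated.

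\textbf{The classification step.} Your conclusion is correct, but the reason is valence counting, not a ``pull-back from a lower-dimensional space'' argument.
\begin{itemize}
\item A $Q$-vertex with no incoming edge, or an external vertex $\bar i$ ($i\geq 1$) with no incoming edge, gives zero weight for degree reasons.
\item A $Q$-vertex with exactly one incoming edge gives zero by the bivalent-vertex lemma for Shoikhet's weights (the lemma behind Theorem~\ref{ShoikhetFormality}~\ref{palmatum}).
\item Hence each $Q$-vertex receives at least $2$ edges and each $\bar i$ at least $1$. Since there are exactly $2j+m$ edges, all these inequalities are equalities, and $\bar 0$ receives none.
\item Multiple edges are forbidden, so the $j+m$ edges from $0$ hit every $Q$-vertex and every $\bar i$ exactly once.
\item The $Q$-edges therefore form a fixed-point-free permutation of the $Q$-vertices, i.e.\ wheels.
\end{itemize}

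\textbf{The gap.} The weak point is the sentence ``this forces $(\shoikhet_Q)_0$ to be adjoint to $(\kontsevich_Q)_1$ up to homotopy, so the scalar factors agree.'' The homotopy only gives identities modulo $Q$-exact terms. Such identities pin down universal constants only on test examples that actually detect them, and you do not supply any.

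Here is one way to close it. Apply Theorem~\ref{prop:HmtOpShoi} with $\omega=Q$, a $Q$-closed $k$-vector field $\gamma$, and the unit $0$-chain $\alpha=1$. Then $\hochschildb\alpha=\iL_Q\alpha=0$, and only the function component of $(\kontsevich_Q)_1(\gamma)$ acts on $\alpha$. The identity then says that the $0$-form component of $\iI_\gamma\big(E^\circ-\widehat{A}^{1/2}\big)$ is $Q$-exact.

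Now take $\cV=\mathfrak{gl}_N[1]$ with $Q=d_{\mathrm{CE}}$.
\begin{itemize}
\item There are no functions of degree $-1$, so the $Q$-exact term vanishes.
\item The $Q$-closed constant-coefficient $k$-vector fields are $(S^k\mathfrak{g})^{\mathfrak{g}}$, which pair nondegenerately with $(\hat S^k\mathfrak{g}^\vee)^{\mathfrak{g}}$.
\item Hence $E^\circ=\widehat{A}^{1/2}$ as invariant power series.
\item Take logarithms and use the algebraic independence of $x\mapsto\mathrm{tr}(\mathrm{ad}_x^{2k})$ for $2k\leq N$. This gives $c^\circ_{2k}=c_{2k}$, where $c_{2k}$ are the coefficients of $\log\widehat{A}^{1/2}$.
\end{itemize}

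Odd traces vanish identically on $\mathfrak{gl}_N$, so the odd constants are invisible in this test. Your reflection argument for odd disk wheels is therefore indispensable, not optional. It requires the disk propagator to change sign under $z\mapsto\bar z$.

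With these supplements your argument closes. The appeal to the relation between $\iI$ and $\iL$ is not needed.
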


\subsection{Compatibility with the calculus structures}

Let $(C,d_C)$ be a dgca.
The calculus operations on $\Tpoly{}(\cV)$, $\cApoly{}(\cV)$, $\Dpoly{}(\cV)$, and $\cCpoly{}(\cV)$
admit straightforward extensions to $C\otimes\Tpoly{}(\cV)$,
$C\cotimes\cApoly{}(\cV)$, $C\otimes\Dpoly{}(\cV)$, and $C\cotimes\cCpoly{}(\cV)$.
Kontsevich's morphism $\kontsevich$ and Shoikhet's morphism $\shoikhet$ also admit natural extensions,
which, given any Maurer--Cartan element $\omega$ in $C\otimes\Tpoly{}(\cV)$, can be twisted:
\begin{gather*}
\kontsevich_\omega: \big(C \otimes \Tpoly{}(\cV)[1]\big)_\omega
\inftyto \big( C \otimes \Dpoly{}(\cV)[1]\big)_{\kontsevich(\omega)} \\
\shoikhet_\omega: \big( C \cotimes \cCpoly{}(\cV)\big)_\omega
\inftyto \big( C \cotimes \cApoly{}(\cV)\big)_{\kontsevich(\omega)}
.\end{gather*}

\subsubsection{Compatibility with $d_{\DR}$ and $B$}

Willwacher proved that Shoikhet's morphism intertwines the Connes--Rinehart operator $B$ on Hochschild chains with the de~Rham differential $d_{\DR}$ on differential forms:

\begin{theorem}[\cite{MR2836399}]\label{prop:CompatibleWithB_local}
For all $n\in\NO$; $\alpha\in C\cotimes\cCpoly{}(\cV)$;
and $\gamma_1,\gamma_2,\cdots,\gamma_n\in C\otimes\Tpoly{}(\cV)[1]$, we have
\[ d_{\DR} \shoikhet_n (\gamma_1,\cdots,\gamma_n;\alpha) = \shoikhet_n (\gamma_1,\cdots,\gamma_n;B\alpha) .\]
\end{theorem}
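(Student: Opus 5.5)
The plan is to reduce to the classical, ungraded statement of Willwacher and then observe that the argument is insensitive both to the grading and to the coefficient dgca. The first step is to dispose of $C$. By construction, the extensions of $\shoikhet_n$, $B$ and $d_{\DR}$ to $C\otimes\Tpoly{}(\cV)$, $C\cotimes\cCpoly{}(\cV)$ and $C\cotimes\cApoly{}(\cV)$ are the $C$-(multi)linear extensions of the corresponding operations on $\cV$, with Koszul signs. Both $B$ and $d_{\DR}$ have odd degree and act trivially on the $C$-factor. Hence for $\gamma_i=c_i\otimes\gamma_i'$ and $\alpha=c\otimes\alpha'$ both sides of the identity equal the same sign times $c_1\cdots c_n c$ multiplied by the corresponding expression on $\cV$. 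By continuity for the completed tensor products it therefore suffices to treat $C=\KK$.

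The second step is to write both sides graph by graph, using $\shoikhet_n=\sum_\Gamma w^\circ_\Gamma\shoikhet_\Gamma$. The form $\shoikhet_n(\gamma_1,\dots,\gamma_n;\alpha)$ is determined by its pairings with polyvector fields $\beta$ placed at the central vertex $0$. Applying $d_{\DR}$ and pairing with $\beta$ amounts, by the Cartan formula, to evaluating on $\beta$ with one extra derivative. Combinatorially this is an extra edge emanating from vertex $0$ and landing on a newly inserted external vertex that carries the constant function $1$, plus the terms where that edge lands on existing vertices. On the other side, $B\alpha$ inserts a $1$ at position $\bar 0$ and sums over cyclic rotations of $a_0,\dots,a_p$. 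Since Hochschild chains are normalized (a $1$ in a non-zero slot is killed by any graph with an edge into it), only graphs receiving no edge at the new vertex contribute.

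The key identity I would prove is an equality of weights. For a graph $\Gamma$ with $1+m$ external vertices, the sum over cyclic relabellings of the Shoikhet weights of the graph with an extra edge-free boundary vertex should match the weight of the graph obtained by adding an edge from $0$. I would obtain this by fibering the configuration space $\overline{C}$ of the larger graph over the circle of rotations about the centre. The angular form of an edge from the origin to a boundary point $e^{i\theta}$ is $d\theta/2\pi$. The rotation that moves $\bar 0$ back to $1$ converts integration over the position of the new boundary point, which is split into $m+1$ arcs giving the cyclic sum, into integration of this angular $1$-form, and Fubini gives the match. Willwacher's argument in~\cite{MR2836399} carries this out and handles the boundary and sign bookkeeping via Stokes' theorem. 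I expect this weight identity, together with the signs coming from reordering edges and vertices under cyclic permutation, to be the main obstacle.

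Finally, the passage to trivialized $\ZZ$-graded $\cV$ is formal. The operators $\shoikhet_\Gamma$ are built from the universal bidifferential operators $H_e=\sum_k\partial_{p_k}\partial_{x_k}$ acting on $\cA[p_1,\dots,p_{d+r}]$, with weights independent of the grading. The identity to be proved is therefore an identity of weighted sums of such universal operators, and it holds for super/graded variables once Koszul signs are inserted consistently. This is the same principle by which Cattaneo--Felder transfer Kontsevich's formality to the graded setting~\cite{MR2304327}. I would check the sign of one generic term, namely the contraction of $\beta$ with $da_i$ against the rotated chain, to fix the global convention.
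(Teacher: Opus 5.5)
The paper gives no proof of this statement. It quotes Willwacher and treats the extension to coefficients in $C$ and to trivialized $\ZZ$-graded $\cV$ as routine, which is what your first and last steps do. So your route is the paper's route.

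\textbf{The weight identity is misstated.} As you phrase it, the two sides cannot both be nonzero. If inserting an edge-free boundary vertex into $\Gamma$ gives the required $2n+m+1$ edges, then adding an edge from $0$ to $\Gamma$ gives $2n+m+2$ edges on a configuration space of dimension $2n+m$, so its weight is zero.

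The correct bookkeeping is by output graph. Fix $G$ with $2n+m+1$ edges on the vertices $0,\dots,n$ and $\bar0,\dots,\bar m$.
\begin{itemize}
\item Its coefficient in $\shoikhet_n(\gamma;B\alpha)$ is $\sum_i\pm w^\circ(G'_i)$, where $G'_i$ is $G$ with an edge-free vertex carrying $1$ inserted at $\bar0$ in front of $a_i$.
\item Its coefficient in $d_{\DR}\shoikhet_n(\gamma;\alpha)$ is, by the Leibniz rule, a sum over which edge leaving $0$ was created by $d_{\DR}$. That edge may land on any vertex, $\bar0$ included.
\end{itemize}
So what must be shown is
\[ \sum_i \pm w^\circ(G'_i)=\sum_{e\in E_G,\ s(e)=0}\pm w^\circ(G\setminus e). \]
An edge is removed rather than added, and the sum runs over all edges leaving $0$. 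A sanity check: for $n=0$ and $G$ with edges from $0$ to every $\bar j$, both sides equal $1/m!$.

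\textbf{How the fibration proves it.} The argument needs one property that you should state explicitly. In Shoikhet's propagator, an edge leaving $z\neq0$ carries the hyperbolic angle at $z$ measured from the geodesic to the centre. This is invariant under rotations about $0$ and does not involve the position of $\bar0$. An edge from $0$ to $w$ carries $d\arg(w)/2\pi$.

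The configuration spaces of the $G'_i$ together give the configurations with the edge-free vertex at $1$ and $a_0$ anywhere on the circle. Use coordinates $(\theta,y)$, where $\theta=\arg z_{a_0}$ and $y$ is the configuration rotated so that $a_0$ sits at $1$. Up to a null set, $y$ ranges over the configuration space of $G$.
\begin{itemize}
\item Only edges leaving $0$ have a $d\theta$-component, and it equals $d\theta/2\pi$.
\item Extracting the $d\theta$-component selects exactly one such edge, and Fubini gives the right-hand side.
\end{itemize}
So the $d\theta$ comes from an actual edge of $G$, not from an edge to the new vertex; such an edge would hit the constant $1$ and vanish. The argument is pure Fubini, with no Stokes or boundary analysis. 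If the propagator were referred to $\bar0$ instead, the other edges would also acquire $d\theta$-terms.

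\textbf{Two smaller corrections.}
\begin{itemize}
\item The summand of $B$ with $1$ in slot $\bar1$ does not vanish because polyjets are normalized; in the paper's model they are not. It vanishes under $\shoikhet_n$ for two reasons. A graph with an edge into that vertex gives the zero operator. A graph without such an edge has zero weight, because the vertex's position is a free coordinate that does not appear in the form.
\item The reduction to $C=\KK$ is not sign-neutral. With Koszul conventions, $d_{\DR}$ acting on the output must pass $c_1,\dots,c_n$, while $B$ acting on $\alpha$ does not. Your claim that both sides acquire the same sign is therefore off by $(-1)^{\sum_i|c_i|}$. The consistent statement is $d_{\DR}\shoikhet_n(\gamma;\alpha)=(-1)^{\sum_i(|\gamma_i|-1)}\shoikhet_n(\gamma;B\alpha)$, which says that $F_{\tilde{\shoikhet}}$ intertwines $\id\otimes B$ and $\id\otimes d_{\DR}$. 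This sign is trivial in the only case the paper uses, \eqref{eq:cyclic}, where every $\gamma_i$ is the degree-one Maurer--Cartan element $\omega$.
\end{itemize}
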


As a consequence, we have
\begin{equation}\label{eq:cyclic}
d_{\DR} \circ (\shoikhet_\omega)_0 = (\shoikhet_\omega)_0 \circ B .
\end{equation}

\subsubsection{Compatibility with the cup products}

It is well-known that the first Taylor coefficient of the twisted Kontsevich morphism preserves,
up to homotopy, the natural associative multiplications on polyvector fields and polydifferential operators:
\begin{theorem}[\cite{MR1990011,MR2077241,MR1872382,MR2950766,MR3522653,MR3964152}]
\label{prop:HmtOpKont}
Given any Maurer--Cartan element $\omega$ in the dgla $C\otimes\Tpoly{}(\cV)[1]$,
there exists a ($\id_C\otimes\GL(\RR^d)\times\GL(V)$-equivariant) homotopy operator
\[ H_\omega^{\kontsevich}: \big(C\otimes\Tpoly{}(\cV)\big) \times \big(C\otimes\Tpoly{}(\cV)\big)
\to \big(C\otimes\Dpoly{}(\cV)\big) \]
such that
\begin{multline}\label{eq:HtpEqKont}
\pshift\big((\kontsevich_\omega)_1 \big(\nshift(\gamma_1\odot\gamma_2)\big)\big)
- \pshift\big((\kontsevich_\omega)_1(\nshift\gamma_1)\big) \mathbin{\smile} \pshift\big((\kontsevich_\omega)_1(\nshift\gamma_2)\big)
\\ = (d_C + \hochschild + \gerstenhaber{\kontsevich(\omega)}{\argument})
\big(H_\omega^{\kontsevich}(\gamma_1, \gamma_2) \big)
- H_\omega^{\kontsevich}\big( (d_C + \schouten{\omega}{\argument})(\gamma_1), \gamma_2 \big)
\\ - (-1)^{|\gamma_1|} H^{\kontsevich}_\omega\big(\gamma_1, (d_C + \schouten{\omega}{\argument})(\gamma_2) \big),
\end{multline}
for all $\gamma_1,\gamma_2\in C\otimes\Tpoly{}(\cV)$.

Furthermore, given two Maurer-Cartan elements $\omega$ and $\omega'$ in the dgla $C\otimes\Tpoly{}(\cV)[1]$,
whose difference is a \emph{linear} vector field (i.e. $\omega'-\omega\in C\otimes\XX_{\mathrm{linear}}(\cV)$),
the associated homotopy operators are necessarily equal: $H_\omega^{\kontsevich} = H_{\omega'}^{\kontsevich}$.
\end{theorem}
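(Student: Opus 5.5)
The plan is to build the homotopy $H^{\kontsevich}_\omega$ with Kontsevich's graphical calculus: integrate over a configuration space with one extra modulus. This is the ``eye'' argument of Kontsevich for the cup product, in the form given by Manchon--Torossian and Calaque--Van den Bergh. The coefficient $C$ and the grading on $\cV$ only enter through Koszul signs, since every operator $\kontsevich_\Gamma$ is built from the edge operators $H_e$, which are $C$-linear and $\GL(\RR^d)\times\GL(V)$-equivariant.

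\textbf{Step 1 (the untwisted homotopy).} For $k\geq 0$, consider configurations of $k+2$ points in the upper half plane and $m$ points on $\RR$, modulo the action of $\{z\mapsto az+b\}$. Normalize so that the vertex carrying $\gamma_1$ sits at $i$ and the vertex carrying $\gamma_2$ lies on the segment $i t$, $t\in(0,1)$. Let $E_{k,m}$ be the Kontsevich compactification of this space, of dimension $2k+m+1$. For an admissible graph $\Gamma$ with $k+2$ internal vertices, set $w'_\Gamma=\int_{E_{k,m}}\omega_\Gamma$ and define
\[ H_\omega^{\kontsevich}(\gamma_1,\gamma_2)=\sum_{k\ge 0}\frac{1}{k!}\sum_{m}\sum_{\Gamma} w'_\Gamma\, \kontsevich_\Gamma(\omega,\dots,\omega,\gamma_1,\gamma_2). \]
The sum is finite by the same degree count as in Theorem~\ref{KontsevichFormality}~\ref{iris}. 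Equivariance holds automatically, because only the contractions $H_e$ appear.

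\textbf{Step 2 (Stokes).} Apply $\int_{\partial E_{k,m}} \omega_\Gamma=0$ and sort the codimension-one strata.
\begin{itemize}
\item At $t\to 1$, the two special points collide. The collapsed vertex receives $\gamma_1\odot\gamma_2$ up to the standard vanishing of the edge between them. The remaining configuration is a free Kontsevich configuration, so this stratum produces $(\kontsevich_\omega)_1(\gamma_1\odot\gamma_2)$.
\item At $t\to 0$, the configuration splits into two independent clusters over the real line. This stratum produces the cup product $(\kontsevich_\omega)_1(\gamma_1)\smile(\kontsevich_\omega)_1(\gamma_2)$.
\item Collisions of internal points among themselves (not involving both special points) and collisions of points with the real axis are the usual Kontsevich strata. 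Using the $L_\infty$ relations and the Maurer--Cartan equation $d_C\omega+\tfrac12[\omega,\omega]=0$, these reassemble into the three terms involving $d_C+\hochschild+\gerstenhaber{\kontsevich(\omega)}{\argument}$ and $d_C+\schouten{\omega}{\argument}$ on the right-hand side of \eqref{eq:HtpEqKont}.
\item The remaining strata vanish by Kontsevich's vanishing lemmas.
\end{itemize}

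\textbf{Main obstacle.} I expect the hardest part to be this bookkeeping of boundary strata and signs in the graded setting. I would follow the treatment in \cite{MR2950766,MR3522653} stratum by stratum and check that the Koszul signs coming from the grading of $\cV$ are the ones dictated by the $p_i$-variable formalism.

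\textbf{Step 3 (linear shifts).} For the second assertion, write $\omega'=\omega+X$ with $X\in C\otimes\XX_{\mathrm{linear}}(\cV)$. Expand $H^{\kontsevich}_{\omega'}$ multilinearly. Every term containing at least one copy of $X$ has an internal vertex with exactly one outgoing edge and at most one incoming edge, since $X$ is linear. I would then show that the weight $w'_\Gamma$ of such a graph is zero. The approach mirrors the proof of Theorem~\ref{KontsevichFormality}~\ref{LinVF-kontsevich}: integrating out the position of that vertex gives zero. A vertex with no incoming edge gives an angle form integrated over a fibre of dimension $2$ with a $1$-form, which vanishes. A bivalent vertex is handled by Kontsevich's lemma, using the reflection $z\mapsto -\bar z$ about the point it is attached to. The normalization fixing the two special points does not interfere, because the vertex carrying $X$ is a free internal vertex. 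Hence $H_{\omega'}^{\kontsevich}=H_\omega^{\kontsevich}$.
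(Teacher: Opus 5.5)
This statement is only quoted in the paper. The cited proofs use Kontsevich's ``eye'' argument, which is the route you take. However, the one-parameter family in your Step~1 does not end where you say, so the identification of the cup-product term in Step~2 fails as written.

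\textbf{Where the path actually ends.} Normalize $z_1=i$. The configurations of the two special vertices then form the punctured upper half plane. Its compactification $\overline{C}_{2,0}$ (the eye) has the following boundary:
\begin{itemize}
\item the collision circle;
\item an arc where $z_2$ reaches $\RR$ while $z_1$ stays in the bulk;
\item an arc at infinity where $z_1$ reaches $\RR$ while $z_2$ stays in the bulk;
\item two corners where both reach $\RR$, in either order.
\end{itemize}
Only the corner with $z_1$ to the left of $z_2$ has fibre $\bigsqcup C_{1+k_1,m_1}\times C_{1+k_2,m_2}$. That is the face which produces $(\kontsevich_\omega)_1(\gamma_1)\smile(\kontsevich_\omega)_1(\gamma_2)$.

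Your vertical path $z_2=it$ approaches, as $t\to 0$, an \emph{interior} point of the first arc. The cluster of $z_2$ lands at $0\in\RR$ directly below $z_1=i$, and $z_1$ stays at height $1$. The resulting face consists of $(\kontsevich_\omega)_1(\gamma_2)$ inserted at a frozen boundary point $0$ of a constrained outer configuration that still contains $\gamma_1$ at $i$. This is a brace-type composite with non-standard weights, not a cup product.

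\textbf{A concrete check.} Take $k=0$, $m=2$, $\gamma_1=X$, $\gamma_2=Y$ vector fields. The edge of $X$ lands to the left or to the right of $0$, each with weight $\tfrac12$, and edges into or out of the collapsed cluster contribute nothing. So this face gives $\tfrac12(X\otimes Y\pm Y\otimes X)$ up to an overall sign, never $X\otimes Y$. Structurally, your whole family is invariant under $z\mapsto-\bar z$, which reverses the order of the boundary points. That involution exchanges the two corners, i.e.\ $\gamma_1\smile\gamma_2$ and $\gamma_2\smile\gamma_1$, so a reflection-invariant family cannot single out one of them.

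\textbf{The fix.} Run the path from a point of the collision circle to the correct corner, for instance $z_1=i$, $z_2=i+s$ with $s\in(0,\infty)$. After rescaling by $1/s$, both special vertices reach $\RR$, at $0$ and $1$ respectively. The face over that corner is the union over splittings of $C_{1+k_1,m_1}\times C_{1+k_2,m_2}$. The factor $\tfrac{1}{k!}\binom{k}{k_1}=\tfrac{1}{k_1!\,k_2!}$ then assembles these terms into the cup product of the twisted first Taylor coefficients. With this change, the rest of your Step~2 (Stokes, vanishing lemmas, Maurer--Cartan bookkeeping) and your Step~3 go through as you describe.

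A minor point: the finiteness claimed in Step~1 uses that $\omega$ is a vector field, which holds in the paper's applications. For a general Maurer--Cartan element you need a completeness assumption.
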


\subsubsection{Compatibility with the cap products}

The cohomologies
\[ H^\bullet\big(C \cotimes \cCpoly{}(\cV), d_C + \hochschildb + \iL_\omega \big) \qquad \text{and} \qquad H^\bullet\big(C \cotimes \cApoly{}(\cV), d_C + \iL_\omega \big) \]
are modules over the associative algebras
\[ H^\bullet\big(C\otimes\Dpoly{}(\cV),
d_C+\hochschild+\gerstenhaber{\omega}{\argument} \big)
\quad\text{and}\quad
H^\bullet\big(C\otimes\Tpoly{}(\cV),
d_C+\schouten{\omega}{\argument}\big) ,\]
respectively.
Making use of the map $(\kontsevich_\omega)_1$,
one can also construe
$H^\bullet\big(C \cotimes \cCpoly{}(\cV), d_C + \hochschildb + \iL_\omega \big)$
as a module over $H^\bullet\big(C\otimes\Tpoly{}(\cV),d_C+\schouten{\omega}{\argument}\big)$.

It is well-known that the zeroth Taylor coefficient of the twisted Shoikhet morphism preserves, up to homotopy, the natural module structures on polyjets and differential forms:

\begin{theorem}[\cite{MR2764337,MR2836111,MR3522653}]\label{prop:HmtOpShoi}
Given any Maurer--Cartan element $\omega$ in $C\otimes\Tpoly{}(\cV)[1]$,
there exists a ($\id_C\otimes\GL(\RR^d)\times\GL(V)$-equivariant) chain homotopy
(lowering the degree by $1$)
\[ H_\omega^{\shoikhet}: \big(C \otimes \Tpoly{}(\cV)\big) \times \big(C \cotimes \cCpoly{}(\cV) \big) \to \big( C \cotimes \cApoly{}(\cV) \big) \]
such that
\begin{multline}\label{eq:HtpEqShoi}
(\shoikhet_\omega)_0 \big(\iI_{\pshift (\kontsevich_\omega)_1(\nshift\gamma)} (\alpha) \big) - \iI_\gamma \big( (\shoikhet_\omega)_0(\alpha) \big) = (d_C + \iL_\omega)\big(H_\omega^{\shoikhet}(\gamma; \alpha) \big) \\
- H_\omega^{\shoikhet}\big( (d_C+\schouten{\omega}{\argument}) (\gamma); \alpha\big)
- (-1)^{|\gamma|} H^{\shoikhet}_\omega\big(\gamma; (d_C + \hochschildb + \iL_{\kontsevich(\omega)})(\alpha) \big),
\end{multline}
for all $\gamma\in C\otimes\Tpoly{}(\cV)$ and $\alpha\in C\cotimes\cCpoly{}(\cV)$.

Furthermore, given two Maurer-Cartan elements $\omega$ and $\omega'$
in the dgla $C\otimes\Tpoly{}(\cV)[1]$,
whose difference is a \emph{linear} vector field
(i.e. $\omega'-\omega\in C\otimes\XX_{\mathrm{linear}}(\cV)$),
the associated homotopy operators are necessarily equal:
$H_\omega^{\shoikhet} = H_{\omega'}^{\shoikhet}$.
\end{theorem}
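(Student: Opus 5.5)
The plan is to build $H_\omega^{\shoikhet}$ from configuration-space integrals, as in the proofs of Theorem~\ref{prop:HmtOpKont}, but using Shoikhet's disk configuration spaces instead of Kontsevich's upper half-plane ones. We then twist by $\omega$ and check both assertions through a Stokes-theorem analysis of boundary strata.

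\textbf{Step 1: the untwisted operators.} Take $C=\KK$ and $\omega=0$. For $\gamma\in\Tpoly{}(\cV)$ and $\gamma_1,\dots,\gamma_n$, consider graphs in $\graphs^\circ_{n+1,m}$ with one distinguished extra internal vertex, labelled by $\gamma$. The associated configuration space adds one real degree of freedom to Shoikhet's: the $\gamma$-vertex is constrained to a segment joining the centre $0$ to the boundary point $1$, or equivalently to a suitable codimension-one locus. Define
\[ H_n(\gamma;\gamma_1,\dots,\gamma_n;\alpha)=\sum_\Gamma w^{\gamma}_\Gamma\,\shoikhet_\Gamma(\gamma,\gamma_1,\dots,\gamma_n;\alpha), \]
where $w^{\gamma}_\Gamma$ is the integral of the product of angle forms over the compactified space. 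The map $\shoikhet_\Gamma$ is the same graph-assembly map recalled above. It uses only partial derivatives in linear coordinates, so $\GL(\RR^d)\times\GL(V)$-equivariance is automatic.

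\textbf{Step 2: Stokes' theorem and the boundary strata.} Apply Stokes' theorem to $d\omega_\Gamma=0$ and classify the codimension-one faces.
\begin{itemize}
\item When the $\gamma$-vertex collides with the centre $0$, the face gives $\iI_\gamma\circ\shoikhet_n$.
\item When it reaches the boundary point $1$, it produces an external point carrying $(\kontsevich)$-type subgraphs, and these faces give $\shoikhet_n(\dots;\iI_{\kontsevich_k(\gamma,\dots)}\alpha)$.
\item Collapses of clusters of internal vertices in the interior produce the Schouten-bracket terms $H(\schouten{\gamma_i}{\gamma_j},\dots)$ and $H(\schouten{\gamma}{\gamma_i},\dots)$.
\item Collapses of clusters at the boundary produce $\hochschildb$ and the $\iL_{\kontsevich_k(\dots)}$ terms.
\end{itemize}
The result should be an $L_\infty$-type identity, at each arity, for the family $(H_n)$. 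It expresses that $\alpha\mapsto\iI_{\kontsevich(\gamma)}\alpha$ and $\iI_\gamma$ are intertwined by $\shoikhet$ up to the homotopy $H$.

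\textbf{Step 3: twisting by $\omega$.} Extend all operators $C$-linearly. Set
\[ H_\omega^{\shoikhet}(\gamma;\alpha)=\sum_k\tfrac1{k!}H_k(\gamma;\omega,\dots,\omega;\alpha). \]
This converges in the product $C\cotimes\cApoly{}(\cV)$ by the same form-degree count as Theorem~\ref{ShoikhetFormality}~\ref{soja}. Specialising the Step~2 identity to $\gamma_i=\omega$ and using the Maurer--Cartan equation collapses the $\omega$-bracket terms. It also converts $\sum\tfrac1{k!}\kontsevich_{k+1}(\omega,\dots,\omega,\gamma)$ into $(\kontsevich_\omega)_1(\gamma)$, which yields exactly \eqref{eq:HtpEqShoi}.

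\textbf{Step 4: independence under linear changes of $\omega$.} Suppose $\omega'-\omega=X$ is a linear vector field. It suffices to show that $H_k(\gamma;X,\gamma_2,\dots;\alpha)=0$ for $k\ge1$, and then expand multilinearly. A vertex labelled by a linear vector field has exactly one outgoing edge and, for a nonzero contribution, at most one incoming edge. The standard vanishing lemma applies to such bivalent vertices: the angle-form integral over their position vanishes by Kontsevich's argument, as used for Theorem~\ref{KontsevichFormality}~\ref{LinVF-kontsevich} and Theorem~\ref{ShoikhetFormality}~\ref{palmatum}. Adapting that lemma to the extra segment constraint shows the weights vanish.

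\textbf{Main obstacle.} The hardest step is Step~2. One must show that the remaining hidden faces contribute nothing: collapses involving three or more internal vertices, and strata in which the $\gamma$-vertex meets other internal points on the segment. One must also keep the Koszul signs consistent in the graded setting. I would first try to import the vanishing arguments of Kontsevich and Shoikhet (Kontsevich's lemma and the involution argument) verbatim, checking only that the segment constraint on the $\gamma$-vertex does not spoil them.
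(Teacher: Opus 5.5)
The paper gives no proof of this statement; it imports it from the cited works. Your overall architecture is the right kind of argument: a Stokes argument on a Shoikhet-type disk configuration space in which the $\gamma$-vertex has one real degree of freedom, then twisting by $\omega$, then a bivalent-vertex argument for linear shifts. Steps~3 and~4 are essentially fine.

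\textbf{The gap is in Step~2, at the face you treat as routine.} With the $\gamma$-vertex confined to the radial segment ending at the position of $\bar0$, the face $\gamma\to 1$ does \emph{not} produce $\shoikhet(\dots;\iI_{\kontsevich_k(\gamma,\dots)}\alpha)$. After blowing up at $1$, the vertex $\gamma$ sits straight above the pinned point $\bar0$, so the local space is not a Kontsevich configuration space placed after $a_0$. Three things go wrong:
\begin{itemize}
\item boundary vertices of the cluster can lie on either side of $\bar0$, giving wrap-around insertions $a_0\,D(a_{m-e+1},\dots,a_m,a_1,\dots,a_d)$;
\item the local weights are integrals over a cut version of Kontsevich's space, not Kontsevich's weights;
\item edges from cluster vertices into $\bar0$ survive and differentiate $a_0$.
\end{itemize}
The data (centre, $\bar0=1$, segment) are invariant under complex conjugation, which reverses the cyclic order of the boundary. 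So this face is necessarily symmetric between ``before $a_0$'' and ``after $a_0$'', whereas $\iI$ is not.

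\textbf{A concrete check.} Take $C=\KK$, $\omega=0$, $\gamma=X$ a vector field, and $\alpha=a_0\otimes a_1\otimes a_2$.
\begin{itemize}
\item On $1$-chains your $H_0(X;\cdot)$ vanishes identically. The edges $0\to X$ and $X\to\bar0$ carry constant angles along the segment. The only remaining weight, $\int d\phi(0,\bar1)\wedge d\phi(X,\bar1)$, is killed by conjugation: the integrand is invariant but the orientation is reversed.
\item Hence \eqref{eq:HtpEqShoi} would force $\shoikhet_0(a_0X(a_1)\otimes a_2)=\iI_X\big(\shoikhet_0(\alpha)\big)$ on the nose. This is false: the right-hand side contains a multiple of $a_0X(a_2)\,da_1$ and the left-hand side does not.
\item What your Stokes argument actually yields is $\iI_X\big(\shoikhet_0(\alpha)\big)=\tfrac12\,\shoikhet_0\big(a_0X(a_1)\otimes a_2\pm a_0X(a_2)\otimes a_1\big)$. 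The face $\gamma\to1$ has two components ($\bar1$ or $\bar2$ falling in together with $X$), each of weight $\tfrac12$, because the harmonic angle seen from a point straight above $\bar0$ sweeps half a turn on each side.
\end{itemize}
So your $H$ is a homotopy for a symmetrized cap product, not for $\iI$.

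\textbf{What the fix requires.} The choice of the one-parameter locus is not a matter of taste, and ``or equivalently a suitable codimension-one locus'' hides the key point. You need the end face to be the stratum where $\gamma$, together with the interior vertices and boundary vertices $\bar1,\dots,\bar d$ collapsing with it, forms an unconstrained Kontsevich configuration (modulo affine maps). That configuration must sit on the arc immediately after $\bar0$, with $\bar0$ infinitely far away at that scale. Only then does the face give exactly $a_0\cdot\kontsevich_k(\gamma,\dots)(a_1,\dots,a_d)$, with Kontsevich's weights and no derivatives of $a_0$. One way to arrange this is to let the $\gamma$-vertex approach $1$ tangentially from the $\bar1$ side, or to work directly with the corresponding corner of the compactification. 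This introduces intermediate strata between the $\gamma$-cluster and $\bar0$, which you must show vanish or account for. When $\omega$ is built from vector fields, as in the paper's application, a dimension count kills them.

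\textbf{Two smaller points.} First, collapses into the centre of clusters not containing $\gamma$ are what produce the $\iL_\omega\circ H$ term, so they belong on your list of faces. Second, the convergence argument in Step~3 works only when $\omega$ consists of vector fields, since bivector components of $\omega$ do not raise the form degree.
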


\section*{Acknowledgments}

We are especially grateful to Boris Shoikhet, who sparked our interest in formality theorems for dg manifolds by drawing our attention to \cite{arXiv:math/9812009}, and to Thomas Willwacher for patiently answering numerous questions throughout various stages of this work.

We also wish to thank many colleagues for insightful discussions and valuable comments: Ruggero Bandiera, Alberto Cattaneo, Sophie Chemla, Zhuo Chen, Vasily Dolgushev, Giovanni Felder, Niels Kowalzig, Dominique Manchon, Marco Manetti, Michael Pevzner, Kyoji Saito, Jonas Schnitzer, Boris Tsygan, Maosong Xiang --- and anyone we may have inadvertently forgotten.

Finally, we would like to thank several institutions for their hospitality during the preparation of this work: the Research Institute for Mathematical Sciences at Kyoto University (Liao, Stiénon, Xu), the Institut Mittag-Leffler (Stiénon, Xu), Pennsylvania State University (Liao), Taiwan's National Center for Theoretical Sciences (Liao, Stiénon, Xu), National Tsing Hua University (Stiénon, Xu), the Institut des Hautes Études Scientifiques (Liao), and the Institut Henri Poincaré (Liao, Stiénon, Xu).

\printbibliography

\end{document}